\newtheorem{theorem}{Theorem}[section]
\newtheorem{proposition}[theorem]{\sf Proposition}
\newtheorem{lemma}[theorem]{\sf Lemma}
\newtheorem{definition}[theorem]{\sf Definition}
\newtheorem{corollary}[theorem]{\sf Corollary}
\newtheorem{remark}[theorem]{\sf Remark}
\def\C{\mathbb C}
\def \Z{\mathbb Z}
\def \Q{\mathbb Q}
\def \R{\mathbb R}
\def \O{\mathcal O}
\def \ra{\rightarrow}
\def \vp{\varphi}
\def \F{\mathcal F}
\def \E{\mathcal E}
\DeclareMathOperator{\Spec}{Spec}
\let\olddefinition\definition
\renewcommand{\definition}{\olddefinition\normalfont}
\let\oldremark\remark
\renewcommand{\remark}{\oldremark\normalfont}
\begin{document}

%%%%%%%%%%%%%%%%%%%%%%%%%%%%%%%%%%%%%%%%%%%%%%

\author{Tanya Kaushal Srivastava}
\title{On Derived Equivalences of K3 Surfaces in Positive Characteristic}
\address{Department of Mathematics and Computer Science, Freie Universit\"at, Arnimallee 3, Berlin 14195}
\email{tks.rket@gmail.com}
\date{\today}

\begin{abstract}
For an ordinary K3 surface over an algebraically closed field of positive characteristic we show that every automorphism lifts to characteristic zero. Moreover, we show that the Fourier-Mukai partners of an ordinary K3 surface are in one-to-one correspondence with the Fourier-Mukai partners of the geometric generic fiber of its canonical lift. We also prove that the explicit counting formula for Fourier-Mukai partners of the K3 surfaces with Picard rank two and with discriminant equal to minus of a prime number, in terms of the class number of the prime, holds over a field of positive characteristic as well. We show that the image of the derived autoequivalence group of a K3 surface of finite height in the group of isometries of its crystalline cohomology has index at least two. Moreover, we provide an upper bound on the kernel of this natural cohomological descent map. 

Further, we give an extended remark in the appendix on the possibility of an F-crystal structure on the crystalline cohomology of a K3 surface over an algebraically closed field of positive characteristic and show that the naive F-crystal structure fails in being compatible with inner product. 
\end{abstract}

\subjclass[2010]{Primary: 14F05, Secondary: 14F30, 14J50, 14J28, 14G17}

\keywords{Derived Equivalences, K3 surfaces, Automorphisms, positive characteristic}

\maketitle

%%%%%%%%%%%%%%%%%%%%%%%%%%%%%%%%%%%%%%%%%%%%%%%%%%%%%%%%%%%%%%%%%%%%%%%%%%%
\tableofcontents

%%%%%%%%%%%%%%%%%%%%%%%%%%%%%%%%%%%%%%%%%%%%%%%%%%%%%%%%%%%%%%%%%%%%%%%%%%%%%%%%%%%%%%%%%%%%%%%%
\section*{Acknowledgement}
 
The results contained in this article are a part of PhD thesis  written under the supervision of Prof. Dr. H\'el\`ene Esnault. I owe her special gratitude for guidance, support, continuous encouragement and inspiration she always provided me. I thank Berlin Mathematical School for the PhD fellowship.  I am greatly thankful to  Michael Groechenig, Vasudevan Srinivas, Fran\c cois Charles, Christian Liedtke, Daniel Huybrechts,  Martin Olsson, Max Lieblich, Lenny Taelman and Sofia Tirabassi  for many mathematical discussions and suggestions.

%%%%%%%%%%%%%%%%%%%%%%%%%%%%%%%%%%%%%%%%%%%%%%%%%%%%%%%%%%%%%%%%%%%%%%%%%%%%%%%%%%%%

\section{Introduction}

The derived category of coherent sheaves on a smooth projective variety was first studied as a geometrical invariant by Mukai in the early 1980's. In case the smooth projective variety has an ample canonical or anti-canonical bundle, Bondal-Orlov \cite{BOderived} proved that, if two such varieties have equivalent bounded derived categories of coherent sheaves, then they are isomorphic. However, in general this is not true. The bounded derived  category of coherent sheaves is not an isomorphism invariant. Mukai \cite{MukaiAV} showed that for an Abelian variety over $\C$, its dual has equivalent bounded derived category. Moreover, in many cases it can be shown that the dual of an Abelian variety is not birational to it, which implies that derived categories are not even birational invariants, see \cite{HuyFM} Chapter 9. Similarly, Mukai showed in \cite{Mu} that for K3 surfaces over $\C$, there are non-isomorphic K3 surfaces with equivalent derived categories. This led to the natural question of classifying all derived equivalent varieties.\\

For K3 surfaces, the case of interest to us, this was completed over $\C$ in late 1990's by Mukai and Orlov (\cite{Mu} Theorem 1.4, \cite{Orlov1} Theorem 1.5) using Hodge theory along with the Global Torelli Theorem (see \cite{Barth} VIII Corollary 11.2, \cite{HuyLect} Theorem 7.5.3). As a consequence, it was shown that there are only finitely many non-isomorphic K3 surfaces with equivalent bounded derived categories (see Proposition \ref{finitechar0}) and a counting formula was also proved by Hosono \'et.al in \cite{HLOY}.  On the other hand, for K3 surfaces over a field of positive characteristic, a partial answer to the classification question was first given by Lieblich-Olsson \cite{LO} (see Theorem \ref{LOmainthm}) in early 2010's. They showed that there are only finitely many non-isomorphic K3 surfaces with equivalent bounded derived categories. We remark here that due to unavailability of a positive characteristic version of the global Torelli Theorem for K3 surfaces of finite height, it is currently not feasible to give a complete cohomological description of derived equivalent K3 surfaces. However, a description in terms of moduli spaces was given by Lieblich-Olsson. We also point out here that the proofs of these results go via lifting to characteristic zero and thus use the Hodge theoretic description given by Mukai and Orlov. Furthermore, Lieblich-Olsson \cite{LO2} also proved the derived version of the Torelli theorem using the Crystalline Torelli theorem for supersingular K3 surfaces. \\

Meanwhile in 1990's another school of thought inspired by string theory in physics led Kontsevich \cite{Ko} to propose the homological mirror symmetry conjecture which states that the bounded derived category $D^b(X)$ of coherent sheaves of a projective variety $X$ is equivalent (as a triangulated category) to the bounded derived category $D^bFuk(\check{X},\beta)$ of the Fukaya category $Fuk(\check{X},\beta)$ of a mirror $\check{X}$ with its symplectic structure $\beta$. Moreover, the symplectic automorphisms of $\check{X}$ induce derived autoequivalences of $D^b(X)$. This provided a natural motivation for the study of the derived autoequivalence group. \\

For K3 surfaces $X$ over $\C$, the structure of the group of derived autoequivalences was analyzed by Ploog in \cite{Ploog}, Hosono et al. in \cite{Hosono} and Huybrechts, et al. in \cite{Huy}. They showed that the image of $Aut(D^b(X))$ under the homomorphism
$$
Aut(D^b(X)) \ra O_{Hodge}(\widetilde{H(X, \Z)}),
$$
where $O_{Hodge}(\widetilde{H(X, \Z)})$ is the group of Hodge isometries of the Mukai lattice of $X$, has index 2. However, the kernel of this map has a description only in the special case when the Picard rank of $X$ is 1, given by \cite{BB}. \\

In this article, we study the above two questions in more details for the case of K3 surfaces over an algebraically closed field of positive characteristic. In Section \ref{PrelimsK3} we recall the notion of height of a K3 surface over a field of positive characteristic, the results on lifting K3 surfaces from characteristic $p$ to characteristic $0$, the moduli spaces of stable sheaves on a K3 surface and derived equivalences on K3 surfaces. We end this section by proving that height of a K3 surface remains invariant under derived equivalences (Lemma \ref{derivedinvheight}).  In Section \ref{Derivedautoeqncharp}, we address the question on the group of derived autoequivalences for K3 surfaces of finite height.  We show that the image of the derived autoequivalence group of a K3 surface of finite height in the group of isometries of its crystalline cohomology has index at least two (Theorem \ref{orientation preserving}). Moreover, we provide an upper bound on the kernel of this natural cohomological descent map (Proposition \ref{boundonKernel}).  In Section \ref{CountingFMP}, we count the number of Fourier-Mukai partner for an ordinary K3 surface (Theorem \ref{ordinarycount}) along with showing that the automorphism group lifts to characteristic 0 (Theorem \ref{liftingauto}).  We also prove that the explicit counting formula for Fourier-Mukai partners of the K3 surfaces with Picard rank two and with discriminant equal to minus of a prime number, in terms of the class number of the prime, holds over a field of positive characteristic as well (Theorem \ref{Classno}). In Appendix  \ref{F-crystalchap}, we define an F-crystal structure and show that this integral structure is preserved by derived equivalences but its compatibility with intersection pairing fails.

%%%%%%%%%%%%%%%%%%%%%%%%%%%%%%%%%%%%%%%%%%%%%%%%%%%%%%%%%%%%%%%%%%%%%%%%%%%%%%%%%%%%%%%%%%%%%%%%%

\subsection{Conventions and Notations} \label{convention} 

For a field $k$ of positive characteristic $p$, $W(k)$ will be its ring of Witt vectors. For any cohomology theory $H^*_{...}(...)$, we will denote the dimension of the cohomology groups $H^i_{\ldots}(\ldots)$ as $h^i_{\ldots}(\ldots)$.  We will implicitly assume that the cardinality of $ K := Frac(W(k))$ and its algebraic closure $\bar{K}$ are not bigger than that of $\C$, this will allow us to choose an embedding $\bar{K} \hookrightarrow \C$ which we will use in our arguments to transfer results from characteristic $0$ to characteristic $p$. See also Remarks \ref{char0} and \ref{finiteChar0}.

%%%%%%%%%%%%%%%%%%%%%%%%%%%%%%%%%%%%%%%%%%%%%%%%%%%%%%%%%%%%%%%%%%%%%%%%%%%%%%%%%%%%%%%%%%%%%%%
\section{Preliminaries on K3 Surfaces and Derived Equivalences} \label{PrelimsK3}

We recall the notion height of a K3 surface ovewr a field of positive characteristic through its F-crystal, which gives a subclass of K3 surfaces with finite height or infinite height called supersingular K3 surfaces. For an introduction to Brauer group of K3 surfaces and the definition of height via the Brauer groups see \cite{HuyLect} and \cite{Liedtke}. Both definitions turn out to be equivalent (see, for example, Prop. 6.17 \cite{Liedtke}). 

Let $k$ be an  algebraically closed field of positive characteristic, $W(k)$ its ring of Witt vectors and $Frob_W$ the Frobenius morphism of $W(k)$ induced by the Frobenius automorphism of $k$. Note that $Frob_W$ is a ring homomorphism and induces an automorphism of the fraction fields $K := Frac (W(k))$, denoted as $Frob_K$. We begin by recalling the notion of F-isocrystal and F-crystals which we will use later to stratify the moduli of K3 surfaces. 

\begin{definition}[F-(iso)crystal] \label{F-crystal}
An \textbf{F-crystal} $(M, \phi_M)$ over $k$ is a free $W$-module $M$ of finite rank together with an injective $Frob_W$-linear map $\phi_M: M \ra M$, that is, $\phi_M$ is additive, injective and satisfies 
$$
\phi_M (r \cdot m) = Frob_W(r) \cdot \phi_M(m) \ \text{for all} \ r \in W(k), m \in M.
$$
An \textbf{F-isocrystal} $(V, \phi_V)$ is a finite dimensional $K$-vector space $V$ together with an injective $Frob_K$-linear map $\phi_V: V \ra V$. 

A \textbf{morphism $u:(M, \phi_M) \ra (N, \phi_N)$ of F-crystals} (resp. \textbf{F-isocrystals}) is a $W(k)$-linear (resp. $K$-linear) map $M \ra N$ such that $\phi_N \circ u = u \circ \phi_M$. An \textbf{isogeny} of F-crystals is a morphism $u: (M, \phi_M) \ra (N, \phi_N)$ of F-crystals, such that the induced map $ u \otimes Id_K: M \otimes_{W(k)} K \ra N \otimes_{W(k)} K$ is an isomorphism of F-isocrystals.
\end{definition}

\textbf{Examples:}
\begin{enumerate}
\item The trivial crystal: $(W, Frob_W)$.
\item  This is the case which will be of most interest to us:\\
Let $X$ be a smooth and proper variety over $k$. For any $n$, take the free $W(k)$ module $M$ to be $H^n := H^n_{crys}(X/W(k))/torsion$ and $\phi_M$ to be the Frobenius $F^*$. The Poincar\'e duality induces a perfect pairing 
\begin{equation*}
\langle -, - \rangle: H^n \times H^{2dim(X)-n} \ra H^{2dim(X)} \cong W 
\end{equation*}
which satisfies the following compatibility with Frobenius 
\begin{equation*}
\langle F^*(x), F^*(y) \rangle = p^{dim(X)} Frob_W(\langle x, y \rangle),
\end{equation*}
where $x \in H^n$ and $y \in H^{2dim(X)-n}$. As $Frob_W$ is injective, we have that $F^*$ is injective. Thus, $(H^n, F^*)$ is an F-crystal. We will denote the F-isocrystal $H^n_{crys}(X/W) \otimes K$ by $H^n_{crys}(X/K)$.
\item The F-isocrystal $K(1) : = (K, Frob_K/p)$. Similarly, one has the F-isocrystal $K(n) := (K, Frob_K/p^n)$ for all $n \in \Z$. Moreover, for any F-isocrystal $V$ and $n \in \Z$, we denote by $V(n)$ the F-isocrystal $V \otimes K(n)$. 
\end{enumerate}

 Recall that the category of F-crystals over $k$ up to isogeny is semi-simple and the simple objects are the F-crystals: 
\begin{equation*}
M_{\alpha} = ((\Z_p[T])/(T^s-p^r)) \otimes_{Z_p} W(k), (\text{mult. by} \ T)\otimes Frob_W),
\end{equation*} 
for $\alpha = r/s \in \Q_{\geq 0}$ and $r$, $s$ non-negative coprime integers. 
This is a theorem of Dieudonn\'e-Manin. Note that the rank of the F-crystal $M_{\alpha}$ is $s$. We call $\alpha$ the \textbf{slope} of the F-crystal $M_{\alpha}$. 

\begin{definition} Let $(M, \phi)$ be an F-crystal over $k$ and let 
$$
(M, \phi) \sim^{isogeny} \oplus_{\alpha \in \Q_{\geq 0}} M_{\alpha}^{n_{\alpha}}
$$
be its decomposition up to isogeny. Then the elements of the set 
$$
\{\alpha \in \Q_{\geq 0}| n_{\alpha} \neq 0 \}
$$
are called the \textbf{slopes} of $(M, \phi)$. For every slope $\alpha$ of $(M, \phi)$, the integer $\lambda_{\alpha} := n_{\alpha} \cdot rank_W M_{\alpha}$ is called the \textbf{multiplicity} of the slope $\alpha$.
\end{definition}

\begin{remark}
In case $(M, \phi)$ is an F-crystal over a perfect field $k$ (rather than being algebraically closed as assumed above), we define its slope and multiplicities to be that of the F-crystal $(M, \phi) \otimes _{W(k)}W(\bar{k})$, where $\bar{k}$ is an algebraic closure of $k$. 
\end{remark}

We still keep our assumption of $k$ being an algebraically closed field of positive characteristic. 

The above classification result of Dieudonn\'e-Manin is more general. Any F-isocrystal $V$ with bijective $\phi_V$ is isomorphic to a direct sum of F-isocrystals
$$
(V_{\alpha} := K[T]/(T^s-p^r), (\text{mult. by $T$}) \otimes Frob_K), 
$$   
for $\alpha = r/s \in \Q$. The dimension of $V_{\alpha}$ is $s$ and we call $\alpha$ the \textbf{slope} of $V_{\alpha}$.

\begin{definition}[Height]
The \textbf{height} of a K3 surface $X$ over $k$ is the sum of multiplicities of slope strictly less than 1 part of the F-crystal $H^2_{crys}(X/W)$. In other words, the dimension of the subspace of slope strictly less than one of the F-isocrystal $H^2_{crys}(X/K)$, which is $\text{dim} (H^2_{crys}(X/K)_{[0,1)} := \oplus_{\alpha_i < 1}V_{\alpha_i}^{n_{\alpha_i}})$.
\end{definition}

If for a K3 surface $X$ the $\text{dim} (H^2_{crys}(X/K)_{[0,1)}) =0$, then we say that the height of $X$ is infinite.

Supersingular K3 surfaces (i.e., K3 surfaces with infinite height) also have an equivalent description that their Picard rank is 22 (see \cite{Liedtke} Theorem 4.8).  We will be discussing more about F-crystals later in Appendix \ref{F-crystalchap}. 

Lastly, we state the theorem by Deligne about lifting K3 surfaces which will be used a lot in the theorems that follow.

Let $X_0$ be a K3 surface over a field $k$ of characteristic $p > 0$. 
\begin{definition}[Lift of a K3 surface]
A \textbf{lift} of a K3 surface $X_0$ to characteristic $0$ is a smooth projective scheme $X$ over $R$, where $R$ is a discrete valuation ring such that $R/\mathfrak{m} = k$, $K := \text{Frac} (R) $ is a field of characteristic zero, the generic fiber of $X$, denoted $X_K$, is a K3 surface and the special fiber is $X_0$. 
\end{definition}

\begin{theorem}[Deligne \cite{Deligne} Theorem 1.6, corollary 1.7, 1.8] \label{Delignelifting}
Let $X_0$ be a K3 surface over a field $k$ algebraically closed of characteristic $p > 0$.  Let $L_0$ be an ample line bundle on $X_0$. Then there exists a finite extension $T$ of $W(k)$, the Witt ring of $k$, such that there exists a deformation of $X_0$ to a smooth proper scheme $X$ over $T$ and an extension of $L_0$ to an ample line bundle $L$ on $X$.  
\end{theorem}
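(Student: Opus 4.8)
The plan is to produce the lift in two stages: first construct a \emph{formal} polarized deformation of the pair $(X_0, L_0)$ over $W(k)$ (or a finite extension), and then algebraize it by means of Grothendieck's existence theorem. I would organize the first stage entirely through the deformation theory of the pair $(X_0,L_0)$, controlling infinitesimal lifts and their obstructions by coherent cohomology on $X_0$, and the second stage through the relative ampleness of the resulting formal polarization.

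The heart of the matter is the unobstructedness of the deformation functor. Since $X_0$ is a K3 surface we have $\omega_{X_0} \cong \O_{X_0}$ and no global $1$-forms, so $T_{X_0} \cong \Omega^1_{X_0}$ and, by Serre duality, $H^2(X_0, T_{X_0}) \cong H^0(X_0, \Omega^1_{X_0})^\vee = 0$. This vanishing is what makes the surface case clean and sidesteps the $T^1$-lifting machinery needed in higher dimension; crucially it holds in characteristic $p$, as it uses only Serre duality and $b_1 = 0$. Because $H^0(X_0,T_{X_0}) = H^0(X_0, \Omega^1_{X_0}) = 0$ as well, the deformation functor $\mathrm{Def}_{X_0}$ is pro-representable, and with vanishing obstruction space it is formally smooth over $W(k)$ of relative dimension $h^1(T_{X_0}) = 20$.

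Next I would cut out the polarized deformations inside $\mathrm{Def}_{X_0}$. Deforming the pair is governed by the Atiyah algebroid of $L_0$: the obstruction to extending $L_0$ along a first-order deformation $v \in H^1(X_0, T_{X_0})$ is the class $v \cup c_1(L_0) \in H^2(X_0, \O_{X_0})$, where $c_1(L_0) = \mathrm{at}(L_0) \in H^1(X_0, \Omega^1_{X_0})$ is the Atiyah class. Using $T_{X_0} \cong \Omega^1_{X_0}$, the pairing $H^1(X_0,T_{X_0}) \times H^1(X_0, \Omega^1_{X_0}) \ra H^2(X_0,\O_{X_0}) \cong k$ is the perfect Serre duality pairing, and since $L_0$ is ample we have $c_1(L_0) \neq 0$; hence $v \mapsto v \cup c_1(L_0)$ is surjective. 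Combined with $H^1(X_0,\O_{X_0}) = 0$ and $H^2(X_0,T_{X_0}) = 0$, a short diagram chase shows the obstruction space of the pair vanishes, so $\mathrm{Def}_{(X_0,L_0)}$ is formally smooth over $W(k)$ of relative dimension $19$. Smoothness over the complete local ring $W(k)$, together with the given $k$-point $(X_0,L_0)$, lets me solve the lifting problem order by order and assemble a compatible system of deformations, i.e.\ a formal polarized lift $(\mathfrak{X}, \mathfrak{L})$; passing to a finite extension $T$ of $W(k)$ yields the base ring of the statement.

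Finally I would algebraize. The formal line bundle $\mathfrak{L}$ is relatively ample because ampleness is tested on the special fiber $L_0$ and is an open condition, so by Grothendieck's existence theorem the formal scheme $(\mathfrak{X},\mathfrak{L})$ is the completion of a proper scheme $X \ra \Spec(T)$, with $\mathfrak{L}$ algebraizing to an ample line bundle $L$ restricting to $(X_0,L_0)$; in particular $X$ is projective over $T$. The generic fiber $X_K$ is smooth and proper with $\omega_{X_K} \cong \O_{X_K}$ and $H^1(X_K,\O_{X_K}) = 0$, these numerics being deformation invariant, hence it is a K3 surface. The step I expect to be the main obstacle is establishing unobstructedness cleanly in characteristic $p$ and then propagating the ampleness of $L_0$ to a genuine relative polarization so that Grothendieck existence applies; the remainder is formal deformation theory and algebraization.
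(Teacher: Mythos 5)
Your two-stage skeleton (formal polarized deformation, then algebraization by Grothendieck existence) is indeed the shape of Deligne's argument --- the paper itself imports this theorem from \cite{Deligne} without proof, so the comparison is to Deligne's original proof --- but both of the characteristic-$p$ inputs you treat as routine are exactly where the content lies, and one of them is false as stated. First, $H^2(X_0,T_{X_0})\cong H^0(X_0,\Omega^1_{X_0})^\vee$ does vanish, but not ``using only Serre duality and $b_1=0$'': in characteristic $p$ Hodge symmetry and Hodge--de Rham degeneration can fail, and $b_1=0$ does not bound $h^0(\Omega^1)$ (cf.\ Igusa's examples of surfaces with $h^0(\Omega^1)>\tfrac12 b_1$). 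The vanishing $H^0(X_0,T_{X_0})=H^0(X_0,\Omega^1_{X_0})=0$ for K3 surfaces in positive characteristic is a nontrivial theorem of Rudakov--Shafarevich \cite{RS76}, which Deligne cites both for pro-representability and for unobstructedness; this gap is repairable by citation. Second --- and this is the genuine failure --- your claim that $L_0$ ample forces $c_1(L_0)\neq 0$ in $H^1(X_0,\Omega^1_{X_0})$ is wrong in characteristic $p$: the Hodge-theoretic Chern class is computed by $d\log$ and kills $p\cdot\mathrm{Pic}(X_0)$ (so $L_0=M^{\otimes p}$ ample has $c_1(L_0)=0$), and on a supersingular surface, where $\rho=22>h^{1,1}=20$, the map $\mathrm{NS}(X_0)\otimes\mathbb{F}_p\to H^1(X_0,\Omega^1_{X_0})$ has a kernel containing classes of ample line bundles not divisible by $p$, so the defect cannot be dodged by extracting roots. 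Hence your pairing $v\mapsto v\cup c_1(L_0)$ need not be surjective, the obstruction space of the pair does not obviously vanish, and $\mathrm{Def}_{(X_0,L_0)}$ is in general \emph{not} formally smooth over $W(k)$ of relative dimension $19$.

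Note that if your smoothness claim were true you would lift over $W(k)$ itself, with no finite extension $T$ --- the extension in the statement is the symptom that something weaker is true. What Deligne actually proves (his Theorem 1.6) is that inside the smooth $20$-dimensional universal formal deformation $S$, the locus $S_{L_0}$ where $L_0$ extends is a closed formal subscheme cut out by a single equation (this much does follow from $H^1(X_0,\O_{X_0})=0$ and $\dim H^2(X_0,\O_{X_0})=1$, as in your setup), and that this formal divisor is \emph{flat} over $W(k)$, i.e.\ its equation does not vanish identically modulo $p$. The proof of flatness is the idea missing from your proposal: one shows $L_0$ cannot extend over the whole equicharacteristic-$p$ universal family, using the crystalline Chern class --- nonzero because $\bigl(c_1^{crys}(L_0)\bigr)^2=(L_0^2)\,[\mathrm{pt}]\neq 0$ in the torsion-free $H^4_{crys}(X_0/W)$ --- together with horizontality of Chern classes of deformed line bundles under the Gauss--Manin connection. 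A point of the resulting flat divisor over a finite (possibly ramified) extension $T$ of $W(k)$ is then extracted by Weierstrass preparation, and your algebraization stage (which is fine: ampleness passes to each infinitesimal thickening, then EGA III existence, then the deformation-invariance argument identifying the generic fiber as a K3) applies verbatim. In short: stage two is correct, but stage one as written fails for $p$-divisible polarizations and on supersingular surfaces, and the crystalline flatness argument must replace the smoothness claim.
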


Consider the situation where we have a lift of a K3 surface, i.e., let $X_0$ be a K3 surface over a field of characteristic $p >0$ and $X$ a lift over $S = \Spec(R)$ as defined above. The de Rham cohomology of $X/S$, $H^*_{DR}(X/S)$ is equipped with a filtration induced from the Hodge to de Rham spectral sequence:
$$
E_1^{i,j} = H^j(X, \Omega^i_{X/S}) \Rightarrow H^*_{DR}(X/S)
$$
For a construction of this spectral sequence, see \cite{Grothendieck} III-0 11.2. We call this filtration on $H^2_{DR}(X/S)$ the Hodge filtration. Using the comparison isomorphism between the crystalline cohomology of the special fiber and the de Rham cohomology of $X$ \cite{BOcrys} 7.26.3, 
$$
H^i_{crys}(X_0/W(k)) \otimes R \cong H^i_{DR}(X/S),
$$
we get a filtration on the crystalline cohomology, also called \textbf{the Hodge filtration}. This Hodge filtration on the crystalline cohomology depends on the choice of a lift of $X_0$.

Next we discuss about the Moduli space of sheaves on a K3 surface as these spaces turn out to play a very important role in the theory of derived equivalences of K3 surfaces. We introduce the moduli stack of sheaves on a K3 surface and show that it's a $\mu_r-$Gerbe under some numerical conditions. We will try to keep the exposition here characteristic independent and in case of characteristic restrictions we will mention them as necessary. Moreover, in the case of a K3 surface defined over a field we will not assume the field to be algebraically closed and in general, for a relative K3 surface, we will work with a spectrum of a  mixed characteristic discrete valuation ring as the base scheme. The main references for this section are  \cite{L2} Section 2.3.3 and \cite{LO} Section 3.15. We refer the reader to \cite{gomez}, for a comparison between the moduli stack point of view and that of more classical moduli functors.  For an introduction to theory of gerbes we refer the reader to  \cite{OlssonAS}.

\begin{remark}
The point of view of moduli stacks offers us the benefit that in this way it becomes more natural to generalize this theory to derived schemes and derived stacks, where the role of Artin representability theorem will be taken up by Lurie's Artin Representability theorem \cite{LurieFM} and those of the Hilbert and Quot schemes by their derived versions. 
\end{remark} 

Before proceeding to the definition of moduli stacks of sheaves that we will be working with, let us also recall the notion of (Gieseker) semistability for coherent sheaves (for details see \cite{HL}, Section 1.2): Let $X$ be a projective scheme over a field $k$. The Euler characteristic of a coherent sheaf $\mathcal{F}$ is $\chi(\mathcal{F})= \sum (-1)^i h^i(X, \mathcal{F})$. If we fix an ample line bundle $\O(1)$ on $X$, then the Hilbert polynomial $P(\mathcal{F})$ given by $n \mapsto \chi(\mathcal{F} \otimes \O(n))$ can be uniquely written in the form 
$$
P(\mathcal{F}, n) = \sum_{i =0}^{dim(\mathcal{F})} \alpha_i(\mathcal{F}) m^i/i! ,
$$ 
with integral coefficients $\alpha_i(\mathcal{F})$. We denote by $p(\mathcal{F},n) := P(\mathcal{F}, n)/ \alpha_{dim(\mathcal{F})}(\F)$, the \textbf{reduced Hilbert polynomial of $\F$}.

\begin{definition}[Semistability]
A coherent sheaf $\F$ of dimension $d$ is \textbf{semistable} if $\F$ has no nontrivial proper subsheaves of strictly smaller dimension  and for any subsheaf $\E \subset \F$, one has $p(\E) \leq p(\F)$. It is called \textbf{stable} if  for any proper subsheaf the inequality is strict. 
\end{definition}

\begin{remark}
The ordering on polynomials is the lexicographic ordering of the coefficients. 
\end{remark}

\begin{definition}[Mukai vector]  \label{defMukaivector}
For a smooth projective $X$ over $k$, given a perfect complex $E \in D(X)$, where $D(X)$ is the derived category of coherent sheaves on $X$, we define the \textbf{Mukai vector} of $E$ to be 
\begin{equation*}
v(E) := ch(E) \sqrt{td_X} \in A^*(X)_{num, \Q}.
\end{equation*}
Here, $ch(-)$ denotes the Chern class map, $td_X$ is the Todd genus and $A^*(X)_{num, \Q}$ is the numerical Chow group of $X$ with rational coefficients. 
\end{definition}

For $X$ a K3 surface over $k$, the Mukai vector of  a complex is given by (see \cite{HuyFM} Chapter 10):
\begin{equation*}
v(E) = (\text{rank} (E), c_1(E), \text{rank} (E) + c_1(E)^2/2 - c_2(E)).
\end{equation*}

Let $X$ be a projective scheme over $k$ and $h$ an ample line bundle.
\begin{definition}[Moduli Stack]
The \textbf{moduli stack of semistable sheaves}, denoted $\mathfrak{M}_h^{ss}$, is defined as follows:
\begin{equation*}
\begin{split}
\mathfrak{M}_h^{ss}: (Sch/k) &\ra (\text{groupoids}) \\
 S &\mapsto \{ \mathcal{F} | \mathcal{F} \ \text{an $S$-flat coherent sheaf on $X \times S$ with semistable fibers} \}.
\end{split}
\end{equation*}
Similarly, the \textbf{moduli stack of stable sheaves} can be defined by replacing semistable above with stable and we denote it by $\mathfrak{M}^s_h$. 
\end{definition}

If we fix a vector $v \in A^*(X)_{num, \Q}$, we get an open and closed substack $\mathfrak{M}^{ss}_h(v)$ classifying semistable sheaves on $X$ with Mukai vector $v$.

The following result has been proved by Lieblich \cite{L2}, for the more general case of moduli of twisted sheaves. Restricting to the case of semistable sheaves without any twisting a simpler argument is given in \cite{SrivasPhD} Theorem 2.30. 

\begin{theorem} \label{agstack}
The stack $\mathfrak{M}^{ss}_h$ is an algebraic stack and the stack $\mathfrak{M}^{ss}(v)$ is an algebraic substack of finite type over $k$. 
\end{theorem}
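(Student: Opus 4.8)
The plan is to deduce algebraicity from Artin's representability theorem for stacks and to extract the finite-type statement from openness of semistability together with a boundedness result. First I would introduce the ambient stack $\mathfrak{Coh}$ whose $S$-points are the $S$-flat coherent sheaves on $X \times S$, and show that it is algebraic by verifying Artin's criteria (in the form used in \cite{OlssonAS} or Laumon--Moret-Bailly). The stack $\mathfrak{M}^{ss}_h$ is then realized as an open substack of $\mathfrak{Coh}$, hence is itself algebraic, and fixing the Mukai vector $v$ cuts out a finite-type piece.

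For the algebraicity of $\mathfrak{Coh}$ the verification proceeds along the standard lines. The diagonal is representable because, for two flat families $\F, \mathcal{G}$ over a base $T$, the functor $\underline{\mathrm{Isom}}_T(\F,\mathcal{G})$ is representable by a scheme over $T$, being an open subfunctor of the linear scheme representing homomorphisms $\underline{\mathrm{Hom}}_T(\F,\mathcal{G})$ (which exists by flatness and properness of $X$). The stack is locally of finite presentation since coherent sheaves and flatness are compatible with filtered colimits of rings. The deformation and obstruction theory of a sheaf $\F$ on a fiber is governed by the finite-dimensional groups $\mathrm{Ext}^1(\F,\F)$ and $\mathrm{Ext}^2(\F,\F)$, which furnish a deformation-obstruction theory of the kind required by Artin's criterion; the Schlessinger-type conditions follow from the usual description of deformations of sheaves. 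Effectivity of formal objects is Grothendieck's existence theorem (formal GAGA), using the projectivity of $X$. All of these inputs are characteristic independent.

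Openness of semistability in flat families is classical (see \cite{HL} Proposition 2.3.1): the locus in the base over which the fibers are Gieseker semistable with respect to $h$ is open. Hence $\mathfrak{M}^{ss}_h$ is an open substack of $\mathfrak{Coh}$ and is therefore algebraic. Fixing $v \in A^*(X)_{num, \Q}$ fixes the Chern character, and thus the Hilbert polynomial $P$ of every member of the family, so $\mathfrak{M}^{ss}(v)$ parametrizes semistable sheaves with a single fixed reduced Hilbert polynomial. To conclude that this substack is of finite type over $k$ it suffices to prove that the family of such sheaves is bounded, since boundedness together with local finite presentation yields finite type.

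The main obstacle is precisely this boundedness statement. Over a field of characteristic zero it follows from the Grothendieck--Kleiman boundedness criterion via Grauert--M\"ulich-type control on the restriction to a generic hyperplane. In positive characteristic this argument breaks down, since the Grauert--M\"ulich theorem can fail, and one must instead invoke Langer's theorem on boundedness of the family of semistable sheaves with fixed Hilbert polynomial in arbitrary characteristic. Granting Langer's boundedness, the bounded family is parametrized by a quasi-compact scheme, realized as a locally closed subscheme of a suitable Quot scheme, from which finite type of $\mathfrak{M}^{ss}(v)$ over $k$ follows. I expect the delicate point to be exactly this appeal to positive-characteristic boundedness, as it is what replaces the Hodge-theoretic and generic-smoothness arguments available only over $\C$.
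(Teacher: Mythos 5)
Your proposal is correct and follows essentially the same route as the paper, which does not argue inline but defers to Lieblich \cite{L2} (with the simplified untwisted treatment in \cite{SrivasPhD}): there too algebraicity comes from Artin's representability criteria \cite{ArtinRT} applied to the ambient stack of flat families of coherent sheaves, with $\mathfrak{M}^{ss}_h$ cut out as an open substack via openness of semistability (\cite{HL} Proposition 2.3.1), and finite type for fixed $v$ reduced to boundedness of semistable sheaves with fixed Hilbert polynomial, for which Langer's theorem \cite{Langer} is precisely the positive-characteristic input. You have correctly identified both the structure of the argument and its one genuinely delicate point.
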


\begin{remark} 
Recall that the Mukai vector $v$ for a sheaf on a K3 surface determines its Hilbert polynomial and its rank as well.
\end{remark}

Moreover, the stack $\mathfrak{M}_h^{ss}(v)$ contains an open substack of geometrically stable points (see Footnote \ref{Footnote5}) denoted $\mathfrak{M}_h^s(v)$. 

\begin{theorem}[\cite{L2}, Lemma 2.3.3.3 and Proposition 2.3.3.4 or \cite{SrivasPhD} Theorem 2.34]  \label{coarseMS2}
The algebraic stack $\mathfrak{M}_h^{s}(v)$ admits a coarse moduli space.
\end{theorem}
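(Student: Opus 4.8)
The plan is to exploit the fact that, by Theorem~\ref{agstack}, $\mathfrak{M}_h^{s}(v)$ is an algebraic stack of finite type over $k$ whose stabilizers consist only of scalar automorphisms, and to produce the coarse space by rigidifying away this common $\mathbb{G}_m$. First I would analyze the inertia: a geometric point of $\mathfrak{M}_h^s(v)$ is a geometrically stable sheaf $\mathcal{F}$, and stability forces $\mathcal{F}$ to be simple, so that $\mathrm{End}(\mathcal{F})$ is the base field and $\mathrm{Aut}(\mathcal{F}) = \mathbb{G}_m$ acts by homotheties. These scalars are central and assemble, uniformly in families, into a flat closed subgroup stack $\mathbb{G}_m \hookrightarrow I_{\mathfrak{M}_h^s(v)}$ of the inertia, which is precisely the datum needed to form a rigidification in the sense of Abramovich-Corti-Vistoli and Romagny.

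Next I would pass to the rigidification $\mathcal{M}$ of $\mathfrak{M}_h^s(v)$ along this scalar $\mathbb{G}_m$. By the general theory, $\mathcal{M}$ is again an algebraic stack of finite type over $k$ and the canonical morphism $\mathfrak{M}_h^s(v) \ra \mathcal{M}$ is a $\mathbb{G}_m$-gerbe; since we have removed exactly the full automorphism group at every stable point, $\mathcal{M}$ has trivial inertia and is therefore an algebraic space. A gerbe and its base share the same geometric points, and the gerbe morphism is initial among morphisms to algebraic spaces, so $\mathfrak{M}_h^s(v) \ra \mathcal{M}$ exhibits the algebraic space $\mathcal{M}$ as the desired coarse moduli space. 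Equivalently, one may argue by GIT: for $n \gg 0$ every stable sheaf with Mukai vector $v$ is $n$-regular, so the family embeds into a single Quot scheme, Gieseker stability matches GIT stability, and the geometric quotient of the stable locus by $\mathrm{PGL}_N$ is the coarse space; in positive characteristic boundedness is supplied by Langer's theorem and is in any case already encoded in the finite-type assertion of Theorem~\ref{agstack}.

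The content of the argument is concentrated in the step that genuinely uses stability, namely the simplicity of stable sheaves, which pins the stabilizers down to exactly $\mathbb{G}_m$ and makes the rigidification an algebraic space; for merely semistable sheaves the Jordan-H\"older factors enlarge the automorphism groups and this clean picture breaks down, which is why only $\mathfrak{M}_h^s(v)$, and not $\mathfrak{M}_h^{ss}(v)$, admits a coarse space in this naive sense. The main obstacle is therefore verifying the hypotheses that legitimize the rigidification, namely that the scalar $\mathbb{G}_m$ really is a flat, central, closed subgroup stack of the inertia, uniformly over the (not necessarily algebraically closed) base; on the GIT route the corresponding crux is the equivalence between Gieseker stability of the sheaves and GIT stability of the associated Quot-scheme points, which is where the real work lies.
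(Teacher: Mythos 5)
Your proposal is correct, but your primary route is genuinely different from the one the paper relies on. The paper contains no proof of its own: it cites Lieblich (\cite{L2}, Lemma 2.3.3.3 and Proposition 2.3.3.4) and the author's thesis, where the coarse space is produced by the classical GIT construction in the style of Huybrechts--Lehn adapted to characteristic $p$ --- boundedness via Langer, embedding of the stable sheaves into a single Quot scheme via $n$-regularity for $n \gg 0$, matching of Gieseker stability with GIT stability, and the geometric quotient of the stable locus by $\mathrm{PGL}_N$ --- which is essentially your second, ``equivalently, one may argue by GIT'' route; that route buys more than the statement asks for, namely (quasi-)projectivity of the coarse space, which the paper exploits in the subsequent Mukai--Orlov theorem, and it is also what the paper's relative-version discussion (replacing Quot by its relative functor) is built on. Your primary route, rigidifying the scalar inertia, is a purely stack-theoretic alternative that avoids GIT entirely and is sound as far as the bare existence statement goes: stable implies simple, the scalars give a flat central closed subgroup $\mathbb{G}_m \hookrightarrow I_{\mathfrak{M}_h^s(v)}$, the rigidification is a $\mathbb{G}_m$-gerbe over a stack with trivial inertia, hence over an algebraic space, and the universal property of rigidification makes this map a coarse moduli map; the price is that you only get an algebraic space, with no projectivity. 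One mismatch worth flagging: the paper, following Lieblich, implicitly works with sheaves of fixed determinant (it notes the Mukai vector determines the determinant line bundle), so the stabilizer of a stable point there is the finite group $\mu_r$ rather than $\mathbb{G}_m$ --- in that setting the inertia is finite, Keel--Mori yields the coarse space directly, and the map is the $\mu_r$-gerbe asserted in the discussion after the theorem. For the stack as literally defined in the paper, with no trivialization of the determinant, the stabilizers are $\mathbb{G}_m$ exactly as you say, Keel--Mori does not apply, and your $\mathbb{G}_m$-rigidification is the correct fix; the two pictures are reconciled by the fact that both gerbes have the same coarse space.
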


The above theorem also implies that $\mathfrak{M}_h^{s}(v)$ is a $\mu_r$-gerbe. Indeed, recall that for an algebraic stack $\mathcal{X}$, the morphism $\mathcal{X} \ra Sh(\mathcal{X})$ is a $\mu_r$-gerbe if and only if $Sh(\mathcal{X})$ is isomorphic to the final object in the topos $Sh(\mathcal{X})$ and the automorphism sheaf is isomorphic to $\mu_r$ (see, for example, \cite{L2} 2.1.1.12). The first condition is obvious and for the second we just need to compute the automorphism sheaf of any point in $\mathfrak{M}_h^{s}(v)$, but this just corresponds to finding out the automorphisms of a semistable sheaf with fixed determinant line bundle (note that the Mukai vector determines the determinant line bundle) and fixed rank $r$, which turns out to be $\mu_r$. This gerbe corresponds via \cite{OlssonAS}, Theorem 12.2.8 to a class $\alpha_r$ in  $H^2(X, \mu_r)$. 

The Kummer exact sequence 
$$
0 \ra \mu_r \ra \mathbb{G}_m \ra \mathbb{G}_m \ra 0 
$$ 
induces a long exact sequence of group cohomology, giving us a map $H^2(X, \mu_r) \ra H^2(X, \mathbb{G}_m)$. The image of the class $\alpha_r$ in $H^2(X, \mathbb{G}_m)$ gives us a corresponding $\mathbb{G}_m$-gerbe, again using \cite{OlssonAS}, Theorem 12.2.8. This class is the obstruction to the existence of the universal bundle on $Sh(\mathfrak{M}_h^s(v)) \times X$.

\begin{theorem}[Mukai-Orlov]
Let $X$ be a K3 surface over a field $k$. 
\begin{enumerate}
\item Let $v \in A^*(X)_{num, \Q}$ be a primitive element with $v^2 = 0$ (with respect to the Mukai pairing\footnote{The Mukai pairing is just an extension of the intersection pairing, defined as follows: let $(a_1, b_1, c_1) \in A^*(X)_{num, \Q}$ and $(a_2, b_2, c_2) \in A^*(X)_{num, \Q}$, then the Mukai pairing is $<(a_1, b_1, c_1), (a_2, b_2, c_2)> = b_2\cdot b_1 - a_1\cdot c_2 - a_2 \cdot c_1 \in A^2(X)_{num, \Q}$.}) and positive degree $0$ part\footnote{The degree zero part just means the $A^0(X)_{num, \Q}$ term in the representation of the Mukai vector in $A^*(X)_{num, \Q}$.}. Then $\mathfrak{M}^{ss}_h(v)$is non-empty.
\item If, in addition, there is a complex $P \in D(X)$ with Mukai vector $v'$ such that $<v, v'> = 1$, then every semistable sheaf with Mukai vector $v$ is locally free and geometrically stable\footnote{\label{Footnote5}A coherent sheaf $\F$ is \textbf{geometrically stable} if for any base field extension $l / k$,  the pullback $ \F \otimes_k l$ along $X_l = X \times_k \Spec(l) \ra X$ is stable.}, in which case $\mathfrak{M}^{ss}_h(v)$ is a $\mu_r$-gerbe for some $r$, over a smooth projective surface $M_h(v)$\footnote{We will denote this moduli space later as $M_X(v)$ to lay emphasis that it is the moduli space of stable sheaves  over $X$.} such that the associated $\mathbb{G}_m$-gerbe is trivial.   
\end{enumerate}
\end{theorem}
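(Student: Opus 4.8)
The plan is to prove this Mukai–Orlov theorem by the standard route of reducing to the finitely many numerical facts that make moduli spaces of sheaves on K3 surfaces well-behaved, and then verifying the gerbe-theoretic triviality statement separately. Let me think about what each part requires.

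For part (1), non-emptiness, I would start from the Mukai vector $v = (a, b, c)$ with $v^2 = 0$ and $a > 0$. The condition $v^2 = 0$ together with primitivity is exactly the numerical condition under which one expects a zero-dimensional or two-dimensional moduli problem to be populated. The key computation is the dimension formula: for a K3 surface, the expected dimension of $\mathfrak{M}_h^{ss}(v)$ at a stable point is $v^2 + 2 = 2$, coming from $\mathrm{ext}^1 = v^2 + 2$ and the vanishing of the trace-free $\mathrm{Ext}^0$ and $\mathrm{Ext}^2$ by stability and Serre duality. So with $v^2 = 0$ we are looking at a surface. To show non-emptiness I would invoke existence results for semistable sheaves with prescribed Mukai vector — for instance, realizing $v$ by a suitable sheaf constructed from line bundles or ideal sheaves, or appealing to Mukai's original existence argument. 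The cleanest approach is to lift everything to characteristic zero using Theorem \ref{Delignelifting}, apply the classical Mukai–Orlov theorem over $\mathbb{C}$ via the chosen embedding $\bar K \hookrightarrow \mathbb{C}$, and then descend non-emptiness back down, since non-emptiness of a proper-over-base moduli space is preserved by specialization.

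For part (2), the condition $\langle v, v' \rangle = 1$ is what forces everything to be nice. First I would establish \textbf{local freeness}: if a semistable sheaf $\mathcal{F}$ with $v(\mathcal{F}) = v$ had torsion or failed to be locally free, its rank or the numerics would contradict the structure forced by $v^2 = 0$ combined with the existence of $P$ pairing to $1$; concretely, $\langle v, v'\rangle = 1$ means $v$ is primitive in a strong sense and pairs to a unit, which rules out the sheaf decomposing. Next, \textbf{geometric stability}: I would show semistable implies stable by the standard argument that a strictly semistable sheaf would have a Jordan–Hölder factor $\mathcal{G}$ with $v(\mathcal{G}) = \lambda v$ proportional to $v$, but $v^2 = 0$ and primitivity force $\langle v, v'\rangle$ to be divisible by the proportionality index, contradicting $\langle v, v'\rangle = 1$; stability is then preserved under base field extension precisely because the numerical obstruction is insensitive to the field. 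Then smoothness and properness of $M_h(v)$ follow from the deformation theory (obstructions live in trace-free $\mathrm{Ext}^2$, which vanishes) together with the valuative criterion via the stack being a $\mu_r$-gerbe over its coarse space from Theorem \ref{coarseMS2}.

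The \textbf{main obstacle} I anticipate is the final assertion that the associated $\mathbb{G}_m$-gerbe is trivial, i.e. that the universal sheaf exists on $M_h(v) \times X$. This is genuinely the crux: the obstruction class lives in $H^2(X, \mathbb{G}_m)$ (the Brauer group), and I would kill it using the complex $P$. The idea is that $P$ gives a family over $X$ whose relative Ext- or Mukai-pairing against the universal twisted sheaf on the gerbe produces a line bundle (or a $1$-dimensional representation) that splits the $\mu_r$-action, precisely because $\langle v, v'\rangle = 1$ is a unit. In lattice terms, the obstruction class is the image of $v$ under a boundary map, and pairing to $1$ against $v'$ exhibits a section trivializing it. Making this rigorous in positive characteristic — where I cannot directly use Hodge theory — requires care: I would either argue via the twisted Chern character and the fact that $\langle v, v'\rangle = 1$ realizes the gerbe class as torsion that is annihilated, or again lift to characteristic zero, verify triviality there where the Brauer-obstruction story is classical, and descend. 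Verifying that triviality descends rather than merely the weaker statement that the class is torsion is the delicate point I would need to nail down.
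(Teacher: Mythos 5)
Your overall route coincides with the paper's, which proves this theorem purely by citation: non-emptiness is quoted from \cite{HuyLect} Chapter 10, Theorem 2.7 together with \cite{LO} Remark 3.17 --- and the latter is exactly your lift-and-specialize argument (Deligne lift via Theorem \ref{Delignelifting}, classical non-emptiness on the geometric generic fiber, then the closed image of the proper semistable moduli space over the valuation ring forces a non-empty special fiber) --- while local freeness and stability are quoted from \cite{HL} Remark 6.1.9, and the universal sheaf from \cite{HL} Theorem 4.6.5 and Corollary 4.6.7. Your \emph{first} mechanism for trivializing the $\mathbb{G}_m$-gerbe, namely using $\langle v, v'\rangle = 1$ to produce a line bundle cancelling the weight of the central $\mathbb{G}_m$ on the equivariant family over the Quot scheme, is precisely the GIT descent of \cite{HL} Theorem 4.6.5, and it is algebraic and characteristic-free; the paper stresses exactly this point (``one has to, in the end, actually use GIT again''). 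Your fallback branch --- prove triviality of the Brauer obstruction in characteristic zero and descend --- should simply be deleted: as you yourself flag, triviality of the $\mathbb{G}_m$-gerbe does not obviously specialize (the relevant maps of Brauer groups point the wrong way), and the GIT argument makes it unnecessary.

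Two steps in your sketch would fail as literally written. First, in the semistable-implies-stable step you assert that a strictly semistable sheaf has a Jordan--H\"older factor with Mukai vector \emph{proportional} to $v$. Equality of reduced Hilbert polynomials only forces proportionality of the data $(\mathrm{rk},\, c_1\cdot h,\, \chi)$; on a K3 of Picard rank $>1$ the full Mukai vectors of the factors need not be proportional to $v$ (two distinct line bundle classes with equal $c_1\cdot h$ and equal $c_1^2$ already give non-proportional vectors with the same Hilbert polynomial), so the divisibility contradiction against $\langle v, v'\rangle = 1$ does not come for free and requires the more careful bookkeeping of \cite{HL} Remark 6.1.9. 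Second, local freeness is not a formal consequence of ``the numerics'': the actual proof is the double-dual estimate $v(\mathcal{F}^{\vee\vee})^2 = v^2 - 2rl \ge -2$ (with $l$ the colength of $\mathcal{F}\subset\mathcal{F}^{\vee\vee}$, once $\mathcal{F}^{\vee\vee}$ is known to be simple), which for $v^2 = 0$ gives $rl \le 1$ and hence local freeness only when $r \ge 2$. At rank one the conclusion genuinely fails: for $v = (1,0,0)$ the semistable sheaves are the ideal sheaves $I_x$, which are stable but not locally free, while $v' = (0,0,-1) = v(k(x)[1])$ satisfies $\langle v, v'\rangle = 1$ --- so your vaguest step is precisely where a real constraint hides, and it cannot be waved through. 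A minor further point: in the non-emptiness step one must lift not only $(X,h)$ but also the degree-two component of $v$ to the chosen model; this is arranged in \cite{LO} Lemma 8.1 and Remark 3.17, which the paper's citation encapsulates.
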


\begin{remark}
\begin{enumerate}
\item Note that the triviality of the $\mathbb{G}_m$-gerbe is equivalent to the existence of a universal bundle over $X \times M_h(v)$, also see \cite{LO} Remark 3.19.
\item See Remark 6.1.9 \cite{HL} for a proof that under the assumption of the above Theorem part (2), any semistable sheaf is locally free and geometrically stable. 
\end{enumerate}
\end{remark}

\begin{proof}
The non-emptiness follows from \cite{HuyLect} Chapter 10 Theorem 2.7 and \cite{LO} Remark 3.17. For the construction of the universal bundle, one has to ,in the end, actually use GIT again. For a proof see \cite{HuyLect} Chapter 10 Proposition 3.4 and \cite{HL} Theorem 4.6.5 (this is from where we have the numerical criteria, in particular, also see \cite{HL} Corollary 4.6.7.). 
\end{proof}

We generalize our moduli stack to the relative setting. Let $X_S$ be a flat projective scheme over $S$ with an ample line bundle $h$. (The case of $S = \Spec(R)$ for $R$ a discrete valuation ring of mixed characteristic, will be of most interest to us.) 

\begin{definition}[Relative Moduli Stack] The \textbf{relative moduli stack of semi-stable sheaves}, denoted $\mathfrak{M}_h^{ss}$, is defined as follows:
\begin{equation*}
\begin{split}
\mathfrak{M}_h^{ss}: (Sch/S) &\ra (\text{groupoids}) \\
 T &\mapsto \{ \mathcal{F} | \mathcal{F} \ \text{$T$-flat coherent sheaf on $X \times_S T$ with semistable fibers} \}. 
\end{split}
\end{equation*}
The \textbf{relative moduli stack of stable sheaves} can be defined similarly and we denoted it by $\mathfrak{M}^s_h$. 
\end{definition} 
 
The following theorem shows the existence of the fine moduli space for the relative moduli stack, when $X_R$ is a relative K3 surface over a mixed characteristic discrete valuation ring, under some numerical conditions. Recall that the condition of flatness is going to be always satisfied in our relative K3's case by definition as they are smooth. The relative stack can be proived to be an algebraic stack using arguments similar to the ones used for proving Theorem \ref{agstack}. Moreover, all the results above about the moduli stack hold also for the relative stack. So, there exists a coarse moduli space (Compare from footnote 1 in \cite{HuyLect} Chapter 10 or \cite{HL} Thm 4.3.7, the statement there is actually weaker as we do not ask for morphism of $k$-schemes, which is not going to be possible for mixed characteristic case. So, for the mixed characteristic case one replaces, in the GIT part of the proof, the quot functor by its relative functor, which is representable in this case as well \cite{NNQuot} Theorem 5.1). Moreover, the non-emptiness results also remain valid in mixed characteristic setting and we have:

\begin{theorem}[Fine relative Moduli Space] \label{finerelativemoduli} 
Let $X_V$ be a relative K3 surface over a mixed characteristic discrete valuation ring $V$ with $X$ as a special fiber over $\Spec(k)$
\begin{enumerate}
\item Let $v \in A^*(X)_{num, \Q}$\footnote{Note that in the mixed characteristic setting, for any complex $E_V \in D^b(X_V)$ we define its Mukai vector to be just the Mukai vector of $E := E_V \otimes_V k$ in $A^*(X)_{num, \Q}$. This definition makes sense as $X_V \ra V$ is flat.} be a primitive element with $v^2 = 0$ (with respect to the Mukai pairing) and positive degree $0$ part\footnote{The degree zero part just means the $A^0(X)_{num, \Q}$ term in the representation of the Mukai vector in $A^*(X)_{num, \Q}$.}. Then, $\mathfrak{M}^{ss}_h(v)$, the sub-moduli stack of $\mathfrak{M}^{ss}_h$ with fixed Mukai vector $v$, is non-empty.
\item If, in addition, there is a complex $P \in D(X_V)$ with Mukai vector $v'$ such that $<v, v'> = 1$, then every semistable sheaf with Mukai vector $v$ is locally free and stable, in which case $\mathfrak{M}^{ss}_h(v)$ is a $\mu_r$-gerbe for some $r$, over a smooth projective surface $M_h(v)$ such that the associated $\mathbb{G}_m$-gerbe is trivial.   
\end{enumerate}
\end{theorem}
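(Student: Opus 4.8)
The plan is to reduce every assertion to the absolute theorem of Mukai--Orlov stated above, applied fibrewise, and then to promote the fibrewise conclusions to the relative setting over $V$ by relative GIT. Since $V$ is a discrete valuation ring, $\Spec(V)$ has only the closed point $\Spec(k)$ and the generic point $\Spec(K)$, so a $V$-flat coherent sheaf on $X_V$ with semistable fibres is governed by its restrictions to the special fibre $X$ and to the (geometric) generic fibre. Because $X_V \ra V$ is smooth and proper, the numerical invariants $v^2$ and $\langle v, v' \rangle$ are constant in the family (the crystalline and $\ell$-adic realisations are specialisation-invariant), and the primitivity and positivity of $v$ needed on each fibre are inherited from the special fibre through the specialisation map on algebraic cohomology; thus the hypotheses $v^2 = 0$ and $\langle v, v' \rangle = 1$ hold on every geometric fibre.

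For part (1) I would apply the absolute Mukai--Orlov theorem on the special fibre: it produces a semistable sheaf with Mukai vector $v$ on $X$, which is a point of $\mathfrak{M}^{ss}_h(v)$ over $\Spec(k)$, so the relative stack is non-empty. The algebraicity of the relative $\mathfrak{M}^{ss}_h$ and the finite type of $\mathfrak{M}^{ss}_h(v)$ I would prove exactly as in Theorem \ref{agstack}, the only modification being that boundedness and the openness of semistability are applied in families over $S$, and the absolute Quot scheme is replaced by the relative Quot scheme, which stays representable over the mixed characteristic base by \cite{NNQuot} Theorem 5.1. Running the GIT quotient construction of Theorem \ref{coarseMS2} relatively over $V$ then yields the coarse space.

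For part (2), the equality $\langle v, v' \rangle = 1$ on each fibre lets me invoke the absolute theorem fibrewise to see that every member of the family is locally free and geometrically stable; since these are open conditions on the base and hold on both fibres of the $V$-flat family, they hold throughout. Stability forces the automorphism sheaf at every point to be the scalars $\mu_r$ (with $r$ the rank determined by $v$), identifying $\mathfrak{M}^{ss}_h(v)$ as a $\mu_r$-gerbe over its coarse space $M_h(v)$, as in the discussion following Theorem \ref{coarseMS2}. That $M_h(v)$ is a smooth projective relative surface over $V$ I would obtain as follows: properness comes from the relative GIT construction via boundedness and the valuative criterion; each geometric fibre is smooth of dimension $v^2 + 2 = 2$ because Mukai's tangent--obstruction calculation for stable sheaves on a K3 surface gives vanishing of the trace-free $\mathrm{Ext}^2$, so deformations are unobstructed; and a proper morphism to the DVR $V$ that is flat with smooth fibres is smooth, flatness being automatic since $M_h(v)$ is $V$-torsion-free with equidimensional fibres.

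Finally, for the triviality of the associated $\mathbb{G}_m$-gerbe --- equivalently, the existence of a universal bundle on $X_V \times_V M_h(v)$ --- I would exploit the complex $P \in D(X_V)$: pairing the universal twisted sheaf against $P$ yields a twisted invertible sheaf whose class cancels the image of the Brauer obstruction $\alpha_r$ in $H^2(X, \mathbb{G}_m)$, which is precisely the trivialisation used in the absolute Mukai--Orlov theorem, now performed over all of $V$ because $P$ is defined on $X_V$. The main obstacle I anticipate is not any fibrewise input, all of which come from the absolute theorem, but the behaviour of the construction over the non-field base $V$: one must know the relative Quot scheme and the linearised quotient are well-behaved over a mixed characteristic discrete valuation ring, and that the fibrewise trivialisations of the $\mathbb{G}_m$-gerbe glue. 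Representability of the relative Quot functor by \cite{NNQuot} Theorem 5.1 handles the former, while the flatness of $X_V$ over $V$ and the two-point structure of $\Spec(V)$ reduce the gluing to a compatibility between the special and generic fibres.
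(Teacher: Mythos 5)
Your proposal is correct and takes essentially the same route as the paper, which likewise deduces the relative statement from the absolute Mukai--Orlov theorem: algebraicity of the relative stack is proved as in Theorem \ref{agstack}, the Quot functor is replaced in the GIT step by its relative version (representable by \cite{NNQuot} Theorem 5.1), and the non-emptiness, gerbe, and universal-bundle statements are observed to persist over the mixed characteristic base. Your fibrewise reductions and the smoothness/flatness argument for $M_h(v)$ over $V$ merely fill in details that the paper's brief sketch leaves implicit.
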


With this we conclude our exposition on moduli stacks and spaces of sheaves. We now give a summary of selected results on derived equivalences of a K3 surfaces for both positive characteristic and characteristic zero. We begin by a general discussion on derived equivalences and then specialize to different characteristics.   

Let $X$ be a K3 surface over a field $k$ and let $D^b(X)$ be the bounded derived category of coherent sheaves of $X$. We refer the reader to \cite{HuyFM} for a quick introduction to derived categories and the textbooks \cite{GM}, \cite{KS} for details. 

\begin{definition} 
Two K3 surfaces $X$ and $Y$  over $k$ are said to be \textbf{derived equivalent} if there exists an exact equivalence $D^b(X) \simeq D^b(Y)$ of the derived categories as triangulated categories\footnote{We don't need to start with $Y$ being a K3 surface, this can be deduced as a consequence by the existence of an equivalence on the level of derived categories of varieties, see \cite{HuyFM} Chapter 4 and Chapter 6 and Chapter 10 and \cite{BBR} Chapter 2 for the properties preserved by derived equivalences.  However, note that Orlov's Representability Theorem \ref{ORT} is used in some proofs.}. 
\end{definition}

\begin{definition}[Fourier-Mukai Transform]
For a perfect complex  $\mathcal{P} \in D^b(X \times Y)$, the \textbf{Fourier-Mukai transform} is a functor of the derived categories which is defined as follows:
\begin{equation*}
\begin{split}
\Phi_P: D^b(X) &\ra D^b(Y)  \\
\mathcal{E} &\mapsto \mathbb{R}p_{Y*}( (p_X^* \mathcal{E}) \otimes^{\mathbb{L}} \mathcal{P}),
\end{split}
\end{equation*} 
where $p_X, p_Y$ are the projections from $X \times Y$ to the respective  $X$ and $Y$.
\end{definition}

\begin{remark}
The boundedness of the derived categories: We restrict to the bounded derived categories as it allows us to employ cohomological methods to study derived equivalences, as explained below. 
\end{remark}

For details on the properties of Fourier-Mukai transform see \cite{HuyFM} Chapter 5. Note that not every Fourier-Mukai transform induces an equivalence. The only general enough criteria available to check whether the Fourier Mukai transform induces a derived equivalence is by Bondol-Orlov, see for example, \cite{HuyLect} Chapter 16 Lemma 1.4, Proposition 1.6 and Lemma 1.7. In case the Fourier-Mukai transform is an equivalence, we have the following definition:
\begin{definition}
A K3 surface $Y$ is said to be a \textbf{Fourier Mukai partner} of $X$ if there exists a Fourier-Mukai transform between $D^b(X)$ and $D^b(Y)$ which is an equivalence. We denote by  $FM(X)$ the set of isomorphism classes of Fourier Mukai Partners of $X$ and by $|FM(X)|$ the cardinality of the set, which is called the \textbf{Fourier Mukai number} of $X$.
\end{definition}

We state here the most important result in the theory of Fourier-Mukai transforms and derived equivalences.

\begin{theorem}[Orlov, \cite{HuyFM} Theorem 5.14] \label{ORT}
Every equivalence of derived categories for smooth projective varieties is given by a Fourier Mukai transform. More precisely, let $X$ and $Y$ be two smooth projective varieties and let 
\begin{equation*}
F: D^b(X) \ra D^b(Y)
\end{equation*} 
be a fully faithful exact functor. If $F$ admits right and left adjoint functors, then there exists an object $P \in D^b(X \times Y)$ unique up to isomorphism such that $F$ is isomorphic to $\Phi_P$.
\end{theorem}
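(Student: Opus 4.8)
The plan is to follow Orlov's original strategy: reconstruct a kernel from the action of $F$ on skyscraper sheaves, and then check that the resulting Fourier-Mukai transform agrees with $F$ on a spanning class.

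First I would record the structural input coming from $X$ and $Y$ being smooth and projective. Serre duality furnishes Serre functors $S_X \cong (-) \otimes \omega_X[\dim X]$ and $S_Y \cong (-) \otimes \omega_Y[\dim Y]$, and the two adjoints assumed in the statement are compatible with these via $F^L \cong S_X \circ F^R \circ S_Y^{-1}$, a fact I would use freely. The essential consequence of full faithfulness is that the skyscraper sheaves $\{k(x)\}_{x \in X}$ form a spanning class of $D^b(X)$ and that $F$ preserves all Hom-relations among them: one has $\mathrm{Hom}(F(k(x)), F(k(y))[i]) \cong \mathrm{Hom}(k(x), k(y)[i])$, which vanishes for $x \neq y$ and is one-dimensional when $x = y$, $i = 0$. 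Thus each $P_x := F(k(x))$ is a point-like object of $D^b(Y)$, and the collection $\{P_x\}_{x \in X}$ is an orthogonal family parametrized by the closed points of $X$, with $\mathrm{Ext}^*(P_x, P_x) \cong \mathrm{Ext}^*(k(x), k(x)) \cong \wedge^* T_x X$.

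The heart of the argument is to promote this pointwise family to a single object $P \in D^b(X \times Y)$ whose restriction to each fiber $\{x\} \times Y$ recovers $P_x$. I would carry this out exactly as Orlov does: the orthogonality together with the Ext-computation above shows that the family behaves like a flat family of point objects, and a gluing/descent argument, controlled by resolving $k(x)$ through an ample sequence $\{L^{\otimes i}\}_{i}$ and assembling the images under $F$ into a Postnikov system, produces the kernel $P$ together with the candidate functor $\Phi_P$. By construction $\Phi_P(k(x)) \cong P_x \cong F(k(x))$ for every closed point $x \in X$.

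Finally I would upgrade this fiberwise isomorphism to a natural isomorphism of functors $\Phi_P \cong F$. Since the $k(x)$ form a spanning class and both functors are exact and admit adjoints, it suffices to compare them on this class and on the morphisms between the chosen generators; the standard criterion that two functors which agree on a spanning class and induce isomorphisms on the relevant $\mathrm{Hom}$-spaces (here supplied by full faithfulness) must be isomorphic then forces $\Phi_P \cong F$. Uniqueness of $P$ follows by the same philosophy: if $\Phi_P \cong \Phi_{P'}$, restricting to fibers gives $P|_{\{x\} \times Y} \cong P'|_{\{x\} \times Y}$ for all $x$, and a rigidity argument globalizes this to $P \cong P'$. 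I expect the main obstacle to be precisely the descent step, namely manufacturing the global kernel $P$ out of the fiberwise data $P_x$, since this requires controlling the homological algebra of the family uniformly in $x$ and is where Orlov's proof is genuinely technical; by contrast the verification on the spanning class and the uniqueness statement are comparatively formal once $P$ is in hand.
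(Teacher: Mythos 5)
First, note that the paper does not prove this statement at all: it is quoted verbatim from \cite{HuyFM}, Theorem 5.14 (Orlov's theorem), so your sketch can only be measured against the standard proof. Against that benchmark, your outline borrows the right vocabulary (ample sequences, Postnikov systems, spanning classes) but inverts the architecture, and the inversion is fatal. In Orlov's proof the kernel is \emph{not} obtained by gluing the fiberwise family $P_x = F(k(x))$. Instead one fixes an ample line bundle $L$ on $X$, resolves the diagonal $\mathcal{O}_{\Delta}$ on $X \times X$ by a complex whose terms are exterior products $L^{-i} \boxtimes B_i$, transports this complex through $F$ in the second variable (full faithfulness is what supplies the differentials between the objects $L^{-i} \boxtimes F(B_i)$), and defines $P$ as the convolution of the resulting Postnikov system; the $\mathrm{Ext}$-vanishings needed for the convolution to exist and be unique come from ampleness of $L$, not from orthogonality of the $P_x$. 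Your step two --- ``the orthogonality together with the Ext-computation shows the family behaves like a flat family \ldots and a gluing/descent argument produces the kernel'' --- is not a proof sketch: a family of objects indexed by closed points carries no coherence data, and there is no descent mechanism in $D^b(X \times Y)$ that assembles one from it. Supplying exactly that coherence is the content of the theorem, so as stated this step is circular. (The skyscraper analysis you lead with belongs to a different circle of results, e.g.\ criteria for a Fourier--Mukai functor to be an equivalence and point-like object arguments; it plays no role in constructing $P$.)

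Two further steps fail as written. The ``standard criterion'' you invoke in the final paragraph --- that two exact functors agreeing objectwise on a spanning class, with isomorphisms on the relevant $\mathrm{Hom}$-spaces, must be isomorphic --- is false in that generality, and this is precisely the delicate point of Orlov's argument: one must first build an honest natural transformation $\Phi_P \to F$ on the abelian category of coherent sheaves, by induction along the ample sequence $\{L^{-i}\}$, and only then extend it to $D^b(X)$; objectwise agreement on a spanning class alone does not rigidify to an isomorphism of functors (this subtlety is treated carefully in the proof of \cite{HuyFM} Theorem 5.14 and in the subsequent literature on uniqueness of kernels). Your uniqueness argument has the same defect in sharper form: fiberwise isomorphisms $P|_{\{x\} \times Y} \cong P'|_{\{x\} \times Y}$ for all closed points $x$ do not globalize by any ``rigidity.'' Indeed $P$ and $P \otimes p_X^{*}M$, for $M$ a nontrivial line bundle on $X$, restrict isomorphically to every fiber $\{x\} \times Y$ while being non-isomorphic kernels inducing non-isomorphic functors --- so the fiberwise data you propose to use is strictly too weak to detect the kernel. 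In the actual proof, uniqueness comes from running both kernels through the same resolution-of-the-diagonal construction, where uniqueness of convolutions of the relevant Postnikov system (again a consequence of the ample-sequence $\mathrm{Ext}$-vanishings) forces $P \cong P'$. You correctly identify the construction of $P$ as the technical heart, but the mechanism you propose for it, and the two ``formal'' steps you lean on afterwards, are each gaps rather than routine verifications.
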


\begin{remark}
This theorem allows us to restrict the collection of derived equivalences to a smaller and more manageable collection of Fourier-Mukai transforms, which will be studied via cohomological descent.  
\end{remark}

Any Fourier Mukai transform, $\Phi_P$, descends from the level of the derived categories to various cohomological theories $(H^*_{\ldots}( \ ))$, as
\begin{center}
$\begin{CD}
 D^b(X)@>\text{$\mathcal{E}  \mapsto \mathbb{R} p_{Y*}((\mathbb{L}p_X^* \mathcal{E}) \otimes^{\mathbb{L}} P) $}>>D^b(Y)\\
 @VVch( \ )\sqrt{td_{X}} V @VVch( \ )\sqrt{td_{Y}} V\\
 H^*(X)@>\text{$\alpha \mapsto p_{Y*}\big(( p_X^*\alpha) \cdot ch(P) \sqrt{td_{X\times Y}}\ \big) $}>> H^*(Y),
\end{CD}$\\
\end{center}
where $ch( \ )$ is the total Chern character and $td_{X}$ is the Todd genus of $X$. This descent provides a way to study the Fourier Mukai partners of $X$ using cohomological methods. For details see \cite{HuyFM} Section 5.2 and \cite{LO} Section 2. \\

In characteristic $0$ (mostly over $\C$, see remark \ref{char0} below), we will use the singular cohomology along with $p/l$-adic/\'etale cohomology  and in characteristic $p >0$, we will use crystalline cohomology or $l$-adic etale cohomology.  In the mixed characteristic setting, we will be frequently using a different combination of cohomologies along with their comparison theorems from $p$-adic Hodge theory. 

\begin{remark}
The Orlov Representability Theorem \ref{ORT} works only for smooth projective varieties, so when we work with relative schemes we will restrict from the collection of derived equivalences and work only with the subcollection of Fourier-Mukai transforms. 
\end{remark}

Over the field of complex numbers, Mukai and Orlov provide the full description of the set $FM(X)$ as:
\begin{theorem} [Mukai \cite{Mu}, Theorem 1.4 and Theorem 1.5, \cite{Orlov1}]  \label{DerivedHodge} Let $X$ be a K3 surface over $\C$. Then the following are equivalent:
\begin{enumerate}
\item There exists a Fourier-Mukai transform  $\Phi: D^b(X) \cong D^b(Y)$ with kernel $\mathcal{P}$.
\item There exists a Hodge isometry $f: \tilde{H}^*(X , \mathbb{Z}) \rightarrow \tilde{H}^*(Y , \mathbb{Z}) $, where $\tilde{H}^*( \ , \mathbb{Z})$ is the singular cohomology of the corresponding analytic space and is compared with the de Rham cohomology of the algebraic variety $X$ which comes with a Hodge filtrations and Mukai pairing \footnote{The Mukai pairing is just an extension of the intersection pairing, defined as follows: let $(a_1, b_1, c_1) \in \tilde{H}^*(X , \mathbb{Z})$ and $(a_2, b_2, c_2) \in \tilde{H}^*(X , \mathbb{Z})$, then the Mukai pairing is $<(a_1, b_1, c_1), (a_2, b_2, c_2)> = b_2 \cdot b_1 - a_1 \cdot c_2 - a_2 \cdot c_1 \in H^4(X, \Z)$.}.  
\item There exists a Hodge isometry $f: T(X) \simeq T(Y)$ between their transcendental lattices.
\item  $Y$ is a two dimensional fine compact moduli space of stable sheaves on $X$ with respect to some polarization on $X$, i.e., $Y \cong M_X(v)$ for some Mukai vector $v \in A^*(X)_{num, \Q}$ \footnote{Compare from Definition \ref{defMukaivector}.}.
\item  There is an isomorphism of Hodge structures between $H^2(M_X(v), \Z)$ and $v^{\perp}/\Z v$ which is compatible with the cup product pairing on $H^2(M_X(v), \Z)$ and the bilinear form on $v^{\perp}/\Z v$ induced by that on the Mukai lattice $\tilde{H}^*(X, \Z)$.  
\end{enumerate}
\end{theorem}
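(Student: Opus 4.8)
The plan is to prove all five statements equivalent by closing the cycle $(1)\Rightarrow(2)\Rightarrow(3)\Rightarrow(4)\Rightarrow(1)$ and separately recording $(4)\Leftrightarrow(5)$, with the Global Torelli Theorem serving as the essential bridge from lattice data back to geometry. First I would establish $(1)\Rightarrow(2)$ using the cohomological descent diagram recalled above: a Fourier-Mukai equivalence $\Phi_P$ induces $\Phi_P^H:\tilde{H}^*(X,\Z)\ra\tilde{H}^*(Y,\Z)$, $\alpha\mapsto p_{Y*}(p_X^*\alpha\cdot ch(P)\sqrt{td_{X\times Y}})$. This map preserves the weight-two Hodge structure of the Mukai lattice because $P$ is an honest object of $D^b(X\times Y)$, so its Mukai vector is an algebraic class and convolution against it respects the Hodge grading. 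That $\Phi_P^H$ is an isometry for the Mukai pairing follows from Grothendieck-Riemann-Roch, while bijectivity comes from the inverse equivalence (whose kernel again lies in $D^b(X\times Y)$ by Theorem \ref{ORT}) inducing the two-sided inverse on cohomology; integrality is automatic since the Mukai vector of any perfect complex on a K3 surface lands in $\tilde{H}^*(X,\Z)$ by Riemann-Roch.

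The implication $(2)\Rightarrow(3)$ is formal: the transcendental lattice $T(X)$ is the smallest primitive sub-Hodge-structure of $H^2(X,\Z)$ whose complexification contains $H^{2,0}(X)$, and a Hodge isometry $f$ carries $H^{2,0}(X)$ isomorphically onto $H^{2,0}(Y)$, so it must restrict to an isometry $T(X)\simeq T(Y)$. The step $(3)\Rightarrow(4)$ is the heart of the matter and the one I expect to be the main obstacle. Following Mukai and Orlov, I would choose a primitive Mukai vector $v$ with $v^2=0$, positive rank, together with a class $v'$ satisfying $\langle v,v'\rangle=1$, so that by the Mukai-Orlov moduli theorem recalled above $M_X(v)$ is a smooth projective K3 surface carrying a universal sheaf. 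One then computes directly from the construction that $T(M_X(v))$ is Hodge-isometric to $T(X)$, and, composing with the given isometry $T(X)\simeq T(Y)$, obtains a Hodge isometry $T(M_X(v))\simeq T(Y)$. The Global Torelli Theorem upgrades this to an actual isomorphism $M_X(v)\cong Y$. The delicate part is arranging the numerical data $v,v'$ so that the resulting moduli space realizes precisely the prescribed partner $Y$, rather than merely some K3 surface with the same transcendental lattice; this is exactly where the existence and fineness statements of the moduli theorem, together with Torelli, are indispensable.

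Finally I would close the cycle with $(4)\Rightarrow(1)$ and record $(4)\Leftrightarrow(5)$. When $Y=M_X(v)$ is fine, the universal sheaf $\E$ on $X\times Y$ serves as the kernel of a Fourier-Mukai transform, and Mukai's argument---verified through the Bondal-Orlov spanning-class criterion---shows it is an equivalence, giving $(4)\Rightarrow(1)$. The equivalence $(4)\Leftrightarrow(5)$ is Mukai's description of the second cohomology of a moduli space of stable sheaves: the universal sheaf induces a correspondence realizing the Hodge isometry $H^2(M_X(v),\Z)\simeq v^\perp/\Z v$, compatible with the pairings by the same Grothendieck-Riemann-Roch computation as in the first step. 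Together these implications show all five statements are equivalent.
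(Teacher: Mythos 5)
The paper itself does not prove Theorem \ref{DerivedHodge}: it is quoted from Mukai and Orlov with no argument supplied, so your proposal must be measured against the standard literature proof, whose overall architecture your cycle $(1)\Rightarrow(2)\Rightarrow(3)\Rightarrow(4)\Rightarrow(1)$ together with $(4)\Leftrightarrow(5)$ correctly mirrors. Your steps $(1)\Rightarrow(2)$ (algebraicity of the kernel's Mukai vector, Grothendieck--Riemann--Roch for the isometry, integrality from evenness of the intersection form), $(2)\Rightarrow(3)$ (minimality of $T(X)$ as the primitive sub-Hodge structure containing $H^{2,0}$), $(4)\Rightarrow(1)$ (universal sheaf as kernel plus the Bondal--Orlov criterion), and $(4)\Leftrightarrow(5)$ (Mukai's computation of $H^2$ of the moduli space) are all sound and are exactly the standard ingredients.

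However, your step $(3)\Rightarrow(4)$, which you yourself flag as the heart of the matter, contains a genuine gap as written: you produce a Hodge isometry $T(M_X(v))\simeq T(Y)$ and claim ``the Global Torelli Theorem upgrades this to an actual isomorphism $M_X(v)\cong Y$.'' Global Torelli requires a Hodge isometry of the \emph{full} lattices $H^2(-,\Z)$; an isometry of transcendental lattices alone does not suffice --- indeed, if it did, any two derived-equivalent K3 surfaces would be isomorphic (FM partners always have Hodge-isometric transcendental lattices by your own step $(2)\Rightarrow(3)$), contradicting Mukai's examples of non-isomorphic partners which motivate the whole theorem. The missing mechanism, due to Orlov, is the following: since the orthogonal complement of $T(X)$ in $\tilde{H}^*(X,\Z)$ contains a hyperbolic plane $H^0\oplus H^4$, Nikulin's lattice-theoretic extension results allow one to extend the \emph{given} isometry $T(X)\simeq T(Y)$ to an isometry of Mukai lattices $\tilde{H}^*(X,\Z)\simeq\tilde{H}^*(Y,\Z)$; one then \emph{defines} $v$ as the image of $(0,0,1)$ under the inverse (adjusted by shift, dualization and twists by line bundles so that $v$ is a positive primitive algebraic class with $v^2=0$; the extended isometry preserves the pairing, so $v$ pairs unimodularly with the image of $(1,0,0)$, which supplies the class $v'$ guaranteeing fineness). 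Only then does Mukai's identification $H^2(M_X(v),\Z)\cong v^{\perp}/\Z v$ --- your item $(5)$ --- convert the extended isometry into a Hodge isometry of full second cohomologies $H^2(M_X(v),\Z)\simeq H^2(Y,\Z)$, to which Global Torelli applies. In other words, $v$ cannot be chosen first and matched to $Y$ afterwards: it must be manufactured from the isometry of item $(3)$, and your concluding remark that this matching is ``the delicate part'' identifies precisely the step your argument omits rather than supplies.
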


The following result is the \'etale version of the Mukai-Orlov cohomological version of decription of derived equivalences of K3 surfaces over $\C$. 

\begin{proposition}[p-adic \'etale cohomology version] \label{etaleversion}
If $X$ and $Y$ are derived equivalent K3 surfaces, then there is an isomorphism between $H^2_{\acute{e}t} (M_X(v), \Z_p)$ and $v^{\perp}/ \Z_p v$, (see footnote \footnote{We are abusing the notation here: The Mukai vector is now considered as an element of $H^*_{\acute{e}t}(X, \Z_p)$ and $v^{\perp}$ is the orthogonal complement of $v$ in $H^*_{\acute{e}t}(X, \Z_p)$ with respect to Mukai pairing. Thus, $v^{\perp}$ is a $\Z_p$ lattice. Then we mod out this lattice by the $\Z_p$ module generated by $v$.}), which is compatible with the cup product pairing on $H^2_{\acute{e}t} (M_X(v), \Z_p)$ and the bilinear form on $v^{\perp}/\Z_p v$ induced by that on the Mukai lattice $\tilde{H}^*(X, \Z_p)$, where $p$ is a prime number and $\Z_p$ is the ring of $p$-adic integers.  
\end{proposition}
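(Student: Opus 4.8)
The plan is to deduce this statement directly from the integral, Hodge-theoretic description in Theorem \ref{DerivedHodge} by transporting everything along the comparison isomorphism between singular and $p$-adic \'etale cohomology. Since $X$ and $Y$ are derived equivalent K3 surfaces over $\C$, parts (4) and (5) of Theorem \ref{DerivedHodge} already furnish a primitive isotropic Mukai vector $v \in A^*(X)_{num, \Q}$ (which for a K3 surface refines to a class in $\tilde{H}^*(X,\Z)$) with $Y \cong M_X(v)$, together with an isomorphism of lattices $H^2(M_X(v), \Z) \cong v^{\perp}/\Z v$ compatible with the cup product and the Mukai pairing. The task is therefore purely formal: I would show that this integral isomorphism survives the functor $(-)\otimes_{\Z}\Z_p$ and is identified, via Artin's comparison theorem, with the asserted isomorphism of $p$-adic \'etale cohomology groups.

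First I would invoke Artin's comparison theorem: for a smooth projective complex variety $Z$ one has $H^i_{\acute{e}t}(Z, \Z/p^k) \cong H^i(Z^{an}, \Z/p^k)$, and passing to the inverse limit gives $H^i_{\acute{e}t}(Z, \Z_p) \cong \varprojlim_k H^i(Z^{an}, \Z/p^k)$. For a K3 surface, and for the Mukai lattice $\tilde{H}^*(X,\Z) \cong U^{\oplus 4}\oplus E_8(-1)^{\oplus 2}$, all the relevant cohomology is finitely generated and torsion-free, so the $\varprojlim^1$ terms vanish and this simplifies to $H^i_{\acute{e}t}(Z,\Z_p) \cong H^i(Z^{an},\Z)\otimes_{\Z}\Z_p$, compatibly with cup products. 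Applying this to $Z = M_X(v)$ identifies $H^2_{\acute{e}t}(M_X(v),\Z_p)$ with $H^2(M_X(v),\Z)\otimes_{\Z}\Z_p$, and applied to $X$ it identifies $\tilde{H}^*_{\acute{e}t}(X,\Z_p)$ with $\tilde{H}^*(X,\Z)\otimes_{\Z}\Z_p$ as lattices carrying their Mukai pairings.

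Next I would base change the integral isomorphism of Theorem \ref{DerivedHodge}(5) along the flat extension $\Z \hookrightarrow \Z_p$. The only point requiring care is that forming the orthogonal complement and the quotient commutes with $(-)\otimes_{\Z}\Z_p$; this holds because flat base change is exact and hence preserves kernels and cokernels. Concretely, writing $v^{\perp}$ as the kernel of the pairing homomorphism $\langle -, v\rangle : \tilde{H}^*(X,\Z)\to \Z$ gives $v^{\perp}\otimes_{\Z}\Z_p \cong \ker(\langle -, v\rangle \otimes \Z_p) = v^{\perp}_{\Z_p}$, the orthogonal complement of $v$ inside $\tilde{H}^*_{\acute{e}t}(X,\Z_p)$, while applying flatness to $0\to \Z v\to v^{\perp}\to v^{\perp}/\Z v\to 0$ yields $(v^{\perp}/\Z v)\otimes_{\Z}\Z_p \cong v^{\perp}_{\Z_p}/\Z_p v$. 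Because the comparison isomorphisms respect cup products and the Mukai pairing is assembled from the cup product together with the grading on $\tilde{H}^*$, the induced pairing on $v^{\perp}_{\Z_p}/\Z_p v$ is the $\Z_p$-linearization of the integral Mukai form, so the resulting isomorphism $H^2_{\acute{e}t}(M_X(v),\Z_p)\cong v^{\perp}_{\Z_p}/\Z_p v$ is an isometry for the cup product pairing, which is exactly the assertion.

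The main difficulty here is bookkeeping rather than conceptual. I must make sure that no spurious $p$-torsion appears upon tensoring and that the orthogonal complement computed in the $\Z_p$-lattice genuinely coincides with the $\Z_p$-span of the integral one. Both are guaranteed by the torsion-freeness of the Mukai lattice $\tilde{H}^*(X,\Z)$ and by the primitivity and isotropy of $v$ (the latter ensuring, as in part (2) of the Mukai--Orlov theorem, the existence of $v'$ with $\langle v, v'\rangle = 1$, hence that $v^{\perp}/\Z v$ is a genuine torsion-free lattice). Granting these, the flatness argument proceeds without modification, and the only inputs beyond Theorem \ref{DerivedHodge} are Artin's comparison theorem and the exactness of flat base change.
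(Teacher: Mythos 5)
Your proposal is correct and follows essentially the same route as the paper, whose proof consists precisely of citing Artin's comparison theorem together with Theorem \ref{DerivedHodge}. The flat base change and torsion-freeness bookkeeping you spell out is exactly the (omitted) detail behind the paper's one-line argument, so there is nothing further to reconcile.
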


\begin{proof}
This follows from Artin's Comparison Theorem \cite{SGA4} Tome III, Expos\'e 11, Th\'eor\`eme 4.4 between \'etale and singular cohomology and the theorem above.
\end{proof}

\begin{proposition}[\cite{HuyLect} Proposition 3.10] \label{finitechar0}
Let $X$ be a complex projective K3 surface, then $X$ has only finitely many Fourier-Mukai partners, i.e.,
$$
|FM(X)| < \infty.
$$ 
\end{proposition}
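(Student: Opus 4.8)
The plan is to reduce the assertion to classical finiteness theorems for integral quadratic forms by way of the Hodge-theoretic classification in Theorem \ref{DerivedHodge}. By the equivalence of conditions (1) and (3) there, a K3 surface $Y$ is a Fourier--Mukai partner of $X$ precisely when there is a Hodge isometry $T(X) \cong T(Y)$ of transcendental lattices. Hence it suffices to bound the number of isomorphism classes of K3 surfaces $Y$ whose transcendental lattice is Hodge-isometric to the fixed lattice $T := T(X)$, equipped with its fixed weight-two Hodge structure.

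First I would work inside the unimodular K3 lattice $\Lambda := H^2(Y,\mathbb{Z}) \cong U^{\oplus 3} \oplus E_8(-1)^{\oplus 2}$, of signature $(3,19)$. For any such $Y$ the N\'eron--Severi lattice $\mathrm{NS}(Y)$ is the orthogonal complement of $T(Y)$ in $\Lambda$, so it has rank $\rho := 22 - \mathrm{rk}\,T$ and signature $(1, \rho - 1)$, independent of $Y$. Since $\Lambda$ is unimodular, the discriminant form of $\mathrm{NS}(Y)$ is the negative of that of $T(Y) \cong T$; therefore every $\mathrm{NS}(Y)$ has the same signature and the same discriminant form, i.e. all the $\mathrm{NS}(Y)$ lie in a single genus. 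The classical theorem that a genus of lattices contains only finitely many isometry classes (Eichler, Kneser) then leaves finitely many possibilities for the isometry class of $\mathrm{NS}(Y)$.

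Next I would count, for the fixed Hodge structure on $T$, how many surfaces can realize each complement. Every Fourier--Mukai partner $Y$ yields a primitive embedding $T \hookrightarrow \Lambda$ with orthogonal complement $\mathrm{NS}(Y)$; by Nikulin's theory of primitive embeddings into a unimodular lattice, governed by discriminant forms, the set of such embeddings modulo the action of the isometry group $O(\Lambda)$ is finite. Finally, by the Global Torelli theorem the isomorphism class of $Y$ is determined by the weight-two Hodge structure on $\Lambda = H^2(Y,\mathbb{Z})$; as the line $H^{2,0}(Y)$ lies in $T \otimes \mathbb{C}$ and the Hodge structure on $T$ is the prescribed one, each $O(\Lambda)$-orbit of embeddings determines $Y$ up to isomorphism. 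Assembling these finiteness inputs gives $|FM(X)| < \infty$.

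The genuinely nontrivial ingredients, and hence the main obstacles, are the two lattice-theoretic finiteness statements: that a genus contains finitely many isometry classes, and that there are finitely many $O(\Lambda)$-orbits of primitive embeddings of $T$ into $\Lambda$. The point that must be handled carefully is the passage from a Hodge isometry $T(X) \cong T(Y)$ to the anti-isometry of discriminant forms $q_{\mathrm{NS}(Y)} \cong -q_{T}$, since it is this step that confines all the N\'eron--Severi lattices to one genus and makes the finiteness theorems applicable.
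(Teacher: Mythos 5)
Your argument is correct and is essentially the paper's proof: the paper gives no argument of its own but cites \cite{HuyLect} (Proposition 16.3.10), whose proof is exactly this reduction via Theorem \ref{DerivedHodge} to Hodge-isometric transcendental lattices, finiteness of a genus, finiteness of $O(\Lambda)$-orbits of primitive embeddings, and the Global Torelli theorem. The same lattice-theoretic structure underlies the counting formula of Theorem \ref{CountingformulaChar0} quoted in the paper, so there is nothing to add.
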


\begin{remark} \label{finiteChar0} 
The above result is also true for any algebraically closed field of characteristic $0$. Indeed, if $X$ and $Y$ are two K3 surfaces over a field $K$ algebraically closed and characteristic $0$, we have $X \cong Y \Leftrightarrow X_{\C} \cong Y_{\C}$. One way is obvious via base change and for the other direction we just need to show that every isomorphism $X_{\C} \cong Y_{\C}$ comes from an isomorphism $X \cong Y$. To define an isomorphism only finitely many equations are needed, so we can assume that the isomorphism is defined over $A$, a finitely generated $K$-algebra (take $A$ to be the ring $K[a_1, \ldots , a_n]$, where $a_i$ are the finitely many coefficients of the finitely many equations defining our isomorphism). Thus, we have have our isomorphism defined over an affine scheme, $X_A \cong Y_A$, where $X_A := X \times_K \Spec(A)$ (resp. $Y_A := Y \times_K \Spec(A)$). As $K$ is algebraically closed, any closed point $t \in \Spec(A)$ has residue field $K$. Now taking a $K$-rational point will give us our required isomorphism. 

This gives us a natural injection: 
\begin{equation*}
\begin{split}
FM(X) &\hookrightarrow FM(X_{\C})\\
Y &\mapsto Y_{\C}. 
\end{split}
\end{equation*}
Hence, we have $|FM(X)| \leq |FM(X_{\C})| < \infty$. 
\end{remark}

Let $S=NS(X)$ be the N\'eron-Severi lattice of $X$. The following theorem gives us the complete counting formula for Fourier-Mukai partners of a K3 surface.

\begin{theorem}[Counting formula \cite{HLOY}] \label{CountingformulaChar0}
Let $\mathcal{G}(S) = \{S_1=S,S_2, \ldots S_m \}$ be the set of isomorphism classes of lattices with same signature and discriminant as $S$. Then
\begin{equation*}
|FM(X)| = \sum_{j=1}^{m}|Aut(S_j) \backslash Aut(S_j^*/S_j ) / O_{Hdg}(T(X))| < \infty.
\end{equation*}
\end{theorem}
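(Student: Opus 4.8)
The plan is to reduce the count of Fourier--Mukai partners to a purely lattice-theoretic double-coset count, combining the Hodge-theoretic characterization of derived equivalence with the global Torelli theorem and Nikulin's theory of discriminant forms. First I would invoke Theorem \ref{DerivedHodge} (the equivalence of (1) and (3)): a K3 surface $Y$ is a Fourier--Mukai partner of $X$ if and only if there is a Hodge isometry $T(X) \cong T(Y)$ of transcendental lattices. By the global Torelli theorem, isomorphism classes of K3 surfaces correspond bijectively to $O(\Lambda_{K3})$-orbits of weight-two Hodge structures on the K3 lattice $\Lambda_{K3} \cong U^{\oplus 3}\oplus E_8(-1)^{\oplus 2}$, the transcendental part being the minimal primitive sublattice whose complexification contains $H^{2,0}$. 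Hence $FM(X)$ is in bijection with the set of primitive embeddings $T := T(X) \hookrightarrow \Lambda_{K3}$ (with the Hodge structure of $T$ placed on the image and its orthogonal complement made purely of type $(1,1)$), taken modulo the action of $O(\Lambda_{K3})$ on the target and of $O_{Hdg}(T)$ on the source.

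Next I would stratify these embeddings by the isometry class of the orthogonal complement. For a primitive embedding $\iota$, set $S' := \iota(T)^{\perp}\subset \Lambda_{K3}$; this is the N\'eron--Severi lattice of the corresponding K3 surface. Since $\Lambda_{K3}$ is unimodular, Nikulin's theory gives a canonical anti-isometry of discriminant forms $q_{S'}\cong -q_{T}$, so that $q_{S'}\cong -q_{T}\cong q_{S}$ where $S=NS(X)=T^{\perp}$; moreover the signature of $S'$ equals that of $S$. Thus $S'$ lies in the genus $\mathcal{G}(S)=\{S_1,\dots,S_m\}$, and conversely every member of the genus is realized as such a complement. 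This produces the decomposition of $FM(X)$ into the disjoint union indexed by $j=1,\dots,m$.

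For fixed $S_j$ I would apply Nikulin's correspondence between primitive embeddings into a unimodular lattice and glue data. A primitive embedding of $T$ with complement isometric to $S_j$ is determined, up to the action of $O(\Lambda_{K3})$, by the anti-isometry $\gamma: q_T\to q_{S_j}$ that glues $T$ and $S_j$ into the unimodular overlattice, where $\gamma$ is considered up to the image of $O(S_j)$ acting on $q_{S_j}$ (target isometries preserving $S_j$) and up to the image of $O_{Hdg}(T)$ acting on $q_T$ (the residual source symmetries surviving the Torelli quotient). Fixing one reference anti-isometry identifies the set of such $\gamma$ with the finite group $O(q_{S_j})=Aut(S_j^{*}/S_j)$, and the two residual actions become the left action of $O(S_j)$ through $O(S_j)\to O(q_{S_j})$ and the right action of $O_{Hdg}(T)$ through $O_{Hdg}(T)\to O(q_T)\cong O(q_{S_j})$. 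This yields exactly the double coset $Aut(S_j)\backslash Aut(S_j^{*}/S_j)/O_{Hdg}(T(X))$, and summing over $j$ gives the formula.

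Finiteness is then immediate: the genus $\mathcal{G}(S)$ is finite (finiteness of classes in a genus), each discriminant group $S_j^{*}/S_j$ is finite, and hence so is each double coset. The main obstacle I anticipate is the careful bookkeeping in the last step: one must verify that the residual equivalence on glue maps is precisely by $O(S_j)$ on one side and by the image of $O_{Hdg}(T)$ (rather than all of $O(T)$) on the other, which requires tracking how the $O(\Lambda_{K3})$-quotient from Torelli interacts with Nikulin's description, and checking the surjectivity direction---that every admissible glue map actually arises from a genuine K3 surface, i.e., that the resulting $(1,1)$-complement contains a class of positive self-intersection so the period lies in the correct domain.
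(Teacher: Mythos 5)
Your proposal is correct and follows essentially the same route as the source: the paper itself gives no proof, citing \cite{HLOY}, and the HLOY argument is precisely your chain of reductions --- Orlov's criterion via Hodge isometries of transcendental lattices, global Torelli plus surjectivity of the period map to identify $FM(X)$ with primitive embeddings $T(X)\hookrightarrow \Lambda_{K3}$ modulo $O(\Lambda_{K3})$ and $O_{Hdg}(T(X))$, Nikulin's discriminant-form theory to stratify by the genus $\mathcal{G}(S)$ and to convert embeddings with fixed complement $S_j$ into double cosets in $Aut(S_j^*/S_j)$. The two caveats you flag (the residual actions being exactly $Aut(S_j)$ and the image of $O_{Hdg}(T(X))$, and realizability of every glue map by a projective K3, which follows since $S_j$ has signature $(1,\rho-1)$ and hence contains a class of positive square) are exactly the points handled in \cite{HLOY}, so nothing is missing.
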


The relation with the class number $h(p)$ of $\mathbb{Q}(\sqrt{-p})$, for a prime p, is:

\begin{theorem} [\cite{HLOY} Theorem 3.3] \label{classnocount}
Let the $rank \ NS(X) = 2$  for X, a K3 surface, then $\det NS(X) = -p$ for some prime $p$, and $|FM(X)|=(h(p) +1)/2$.
\end{theorem}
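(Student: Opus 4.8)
The plan is to obtain Theorem \ref{classnocount} as a specialization of the general counting formula in Theorem \ref{CountingformulaChar0} to the rank-two case $S = NS(X)$. First I would record the numerical shape of $S$. By the Hodge index theorem $S$ has signature $(1,1)$, and being a primitive sublattice of the even unimodular K3 lattice it is itself even; writing a Gram matrix $\left(\begin{smallmatrix} 2a & b \\ b & 2c\end{smallmatrix}\right)$ gives $\det S = 4ac - b^2$, so the hypothesis $\det S = -p$ means the discriminant group $S^*/S$ is cyclic of order $p$. Thus the genus $\mathcal{G}(S) = \{S_1,\dots,S_m\}$ consists of the even lattices of signature $(1,1)$ whose discriminant form is the prescribed form on a cyclic group of order $p$.

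The second step is to show that each double-coset factor in the formula collapses to one element, so that $|FM(X)| = m$. The group $Aut(S_j^*/S_j)$ is the group of isometries of the discriminant quadratic form on $\Z/p$; since such an isometry $x \mapsto ux$ must satisfy $u^2 q(x) = q(x)$, it is just $\{\pm 1\}$. Because $-\mathrm{id}$ is an isometry of every lattice and induces $-1$ on $S_j^*/S_j$, the natural map $Aut(S_j) \to Aut(S_j^*/S_j)$ is already surjective, so the left quotient $Aut(S_j)\backslash Aut(S_j^*/S_j)$ is trivial; the image of $O_{Hdg}(T(X))$, which lands in $Aut(T(X)^*/T(X)) \cong Aut(S_j^*/S_j)$, can then only act on a single coset. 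Hence every summand equals $1$ and $|FM(X)| = m = |\mathcal{G}(S)|$, the number of isomorphism classes of lattices in the genus of $NS(X)$, each of which is realized as the N\'eron--Severi lattice of a Fourier--Mukai partner (this realization is implicit in Theorem \ref{CountingformulaChar0}).

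It then remains to count $\mathcal{G}(S)$ arithmetically. I would use the classical dictionary sending $\left(\begin{smallmatrix} 2a & b \\ b & 2c\end{smallmatrix}\right)$ to the binary quadratic form $Q = ax^2 + bxy + cy^2$, which are automatically primitive since $p$ is squarefree. Under this dictionary, isometry classes of our lattices correspond to $GL_2(\Z)$-equivalence classes of such forms, while the form class group of the relevant quadratic order parametrizes the finer $SL_2(\Z)$-classes and has order $h(p)$. Passing from proper ($SL_2$) to improper ($GL_2$) equivalence is exactly passing to orbits of the involution $C \mapsto C^{-1}$ induced by reversing the orientation of the lattice (the opposite form is the inverse class). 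These orbits are the two-element sets $\{C, C^{-1}\}$ together with the ambiguous classes $C = C^{-1}$; since the relevant discriminant has a single ramified prime, genus theory forces $h(p)$ to be odd, so the principal class is the only ambiguous one. Counting gives $(h(p)-1)/2$ two-element orbits plus one fixed class, i.e. $m = (h(p)+1)/2$, which is the desired formula.

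The main obstacle I anticipate lies in the arithmetic of the last step rather than in the lattice-theoretic reduction: one must set up the correspondence between $\mathcal{G}(S)$ and the form class group carefully, matching the orientation-reversing involution on lattices with inversion on the class group on the nose, and one must invoke precisely the genus-theoretic input (a single ramified prime) guaranteeing that $h(p)$ is odd, so that $(h(p)+1)/2$ is an integer. A secondary subtlety is the collapse of the double cosets: for a \emph{special} surface $X$ the Hodge isometry group $O_{Hdg}(T(X))$ may be strictly larger than $\{\pm 1\}$, so one should verify that its image in $Aut(T(X)^*/T(X))$ still lies in $\{\pm 1\}$ and therefore neither subdivides nor merges cosets, which is exactly what keeps the final count depending only on $p$ and not on the individual surface with the given Picard lattice.
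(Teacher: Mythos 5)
The paper contains no proof of this statement to compare against: Theorem \ref{classnocount} is imported verbatim from \cite{HLOY} (Theorem 3.3), and what you have written is in substance a correct reconstruction of the argument given there. Your route --- specialize the counting formula of Theorem \ref{CountingformulaChar0}, observe that $Aut(S_j^*/S_j)$ consists of multiplications by $u \in (\Z/p)^{\times}$ with $u^2 = 1$, hence equals $\{\pm 1\}$, note that $-\mathrm{id} \in Aut(S_j)$ maps to $-1$ so every double coset is a single point and $|FM(X)| = m$, and then identify $\mathcal{G}(S)$ with $GL_2(\Z)$-classes of binary forms and count orbits of class-group inversion via genus theory --- is exactly the mechanism of \cite{HLOY}, including your observation that the size of $O_{Hdg}(T(X))$ is irrelevant once the left quotient is already trivial.

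Two points you left implicit deserve to be pinned down. First, as transcribed in the paper the theorem reads as if $rank\, NS(X) = 2$ alone forces $\det NS(X) = -p$; that is false (for instance $NS(X) \cong U$ has determinant $-1$), and your reading of $\det NS(X) = -p$ as a hypothesis is the correct one and matches the original statement in \cite{HLOY}. Second, you should make ``the relevant quadratic order'' precise: since $NS(X)$ is even of signature $(1,1)$, the associated forms $ax^2 + bxy + cy^2$ satisfy $b^2 - 4ac = p > 0$, so they are indefinite, $p \equiv 1 \pmod 4$ is forced, and $SL_2(\Z)$-classes biject with the \emph{narrow} class group of the real quadratic field $\Q(\sqrt{p})$ --- not with an imaginary quadratic class group (the sentence preceding the theorem in the paper, which names $\Q(\sqrt{-p})$, is a slip in the paper, not something you should reproduce). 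Your genus-theoretic input (a single ramified prime, hence a unique ambiguous class) shows that the narrow class number $h^{+}(p)$ is odd, and oddness then yields $h^{+}(p) = h(p)$, since $h^{+}(p) \in \{h(p), 2h(p)\}$ (equivalently, the fundamental unit has norm $-1$); this is what entitles you to write the final count as $(h(p)+1)/2$ with the ordinary class number. With these two clarifications your argument is complete and agrees with the cited proof.
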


\begin{remark}
The surjectivity of period map (\cite{HuyLect} Theorem 6.3.1)  along with \cite{HuyLect} Corollary 14.3.1 implies that there exists a K3 with Picard rank $2$ and discriminant $-p$, for each prime $p$ (see \cite{HLOY} Remark after Theorem 3.3).\\
\end{remark}

We now describe the known results about the derived autoequivalence group $Aut(D^b(X))$ for a K3 surface over $\C$. Observe that Theorem \ref{DerivedHodge} implies that we have the following natural map of groups:
$$
Aut(X) \hookrightarrow Aut(D^b(X)) \ra O_{Hdg}(\tilde{H}^*(X, \Z)).
$$ 
The following theorem gives a description of the second map:
\begin{theorem}[\cite{HLOY}, \cite{Ploog}]
Let $\vp$ be a Hodge isometry of the Mukai lattice $\tilde{H}^*(X, \Z)$ of a K3 surface $X$, i.e. $\vp \in O_{Hdg}(\tilde{H}^*(X, \Z))$. Then there exists an autoequivalence 
\begin{equation}
\Phi_{E} : D^b(X) \ra D^b(X)
\end{equation}
with $\Phi^H_E= \vp \circ ( \pm id_{H^2}): \tilde{H}^*(X, \Z) \ra \tilde{H}^*(X, \Z)$. In particular, the index of image
\begin{equation}
Aut(D^b(X)) \ra O_{Hdg}(\tilde{H}^*(X, \Z))
\end{equation}
is at most 2. 
\end{theorem}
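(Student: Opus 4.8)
The plan is to deduce the index bound from a single realization statement and to treat the orientation as the governing invariant. Write $I := \mathrm{im}\big(Aut(D^b(X)) \ra O_{Hdg}(\tilde H^*(X,\Z))\big)$ for the image, which is a subgroup, and let $j$ denote the Hodge isometry acting by $+id$ on $H^0 \oplus H^4$ and by $-id$ on $H^2$ (so $j$ is the ``$-id_{H^2}$'' of the statement and $j^2 = id$). I would first prove: for every $\vp \in O_{Hdg}(\tilde H^*(X,\Z))$, at least one of $\vp$ or $\vp \circ j$ lies in $I$. Granting this, every $\vp$ satisfies $\vp \in I$ or $\vp \in I j^{-1} = Ij$, so $O_{Hdg}(\tilde H^*(X,\Z)) = I \cup Ij$ is the union of the subgroup $I$ with a single right coset, whence $[O_{Hdg}(\tilde H^*(X,\Z)) : I] \leq 2$. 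The point of $j$ is that it toggles orientation: on the positive-definite $4$-plane of the real Mukai lattice (signature $(4,20)$, with one positive direction in the hyperbolic plane $H^0 \oplus H^4$ and three in $H^2$), $j$ fixes one positive direction and negates three, so its determinant there is $(-1)^3 = -1$ and it reverses orientation. Hence it suffices to realize every \emph{orientation-preserving} Hodge isometry by an autoequivalence: an orientation-reversing $\vp$ becomes orientation-preserving after composing with $j$.

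To realize an orientation-preserving $\vp$ I would build $I$ from elementary autoequivalences whose cohomological action can be computed directly: the shift $[1]$, acting as $-id$; the twists $(-)\otimes L$ for $L \in \mathrm{Pic}(X)$, acting as the $B$-field transforms $\exp(c_1(L))$; the pushforwards $f_*$ for $f \in Aut(X)$; and the spherical twist $T_{\O_X}$ along the $(-2)$-class $v(\O_X) = (1,0,1)$, which acts as the reflection $x \mapsto x + \langle x, v(\O_X)\rangle\, v(\O_X)$. Each of these is orientation-preserving (for $-id$ the determinant on the positive $4$-plane is $(-1)^4 = +1$; the $\exp(c_1(L))$ lie in the identity component; reflections in negative classes and automorphisms preserve orientation), which incidentally gives the containment $I \subseteq O^+_{Hdg}$. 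For the reverse inclusion, I would use an Eichler/Witt-type transitivity statement for the Mukai lattice: the subgroup generated by the $\exp(c_1(L))$, the $(-2)$-reflections, and $-id$ acts transitively on the relevant primitive isotropic vectors of the algebraic part $\tilde H^{1,1}(X,\Z)$, reducing to the case $\vp(0,0,1) = (0,0,1)$. Such a $\vp$ preserves $(0,0,1)^\perp$ and descends to a Hodge isometry of $(0,0,1)^\perp/\Z(0,0,1) \cong H^2(X,\Z)$; after correcting by reflections in $(-2)$-classes (realized on the derived side by spherical twists) and a sign, the classical Global Torelli Theorem realizes it by an automorphism of $X$, which lifts back to $\tilde H^*(X,\Z)$. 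This is precisely the Mukai--Orlov circle of ideas recorded in Theorem \ref{DerivedHodge}, applied with $Y = X$.

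The step I expect to be the main obstacle is the lattice-theoretic reduction together with the passage between $\tilde H^*(X,\Z)$ and $H^2(X,\Z)$. One must verify that the transvection-transitivity genuinely holds on the algebraic isotropic vectors for \emph{every} Picard rank (where $\tilde H^{1,1}(X,\Z) = U \oplus NS(X)$ need not contain two hyperbolic planes), that the isometry induced on $H^2(X,\Z)$ is of the type covered by Global Torelli after the $(-2)$-reflections and sign adjustment, and that each elementary cohomological move is realized by an \emph{honest geometric} autoequivalence rather than an abstract isometry. Orientation is exactly the invariant obstructing the realization of the remaining coset, which is why the statement asserts only index at most $2$; upgrading this to index exactly $2$ would further require showing $j \notin I$, which is not needed here.
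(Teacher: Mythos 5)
The paper gives no proof of this statement (it is quoted from [HLOY] and [Ploog]), so the comparison is against the standard argument those references supply, which is the Mukai--Orlov circle of ideas recorded in Theorem \ref{DerivedHodge}. Your coset trick, your orientation computation for $j$, and your treatment of the stabilizer of $(0,0,1)$ (descent to $(0,0,1)^\perp/\Z(0,0,1) \cong H^2(X,\Z)$, Global Torelli up to sign and $(-2)$-reflections, residual transvections $\exp(c_1(L))$ realized by line-bundle twists) are all sound and match that argument. But the pivot of your realization step --- the claimed ``Eichler/Witt-type transitivity'' of the subgroup $G_0$ generated by $\exp(c_1(L))$, the algebraic $(-2)$-reflections and $-id$ on primitive isotropic vectors of $\tilde{H}^{1,1}(X,\Z)$ --- is false in general, and fails exactly in the interesting cases. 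If $G_0$ (or even the full image $I$) moved every positive primitive isotropic $v$ to $(0,0,1)$, then composing the universal-family equivalence $D^b(M_X(v)) \ra D^b(X)$ with the realizing autoequivalence would give a Hodge isometry of Mukai lattices fixing $(0,0,1)$, hence a Hodge isometry $H^2(M_X(v),\Z) \cong H^2(X,\Z)$, and Torelli would force $M_X(v) \cong X$: every K3 surface would be its own unique Fourier--Mukai partner. This contradicts Theorem \ref{classnocount} (and its positive-characteristic analogue Theorem \ref{Classno}), where $|FM(X)| = (h(p)+1)/2 > 1$. The point is that the $O_{Hdg}$-orbit of $(0,0,1)$ consists precisely of those $v$ with $M_X(v) \cong X$, a transcendental (Torelli-dependent) condition, so no purely lattice-theoretic transvection argument can effect your reduction to $\vp(0,0,1)=(0,0,1)$; your flagged worry about $U \oplus NS(X)$ containing only one hyperbolic plane is not a technical verification to be completed but the symptom of a fatal obstruction.

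The repair is the actual route of the cited proofs: given $\vp$, set $v := \vp(0,0,1)$ and use your elementary moves only for the true, weaker statement that $v$ can be normalized to a Mukai vector of positive rank. Then leave $X$: since $v$ has divisibility one (isometries preserve divisibility, and $\langle (1,0,0),(0,0,1)\rangle = -1$), the moduli space $Y = M_X(v)$ is a fine K3 moduli space, and the universal family gives $\Phi_P : D^b(Y) \ra D^b(X)$ with $\Phi_P^H(0,0,1) = v$. The composite $(\Phi_P^H)^{-1} \circ \vp$ fixes $(0,0,1)$ and descends to a Hodge isometry $H^2(X,\Z) \ra H^2(Y,\Z)$; Global Torelli, after the same $\pm$ and Weyl-group corrections you describe (realized by shifts, spherical twists and automorphisms), now yields \emph{both} the isomorphism $Y \cong X$ and the realization of $\vp$ up to $\pm id_{H^2}$ and a transvection killed by a line-bundle twist. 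So $M_X(v) \cong X$ is an output of Torelli at the end, not an input secured by transitivity at the start. One further overclaim to delete: orientation-preservation of your generators gives only that the subgroup they generate lies in $O^+$, not the containment $I \subseteq O^+$; the latter is the hard Huybrechts--Macr\`i--Stellari theorem quoted immediately after this statement in the paper (with only the weaker Theorem \ref{orientation preserving} available in characteristic $p$). It is unused in your argument, so this costs nothing, but as stated it is not a consequence of what you prove.
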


On the other hand, it has been shown that
\begin{theorem}[\cite{Huy}]
The cone-inversion Hodge isometry $id_{H^0 \oplus H^4}\oplus -id_{H^2}$ on $\tilde{H}^*(X, \Z)$ is not induced by any derived auto-equivalence. 
In particular, the index of image 
\begin{equation}
Aut(D^b(X)) \ra O_{Hdg}(\tilde{H}^*(X, \Z))
\end{equation}
is exactly 2. 
\end{theorem}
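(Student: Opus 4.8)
The plan is to exhibit a discrete orientation invariant attached to the positive-definite part of the Mukai lattice, to compute that the cone-inversion isometry reverses it, and then to argue that no Fourier--Mukai autoequivalence can reverse it. Concretely, the Mukai pairing on $\tilde{H}^*(X,\R)$ has signature $(4,20)$, and the maximal positive-definite subspaces form a two-component space, the components being distinguished by an orientation of the positive $4$-plane $P$. A convenient positively oriented frame of $P$ is $(v_0,\ \mathrm{Re}\,\sigma,\ \mathrm{Im}\,\sigma,\ \omega)$, where $v_0=(1,0,-1)\in H^0\oplus H^4$ satisfies $\langle v_0,v_0\rangle=2$, the class $\sigma$ spans $H^{2,0}$, and $\omega$ is a K\"ahler class spanning the remaining positive direction in $H^{1,1}_{\R}$. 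The cone-inversion $g=id_{H^0\oplus H^4}\oplus(-id_{H^2})$ is a Hodge isometry, and since it fixes $v_0$ while negating $\mathrm{Re}\,\sigma$, $\mathrm{Im}\,\sigma$ and $\omega$, one has $\det(g|_P)=(-1)^3=-1$. Thus $g$ reverses the orientation of $P$; write $O_{Hdg}^+$ for the index-two subgroup of orientation-preserving Hodge isometries, so that $g\notin O_{Hdg}^+$.

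The real content is that the image of $Aut(D^b(X))\ra O_{Hdg}(\tilde{H}^*(X,\Z))$ is contained in $O_{Hdg}^+$, i.e. every autoequivalence preserves the orientation of $P$. First I would verify this on the autoequivalences one can write down explicitly. The shift $[1]$ acts as $-id$ on all of $\tilde{H}^*$, hence as $\mathrm{diag}(-1,-1,-1,-1)$ on $P$ with determinant $+1$; pullback along an automorphism of $X$ respects the Hodge structure and the oriented $4$-plane; tensoring by a line bundle $L$ acts by the unipotent $B$-field shift $\exp(c_1(L))$, which lies in the identity component and therefore preserves orientation; and the spherical twist $T_E$ along a spherical object $E$ acts on cohomology by the reflection $\alpha\mapsto\alpha+\langle v(E),\alpha\rangle\,v(E)$ in the $(-2)$-class $v(E)$, which fixes a maximal positive-definite subspace pointwise and so is orientation-preserving. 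Hence each of these lies in $O_{Hdg}^+$.

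The obstacle is that these functors are not known to generate $Aut(D^b(X))$, so checking generators does not by itself suffice. To bridge this I would run a deformation argument: given an arbitrary equivalence $\Phi$ with Fourier--Mukai kernel $\E\in D^b(X\times X')$, deform $X$, $X'$ and the kernel $\E$ over a connected base (using the deformation theory of the perfect complex $\E$, with obstructions living in the appropriate $\mathrm{Ext}^2$), degenerating to a K3 surface with trivial Picard group, where $Aut(D^b(-))$ is completely described and generated by the functors above, hence orientation-preserving. The induced isometries organise into the local system of relative Mukai lattices over the base, so the orientation behaviour is locally constant with values in $\{\pm1\}$; connectedness of the base then propagates orientation-preservation from the special fibre to $\Phi$. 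The main difficulty, and the technical heart of \cite{Huy}, is exactly this step: establishing that $\E$ deforms over the chosen base and that the resulting cohomological action varies continuously in a local system, so that the discrete orientation invariant stays constant along the family.

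Finally I would combine this with the immediately preceding theorem, which shows that for every Hodge isometry $\vp$ either $\vp$ or $\vp\circ g$ is induced by an autoequivalence, so that the image has index at most two. Having shown that the image lies in $O_{Hdg}^+$ while $g\in O_{Hdg}\setminus O_{Hdg}^+$, the cone-inversion $g$ cannot be induced by any autoequivalence, and the image is exactly $O_{Hdg}^+$. In particular the index of the image is exactly two.
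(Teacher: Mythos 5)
Your proposal is correct and is essentially the argument of \cite{Huy}, which is precisely the source this paper cites for the statement rather than reproving it: the orientation invariant of the positive $4$-plane in the signature $(4,20)$ Mukai lattice, the verification on shifts, automorphisms, line-bundle ($B$-field) twists and spherical twists, and then the deformation of an arbitrary Fourier--Mukai kernel to generic non-projective K3 surfaces with trivial Picard group, where $Aut(D^b(-))$ is generated by exactly those functors (cf.\ \cite{Huy2}), with local constancy of the orientation in the induced local system of Mukai lattices, combined with the index-at-most-two statement of the preceding theorem. One small slip that does not affect the argument: the Grassmannian of positive-definite $4$-planes in $\tilde{H}^*(X,\R)$ is connected (it is the space of \emph{oriented} positive $4$-planes that has two components), and it is this connectedness that makes the transport of orientation, and hence the subgroup $O^{+}_{Hdg}$ of orientation-preserving Hodge isometries, well defined.
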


\begin{remark} \label{char0} [\cite{HuyLect} 16.4.2]
The above results have been shown for K3 surfaces over $\C$ only but the results are valid for K3 surfaces over any algebraically closed field of characteristic $0$, in the sense made precise below. The argument goes as follows: We reduce the case of $char(k)= 0$ to the case of $\C$. 
We begin by making the observation that every K3 surface $X$ over a field $k$ is defined over a finitely generated subfield $k_0$, i.e., there exists a K3 surface $X_0$ over $k_0$ such that $X := X_0 \times_{k_0} k$. Similarly, if $\Phi_P : D^b(X) \ra D^b(Y)$ is a Fourier Mukai equivalence, then there exists a finitely generated field $k_0$ such that $X, Y $ and $P$ are defined over $k_0$. Moreover, the $k_0$- linear Fourier-Mukai transform induced by $P_0$, $\Phi_{P_0}: D^b(X_0) \ra D^b(Y_0)$ will again be a derived equivalence (use, for example, the criteria \cite{HuyFM} Proposition 7.1 to check this.). 

Now assume that $k_0$ is algebraically closed. Note that any Fourier-Mukai kernel which induces an equivalence $\Phi_{P_0}: D^b(X_0) \xrightarrow{\sim} D^b(X_0)$ is rigid, i.e. $\text{Ext}^1(P_0,P_0) = 0$ (see \cite{HuyLect} Proposition 16.2.1),  thus any Fourier-Mukai equivalence 
$$
\Phi_{P}: D^b(X_0 \times_{k_0} k) \xrightarrow{\sim} D^b(X_0 \times_{k_0} k)
$$
descends to $k_0$  (see for example \cite{HuyLect} Lemma 17.2.2 for the case of line bundles, the general case follows  similarly\footnote{In the general case we sketch the proof: Use the moduli stack of simple universally gluable perfect complexes over $X_0 \times X_0 / k_0$, denoted $s\mathcal{D}_{X_0 \times X_0/k_0}$, as defined in Definition \ref{Def4.9}. From the arguments following the definition, it is an algebraic stack which admits a coarse moduli algebraic space $sD_{X_0 \times X_0/k_0}$. Note that for any $k_0$ point $P_0$ which induces an equivalence, the local dimension of the coarse moduli space is zero as the tangent space is a subspace of $\text{Ext}^1(P_0,P_0) = 0$ (see, for example, \cite{L} 3.1.1 or proof of \cite{LO} Lemma 5.2) and the coarse moduli space is also smooth. The smoothness follows from the fact that the deformation of the complex is unobstructed (see, for example, \cite{SP} Tag 03ZB and Tag 02HX) in equi-characteristic case as one always has a trivial deformation. Indeed, let $A$ be any Artinian local $k$-algebra, then pullback  along the structure morphism $\Spec(A) \ra \Spec(k)$ gives a trivial deformation of $X \times X$ and also a trivial deformation of any complex on $X \times X$.   Thus, we can repeat the argument as in \cite{HuyLect} Lemma 17.2.2 as now the image of the classifying map $f: \Spec(A) \ra sD_{X_0 \times X_0/k_0}$ is constant (In the notation of \cite{HuyLect} Lemma 17.2.2).}). Hence, for a K3 surface $X_0$ over the algebraic closure $k_0$ of a finitely generated field extension of $\Q$ and for any choice of an embedding $k_0 \hookrightarrow \C$, which always exists, one has 
\begin{equation*}
Aut(D^b(X_0 \times_{k_0} k))\cong  Aut(D^b(X_0)) \cong Aut(D^b(X_0 \times_{k_0} \C)).
\end{equation*}
In this sense, for K3 surfaces over algebraically closed fields $k$ with $char(k) = 0$, the situation is identical to the case of complex K3 surfaces.
\end{remark}

We can now write down the following exact sequence: For $X$ a projective complex $K3$ surface one has
\begin{equation}
0 \ra \text{Ker} \ra \text{Aut}(D^b(X)) \ra \text{O}_{\text{Hdg}}(\tilde{H}^*(X, \Z))/\{\mathfrak{i}\} \ra 0,
\end{equation}
where $\tilde{H}^*(X, \Z)$ is the cohomology lattice with Mukai pairing and extended Hodge structure, and $\text{O}_{\text{Hdg}}(-)$ is the group of Hodge isometries, $\mathfrak{i}$ is the cone inversion isometry ${\text Id}_{H^0 \oplus H^4} \oplus -{\text Id}_{H^2}$.

\begin{remark}
\begin{enumerate}
\item The structure of the kernel of this map has been described only in the special case of a projective complex $K3$ surface with $\text{Pic}(X) = 1$ in \cite{BB}. (For a discussion about the results in non-projective case see \cite{Huy2}.) However, Bridgeland  in \cite{Bri2} (Conjecture 1.2) has conjectured that this kernel can be described as the fundamental group of an open subset of $H^{1,1} \otimes \C$. Equivalently, the conjecture says that the connected component of the stability manifold (see \cite{Bri1}, \cite{Bri2} for the definitions) associated to the collection of the stability conditions on $D^b(X)$ covering an open subset of $H^{1,1} \otimes \C$ is simply connected. The equivalence of the two formulations follows from a result of Bridgeland (\cite{Bri2} Theorem 1.1), which states that the kernel acts as the group of deck transformations of the covering of an open subset of $H^{1,1} \otimes \C$ by a connected component of the stability manifold. Bayer and Bridgeland \cite{BB} have verified the conjecture in the special cases of $\text{Pic}(X) =1$, (see \cite{Huy2} for the non-projective case).

\item Note that Bridgeland defines the stability conditions for any small triangulated category, so even in the case of derived category of a $K3$ over a field of positive characteristic we can associate the stability manifold which will still be a complex manifold (\cite{Bri2}, Remark 3.2).
\end{enumerate}
\end{remark}

Lastly,  we state the main results on derived equivalences of K3 surfaces over an algebraically closed field of positive characteristic known so far. For generalizations of some results to non-algebraically closed fields of positive characteristic see \cite{MatthewWard}.   

In case, $char (k) = p >2$, Lieblich-Olsson \cite{LO}, proved the following:
\begin{theorem}[\cite{LO}, Theorem 1.1] \label{LOmainthm}
Let $X$ be a K3 surface over an algebraically closed field $k$ of positive characteristic $\neq 2$.
\begin{enumerate}
\item If $Y$ is a smooth projective k-scheme with $D^b(X) \cong D^b(Y)$, then Y is a K3 surface isomorphic to a fine moduli space of stable sheaves.   
\item There exists only finitely many smooth projective k-schemes $Y$ with $D^b(X) \cong D^b(Y)$. If $X$ has $ rank \ NS(X) \geq 12$, then $D^b(X) \cong D^b(Y)$ implies that $X \cong Y$.  In particular, any supersingular K3 surface is determined up to isomorphism by its derived category.  
\end{enumerate}
\end{theorem}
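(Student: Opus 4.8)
The plan is to run everything through Orlov's Representability Theorem and Deligne's lifting theorem, reducing the classification to the complex-analytic picture of Mukai and Orlov (Theorems \ref{ORT}, \ref{DerivedHodge}, \ref{Delignelifting}). For part (1), I would first invoke Theorem \ref{ORT} to write the given equivalence as a Fourier--Mukai transform $\Phi_P$ with kernel $P \in D^b(X \times Y)$; in particular $Y$ is smooth projective of the same dimension as $X$. Since derived equivalences commute with Serre functors and the Serre functor of a K3 surface is the shift $[2]$, the Serre functor of $Y$ must also be a pure shift, forcing $\dim Y = 2$ and $\omega_Y \cong \mathcal{O}_Y$. To exclude the abelian (and Enriques/bielliptic) cases I would use the invariance of Hochschild homology, $HH_{\ast}(Y) \cong HH_{\ast}(X)$, which through the Hochschild--Kostant--Rosenberg decomposition recovers $h^1(\mathcal{O}_Y) = h^1(\mathcal{O}_X) = 0$, so $Y$ is a K3 surface. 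Finally, to realize $Y$ as a fine moduli space I would feed the skyscraper sheaves $k(y)$, $y \in Y$, into $\Phi_P$: they form a spanning family of mutually orthogonal point-like objects, so their images are, after a fixed shift and twist, stable sheaves on $X$ with a single primitive isotropic Mukai vector $v$. The kernel $P$ then identifies $Y$ with $M_X(v)$ and serves as a twist of the universal sheaf, so the associated $\mathbb{G}_m$-gerbe is trivial and $Y$ is fine, exactly as in the Mukai--Orlov theorem above.

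For the finiteness in part (2), I would lift. By Theorem \ref{Delignelifting} choose a projective lift $\mathcal{X}/V$ of $X$ over a finite extension $V$ of $W(k)$. Because $Y = M_X(v)$ with $v$ primitive and isotropic, the relative fine moduli space $M_{\mathcal{X}}(v)$ of Theorem \ref{finerelativemoduli} simultaneously lifts $Y$ to $\mathcal{Y}/V$ and carries a relative kernel inducing a derived equivalence $D^b(\mathcal{X}_{\bar K}) \cong D^b(\mathcal{Y}_{\bar K})$ on geometric generic fibres; equivalently, the rigidity $\mathrm{Ext}^1(P,P) = 0$ guarantees that $P$ itself deforms. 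Since a K3 surface in characteristic $0$ has only finitely many Fourier--Mukai partners (Proposition \ref{finitechar0} and Remark \ref{finiteChar0}), and non-isomorphic derived-equivalent $Y$ produce non-isomorphic generic fibres, a specialization argument bounds $|FM(X)|$ by $|FM(\mathcal{X}_{\bar K})| < \infty$.

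For the rank statement I would exploit that the rank of $NS$ is a derived invariant (the algebraic classes are preserved by the isometry of crystalline Mukai lattices induced by $\Phi_P$), so $\rho(X) = \rho(Y)$, and that the number of partners is governed by the genus and double-coset data of $NS(X)$ exactly as in the counting formula of Theorem \ref{CountingformulaChar0}. The discriminant group of $NS(X)$ has length at most $\mathrm{rank}\,T(X) = 22 - \rho$, so when $\rho \geq 12$ one has $\mathrm{rank}\,NS(X) = \rho \geq (22 - \rho) + 2$; by Nikulin's criteria the indefinite lattice $NS(X)$ is then unique in its genus and the natural map $O(NS(X)) \ra O(\mathrm{disc}\,NS(X))$ is surjective, which collapses the double coset to a single element and yields $|FM(X)| = 1$, i.e. $X \cong Y$. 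A supersingular K3 surface has $\rho = 22 \geq 12$, so this case follows at once.

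The hard part will be transporting the lattice-theoretic argument faithfully into positive characteristic: the N\'eron--Severi lattice of the special fibre need not lift in full, so the counting formula cannot be applied naively on $\mathcal{X}_{\bar K}$, and one must instead run the genus computation through the crystalline Mukai lattice of $X$, controlling how its $F$-crystal structure interacts with the isometry induced by $\Phi_P$. This cohomological-descent bookkeeping, rather than any single sharp estimate, is where I expect the real difficulty to lie.
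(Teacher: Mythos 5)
First, a point of comparison: the paper does not prove this statement at all --- Theorem \ref{LOmainthm} is imported verbatim from Lieblich--Olsson \cite{LO}, so your attempt can only be measured against their argument, fragments of which the paper reproduces later (Lemma \ref{lmlift}, Proposition \ref{surjectivity}, Theorems \ref{liftingkernels} and \ref{Hodgefil}). Your part (1) follows the Mukai--Orlov/Lieblich--Olsson route in outline and is fine as a sketch (though the stability of the skyscraper images in characteristic $p$ is the technical heart, not a one-line consequence, and HKR-type arguments need $p > \dim$, which $p \neq 2$ just barely supplies). The gaps begin in part (2). Deligne's theorem lifts a single ample line bundle, not the N\'eron--Severi group; for a partner $Y \cong M_X(v)$ with $v = (r, \ell, s)$ and $\ell \in NS(X)$ arbitrary, the relative moduli space $M_{\mathcal{X}}(v)$ of Theorem \ref{finerelativemoduli} need not exist over a Deligne lift because $\ell$ need not lift. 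The working argument --- the one behind Proposition \ref{surjectivity} --- uses the Picard-preserving lift of Lieblich--Maulik (Lemma \ref{lmlift}) together with the reduction to $r$ prime to $p$ (\cite{LO} Lemma 8.1). Relatedly, your clause ``the rigidity $\mathrm{Ext}^1(P,P) = 0$ guarantees that $P$ itself deforms'' is a non sequitur: $\mathrm{Ext}^1$ vanishing gives uniqueness of deformations, while existence is obstructed in $\mathrm{Ext}^2$ and is exactly what the Hodge-filtration criterion of Theorem \ref{Hodgefil} is designed to handle. (Your injectivity remark is also off target, but harmlessly so: surjectivity of specialization alone bounds $|FM(X)|$, and injectivity is in fact the delicate point that this paper only establishes for ordinary K3 surfaces.)

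The serious failure is in the $\rho \geq 12$ and supersingular claims. Your Nikulin computation (unique genus, surjective $O(NS) \ra O(\mathrm{disc})$, collapsed double cosets) is the correct characteristic-zero mechanism, but the counting formula of Theorem \ref{CountingformulaChar0} rests on the global Torelli theorem, which has no analogue for finite-height K3 surfaces in characteristic $p$; and your proposed remedy --- running the genus computation ``through the crystalline Mukai lattice, controlling its $F$-crystal structure'' --- is precisely what the paper's appendix shows to be unavailable, since the naive $F$-crystal structure on the Mukai lattice is incompatible with the pairing and no crystalline Torelli theorem exists at finite height. The actual argument is the opposite of your ``hard part'': lift Picard-preservingly (so $NS(X_{\bar{K}}) \cong NS(X)$, and your worry about the lattice not lifting evaporates), run the lattice count over $\C$ to get $|FM(X_{\bar{K}})| = 1$, and conclude by surjectivity of specialization. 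Finally, the supersingular case does \emph{not} ``follow at once'': there $\rho = 22 > 20$, so no characteristic-zero lift can preserve the Picard lattice and the entire lifting-plus-genus route collapses; Lieblich--Olsson handle it by a separate argument resting on the supersingular (crystalline) theory, showing every partner of a supersingular K3 is isomorphic to it. As written, your proof of the second half of (2) therefore fails exactly in the case singled out by the theorem's last sentence.
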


\begin{remark}
One of the open questions is to have a cohomological criteria for derived equivalent K3 surfaces over a field of positive characteristic like we have in characteristic $0$ where Hodge theory and Torelli Theorems were available. However, as there is no crystalline Torelli Theorem for non-supersingular K3 surfaces over a field of positive characteristic and the naive F-crystal (see Appendix) fails to be compatible with inner product,  the description in terms of F-crystals is not yet possible. Even though one has crystalline Torelli Theorem for supersingular K3 surfaces, it is essentially not providing any more information as there are no non-trivial Fourier-Mukai partners of a supersingular K3 surface. However, Lieblich-Olsson proved a derived Torelli Theorem using the Ogus Crystalline Torelli Theorem \cite{Ogus}, see \cite{LO2} Theorem 1.2.
\end{remark}

Let us already show here that height of a K3 surface is a derived invariant. This will allow us to stay within a subclass of K3 surfaces while checking derived equivalences. 

\begin{lemma} \label{derivedinvheight}
Height of a K3 surface $X$ over an algebraically closed field of characteristic $p > 3$ is a derived invariant. 
\end{lemma}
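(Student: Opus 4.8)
The plan is to reduce the statement to the fact that a Fourier--Mukai equivalence induces an isomorphism of the total crystalline cohomologies respecting a suitably normalized Frobenius, and then to read off the height from the slope decomposition of Dieudonn\'e--Manin. First, if $Y$ is derived equivalent to $X$, then by Theorem \ref{LOmainthm}(1) $Y$ is again a K3 surface, and by Orlov's Representability Theorem \ref{ORT} the equivalence is a Fourier--Mukai transform $\Phi_P$ for some kernel $P \in D^b(X \times Y)$.

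Next I would pass to crystalline cohomology via the cohomological descent recalled above: $\Phi_P$ induces the $K$-linear map $H^*_{crys}(X/K) \ra H^*_{crys}(Y/K)$, $\alpha \mapsto p_{Y*}(p_X^*\alpha \cdot ch(P)\sqrt{td})$. The essential point is to endow the total cohomology $\tilde H(X/K) := H^0_{crys}(X/K) \oplus H^2_{crys}(X/K) \oplus H^4_{crys}(X/K)$ with the \emph{Mukai} Frobenius $\tilde\phi$, defined on the degree-$2i$ piece by $\tilde\phi|_{H^{2i}_{crys}} = p^{1-i}F^*$ (equivalently, the Tate twists $H^0(-1)$ and $H^4(1)$ that make every graded piece pure of weight $2$). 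A direct check---using that an algebraic class of codimension $k$ satisfies $F^*=p^k$ on it, that $F^*$ commutes with pullback, and that $p_{Y*}F^* = p^{2}F^*p_{Y*}$ by compatibility of crystalline pushforward with Frobenius---shows that this is exactly the normalization for which $\Phi_P$ becomes Frobenius-equivariant, i.e. an \emph{isomorphism of $F$-isocrystals} $\tilde H(X/K) \xrightarrow{\sim} \tilde H(Y/K)$. This is the crystalline incarnation of the Mukai--Orlov statement, drawn from Lieblich--Olsson \cite{LO}.

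Then, since an isomorphism of $F$-isocrystals preserves the Dieudonn\'e--Manin slope decomposition together with all multiplicities, the multiplicity of the slope-strictly-less-than-$1$ part agrees for $\tilde H(X/K)$ and $\tilde H(Y/K)$. Finally I would identify this multiplicity with the height: under the Mukai normalization the (algebraic) classes of $H^0$ and $H^4$ acquire slope exactly $1$ and hence contribute nothing below $1$, so the slope-$<1$ part of $\tilde H(X/K)$ coincides with that of $H^2_{crys}(X/K)$, whose multiplicity is by definition the height of $X$. Therefore $X$ and $Y$ have equal height, both being infinite precisely in the supersingular case, where no slope lies below $1$.

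The step I expect to be the main obstacle is the Frobenius-equivariance in the second paragraph: verifying that $\Phi_P$ intertwines the Mukai Frobenii rather than being a mere $K$-linear isomorphism. This rests on the compatibility of the crystalline cycle class and Chern character with $F^*$ and on crystalline Grothendieck--Riemann--Roch for $p_{Y*}$, which simultaneously force the $p^{1-i}$ normalization above; the hypothesis $p>3$ keeps us within the range where these crystalline constructions, and the Lieblich--Olsson machinery they are drawn from, are valid. Granting this compatibility, the remaining slope bookkeeping is immediate.
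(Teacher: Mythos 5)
Your proposal is correct and follows essentially the same route as the paper: both form the Mukai F-isocrystal $H^0(-1)\oplus H^2\oplus H^4(1)$ (your $p^{1-i}F^*$ normalization is exactly these Tate twists), observe that the twisted $H^0$ and $H^4$ pieces have slope exactly $1$ so the slope-$<1$ part is concentrated in $H^2_{crys}$, and invoke the Lieblich--Olsson fact that a Fourier--Mukai equivalence induces an isomorphism of these F-isocrystals, whence Dieudonn\'e--Manin slope multiplicities, and so the height, agree. The only difference is one of emphasis: you sketch the Frobenius-equivariance verification (algebraicity of $ch(P)\sqrt{td}$, $p_{Y*}F^*=p^2F^*p_{Y*}$) that the paper simply asserts with reference to \cite{LO}.
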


\begin{proof}
Recall that the height of a K3 surface $X$ is given by the dimension of the subspace $H^2_{crys}(X/K)_{[0,1)}$ of the F-isocrystal $H^2_{crys}(X/K)$. Now note that the Frobenius acts on the one dimensional isocrystals $H^0(X/K)(-1)$ and $H^4(X/K)(1)$ (Tate twisted) as multiplication by $p$ (see Appendix below for this computation). This implies that the slope of these F-isocrystals is exactly one. Thus, the F-isocrystal 
$$
H^*_{crys}(X/K) := H^0(X/K)(-1) \oplus H^2_{crys}(X/K) \oplus H^4_{crys}(X/K)(1)
$$ 
has the same subspace of slope of dimension strictly less than one as that of the F-isocrystal $H^2_{crys}(X/K)$, i.e.,   $H^*_{crys}(X/K)_{[0,1)} =H^2_{crys}(X/K)_{[0,1)}$. \\
Note that any derived equivalence of $X$ and $Y$ preserves the F-isocrystal $H^*_{crys}(-/K)$, i.e., if $\Phi_P: D^b(X) \simeq D^b(Y)$ is a derived equivalence of two K3 surfaces $X$ and $Y$, then the induced map on  the F-isocrystals
$$
\Phi_P^*: H^*_{crys}(X/K) \ra H^*_{crys}(Y/K)
$$  
is an isometry. Thus, for the height of $Y$ given by  $\text{dim}(H^2_{crys}(Y/K)_{[0,1)})$ we have
\begin{equation*}
\begin{split}
\text{dim}(H^2_{crys}(Y/K)_{[0,1)}) &= \text{dim}(H^*_{crys}(Y/K)_{[0,1)}) \\
&= \text{dim}(H^*_{crys}(X/K)_{[0,1)}) \\
&= \text{dim}(H^2_{crys}(X/K)_{[0,1)}) = \text{height of $X$}
\end{split}
\end{equation*}
Hence the result. 
\end{proof}

\begin{remark} 
\begin{enumerate}
\item In characteristic 0, there is no notion of height but in this case, the Brauer group itself is a derived invariant of a K3 surface, as $Br(X) \cong Hom(T(X), \mathbb{Q}/\Z)$, where $T(X)$ is the transcendental lattice. 
\item A related question would be: Is Picard rank a derived invariant for K3 surfaces? This is true trivially in characteristic $0$ (also see \cite{HuyLect} Corollary 16.2.8). The answer in characteristic $p$ is also yes, but the proof is not direct, it goes via characteristic $0$ for the finite height case using a Picard preserving lift as constructed by Leiblich-Maulik in \cite{LM} Corollary 4.2. And in the supersingular case it is preserved as any supersingular K3 does not have non-trivial Fourier-Mukai partners\footnote{The author would like to thank Vasudevan Srinivas for pointing it out to her that there is a characteristic independent proof of the fact that the Picard rank is a derived invariant for K3 surfaces: A derived equivalence between K3 surfaces yields an isomorphism between Grothendieck groups and hence also numerical Grothendieck groups, and for a surface, the rank of the numerical Grothendieck group is two more than the Picard rank.}. 
\item On the other hand, the Picard lattice is not a derived invariant in any characteristic, though it trivially remains invariant in the case of K3 surfaces which do not have non-trivial Fourier-Mukai partners.
\end{enumerate}
\end{remark}

%%%%%%%%%%%%%%%%%%%%%%%%%%%%%%%%%%%%%%%%%%%%%%%%%%%%%%%%%%%%%%%%%%%%%%%%%%%%%%%%%%%%%%%%%%%%%%%%%%%%%%%%%%%%%
\section{Derived Autoequivalences of K3 Surfaces in Positive Characteristic} \label{Derivedautoeqncharp}

In this section, we compare the deformation of an automorphism as a morphism and as a derived autoequivalence and show that for K3 surfaces these deformations are in one-to-one correspondence. Then we discuss Lieblich-Olsson's results on lifting derived autoequivalences.  Then we use these lifting results to prove results on the structure of the group of derived autoequivalences of a K3 surface of finite height over a field of positive characteristic.

%%%%%%%%%%%%%%%%%%%%%%%%%%%%%%%%%%%%%%%%%%%%%%%%%%%%%%%
\subsection{Obstruction to Lifting Derived Autoequivalences} \label{obstructiontoliftingauto}

We begin by recalling the classical result that for a variety the infinitesimal deformation of a closed sub-variety with a vanishing $H^1(X, \O_X)$ as a closed subscheme is determined by the deformation of its (pushforward of) structure sheaf as a coherent sheaf on $X \times X$. We then use this result to show that on a K3 surface we can lift an automorphism as a automorphism if and only if we can lift it as a perfect complex in the derived category. 

\begin{remark}
For a K3 surface this result can also be seen using \cite{LO} Proposition 7.1 and the $p$-adic criterion of lifting automorphisms on K3 surfaces \cite{EO} Remark 6.5.
\end{remark}

\begin{remark}
Note that in case of varieties that have $H^1(X, \O_X) \neq 0$, there are more ways of deforming the automorphism as a perfect complex, which in our case is just going to be a coherent sheaf (see below for a proof): For example, an elliptic curve or any higher genus curve.
\end{remark}

Let $X$ be a projective variety over an algebraically closed field $k$ of positive characteristic $p$, $W(k)$ its ring of Witt vectors and $\sigma: X \ra X$ an automorphism of $X$. We put the condition of characteristic $p > 3$ as at many places we may have denominators in factors of 2 and 3, like in the definition of Chern characters for K3 surfaces, and these will become invertible in $W(k)$ due to our assumption on the characteristic.

\begin{definition}
For any Artin local $W(k)$-algebra $A$ with residue field $k$, an \textbf{infinitesimal deformation of $X$ over $A$} is a proper and flat scheme $X_A$ over $A$ such that the following square is cartesian:
\begin{equation*}
\xymatrix{ X \ar[r] \ar[d] &X_A \ar[d] \\
\Spec(k) \ar[r] & \Spec(A). }
\end{equation*}
\end{definition}

\begin{remark}
In case $X$ is smooth, we ask $X_A$ to be smooth over $A$ as well. In this case, $X_A$ is automatically flat over $A$. 
\end{remark}

Consider the following two deformation functors:
\begin{equation}
\begin{split}
F_{aut}: & (\text{Artin local $W(k)$-algebras with residue field $k$})  \rightarrow (Sets) \\
& A \mapsto \{\text{Lifts of automorphism $\sigma$ to $A$} \},
\end{split}
\end{equation}
where by lifting of automorphism $\sigma$ over $A$ we mean that there exists an infinitesimal deformation $X_A$ of $X$ and an automorphism $\sigma_A: X_A \rightarrow X_A$ which reduces to $\sigma$, i.e., we have the following commutative diagram: 
$$
\xymatrix{
X_A  \ar[r]^{\sigma_A} &X_A\\
X \ar[u] \ar[r]^{\sigma} & X. \ar[u]}
$$
This is the \textbf{deformation functor of an automorphism as a morphism}. Now consider the \textbf{deformation functor of an automorphism as a coherent sheaf} defined as follows:
\begin{equation}
\begin{split}
F_{coh}: & (\text{Artin local $W(k)$-algebras with residue field $k$})  \rightarrow (Sets) \\
& A \mapsto \{\text{Deformations of $\mathcal{O}_{\Gamma(\sigma)}$ to $A$} \}/ iso,
\end{split}
\end{equation}
where by deformations of $\mathcal{O}_{\Gamma(\sigma)}$ to $A$ we mean that there exists an infinitesimal deformation $Y_A$ of $Y := X \times X $ over $A$ and a coherent sheaf $\mathcal{F}_A$, which is a deformation of the coherent sheaf $\mathcal{O}_{\Gamma(\sigma)}$ and $\mathcal{O}_{\Gamma(\sigma)}$ is considered as a coherent sheaf on $X \times X$ via the closed embedding $\Gamma(\sigma) \hookrightarrow X \times X$. Isomorphisms are defined in the obvious way. 

\begin{remark}
Note that there are more deformations of $X \times X$ than the ones of the shape $X_A \times_A X'_A$, where $X_A$ and $X'_A$ are deformations of $X$ over $A$. From now we make a choice of this deformation ($Y_A$) to be $X_A \times X_A$. Also see Theorem [\ref{Variationofdeformation}] and compare from Theorem [\ref{liftingkernels}] and Remark [\ref{Modulidirection}] below.
\end{remark}

Let $X$ be a smooth projective scheme over $k$ and for $A$ an Artin local $W(k)$-algebra assume that there exists an infinitesimal lift of $X$ to $X_A$\footnote{Note that such a lift may not always exist but for the case of K3 surfaces of finite height it does, see \cite{LM} Corollary 4.2 and Theorem \ref{Delignelifting}. However, for supersingular K3 surfaces, the lift does not exists over all Artin local rings but in some cases it does exist by Theorem \ref{Delignelifting}.}. Observe that there is a natural transformation $\eta: F_{aut} \rightarrow F_{coh}$ given by 
\begin{equation}
\begin{split}
\eta_A :  F_{aut}(A) & \longrightarrow F_{coh}(A)\\
 (\sigma_A: X_A \rightarrow X_A) & \mapsto \mathcal{O}_{\Gamma(\sigma_A)} / X_A \times X_A.
\end{split}
\end{equation}

\begin{theorem} \label{deformationcomparison}
The natural transformation $\eta:F_{aut} \rightarrow F_{coh} $ between the deformation functors is an isomorphism for varieties with $H^1(X, \O_X) = 0$. 
\end{theorem}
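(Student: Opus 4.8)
The plan is to realize both functors through the graph $\Gamma(\sigma) \subset X \times X$ and to play closed-subscheme deformations off against coherent-sheaf deformations. First I would record the (standard) dictionary that lifting $\sigma$ as a morphism is the same as deforming $\Gamma(\sigma)$ as a closed subscheme: a morphism $X_A \ra X_A$ over $A$ is exactly a section of the first projection $X_A \times_A X_A \ra X_A$, i.e. a closed subscheme $Z_A \subset X_A \times_A X_A$ that is flat over $A$, reduces to $\Gamma(\sigma)$, and on which $\text{pr}_1$ restricts to an isomorphism. Since such a $Z_A$ is flat over $A$ and restricts to $\Gamma(\sigma)$ on the special fibre, where $\text{pr}_1$ is an isomorphism, the fibral isomorphism criterion forces $\text{pr}_1|_{Z_A}$ to be an isomorphism automatically, and the resulting $\sigma_A = \text{pr}_2 \circ (\text{pr}_1|_{Z_A})^{-1}$ is again an automorphism because it reduces to one. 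Hence $F_{aut}(A)$ is identified with the set of flat closed-subscheme deformations of $\Gamma(\sigma)$ inside $Y_A = X_A \times_A X_A$, and $\eta_A$ becomes the assignment $Z_A \mapsto \O_{Z_A}$ into the flat coherent-sheaf deformations of $\O_{\Gamma(\sigma)}$.

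Injectivity of $\eta_A$ I would get for free, with no hypothesis on $X$: for any ideal sheaf one has $\text{Ann}_{\O_{Y_A}}(\O_{Y_A}/\mathcal{I}) = \mathcal{I}$, so the $\O_{Y_A}$-module $\O_{Z_A}$ determines its ideal $\mathcal{I}_{Z_A}$, hence the subscheme $Z_A$. An isomorphism of deformations is $\O_{Y_A}$-linear and so preserves annihilators; therefore $\O_{\Gamma(\sigma_A)} \cong \O_{\Gamma(\sigma'_A)}$ forces $\Gamma(\sigma_A) = \Gamma(\sigma'_A)$ and thus $\sigma_A = \sigma'_A$.

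The substance, and the only place the hypothesis $H^1(X, \O_X) = 0$ enters, is surjectivity. Given a flat coherent deformation $\F_A$ of $\O_{\Gamma(\sigma)}$ on $Y_A$, I would lift the canonical surjection $q \colon \O_Y \ra \O_Z$ (with $Z = \Gamma(\sigma)$) to a map $q_A \colon \O_{Y_A} \ra \F_A$. Factoring $A$ through a chain of small extensions and lifting $q$ one step at a time, the obstruction at each stage lies in $\text{Ext}^1_{\O_Y}(\O_Y, \O_Z) \otimes I = H^1(Y, \O_Z) \otimes I = H^1(Z, \O_Z) \otimes I$, and since $Z \cong X$ this equals $H^1(X, \O_X) \otimes I = 0$. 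Thus $q_A$ exists; it reduces to the surjection $q$ modulo $\mathfrak{m}_A$, so by Nakayama (its cokernel is coherent with vanishing special fibre) $q_A$ is itself surjective. Its kernel then cuts out a closed subscheme $Z_A \subset Y_A$, flat over $A$, with $\O_{Z_A} \cong \F_A$, which by the first step is $\Gamma(\sigma_A)$ for a lift $\sigma_A$ of $\sigma$; hence $\eta_A(\sigma_A) = \F_A$.

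The main obstacle is precisely this obstruction computation in the surjectivity step: one must set up the deformation theory of the morphism of sheaves $q$ carefully enough to see that its obstruction is governed by $\text{Ext}^1_{\O_Y}(\O_Y, \O_Z) = H^1(X, \O_X)$, and then confirm that the resulting lift, a priori only a homomorphism, is automatically surjective and has $A$-flat kernel so that it genuinely defines a subscheme deformation. Everything else — the graph/section dictionary, the fibral isomorphism criterion, and the annihilator argument for injectivity — is formal and characteristic-independent.
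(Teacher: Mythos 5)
Your proposal is correct, but it takes a genuinely different route from the paper's. Both arguments start from the same dictionary (the paper's Lemma \ref{24.8}): lifting $\sigma$ as a morphism is equivalent to deforming $\Gamma(\sigma)$ as a closed subscheme of $X_A \times_A X_A$, with flatness forcing $\mathrm{pr}_1$ to be an isomorphism. From there the paper does not prove bijectivity of $\eta_A$ directly; instead it applies $\mathrm{Hom}_Y(-, i_*\O_X)$ to $0 \ra I \ra \O_Y \ra i_*\O_X \ra 0$ to obtain the long exact sequence of Proposition \ref{HartshorneEx19.1}, identifies $H^0(\mathcal{N}_X)$ and $H^1(\mathcal{N}_X)$ as the tangent and obstruction spaces of $F_{aut}$, and $\text{Ext}^1_Y(\O_X,\O_X)$ and $\text{Ext}^2_Y(\O_X,\O_X)$ as those of $F_{coh}$, and then uses $H^1(\O_X)=0$ to conclude that the tangent spaces coincide and that the obstruction space of $F_{aut}$ injects into that of $F_{coh}$, carrying one obstruction class to the other. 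You instead prove the bijection on the nose: injectivity via the hypothesis-free annihilator argument $\mathrm{Ann}_{\O_{Y_A}}(\O_{Y_A}/\mathcal{I}) = \mathcal{I}$, and surjectivity by lifting the generating surjection $q\colon \O_Y \ra \O_{\Gamma(\sigma)}$ across each small extension $0 \ra I \ra A' \ra A \ra 0$; here flatness identifies the kernel of $\F_{A'} \ra \F_A$ with $\O_{\Gamma(\sigma)} \otimes_k I$, so the relevant group is $H^1(Y, \O_{\Gamma(\sigma)}) \otimes I \cong H^1(X,\O_X)\otimes I = 0$, after which Nakayama gives surjectivity of $q_{A'}$ and flatness of the resulting subscheme is automatic because $\F_{A'}$ is flat. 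Your route is more elementary and self-contained: it avoids invoking the tangent--obstruction formalism and in particular sidesteps the one genuinely delicate point in the paper's proof, namely the assertion that the inclusion of obstruction spaces matches the obstruction class of $F_{aut}$ with that of $F_{coh}$; it also shows concretely that every flat sheaf deformation of $\O_{\Gamma(\sigma)}$ remains cyclic, i.e.\ remains the structure sheaf of a subscheme. What the paper's comparison buys in exchange is quantitative information off the locus $H^1(X,\O_X)=0$: the long exact sequence measures exactly the excess sheaf-theoretic deformations when $H^1(X,\O_X)\neq 0$ (the elliptic-curve phenomenon noted before the theorem), which your bijection argument by design does not see. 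One small precision for your write-up: the "obstruction in $\text{Ext}^1_Y(\O_Y,\O_{\Gamma(\sigma)})\otimes I$" is really just the surjectivity of $H^0(\F_{A'}) \ra H^0(\F_A)$ forced by the vanishing of $H^1(Y, \O_{\Gamma(\sigma)}\otimes_k I)$ --- there is no obstruction class to construct, since you are lifting a section of a sheaf, not a map between two varying objects; stated this way your key step is even simpler than you present it.
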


We provide an algebraic proof by constructing a deformation-obstruction long exact sequence connecting the two functors. The proof follows from the following more general proposition \ref{HartshorneEx19.1}, substituting $X \times X$ for $Y$ and taking the embedding $i$ to be the graph of the automorphism $\sigma$. To use proposition \ref{HartshorneEx19.1} we need the following lemma.

\begin{lemma}[Cf. \cite{HartshorneDT} Lemma 24.8] \label{24.8}  
To give an infinitesimal deformation of  an  automorphism $f: X \ra X$ over $X_A$ it is equivalent to give an infinitesimal deformation of the graph $\Gamma_f$ as a closed subscheme of $X \times X$.  
\end{lemma}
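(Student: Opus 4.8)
The plan is to prove the equivalence by exhibiting explicit constructions in both directions and checking that they are mutually inverse, the whole point being that the graph condition on the projections is stable under infinitesimal deformation. First I would recall the base-level dictionary: an automorphism $f: X \ra X$ is the same datum as its graph $\Gamma_f \subset X \times X$, namely the image of $(\mathrm{id}, f): X \ra X \times X$, which is a closed subscheme such that the first projection $p_1|_{\Gamma_f}: \Gamma_f \ra X$ is an isomorphism; conversely, any closed subscheme $Z \subset X \times X$ for which $p_1|_Z$ is an isomorphism is the graph of the morphism $p_2|_Z \circ (p_1|_Z)^{-1}$, and this morphism is an automorphism precisely when $p_2|_Z$ is an isomorphism as well. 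Throughout I fix the ambient deformation to be $X_A \times_A X_A$, in accordance with the convention made in the remark above.

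For the forward direction, given a lift $f_A: X_A \ra X_A$ of $f$ over $X_A$, I would form the graph $\Gamma_{f_A} = (\mathrm{id}, f_A)(X_A) \subset X_A \times_A X_A$. Since $p_1$ identifies $\Gamma_{f_A}$ with $X_A$, this closed subscheme is flat over $A$ and its special fiber is $\Gamma_f$, so it is a genuine infinitesimal deformation of $\Gamma_f$ as a closed subscheme. For the reverse direction, let $Z_A \subset X_A \times_A X_A$ be any flat deformation of $\Gamma_f$. I would consider $p_1: Z_A \ra X_A$, which reduces modulo $\m_A$ to the isomorphism $\Gamma_f \cong X$. The key step is to invoke the standard fact that a morphism of schemes flat and of finite type over an Artin local ring, which restricts to an isomorphism on the closed fiber, is itself an isomorphism; this follows from the local criterion for flatness together with Nakayama's lemma, exactly as in \cite{HartshorneDT}. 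Applying this to $p_1$, and likewise to $p_2$, I would set $f_A := p_2 \circ p_1^{-1}: X_A \ra X_A$, an automorphism of $X_A$ lifting $f$. Checking that the two assignments are mutually inverse is then immediate from the base-level dictionary and the uniqueness of the graph.

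The hard part—indeed the only nonformal point—will be the reverse direction: one must ensure that an a priori arbitrary flat deformation $Z_A$ of the closed subscheme $\Gamma_f$ still projects isomorphically onto $X_A$, so that it is the graph of an honest morphism rather than merely a closed subscheme sitting in $X_A \times_A X_A$. It is precisely the flatness-plus-Nakayama criterion cited above that promotes the isomorphism condition on $p_1|_{\Gamma_f}$ to an isomorphism condition on $p_1$, making ``being a graph'' a stable property under infinitesimal deformation; this rigidity is what underlies the claimed equivalence. Note that no cohomological hypothesis on $X$ is needed here—the condition $H^1(X, \O_X) = 0$ enters only at the next stage, when passing from deformations of the graph as a subscheme to deformations of its structure sheaf as a coherent sheaf via Proposition \ref{HartshorneEx19.1}.
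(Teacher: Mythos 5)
Your proof is correct and takes essentially the same route as the paper: both directions proceed by passing to graphs, and the converse rests on exactly the same rigidity statement, namely that flatness of the deformation $Z$ over the Artin local ring together with the isomorphism on the closed fiber forces $p_1: Z \ra X_A$ to be an isomorphism (the paper cites EGA IV, Corollary 17.9.5 for this), whence $Z$ is the graph of $f_A = p_2 \circ p_1^{-1}$. Your additional remark that $p_2$ is likewise an isomorphism, so that $f_A$ is genuinely an automorphism, merely makes explicit a point the paper leaves implicit.
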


\begin{proof}
To any deformation $f_A$ of $f$ we associate its graph $\Gamma_{f_A}$, which gives a closed subscheme of $X_A\times X_A$. It is an infinitesimal deformation of $\Gamma_f$. Conversely, given a deformation $Z$ of $\Gamma_f$ over $A$, the projection $p_1: Z \hookrightarrow X_A \times_A X_A \ra X_A$ gives an isomorphism after tensoring with $k$. From flatness (see, for example, EGA IV, Corollary 17.9.5) of $Z$ over $A$ it follows that $p_1$ is an isomorphism, and so $Z$ is the graph of $f_A = p_2 \circ p_1^{-1}$.
\end{proof}

\begin{proposition} (Cf. \cite{HartshorneDT} Ex 19.1) \label{HartshorneEx19.1}
Let $i: X \hookrightarrow Y$ be a closed embedding with $X$ integral and projective scheme of finite type over $k$. Then there exists a long exact sequence 
\begin{equation}
0 \rightarrow H^0(\mathcal{N}_X) \rightarrow \text{Ext}^1_Y(\O_X, \O_X) \rightarrow H^1(\O_X) \rightarrow H^1(\mathcal{N}_X) \ra \text{Ext}^2_Y(\O_X, \O_X) \ra \ldots,
\end{equation}
where $\mathcal{N}_X$ is the normal bundle of $X$. 
\end{proposition}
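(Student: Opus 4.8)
The plan is to realise the stated sequence as the long exact sequence obtained by applying the contravariant functor $\text{RHom}_Y(-,\O_X)$ to the structural short exact sequence
$$
0 \ra \mathcal{I} \ra \O_Y \ra \O_X \ra 0
$$
of the closed embedding $i: X \hookrightarrow Y$, where $\mathcal{I}$ is the ideal sheaf of $X$. Passing to the associated long exact sequence of global $\text{Ext}_Y(-,\O_X)$-groups produces the backbone
$$
\cdots \ra \text{Ext}^j_Y(\O_X,\O_X) \ra \text{Ext}^j_Y(\O_Y,\O_X) \ra \text{Ext}^j_Y(\mathcal{I},\O_X) \ra \text{Ext}^{j+1}_Y(\O_X,\O_X) \ra \cdots,
$$
so that the entire proof reduces to identifying the middle and right-hand terms with the cohomology of $\O_X$ and of the normal sheaf $\mathcal{N}_X$.

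For the identifications I would argue as follows. Since $\O_Y$ is the unit object, $\text{Ext}^j_Y(\O_Y,\O_X) = H^j(Y,\O_X) = H^j(X,\O_X)$, the last equality because $i$ is a closed immersion and $i_*$ is exact. By the very definition of the normal sheaf, $\mathcal{H}om_Y(\mathcal{I},\O_X) = \mathcal{H}om_X(\mathcal{I}/\mathcal{I}^2,\O_X) = \mathcal{N}_X$, so on global sections $\text{Hom}_Y(\mathcal{I},\O_X) = H^0(X,\mathcal{N}_X)$. Finally I would use integrality of $X$: as $X$ is integral and projective over $k$, one has $\text{Hom}_Y(\O_X,\O_X) = H^0(X,\O_X) = k$, and the comparison map $\text{Hom}_Y(\O_X,\O_X) \ra \text{Hom}_Y(\O_Y,\O_X) = H^0(X,\O_X)$ is an isomorphism (both groups equal $k$, and the identity is sent to the canonical surjection). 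This kills the degree-zero connecting homomorphism and truncates the sequence to begin as
$$
0 \ra H^0(\mathcal{N}_X) \ra \text{Ext}^1_Y(\O_X,\O_X) \ra H^1(X,\O_X) \ra \text{Ext}^1_Y(\mathcal{I},\O_X) \ra \text{Ext}^2_Y(\O_X,\O_X) \ra \cdots.
$$

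It remains to replace $\text{Ext}^i_Y(\mathcal{I},\O_X)$ by $H^i(X,\mathcal{N}_X)$ for $i \geq 1$, and this is the step I expect to be the main obstacle. The natural tool is the local-to-global spectral sequence $H^p(X,\mathcal{E}xt^q_Y(\mathcal{I},\O_X)) \Rightarrow \text{Ext}^{p+q}_Y(\mathcal{I},\O_X)$, whose bottom row is $H^p(X,\mathcal{N}_X)$ by the computation above; its edge map always yields a canonical injection $H^1(X,\mathcal{N}_X) \hookrightarrow \text{Ext}^1_Y(\mathcal{I},\O_X)$, which is an isomorphism exactly when the higher sheaf-Ext $\mathcal{E}xt^1_Y(\mathcal{I},\O_X)$ carries no global sections. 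For a regular embedding (in particular $X$ smooth, as for the graph of an automorphism used in Lemma \ref{24.8} and Theorem \ref{deformationcomparison}) the short exact sequence above gives $\mathcal{E}xt^q_Y(\mathcal{I},\O_X) \cong \wedge^{q+1}\mathcal{N}_X$ for $q \geq 1$, so the discrepancy between $\text{Ext}^\bullet_Y(\mathcal{I},\O_X)$ and $H^\bullet(\mathcal{N}_X)$ is governed entirely by the cohomology of the higher exterior powers $\wedge^{\geq 2}\mathcal{N}_X$; pinning down that these do not perturb the sequence in the stated range is the delicate bookkeeping, and it is automatic in the codimension-one case.

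Finally I would emphasise that for the intended application only the initial segment $0 \ra H^0(\mathcal{N}_X) \ra \text{Ext}^1_Y(\O_X,\O_X) \ra H^1(X,\O_X)$ is actually needed: once $H^1(X,\O_X)=0$, as for a K3 surface, exactness alone yields the isomorphism $H^0(\mathcal{N}_X) \cong \text{Ext}^1_Y(\O_X,\O_X)$ of tangent spaces, which is precisely the input required to show that the natural transformation $\eta : F_{aut} \ra F_{coh}$ is an isomorphism in Theorem \ref{deformationcomparison}.
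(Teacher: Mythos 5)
Your backbone is exactly the paper's: the paper also applies $\mathrm{Hom}_Y(-,i_*\O_X)$ to $0 \ra I \ra \O_Y \ra i_*\O_X \ra 0$, identifies $\mathrm{Ext}^j_Y(\O_Y,i_*\O_X) \cong H^j(\O_X)$ and $\mathrm{Hom}_Y(I,i_*\O_X) \cong H^0(\mathcal{N}_X)$ by the same adjunction, and uses integrality of $X$ in the same way (an injective endomorphism of $k$ is an isomorphism) to truncate the degree-zero part. The one place where you stopped short --- replacing $\mathrm{Ext}^i_Y(I,i_*\O_X)$ by $H^i(\mathcal{N}_X)$ for $i \geq 1$ --- is precisely where the paper's proof is weakest: its step (6) takes an injective resolution $i_*\O_X \ra \mathcal{J}^{\bullet}$ and computes with the flasque complex $\mathcal{H}om_Y(I,\mathcal{J}^{\bullet})$, but the global sections of that complex compute the global $\mathrm{Ext}^{\bullet}_Y(I,i_*\O_X)$, and the complex is a \emph{resolution} of $i_*\mathcal{N}_X$ only if $\mathcal{E}xt^q_Y(I,i_*\O_X)=0$ for $q \geq 1$. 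As your own analysis shows, for a regular embedding $\mathcal{E}xt^q_Y(I,i_*\O_X) \cong \wedge^{q+1}\mathcal{N}_X$, and this vanishing fails in codimension $\geq 2$: for the diagonal (or a graph) in the product of a K3 surface with itself, $\wedge^2\mathcal{N}_X \cong \wedge^2 T_X \cong \O_X$, and a dimension count confirms the discrepancy --- there $\mathrm{Ext}^2_Y(\O_{\Delta},\O_{\Delta}) \cong HH^2(X)$ has dimension $22$ and maps to $H^2(\O_X)$ of dimension $1$, so $\mathrm{Ext}^1_Y(I,\O_{\Delta})$ has dimension $21$, strictly larger than $h^1(\mathcal{N}_X)=h^1(T_X)=20$. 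So your hesitation identified a genuine error in the paper's own proof, not a gap peculiar to your write-up; the honest form of the sequence has $\mathrm{Ext}^i_Y(I,i_*\O_X)$ in the positions the paper labels $H^i(\mathcal{N}_X)$.

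Two remarks complete your proposal. First, your edge-map injection $H^1(\mathcal{N}_X) \hookrightarrow \mathrm{Ext}^1_Y(I,i_*\O_X)$ can be upgraded, by functoriality of the local-to-global spectral sequence along $I \ra \O_Y$, to exactness of the \emph{displayed} five-term segment: the image of $H^1(\O_X)=\mathrm{Ext}^1_Y(\O_Y,i_*\O_X)$ lies in the first filtration step of $\mathrm{Ext}^1_Y(I,i_*\O_X)$, which is exactly $H^1(\mathcal{N}_X)=E_2^{1,0}=E_\infty^{1,0}$; hence replacing $\mathrm{Ext}^1_Y(I,i_*\O_X)$ by its subspace $H^1(\mathcal{N}_X)$ preserves exactness at $H^1(\O_X)$ and at $H^1(\mathcal{N}_X)$, and the map to $\mathrm{Ext}^2_Y(\O_X,\O_X)$ is the composite with the inclusion. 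Only the continuation hidden in the trailing dots fails (the K3 diagonal count above already violates exactness at $\mathrm{Ext}^2$ if one inserts $H^2(\O_X) \ra H^2(\mathcal{N}_X)$ there). Second, as you correctly emphasize, Theorem \ref{deformationcomparison} uses only the tangent-space isomorphism $H^0(\mathcal{N}_X) \cong \mathrm{Ext}^1_Y(\O_X,\O_X)$ and, when $H^1(\O_X)=0$, the resulting injection of $H^1(\mathcal{N}_X)$ into $\mathrm{Ext}^2_Y(\O_X,\O_X)$ carrying one obstruction class to the other --- both of which your corrected, weaker statement delivers. So your proof, finished with the functoriality remark, establishes everything in the proposition that is true and everything the paper actually needs.
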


\begin{proof}

Consider the short exact sequence given by the closed embedding $i$ 
\begin{equation}
0 \rightarrow I \rightarrow \O_Y \rightarrow i_*\O_X \rightarrow 0. 
\end{equation}

Apply the global Hom contravariant functor $\text{Hom}_Y(-, i_*\O_X)$ to the above short exact sequence and we get the following long exact sequence from \cite{HartshorneAG} III Proposition 6.4,

\begin{equation*}
\begin{split}
0 &\ra \text{Hom}_Y(i_*\O_X, i_*\O_X) \ra \text{Hom}_Y(\O_Y, i_*\O_X) \ra \text{Hom}_Y(I, i_*\O_X) \ra \\
& \text{Ext}^1_Y(i_*\O_X, i_*\O_X) \ra \text{Ext}^1_Y(\O_Y, i_*\O_X) \ra  \text{Ext}^1_Y(I, i_*\O_X) \ra \\
& \text{Ext}^2_Y(i_*\O_X, i_*\O_X) \ra \ldots .
\end{split}
\end{equation*}

Now note that we can make the following identifications
\begin{enumerate}
\item $\text{Hom}_Y(i_*\O_X, i_*\O_X) \cong k$ as $X$ is integral and projective. 
\item $\text{Hom}_Y(\O_Y, i_*\O_X) = H^0(\O_X) = k $ using \cite{HartshorneAG} III Propostion 6.3 (iii), Lemma 2.10 and the fact that $X$ is connected. 
\item As any injective endomorphism of a field is an automorphism, we can modify the long exact sequence as follows:
\begin{equation*}
0 \ra  \text{Hom}_Y(I, i_*\O_X) \ra \text{Ext}^1_Y(i_*\O_X, i_*\O_X) \ra \text{Ext}^1_Y(\O_Y, i_*\O_X) \ra \ldots .
\end{equation*}
  
\item $\text{Hom}_Y(I, i_*\O_X) \cong \text{Hom}_X(i^*I, \O_X)$ using ajunction formula on page 110 of \cite{HartshorneAG}. Moreover, using \cite{HartshorneAG} III, Proposition 6.9, we have 
$$
\text{Hom}_X(i^*I, \O_X) = \text{Hom}_X(\O_X, \mathcal{H}om_X(i^*I, \O_X)),
$$
and using the discussion in \cite{SP} Tag 01R1, we have $\mathcal{H}om_X(i^*I, \O_X) = \mathcal{N}_X$. Thus, putting this together with \cite{HartshorneAG} III Proposition 6.3 (iii) and Lemma 2.10, we get 
$$
\text{Hom}_Y(I, i_*\O_X) \cong H^0(\mathcal{N}_X).
$$   
\item Note that again using  \cite{HartshorneAG} III Proposition 6.3 (iii) and Lemma 2.10, we get 
$$
\text{Ext}^1_Y(\O_Y, i_*\O_X) \cong H^1(\O_X).
$$
\item Note that using the adjunction for Hom sheaves we have:
$$
i_*\mathcal{N}_X = i_*\mathcal{H}om_X(i^*I, \O_X) \cong \mathcal{H}om_Y(I, i_*\O_X).
$$
Thus, $H^1(\mathcal{N}_X) := H^1(X, \mathcal{N}_X) = H^1(Y, i_*\mathcal{N}_X)$ using \cite{HartshorneAG} III Lemma 2.10. To compute $ H^1(Y, i_*\mathcal{N}_X)$, we choose an injective resolution of $i_*\O_X$ as an $\O_Y$-module $0 \ra \O_X \ra \mathcal{J}^{\bullet}$. From \cite{tohoku} Proposition 4.1.3, we know that  $\mathcal{H}om_Y(I, \mathcal{J}^{i})$ are flasque sheaves and so we can compute the cohomology group using this flasque resolution. Hence,
\begin{equation*}
H^i = \frac{Ker(\text{Hom}_Y(I, \mathcal{J}^i) \ra \text{Hom}_Y(I, \mathcal{J}^{i+1}))}{ Im(\text{Hom}_Y(I, \mathcal{J}^{i-1}) \ra \text{Hom}_Y(I, \mathcal{J}^{i}))} = \text{Ext}^i_Y(I, i_*\O_X).
\end{equation*} 

\end{enumerate}

Thus, putting all of the above observations together, we get our required long exact sequence. 
\end{proof}

\begin{proof}[Proof of Theorem \ref{deformationcomparison}:] 
Note that the obstruction spaces for the functors $F_{aut}$ and $F_{coh}$ are $H^1(\mathcal{N}_X)$ and $\text{Ext}^2_Y(\O_X, \O_X)$ respectively.  See, for example, \cite{HartshorneDT} Theorem 6.2, Theorem 7.3, Exercise 7.4 and Lemma \ref{24.8} above. The same results give us the tangent spaces for the functors $F_{aut}$  and $F_{coh}$ and they are  $H^0(\mathcal{N}_X)$ and $\text{Ext}^1_Y(\O_X, \O_X)$.  Now using Proposition \ref{HartshorneEx19.1} along with our assumption of vanishing $H^1(X, \O_X)$ one has that the obstruction space of $F_{aut}$ is a subspace of the obstruction of $F_{coh}$ and this inclusion sends one obstruction class to the other one. Therefore, the obstruction to lifting the automorphism as a morphism vanishes if and only if the obstruction to lifting the automorphism as a sheaf vanishes. Moreover, the isomorphism of tangent spaces implies that the number of lifts in both cases is same.  
\end{proof}

This shows that for projective varieties with vanishing $H^1(X, \O_X)$, one doesn't have extra deformations of automorphisms as a sheaf. Note that we could still ask for deformations as a perfect complex but since the perfect complex we start with is a coherent sheaf any deformation of it as a perfect complex will also have only one non-zero coherent cohomology sheaf. Indeed, this follows from the fact that deformations cannot grow cohomology sheaves ,as if $F^{\bullet}_A$ is the deformation of $\O_X$ over $A$ such that $H^1(F^{\bullet}_A) \neq 0$ (to simplify our argument we are assuming $F^{\bullet}_A$ is bounded above at level 1, i.e., $F^i_A = 0 \ \forall i >1$), then we can replace this complex in the derived category by a complex like
 $$
\ldots \ra F^{-1}_A \ra Ker(F^0_A \ra F^1_A) \xrightarrow{0} H^1(F^{\bullet}_A) \ra 0.
$$
Then reducing to special fiber gives that $H^1(F_A^{\bullet}) \otimes_A k = 0$, but this will only happen if $H^i(F^{\bullet}_A) = 0$. Moreover, as we are in the derived category, we can show that the deformed perfect complex is then quasi isomorphic to a coherent sheaf. Indeed, the quotient map to the non-zero coherent cohomology sheaf provides the quasi-isomorphism. This shows that there are no extra deformations as a perfect complex as well.  Hence, an automorphism $\sigma$ on a projective variety $X$ with vanishing $H^1(X, \O_X)$  lifts if and only if the derived equivalence it induces, $\Phi_{\O_{\Gamma(\sigma)}}: D^b(X) \ra D^b(X)$, lifts as a Fourier-Mukai transform. 

\begin{remark}
\begin{enumerate} 
\item Note that we cannot claim that the derived equivalence lifts as a derived equivalence because in the relative setting when $\mathcal{X}$ is defined over $S$, where $S$ is a scheme not equal to $\Spec(k)$, one does not have the Orlov Representability Theorem  \ref{ORT} and therefore, a priori, one cannot say that every derived equivalence comes from a Fourier-Mukai Transform. Thus, a priori, we can possibly lift more things as a derived equivalence. 
\item If we use the infinity category of perfect complexes on X, denoted by $Perf(X)$, in place of the derived category on X, we can then say that an automorphism $\sigma$ on a projective variety $X$ with vanishing $H^1(X, \O_X)$  lifts if and only if the equivalence  $ \Phi_{\O_{\Gamma(\sigma)}}: Perf(X) \ra Perf(X)$ lifts as an autoequivalence of the infinity category of perfect complexes, as in this case we have a representability Theorem \cite{BZ-F-N}.
\end{enumerate}
\end{remark}

Now we state the two theorems proved by Lieblich-Olsson which give a criteria to lifting perfect complexes.  

\begin{theorem}[\cite{LO} Theorem 6.3] \label{Infinitesimallifting}  \label{liftingkernels}
Let $X$ and $Y$ be two K3 surfaces over an algebraically closed field $k$, and $P \in D^b( Y \times X)$ be a perfect complex inducing an equivalence $\Phi: D^b(Y) \ra D^b(X)$ on the derived categories. Assume that the induced map on cohomology (see below) satisfies:
\begin{enumerate}
\item $\Phi(1, 0, 0) = (1, 0, 0)$, 
\item the induced isometry $\kappa: Pic(Y) \rightarrow Pic(X)$ sends $C_Y$, the ample cone of Y, isomorphically to either $C_X$ or $-C_X$, the $(-)$ample cone of $X$.   
\end{enumerate}  
Then there exists an isomorphism of infinitesimal deformation functors $\delta: Def_X \rightarrow Def_Y$ such that 
\begin{enumerate}
\item $\delta^{-1}(Def_{(Y,L)}) = Def_{(X, \Phi(L))}$;
\item for each augmented Artinian $W$-algebra $W \rightarrow A$ and each $(X_A \rightarrow A) \in Def_X(A)$, there is an object $P_A \in D^b(  \delta(X_A) \times_A X_A)$ reducing to $P$ on $Y \times X$. 
\end{enumerate}
\end{theorem}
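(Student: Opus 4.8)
The plan is to reduce the lifting of the kernel to a matching of Hodge filtrations on the (de Rham / crystalline) cohomology, using the deformation-theoretic form of local Torelli for K3 surfaces. First I would recall that deformations of a K3 are unobstructed and that the Kodaira--Spencer map identifies $H^1(X,T_X)$ with $\text{Hom}(F^2, H^2/F^2)$, where $F^2 = H^{2,0} \subset H^2_{DR}(X)$ is the Hodge filtration. Thus $Def_X$ is pro-represented by the functor of isotropic lines $\tilde{H}^{2,0}_A \subset \tilde{H}(X)\otimes_k A$ lifting $\tilde{H}^{2,0} = H^{2,0}$ inside the Mukai cohomology $\tilde{H}(X)=H^0\oplus H^2 \oplus H^4$; a deformation $X_A$ is precisely such a line. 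On the central fibre $P$ induces a Mukai isometry $\Phi^H\colon \tilde{H}(Y)\to\tilde{H}(X)$ (the cohomological realization of $\Phi$, cf. \cite{HuyFM} Section 5.2 and \cite{LO} Section 2), and I would extend it $A$-linearly.

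Second, I would define $\delta(X_A)$ to be the deformation of $Y$ whose period line is $(\Phi^H)^{-1}(\tilde{H}^{2,0}_A) \subset \tilde{H}(Y)\otimes_k A$. The crux is to check that this transported line is again a \emph{geometric} period, i.e. that it is the period of a genuine deformation of the K3 surface $Y$. This is exactly where hypotheses (1) and (2) enter: the condition $\Phi(1,0,0)=(1,0,0)$ forces the equivalence to be untwisted, so that the transported filtration corresponds to an honest (rather than gerby or twisted) deformation of $Y$, while the cone condition $\kappa(C_Y)=\pm C_X$ fixes the orientation of the positive part and keeps an (anti-)ample class (anti-)ample, so that $\delta(X_A)$ is again projective. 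Since $\Phi^H$ is an $A$-linear isometry and the construction is functorial in $A$, $\delta$ is an isomorphism of functors. Property (1) of the conclusion then follows because a deformation keeps a line bundle $M$ exactly when its period stays orthogonal to $c_1(M)$, and $\Phi^H$ is an isometry sending $c_1(L)$ to the $H^2$-component of $\Phi(L)$; hence $\delta^{-1}(Def_{(Y,L)}) = Def_{(X,\Phi(L))}$.

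Third, I would deform the kernel inductively along a small extension $A' \twoheadrightarrow A$ with square-zero kernel $I$, starting from $X_A$, $Y_A=\delta(X_A)$ and a lift $P_A$. The obstruction to lifting $P_A$ to a perfect complex on $\delta(X_{A'}) \times_{A'} X_{A'}$ lies in $\text{Ext}^2_{Y\times X}(P,P)\otimes_k I \cong HH^2(X)\otimes_k I$, and by the Atiyah-class formalism it is the contraction of the Kodaira--Spencer class of the product deformation against the Atiyah class of $P$. By our choice $Y_{A'}=\delta(X_{A'})$ the two period lines are matched by $\Phi^H$, and since $\Phi=\Phi_P$ intertwines the Kodaira--Spencer/Atiyah data of the two factors, this obstruction vanishes. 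Because $\text{Ext}^1_{Y\times X}(P,P)\cong HH^1(X)=0$ for a K3 surface, the lift $P_{A'}$ is then unique whenever it exists, so the lifts assemble into a compatible system and yield property (2).

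The main obstacle, I expect, is this last compatibility: proving that the obstruction to deforming $P$ is governed exactly by the discrepancy between the two periods, i.e. that $\Phi_P$ intertwines the Atiyah/Kodaira--Spencer classes so that the obstruction vanishes precisely for the deformation $\delta(X_{A'})$ and for no other. Concretely this is the identification, via Hochschild--Kostant--Rosenberg $HH^2 \cong H^0(\wedge^2 T)\oplus H^1(T)\oplus H^2(\O)$, of the action of $\Phi$ on $HH^2$ with its action on the Mukai/de Rham filtration. Once this is in place, the positive-characteristic statement follows from the same deformation formalism as over $\C$, since all the cohomology groups in play are finite free over the Artinian base and no transcendental input is required.
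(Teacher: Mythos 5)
You should know at the outset that the paper does not prove this statement: it is quoted from Lieblich--Olsson (\cite{LO}, Theorem 6.3), and the paper explicitly refers the reader there for the proof. So the benchmark is the argument of \cite{LO}, and your outline does reconstruct its broad shape --- transport of deformation data through the cohomological realization of $\Phi$, an obstruction computation for deforming the kernel along small extensions, and $\mathrm{Ext}^1_{Y\times X}(P,P)=0$ (the paper's Lemma \ref{4.34}) for uniqueness. But two steps are genuinely broken or missing. First, your description of $Def_X$ is wrong as stated: the functor of isotropic lines in the full Mukai cohomology $\tilde{H}=H^0\oplus H^2\oplus H^4$ lifting $H^{2,0}$ has $22$-dimensional tangent space $\mathrm{Hom}(F^2,(F^2)^{\perp}/F^2)$, whereas $Def_X$ has the $20$-dimensional tangent space $H^1(X,T_X)$; geometric periods are lines in $H^2\otimes A$, and the two extra directions are exactly the gerby and noncommutative ones. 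Worse, hypothesis (1) alone does not make your transported line geometric: for $\ell_A\subset H^2(X)\otimes A$, the $H^4$-component of $(\Phi^H)^{-1}(\ell_A)$ vanishes because $\Phi^H$ fixes $(1,0,0)$, but its $H^0$-component equals $-\langle \ell_A,\Phi^H(0,0,1)\rangle=-\ell_A\cdot b$, where $b$ is the $H^2$-part of the algebraic class $\Phi^H(0,0,1)=(b^2/2,\,b,\,1)$. This pairing vanishes only modulo $\mathfrak{m}_A$; its vanishing over $A$ is precisely the condition that $b$ deforms as a line bundle along $\ell_A$, which fails in general. What hypothesis (1) actually buys is that the isotropic vector $(1,0,0)$, hence the subquotient $(1,0,0)^{\perp}/(1,0,0)\cong H^2$, is $\Phi^H$-stable, and the transport must be performed in this subquotient (equivalently, as in \cite{LO}, $\delta(X_A)$ is characterized as the unique deformation of $Y$ killing the obstruction class). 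Relatedly, in characteristic $p$ the identification of deformations with lifts of the Hodge filtration on a constant module is not formal: it requires the Berthelot--Ogus comparison and Ogus-style crystalline local Torelli over Artinian $W$-algebras, which is real work in \cite{LO} and where the restriction on $p$ enters; your closing claim that the positive-characteristic case follows by "the same deformation formalism as over $\C$" with "no transcendental input" elides exactly this.

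Second, the pivot of your argument --- that the obstruction in $\mathrm{Ext}^2_{Y\times X}(P,P)\otimes_k I$ is identified with the discrepancy of Kodaira--Spencer classes under $\Phi$, so that it vanishes for $\delta(X_{A'})$ and for no other deformation of $Y$ --- is asserted rather than proved. You flag it yourself as the main obstacle, and rightly so: this identification (over $\C$ due to Toda and Huybrechts--Macr\`i--Stellari, in characteristic $p$ carried out in \cite{LO} via Lieblich's deformation theory of complexes) is essentially the entire content of the theorem, and the compatibility of the HKR decomposition of $HH^2$ with the Mukai pairing is itself delicate in characteristic $p$. A smaller point: hypothesis (2) has nothing to do with projectivity of $\delta(X_A)$, which is vacuous over an Artinian base; its role is to make $\kappa$ carry the polarized subfunctors to each other, i.e.\ to give conclusion (1) with $\Phi(L)$ a genuine ($\pm$)polarization, and downstream to permit algebraization. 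So your proposal is the right strategy in outline, but as written the definition of $\delta$ fails on the nose and the decisive obstruction-matching step is postulated, not established.
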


\begin{theorem} [\cite{LO}, Theorem 7.1] \label{Hodgefil} 
Let $k$ be a perfect field of characteristic $p > 0$, $W$ be the ring of  Witt vectors of $k$, and $K$ be the field of fractions of $W$. Fix K3 surfaces $X$ and $Y$ over $k$ with lifts $X_W/W$ and $Y_W/W$. These lifts induce corresponding Hodge filtrations via de Rham cohomology on the crystalline cohomology of the special fibers. Denote  by $F^1_{Hdg}(X) \subset H^2(X/K) \subset H^*(X/K)$ and  $F^1_{Hdg}(Y) \subset H^2(Y/K) \subset H^*(Y/K)$ (similarly for $F^2_{Hdg}(-)$), where $ H^*(X/K)$ and $ H^*(Y/K)$ are the corresponding Mukai F-isocrystals. Suppose that $P \in D^b(X \times Y)$ is a kernel whose associated functor $\Phi: D^b(X) \ra D^b(Y)$ is fully faithful. If 
$$
\Phi:  H^*(X/K) \ra  H^*(Y/K)
$$
sends $F^1_{Hdg}(X)$ to $F^1_{Hdg}(Y)$ and $F^2_{Hdg}(X)$ to $F^2_{Hdg}(Y)$, then $P$ lifts to a perfect complex $P_W \in D^b(X_W \times_W Y_W)$.  
\end{theorem}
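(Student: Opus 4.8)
The plan is to lift $P$ to every infinitesimal order along the product of the two given lifts and then algebraize. Set $W_n = W/p^{n+1}$, $X_n = X_W\otimes_W W_n$ and $Y_n = Y_W\otimes_W W_n$, so that $X_0\times Y_0 = X\times Y$ and the kernel of $W_{n+1}\twoheadrightarrow W_n$ is $I_n\cong k$. I would build inductively a perfect complex $P_n\in D^b(X_n\times_{W_n}Y_n)$ with $P_0 = P$, reducing compatibly at each stage. A fully faithful Fourier--Mukai functor between derived categories of K3 surfaces is automatically an equivalence, the trivial canonical bundle forcing its image to be all of $D^b(Y)$; hence $P$ is rigid and $\text{Ext}^i_{X\times Y}(P,P)\cong HH^i(X)$, with $HH^0 = k$, $HH^1 = H^1(\O_X)\oplus H^0(T_X) = 0$ and $HH^2 = H^2(\O_X)\oplus H^1(T_X)\oplus H^0(\wedge^2 T_X)$. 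The vanishing $\text{Ext}^1(P,P) = 0$ makes each successive lift unique whenever it exists, so the $P_n$ glue into a formal object with no compatibilities to reconcile.

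By the Atiyah--Kodaira--Spencer obstruction theory for perfect complexes, the obstruction to lifting $P_n$ across the square-zero extension lies in $\text{Ext}^2_{X\times Y}(P,P)\otimes_k I_n \cong HH^2(X)\otimes_k I_n$ and equals the cup product $\kappa_n\cdot\mathrm{At}(P)$ of the Kodaira--Spencer class $\kappa_n\in H^1(T_{X\times Y})\otimes I_n$ of the ambient deformation with the Atiyah class $\mathrm{At}(P)\in\text{Ext}^1(P,P\otimes\Omega^1_{X\times Y})$. Since the ambient lift is a genuine product of schemes, $T_{X\times Y}$ splits and $\kappa_n = \kappa_{X,n}\oplus\kappa_{Y,n}$ is the pair of Kodaira--Spencer classes of the two chosen lifts in $H^1(T_X)\oplus H^1(T_Y)$; in particular the obstruction is fed only by geometric (commutative) deformation data, and $P$ need never be lifted in any noncommutative direction. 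It thus remains to show that $\kappa_{X,n}\oplus\kappa_{Y,n}$ lies in the kernel of cupping with $\mathrm{At}(P)$ at each stage.

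The crux, and the step I expect to be the main obstacle, is to identify this obstruction with the infinitesimal failure of $\Phi$ to respect the Hodge filtration. The cohomological action $\Phi\colon H^*(X/K)\to H^*(Y/K)$ is horizontal for the Gauss--Manin/crystalline connection, and the action of a Kodaira--Spencer class on the filtered pieces is precisely the derivative of the period map; under the comparison between the Atiyah class of $P$ and this connection one expects $\kappa_n\cdot\mathrm{At}(P)$ to be identified with the defect $\Phi\circ\nabla_{\kappa_{X,n}} - \nabla_{\kappa_{Y,n}}\circ\Phi$ read off on $F^\bullet_{Hdg}$. By Grothendieck--Messing theory together with the crystalline local Torelli theorem for K3 surfaces (\cite{Ogus}), a lift of $X$ over $W_n$ is the same datum as a lift of the Hodge filtration $F^\bullet_{Hdg}(X)\subset H^2(X/W_n)$; thus $X_W$ and $Y_W$ are encoded by $F^\bullet_{Hdg}(X)$ and $F^\bullet_{Hdg}(Y)$, and the hypothesis that $\Phi$ sends $F^1_{Hdg}(X)\to F^1_{Hdg}(Y)$ and $F^2_{Hdg}(X)\to F^2_{Hdg}(Y)$ says exactly that these filtered lifts correspond under $\Phi$ to all orders. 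That matching forces the defect, and hence $\kappa_n\cdot\mathrm{At}(P)$, to vanish for every $n$. Making this identification rigorous, i.e. matching the Atiyah-class obstruction with the variation of the Hodge filtration through the crystalline period map, is the technical heart of the argument.

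Finally, I would algebraize the resulting formal complex. By Deligne's lifting theorem \ref{Delignelifting} the lifts $X_W, Y_W$ may be chosen projective over $W$, so $X_W\times_W Y_W$ is projective over the complete local ring $W$; Grothendieck's existence theorem (formal GAGA), applied to the bounded complex, then promotes the compatible system $\{P_n\}$ to a perfect complex $P_W\in D^b(X_W\times_W Y_W)$ restricting to $P$ on the special fibre. This $P_W$ is the desired lift.
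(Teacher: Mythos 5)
First, note that the paper does not actually prove Theorem \ref{Hodgefil}: it is imported verbatim from Lieblich--Olsson (\cite{LO}, Theorem 7.1), and the text says only ``We refer the reader to \cite{LO} for the proof.'' So your attempt must be measured against the argument in \cite{LO}. Your outer skeleton is sound and broadly parallel to it: induct over $W_n = W/p^{n+1}$, use $\mathrm{Ext}^1_{X\times Y}(P,P)\cong HH^1(X)=0$ (valid, since a fully faithful Fourier--Mukai functor between K3s is an equivalence) to get uniqueness of each infinitesimal lift, and algebraize at the end --- though for the last step you need Grothendieck existence \emph{for perfect complexes} (Lieblich \cite{L}, Prop.\ 3.6.1, which is what this paper invokes in its Weak Lifting theorem), not classical formal GAGA, which handles coherent sheaves rather than objects of the derived category.

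The genuine gap is the one you flag yourself: the identification of the Huybrechts--Thomas obstruction $(\mathrm{id}_{P}\otimes\kappa)\circ A(P)\in\mathrm{Ext}^2(P,P)$ (Theorem \ref{Variationofdeformation}) with the infinitesimal failure of $\Phi$ to intertwine the two chosen Hodge-filtration lifts. You write that ``one expects'' this and that making it rigorous ``is the technical heart'' --- but that identification \emph{is} the theorem; everything surrounding it in your proposal is standard. In characteristic $p$ there is no off-the-shelf semiregularity-type statement equating $\kappa\cdot\mathrm{At}(P)$ with a period defect in crystalline cohomology, so as written the induction never gets off the ground. It is worth recording how \cite{LO} sidesteps exactly this computation: they never test the obstruction against the fixed pair $(X_{n+1},Y_{n+1})$. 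Instead, their deformation-functor isomorphism $\delta$ (quoted in this paper as Theorem \ref{liftingkernels}) produces, for each deformation $X_{n+1}$ of $X_n$, a deformation $\delta(X_{n+1})$ of $Y_n$ to which $P_n$ lifts \emph{automatically}; the filtration hypothesis is then used to prove $\delta(X_{n+1})\cong Y_{n+1}$, by showing the Hodge-filtration lift determined by $\delta(X_{n+1})$ is the image under $\Phi$ of that of $X_{n+1}$ and invoking the fact that infinitesimal deformations of a K3 surface correspond bijectively to lifts of $F^2\subset H^2_{dR}$ (a result of Deligne/Ogus for K3s --- your appeal to Grothendieck--Messing is a misattribution, as that theory concerns abelian schemes and $p$-divisible groups). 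Finally, note that the hypothesis of the theorem constrains $\Phi$ only on the isocrystal $H^*(X/K)$ relative to the filtrations cut out by the $W$-lifts; propagating this single condition through every infinitesimal stage is precisely what the period identification above delivers, and your proposal has no substitute for it.
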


We refer the reader to \cite{LO} for the proof.

\begin{remark}
Note that however, this is not true infinitesimally. We have the same counterexamples as in the case of infinitesimal integral variational Hodge conjecture: take a line bundle such that $\mathcal{L}^{\otimes p} \neq \O_X$, then we have the Chern character of $\mathcal{L}^{\otimes p}$ is $0$ as $ p. ch(\mathcal{L}) = 0$, so it lies in the correct Hodge level, but it need not lift. For example: see \cite{BO} Lemma 3.10.
\end{remark}

\begin{remark}
Note that the lifted kernel also induces an equivalence. Indeed, for a K3 surface fully faithful Fourier-Mukai functor of derived categories is an equivalence (see \cite{HuyFM} Proposition 7.6) and so we can also lift the Fourier-Mukai kernel of the inverse equivalence. Then the composition of the equivalence we started with and its inverse will give us a lift of the identity as an derived autoequivalence. But using the fact that the $\text{Ext}^1_{X \times X} (P, P) = 0$  (see Lemma \ref{4.34}) for any kernel inducing an equivalence, we get that the lift of the identity is unique and is the identity itself. Thus, the lifted Fourier-Mukai functor is an equivalence. 
\end{remark}

\begin{corollary}
Take $P$ to be $\O_{\Gamma(\sigma)}$, where $\sigma: X \ra X$ is an automorphism of a K3 surface $X$ over $k$. Then $P$ lifts to an autoequivalence of $D^b(X_W)$ if and only if $\sigma$ lifts to an automorphism of $X_W$ if and only if $P$ preserves the Hodge filtration. 
\end{corollary}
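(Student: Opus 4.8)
The plan is to establish the cycle of implications among the three conditions: (1) $P$ lifts to an autoequivalence of $D^b(X_W)$; (2) $\sigma$ lifts to an automorphism $\sigma_W$ of $X_W$; and (3) $P$ preserves the Hodge filtration. I would close the loop through $(2)\Rightarrow(1)$, $(2)\Rightarrow(3)$, $(3)\Rightarrow(1)$ and $(1)\Rightarrow(2)$, which together give all three equivalences. Throughout I use that $X$ is a K3 surface, so $H^1(X,\O_X)=0$, and I work with the fixed lift $X_W/W$, taking $X_W\times_W X_W$ as the chosen deformation of $X\times X$ (the convention fixed before Theorem \ref{deformationcomparison}).

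The two implications out of (2) are immediate. For $(2)\Rightarrow(1)$: given a lift $\sigma_W\colon X_W\to X_W$, the structure sheaf $\O_{\Gamma(\sigma_W)}$ of its graph is a perfect complex on $X_W\times_W X_W$ reducing to $P=\O_{\Gamma(\sigma)}$ and inducing the Fourier--Mukai autoequivalence $\Phi_{\O_{\Gamma(\sigma_W)}}$ of $D^b(X_W)$. For $(2)\Rightarrow(3)$: the generic fibre $\sigma_K\colon X_K\to X_K$ is an automorphism in characteristic zero, so $\sigma_K^*$ preserves the Hodge filtration on de Rham cohomology; since the cohomological realization of $\O_{\Gamma(\sigma_K)}$ is, up to the filtration-respecting Todd normalization $\sqrt{td}$, the pullback $\sigma_K^*$, it sends $F^1_{Hdg}$ and $F^2_{Hdg}$ to themselves, and the de Rham--crystalline comparison transports this to the Hodge filtration on crystalline cohomology induced by $X_W$.

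The implication $(3)\Rightarrow(1)$ is exactly Theorem \ref{Hodgefil} applied with $Y=X$ and the lift $X_W$: the functor $\Phi_P$ is fully faithful (being induced by an automorphism) and, by hypothesis, sends $F^1_{Hdg}(X)$ and $F^2_{Hdg}(X)$ to themselves, so $P$ lifts to $P_W\in D^b(X_W\times_W X_W)$, and by the remark following that theorem (using $\mathrm{Ext}^1(P,P)=0$) the lifted kernel again induces an autoequivalence.

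The remaining implication $(1)\Rightarrow(2)$ is where the real work lies, and is the step I expect to be the main obstacle. Suppose $P$ lifts to $P_W\in D^b(X_W\times_W X_W)$ inducing an autoequivalence. Since $P=\O_{\Gamma(\sigma)}$ is a coherent sheaf and, as explained after Theorem \ref{deformationcomparison}, deformations cannot grow cohomology sheaves, $P_W$ is quasi-isomorphic to a single coherent sheaf $\mathcal F_W$, flat over $W$, with $\mathcal F_W\otimes_W k\cong\O_{\Gamma(\sigma)}$. Hence $\mathcal F_W=\O_Z$ for a closed subscheme $Z\subset X_W\times_W X_W$, flat over $W$, with special fibre $\Gamma_\sigma$. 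The first projection $p_1\colon Z\to X_W$ reduces to an isomorphism modulo $\m$, so by the fibrewise isomorphism criterion for flat schemes (EGA IV 17.9.5, as in Lemma \ref{24.8}) $p_1$ is an isomorphism, whence $Z=\Gamma_{\sigma_W}$ is the graph of the automorphism $\sigma_W=p_2\circ p_1^{-1}$ lifting $\sigma$. Alternatively, I would run this argument infinitesimally over each $W/\m^{n+1}$ via the identification $F_{aut}\cong F_{coh}$ of Theorem \ref{deformationcomparison} (valid since $H^1(X,\O_X)=0$) and then algebraize the compatible family of graphs by Grothendieck's existence theorem, using that $X_W\times_W X_W$ is projective over the complete local ring $W$. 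The delicate point is to confirm that the abstractly lifted kernel is genuinely the structure sheaf of a graph and not some other coherent sheaf; here the no-growing-cohomology argument together with the flatness and fibrewise-isomorphism criterion is exactly what is needed.
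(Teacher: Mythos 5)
Your proposal is correct and follows essentially the same route as the paper, which obtains the corollary by combining Theorem \ref{deformationcomparison} (the identification $F_{aut}\cong F_{coh}$, valid since $H^1(X,\O_X)=0$, together with the observation that a perfect-complex deformation of a sheaf is again a sheaf) with Theorem \ref{Hodgefil} and the remark that a lifted kernel again induces an equivalence. The one caveat is that your direct assertion ``hence $\mathcal F_W=\O_Z$ for a closed subscheme $Z$'' in $(1)\Rightarrow(2)$ is not automatic for an abstract flat deformation of $\O_{\Gamma_\sigma}$ as a coherent sheaf; this is precisely what the long exact sequence of Proposition \ref{HartshorneEx19.1} supplies, and your alternative argument via $F_{aut}\cong F_{coh}$ over each $W/\m^{n+1}$ followed by Grothendieck existence is the correct way to close it, as you yourself note.
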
 

However, we see that we can still lift it as an isomorphism as follows:
\begin{theorem}[Weak Lifting of Automorphisms]
Let $\sigma: X \rightarrow X$ be an automorphism of a K3 surface $X$ defined over an algebraically closed field $k$ of characteristic $p$. There exists a smooth projective model $X_R /R$, where $R$ is a discrete valuation ring that is a finite extension of $W(k)$, with $X_K$ its generic fiber such that there is a $P_R$, a perfect complex in $D^b (X_R \times Y_R)$, reducing to $\mathcal{O}_{\Gamma(\sigma)}$ on $X \times X$, where $Y_R$ is another smooth projective model abstractly isomorphic to $X_R$ (see Remark [\ref{iso}] below).
\end{theorem}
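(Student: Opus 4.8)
The plan is to verify that the autoequivalence $\Phi := \Phi_{\O_{\Gamma(\sigma)}}$ satisfies the numerical hypotheses of Theorem \ref{liftingkernels} (applied with $Y = X$), to use the resulting isomorphism of deformation functors to transport a Deligne lift of $X$ into a matching pair of formal lifts carrying a formal kernel, and then to algebraize everything by Grothendieck's existence theorem. First I would record the cohomological action of $\Phi$. Since $\sigma$ is an automorphism, the induced cohomological transform is the isometry $\sigma_*$ of the Mukai (iso)crystal: it acts as the identity on $H^0 \oplus H^4$ and as $\sigma_*$ on $H^2$. In particular $\Phi(1,0,0) = (1,0,0)$, which is hypothesis (1); and the induced isometry $\kappa$ of $\mathrm{Pic}(X)$ equals $\sigma^*$ (resp. its inverse), which preserves ampleness, so $\kappa$ carries the ample cone $C_X$ onto $C_X$, which is hypothesis (2) with the $+C_X$ option. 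Theorem \ref{liftingkernels} then supplies an isomorphism $\delta : Def_X \ra Def_X$ of infinitesimal deformation functors with $\delta^{-1}(Def_{(X,L)}) = Def_{(X,\Phi(L))}$, and for every Artinian $W$-algebra $A$ with residue field $k$ and every $X_A \in Def_X(A)$ an object $P_A \in D^b(\delta(X_A) \times_A X_A)$ reducing to $\O_{\Gamma(\sigma)}$.

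Next I would produce an honest lift over a discrete valuation ring. Fixing an ample $L_0$ on $X$, Deligne's Theorem \ref{Delignelifting} gives a smooth projective $X_R/R$ with $R$ a discrete valuation ring finite over $W(k)$ together with an ample $\mathcal{L}_R$ lifting $L_0$. Passing to the Artinian quotients $R/\mathfrak{m}_R^n$ (these are Artinian $W$-algebras with residue field $k$, since $k$ is algebraically closed) yields a compatible system $\{X_{R/\mathfrak{m}_R^n}\}$, i.e. a formal deformation $\hat{X}_R$ of $X$. Applying $\delta$ levelwise produces a compatible system $Y_{R/\mathfrak{m}_R^n} := \delta(X_{R/\mathfrak{m}_R^n})$, hence a formal deformation $\hat{Y}_R$. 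By the polarized compatibility $\delta^{-1}(Def_{(X,L)}) = Def_{(X,\Phi(L))}$ the class on the $\hat{Y}_R$-side matched to $L_0$ is $\Phi^{-1}(L_0) = \kappa^{-1}(L_0)$, which is again ample because $\kappa$ preserves ampleness; thus $\hat{Y}_R$ carries a compatible system of ample line bundles, and Grothendieck's existence theorem algebraizes it to a smooth projective $Y_R/R$. Simultaneously the objects $P_{R/\mathfrak{m}_R^n}$ assemble into a formal object on $\hat{Y}_R \times \hat{X}_R$.

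Finally I would algebraize the kernel. As observed earlier in this section, any deformation of the sheaf $\O_{\Gamma(\sigma)}$ as a perfect complex is again a coherent sheaf, since deformations cannot grow cohomology sheaves; hence each $P_{R/\mathfrak{m}_R^n}$ is an $R/\mathfrak{m}_R^n$-flat coherent sheaf on $Y_{R/\mathfrak{m}_R^n} \times X_{R/\mathfrak{m}_R^n}$. These algebraize, once more by Grothendieck's existence theorem on the projective $R$-scheme $Y_R \times_R X_R$, to a coherent sheaf $P_R$; it is perfect because $Y_R \times_R X_R$ is regular, it is flat over $R$, and it reduces to $\O_{\Gamma(\sigma)}$ on $X \times X$. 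This gives the desired $P_R \in D^b(X_R \times Y_R)$, and the assertion that $Y_R$ is abstractly isomorphic to $X_R$ is explained in Remark \ref{iso}.

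The step I expect to be the main obstacle is this algebraization: one must guarantee that $Y_R = \delta(\hat{X}_R)$ is genuinely projective (not merely a formal or proper algebraic space) in order to invoke Grothendieck existence both for the family and for the formal kernel. This is precisely where the polarized compatibility of $\delta$ together with the ampleness-preservation of $\kappa$ are indispensable, since they produce the ample line bundle on $\hat{Y}_R$ needed for algebraization; without controlling the polarization one could only conclude the existence of a formal or algebraic-space lift, for which the existence theorems do not directly apply.
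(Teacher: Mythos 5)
Your construction of the lift and the kernel is sound and matches the paper's first two steps: you correctly compute that $\Phi_{\O_{\Gamma(\sigma)}}$ acts on cohomology as $\sigma_*$, hence fixes $(1,0,0)$ and maps the ample cone to itself, so Theorem \ref{liftingkernels} applies; you then run $\delta$ along the Artinian quotients of a Deligne lift, use the polarized compatibility $\delta^{-1}(Def_{(X,L)}) = Def_{(X,\Phi(L))}$ to algebraize the second formal family, and algebraize the kernel. Your one genuine variation is at the last step: the paper invokes Lieblich's Grothendieck existence theorem for perfect complexes (\cite{L} Proposition 3.6.1), whereas you observe that since $\O_{\Gamma(\sigma)}$ is a sheaf and deformations cannot grow cohomology sheaves (as established earlier in this section), each $P_n$ is an $R/\mathfrak{m}^n$-flat coherent sheaf, so the classical Grothendieck existence theorem suffices. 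That is a legitimate and mildly more elementary route for this particular kernel, though it does not generalize to arbitrary Fourier--Mukai kernels the way the paper's citation does.

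However, there is a genuine gap: the theorem asserts that $Y_R$ is \emph{abstractly isomorphic} to $X_R$, and you never prove this --- you defer it to Remark \ref{iso}, but that remark is commentary on the conclusion of the paper's proof and presupposes exactly the unproven claim that the generic fibers $X_K$ and $Y_K$ are isomorphic. Nothing in the deformation-theoretic construction tells you this; $\delta(X_A)$ is a priori just some other lift of $X$. The paper's entire third step is devoted to this point, and it requires a real idea: base change the lifted equivalence $\Phi_{P_{K'}}$ to $\C$ via an embedding $\bar{K} \hookrightarrow \C$, use the Berthelot--Ogus comparison to see that the equivalence is filtered, so that by Theorem \ref{DerivedHodge} it induces an integral Hodge isometry $H^2(X_{K'} \otimes \C, \Z) \cong H^2(X'_{K'} \otimes \C, \Z)$, invoke the Global Torelli Theorem to conclude $X_{K'} \otimes \C \cong X'_{K'} \otimes \C$, descend this to an isomorphism of generic fibers after a further finite extension of the base ring (this is one reason the statement allows $R$ to be a finite extension of $W(k)$), and finally extend it across the closed fiber using the Matsusaka--Mumford theorem (Proposition \ref{MM}), which applies because K3 surfaces are not ruled and the polarizations were carried along. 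Your closing paragraph identifies projectivity of $Y_R$ as the main obstacle; in fact that part is handled correctly by your polarization bookkeeping, and the essential missing content is this Torelli plus Matsusaka--Mumford argument.
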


\begin{proof}
We divide the proof into 3 steps:
\begin{enumerate}

\item Lifting Kernels Infinitesimally:
 Note that $\Phi_{\mathcal{O}_{\Gamma(\sigma)}}$ is a strongly filtered derived equivalence, i.e., 
\begin{equation*}
\Phi_{\mathcal{O}_{\Gamma(\sigma)}}^* = \sigma^*: H^i_{crys}(X/W) \xrightarrow{\sim} H^i_{crys}(X/W)
\end{equation*} 
is an isomorphism which preserves the gradation of crystalline cohomology. Choose a projective lift of $X$ to characteristic zero along with a lift of $H_X$. It always exists as proved by Deligne \cite{Deligne}, i.e., a projective lift $(X_V, H_{X_V})$ of $(X, H_X)$ over $V$ a discrete valuation ring, which is a finite extension of $W(k)$, the Witt ring over $k$. Let $V_n := V/ \mathfrak{m}^n$ for $n \geq 1$, $\mathfrak{m}$ the maximal ideal of V  and let $K$ denote the fraction field of $V$. Then, for each $n$, using the lifting criterion above, there exists a polarized lift $(X'_n, H_{X'_n})$ over $V_n$ and a complex $P_n \in D_{Perf}(X_n \times X'_n)$ lifting $\mathcal{O}_{\Gamma(\sigma)}$.

\item Applying the Grothendieck Existence Theorem for perfect complexes:  
By the classical Grothendieck Existence Ttheorem \cite{HartshorneAG}, II.9.6, the polarized formal scheme $(\varprojlim X'_n, \varprojlim H_{X'_n})$ is algebraizable. So, there exists a projective lift $(X', H_{X'})$ over $V$ that is the formal completion of $(X'_n, H_{X'_n})$. Now using the Grothendieck Existence Theorem for perfect complexes (see \cite{L} Proposition 3.6.1) the formal limit of $(P_n)$ is algebraizable and gives a complex $P_V \in D_{Perf}(X_V \times X'_{V})$. In particular, $P_V$ lifts $\mathcal{O}_{\Gamma(\sigma)}$ and using Nakayama's lemma, $P_V$ induces an equivalence. 

\item Now apply the global Torelli Theorem to show that the two models are isomorphic:
For any field extension $K'$ over $K$, the generic fiber complex $P_{K'} \in D^b(X_{K'} \times X'_{K'})$ induces a Fourier-Mukai equivalence $\Phi_{P_{K'}}:D(X_{K'}) \rightarrow D(X'_{K'})$. Using Bertholet-Ogus isomorphisms \cite{BO}, we see that $\Phi_{K'}$ preserves the gradation on de Rham cohomology of $X_{K'}$. Fix an embedding of $K' \hookrightarrow \mathbb{C}$ gives us a filtered Fourier Mukai equivalence 
\begin{equation*} 
\Phi_{P_{\mathbb{C}}}: D^b(X_{K'} \times \mathbb{C}) \rightarrow D^b(X'_{K'} \times \mathbb{C}),
 \end{equation*}
which in turn induces an Hodge isometry of integral lattices:
\begin{equation*}
H^2(X_{K'} \times \mathbb{C}, \mathbb{Z}) \xrightarrow{\sim} H^2(X'_{K'} \times \mathbb{C}, \mathbb{Z}),
\end{equation*}
using Theorem \ref{DerivedHodge} and the fact that a filtered equivalence preserves the grading. 
This implies that $X_{K'} \times \mathbb{C} \cong X'_{K'} \times \mathbb{C}$, which after taking a finite extension $V'$ of $V$ gives that the generic fiber are isomorphic  $X_{K'} \cong X'_{K'}$ (we abuse notation to still denote the fraction field of $V'$  by $K'$). And since the polarization was lifted along, this gives actually a map of polarized K3 surfaces denoted by $f_{K'}: (X_{K'}, H_{X_{K'}}) \xrightarrow{\sim} (X'_{K'}, H_{X'_{K'}})$.\\
\end{enumerate}
Now we can conclude that the the generic fibers are isomorphic as well by forgetting the polarization. So now we need to show that the models are isomorphic, i.e., $X_{V'} \cong X'_{V'}$, which will follow from the following proposition.
\end{proof}

\begin{proposition}[Matsusaka-Mumford, \cite{MM}] \label{MM}
Let $X_R$ and $Y_R$ be two varieties over a discrete valuation ring $R$ with residue field $k$,  $X_K$ and $Y_K$ be their generic fibers defined over $K$, the fraction field of $R$, and  the special fibers $X_k$ and $Y_k$ be non-singular varieties. Assume that $X_K, Y_K, X_k, Y_k$ are underlying varieties of polarized varieties, $(X_K, H_{X_K}), (Y_K, H_{Y_K}), (X_k, H_{X_k}), (Y_k, H_{Y_k})$ and that the specialization map extends to the polarized varieties. Then, for $Y_k$ not ruled, if there is an isomorphism $f_K: (X_K, H_{X_K}) \rightarrow (Y_K, H_{Y_K})$, $f_K$ can be extended to an isomorphism $f_R$ of $X_R$ and $Y_R$. Moreover, the graph of $f_K$ specializes to the graph of an isomorphism $f_k$ between $(X_k, H_{X_k})$ and $(Y_k, H_{Y_k})$. 
\end{proposition}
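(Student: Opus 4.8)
The plan is to extend $f_K$ by taking the scheme-theoretic closure of its graph and then showing that the special fiber of this closure is again the graph of an isomorphism. Write $p_1, p_2$ for the two projections of $X_R \times_R Y_R$, let $\Gamma_{f_K} \subset X_K \times_K Y_K$ be the graph of $f_K$, and let $\bar{\Gamma} \subset X_R \times_R Y_R$ be its scheme-theoretic closure. As the closure of the integral generic fiber, $\bar{\Gamma}$ is integral and dominates $\Spec(R)$, hence is $R$-torsion free and therefore flat over the discrete valuation ring $R$; it is proper over $R$ since $X_R \times_R Y_R$ is. Thus $\bar{\Gamma}$ is a flat proper $R$-family whose generic fiber is $\Gamma_{f_K}$, and the projection $p_1 \colon \bar{\Gamma} \to X_R$ is proper and birational. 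It therefore suffices to prove that the special fiber $\bar{\Gamma}_k \subset X_k \times_k Y_k$ is the reduced graph of an isomorphism $f_k \colon X_k \to Y_k$: then $f_R := p_2 \circ p_1^{-1}$ exists as a morphism $X_R \to Y_R$ extending $f_K$, the symmetric argument applied to $f_K^{-1}$ gives its inverse, and the asserted specialization of the graph of $f_K$ to the graph of $f_k$ is built in.

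The crucial and genuinely delicate step is to show that $\bar{\Gamma}_k$ is reduced and irreducible. Since $p_1$ is proper and birational, conservation of number shows that $[\bar{\Gamma}_k]$ pushes forward to $1 \cdot [X_k]$, so there is a unique component $Z_0$ of $\bar{\Gamma}_k$ dominating $X_k$, it occurs with multiplicity one, and $p_1|_{Z_0}$ is birational; the same holds for $Y_k$. Any further component $E$ would be vertical, in the sense that $p_1(E) \subsetneq X_k$ is a proper subvariety, so the fibers of $p_1|_E$ are positive-dimensional subvarieties lying in the slices $\{x\} \times_k Y_k$ and thereby sweep out subvarieties of $Y_k$ by an algebraic family of positive-dimensional cycles. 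Analyzing this family through the specialized graph cycle and the compatibility of the polarizations forces these subvarieties to be ruled, which contradicts the hypothesis that $Y_k$ is not ruled. This component analysis — proving that the specialization of the graph cycle of an isomorphism of polarized varieties stays reduced and irreducible when the special fiber is not ruled — is exactly the content of Matsusaka--Mumford, and I expect it to be the main obstacle; the ruled case genuinely fails, as one sees already from a degenerating automorphism $z \mapsto \lambda z$ of $\mathbb{P}^1$, whose graph limits to a reducible cycle.

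Granting that $\bar{\Gamma}_k = Z_0$ is reduced and irreducible, I would finish using the polarizations. Because $f_K$ is an isomorphism of polarized generic fibers we have $p_1^{*}H_{X_K} = p_2^{*}H_{Y_K}$ on $\Gamma_{f_K}$, and since the specialization map extends to the polarized varieties the difference $p_1^{*}H_{X_R} - p_2^{*}H_{Y_R}$ is trivial on the generic fiber; as the special fiber is now irreducible and cut out by a uniformizer of $R$, this difference restricts to a numerically trivial class on $\bar{\Gamma}_k$, giving $p_1^{*}H_{X_k} \equiv p_2^{*}H_{Y_k}$ there. Consequently a curve $C \subset \bar{\Gamma}_k$ is contracted by $p_1$ if and only if it is contracted by $p_2$, because $H_{X_k}$ and $H_{Y_k}$ are ample; but a curve contracted by both projections would map to a point of $X_k \times_k Y_k$, which is impossible. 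Hence $p_1$ contracts no curve, so it is quasi-finite, and being proper and birational onto the normal (smooth) surface $X_k$ it is an isomorphism by Zariski's Main Theorem; likewise $p_2$. Therefore $\bar{\Gamma}_k$ is the graph of the isomorphism $f_k = p_2 \circ p_1^{-1}$, which preserves the polarizations, completing the proof.
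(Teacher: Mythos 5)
First, for context: the paper contains no proof of this proposition at all --- it is imported as a black-box classical result, with \cite{MM} (where it is Theorem 2) cited for the entire statement --- so there is no internal argument to compare against, and your attempt must be judged on its own merits. Your soft reductions are correct and standard: the scheme-theoretic closure $\bar{\Gamma}$ of the graph is integral, proper and (being $R$-torsion free over a DVR) flat over $R$; compatibility of proper pushforward with specialization of cycle classes does give $p_{1*}[\bar{\Gamma}_k] = [X_k]$, hence a unique multiplicity-one component mapping birationally to $X_k$; and, \emph{granting} that $\bar{\Gamma}_k$ is integral, your endgame is sound --- the difference $p_1^*H_{X_R} - p_2^*H_{Y_R}$ is numerically trivial on the special fiber (by constancy of intersection numbers in flat proper families; the remark about the uniformizer is beside the point), so $p_1$ and $p_2$ contract the same curves, hence none, and Zariski's Main Theorem applied to a finite birational map onto a nonsingular variety finishes. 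Your degenerating $z \mapsto \lambda z$ example correctly illustrates why non-ruledness is needed.

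The genuine gap is exactly where you flag it, but it is larger than your sketch acknowledges, in two respects. First, your component bookkeeping does not show that the component dominating $X_k$ coincides with the one dominating $Y_k$: conservation of number gives one degree-one component for each projection \emph{separately}, and a priori $Z_0$ could be $p_2$-vertical while a different component carries the degree for $p_2$; excluding this already requires the intersection-theoretic comparison of the specialized cycle against mixed powers $(p_1^*H_{X_k})^a \cdot (p_2^*H_{Y_k})^b$, which you never set up, and your final step tacitly assumes a single component dominating both factors. Second, the sentence ``analyzing this family \ldots forces these subvarieties to be ruled'' is an assertion, not an argument: a positive-dimensional fiber of $p_1|_E$ is merely some subvariety of $Y_k$ with no visible rationality, and what must be contradicted is that $Y_k$ \emph{itself} is ruled (the hypothesis does not forbid $Y_k$ from containing ruled subvarieties). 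Deducing ruledness of $Y_k$ from the presence of extra components is precisely the substantive content of Matsusaka--Mumford's theorems, resting on their analysis of degree-one polarized correspondences and a ruledness criterion. Since you explicitly defer this core step to \cite{MM}, what you have written is an honest reduction to the reference rather than a proof; given that the paper itself simply cites \cite{MM} for the whole statement, the appropriate course is either to do the same outright or to supply the missing component analysis in full.
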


\begin{remark} \label{iso}
Note that even though the generic fibers are isomorphic which indeed implies that the models are abstractly isomorphic (via the Matsusaka-Mumford Theorem) but not as models of the special fiber as the isomorphism will not be the identity on the special fiber, just for the simple reason that we started with different polarizations on the special fibers. 
\end{remark}

\begin{remark}
This dependence on the choice of the lift $X_A$ of $X$  and the ability to find another lift $Y_A$ can be seen as a reformulation of the formula stated below:
\begin{theorem}[\cite{HuyRT} Theorem on page 2]  \label{Variationofdeformation}
Let $P_0$ be a perfect complex on a separated noetherian scheme $X_0$ and let $i: X_0 \hookrightarrow X$  be a closed embedding defined by an ideal $I$ of square zero. Assume that $X$ can be embedded into a smooth ambient space $A$ (for example if $X$ is quasi-projective). Then there exists a perfect complex $P$ on $X$ such that the derived pullback $i^*P$ is quasi-isomorphic  to $P_0$ if and only if 
\begin{equation*}
0 = (id_{P_0} \otimes \kappa(X_0/X)) \circ A(P_0) \in \text{Ext}^2_{X_0}(P_0, P_0 \otimes I),
\end{equation*}
where $A(P_0)$ is the (truncated) Atiyah class and $\kappa(X_0/X)$ is the (truncated) Kodaira-Spencer class.   
\end{theorem}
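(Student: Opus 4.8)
The plan is to recognize that this is a statement in the deformation theory of perfect complexes, and to first isolate the obstruction class abstractly before identifying it with the Atiyah--Kodaira-Spencer composite. First I would invoke the general deformation theory of perfect complexes over a square-zero extension (as in Lieblich's thesis or Illusie's cotangent-complex formalism): for the closed immersion $i: X_0 \hookrightarrow X$ defined by the square-zero ideal $I$, there is a canonical obstruction class $o(P_0) \in \text{Ext}^2_{X_0}(P_0, P_0 \otimes I)$ whose vanishing is necessary and sufficient for the existence of a perfect complex $P$ on $X$ with $i^*P \simeq P_0$; when it vanishes, the set of such lifts is a torsor under $\text{Ext}^1_{X_0}(P_0, P_0 \otimes I)$ with infinitesimal automorphisms $\text{Hom}_{X_0}(P_0, P_0 \otimes I)$. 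This part is formal and reduces the theorem to a precise identification of the single class $o(P_0)$.

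Next I would construct the two characteristic classes using the smooth ambient space $A$. Since $A$ is smooth and quasi-projective, $\Omega_A$ is locally free and, by the resolution property, $P_0$ can be represented by a bounded complex of locally free $\mathcal{O}_{X_0}$-modules obtained by restricting a locally free resolution on $A$. Writing $J_{X_0} \subset \mathcal{O}_A$ for the ideal of $X_0$, the truncated cotangent complex is $\tau_{\geq -1} L_{X_0} = [J_{X_0}/J_{X_0}^2 \to \Omega_A|_{X_0}]$ in degrees $-1, 0$, with conormal term $N^\vee_{X_0/A} = J_{X_0}/J_{X_0}^2$. The truncated Atiyah class $A(P_0) \in \text{Ext}^1_{X_0}(P_0, P_0 \otimes \tau_{\geq -1} L_{X_0})$ I would build from local algebraic connections on the locally free terms: the differences of local connections are $\mathcal{O}$-linear and assemble, together with the differential, into a \v{C}ech cocycle representing $A(P_0)$. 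The Kodaira-Spencer class $\kappa(X_0/X) \in \text{Ext}^1_{X_0}(\tau_{\geq -1} L_{X_0}, I)$ I would describe through the inclusion $J_X \subset J_{X_0}$ of the ideals of $X, X_0$ in $A$: the square-zero condition gives $J_{X_0}^2 \subset J_X$, so there is a well-defined surjection $N^\vee_{X_0/A} = J_{X_0}/J_{X_0}^2 \to J_{X_0}/J_X = I$ representing $\kappa$ as a morphism $\tau_{\geq -1} L_{X_0} \to I[1]$ in $D(X_0)$.

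With these in place, the composite $(\text{id}_{P_0} \otimes \kappa(X_0/X)) \circ A(P_0)$ is the morphism $P_0 \xrightarrow{A(P_0)} P_0 \otimes \tau_{\geq -1}L_{X_0}[1] \xrightarrow{\text{id} \otimes \kappa} P_0 \otimes I[2]$, an element of $\text{Ext}^2_{X_0}(P_0, P_0 \otimes I)$. The heart of the proof is then to show $o(P_0) = (\text{id} \otimes \kappa) \circ A(P_0)$. I would do this by a direct \v{C}ech computation on an affine cover $\{U_\alpha\}$ of $X_0$: choose local lifts of $P_0|_{U_\alpha}$ over $X$ (these exist because a bounded complex of locally frees always lifts over a square-zero extension locally), measure the failure of the local lifts to glue on overlaps, and show that the resulting $2$-cocycle valued in $\mathcal{E}nd(P_0) \otimes I$ coincides with the cocycle obtained by feeding the \v{C}ech representative of $A(P_0)$ through $\kappa$. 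The point is that the gluing discrepancy is governed precisely by how the local trivializations move in the conormal direction $N^\vee_{X_0/A}$, which $\kappa$ then pushes into $I$, matching the Atiyah-class cocycle term by term.

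The step I expect to be the main obstacle is exactly this last identification, which conceals two genuine difficulties. First, defining the truncated Atiyah class intrinsically for a perfect --- not merely locally free --- complex on a possibly singular $X_0$, and checking independence of the chosen resolution on $A$ and of the local connections; this uses the smoothness of $A$ in an essential way and requires comparing the jet-sequence and connection descriptions. Second, the sign and identification bookkeeping in matching the abstract obstruction $o(P_0)$ (defined via the cotangent complex and a cone construction) with the explicit \v{C}ech cocycle for the composite; getting the canonical isomorphisms such as $\text{Hom}(J_{X_0}/J_{X_0}^2, I) \cong \ldots$ and the connecting maps to line up is where the real work lies. Everything else --- local liftability, the torsor structure, and the functoriality of the cotangent complex producing $\kappa$ --- is formal.
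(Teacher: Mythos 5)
You should know at the outset that the paper never proves this statement: it is imported verbatim from Huybrechts--Thomas \cite{HuyRT} and stated inside a remark (as a reformulation of the dependence on the choice of lift), so there is no internal proof to compare against; the honest benchmark is the proof in \cite{HuyRT} itself. Measured against that, your skeleton is largely faithful to the source. The two-term model $[J_{X_0}/J_{X_0}^2 \to \Omega_A|_{X_0}]$ of $\tau_{\geq -1}L_{X_0}$, the Kodaira--Spencer class as the map induced by $J_{X_0}^2 \subseteq J_X$, namely $J_{X_0}/J_{X_0}^2 \twoheadrightarrow J_{X_0}/J_X = I$, and the reduction of the theorem to identifying one canonical obstruction class with the composite $(\mathrm{id}_{P_0}\otimes \kappa)\circ A(P_0)$ are all exactly the Huybrechts--Thomas setup. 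One difference of route is worth noting: where you propose to fix an Illusie/Lieblich obstruction class in advance and match \v{C}ech cocycles, \cite{HuyRT} deliberately avoids the cotangent-complex machinery and exploits the ambient space more strongly --- in essence, the first infinitesimal neighbourhood of $X_0$ in the smooth $A$ plays the role of a universal square-zero extension, liftability to it is governed by the truncated Atiyah class via the jet sequence, and the given extension $X$ is reached by pushing out along the map $J_{X_0}/J_{X_0}^2 \to I$ representing $\kappa$, so the composite formula falls out of functoriality of obstructions. Your route also requires knowing that your \v{C}ech class agrees with Illusie's, which is a comparison \cite{HuyRT} treats as a serious point, not a formality.

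Two steps of your plan, as written, would fail and need repair. First, you cannot represent $P_0$ by ``restricting a locally free resolution on $A$'': a locally free complex on $A$ resolves the pushforward $i_{0*}P_0$, and its restriction to $X_0$ computes $Li_0^{*}i_{0*}P_0$, which differs from $P_0$ by Tor correction terms (already for $P_0 = \mathcal{O}_{X_0}$ one picks up Koszul cohomology from the conormal bundle). The fix is to invoke the resolution property of the quasi-projective scheme $X_0$ directly; smoothness of $A$ enters only through the local freeness of $\Omega_A$, the jets/connections defining $A(P_0)$, and the two-term cotangent model --- not through resolving $P_0$. Second, and more seriously, your \v{C}ech computation is predicated on ``local lifts of the complex always exist over a square-zero extension.'' For a single locally free sheaf on an affine this is true, but for a complex it is not: lifting the terms and the differentials leaves a defect $\tilde d^{\,2}$ whose class lives in the hyper-Ext group $\text{Ext}^2_U(P_0|_U, P_0|_U \otimes I)$, and this does not vanish on affines when $X_0$ is singular --- a case the theorem explicitly allows (when $X_0$ is smooth, affine square-zero extensions are split and the statement is easy). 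The standard repair is not to demand honest local lifts but to lift terms and differentials without requiring $\tilde d^{\,2}=0$ and to assemble the total obstruction cocycle in the \v{C}ech--Hom double complex, with components $\tilde d^{\,2}$ in bidegree $(0,2)$, the discrepancy of lifted differentials in $(1,1)$, and the gluing discrepancy of lifted terms in $(2,0)$; your description captures only the last two. Alternatively, follow \cite{HuyRT} and push the obstruction forward from the first-order neighbourhood, which bypasses local lifting of the complex altogether.
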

\end{remark}

\begin{remark} \label{Modulidirection} 
The above results can be rephrased to say that in the moduli space of lifts of $X \times X$ we cannot always deform the automorphism in the direction of $X_A \times X_A$ but can do so always in the direction of some $X_A \times Y_A$, where $X_A$ and the automorphism determine $Y_A$ uniquely.  
\end{remark}

Next, we discuss the structure of the derived autoequivalence group of a K3 surface of finite height. 

%%%%%%%%%%%%%%%%%%%%%%%%%%%%%%%%%%%%%%%%%%%%%%%%%%%%%%%%%%%%%%%%%%%%%%%%%%%%%%%%%%%%%%%%%%%%%%%%%%%%%%%%%%%%%%
\subsection{The Cone Inversion Map}

Let $X$ be a K3 surface over $k$ of finite height with $char(k) = p > 3$.
\begin{definition}
The \textbf{positive cone} $ \mathcal{C}_X \subset NS(X)_{\R}$ is the connected component of the set $\{\alpha \in NS(X)| (\alpha)^2 > 0\}$ that contains one ample class (or equivalently, all of them).  
\end{definition}

\begin{definition}[Cone Inversion map] Let $\mathcal{C}_X$ be the positive cone, the \textbf{cone inversion map} on the cohomology is the map that sends the positive cone $\mathcal{C}_X$ to $-\mathcal{C}_X$. 
\end{definition}
Explicitly, in characteristic 0, we define the map to be $(-id_{H^2}) \oplus id_{H^0 \oplus H^4}: \tilde{H}^*(X, \Z) \ra  \tilde{H}^*(X, \Z)$, where $\tilde{H}^*(X, \Z)$ is the Mukai lattice (\cite{HuyFM}, Section 10.1). Note that the cone inversion map is a Hodge isometry. In characteristic $p >3$, we define the map to be $(-id_{H^2}) \oplus id_{H^0 \oplus H^4}: H^*_{crys}(X/K) \ra  H^*_{crys}(X/K)$, where $H^*_{crys}(X/K)$ is the Mukai F-isocrystal (see appendix below). Note that the cone inversion map preserves the Hodge Filtration on $H^2_{crys}(X/K)$. 

(In characteristic 0, the following proposition is proved in \cite{Huy} with the Mukai F-crystal replaced with Mukai lattice.).  

\begin{theorem} \label{orientation preserving}
The image of $Aut(D^b(X))$ in $Aut(\mathcal{H}^*_{crys}(X/K))$ has index at least 2, where $\mathcal{H}^*_{crys}(X/K)$ is the Mukai F-isocrystal.
\end{theorem}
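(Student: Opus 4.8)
The plan is to prove the statement by exhibiting a single element of $Aut(\mathcal{H}^*_{crys}(X/K))$ that does not lie in the image; since $Aut(D^b(X)) \to Aut(\mathcal{H}^*_{crys}(X/K))$ is a group homomorphism, its image is a subgroup, and a \emph{proper} subgroup automatically has index at least $2$. The natural candidate for a non-realized element is the cone inversion map $\mathfrak{i} = (-id_{H^2}) \oplus id_{H^0 \oplus H^4}$.

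First I would verify that $\mathfrak{i}$ genuinely belongs to $Aut(\mathcal{H}^*_{crys}(X/K))$. It is $K$-linear; a one-line computation with the Mukai pairing $\langle (a_1,b_1,c_1),(a_2,b_2,c_2)\rangle = b_1 b_2 - a_1 c_2 - a_2 c_1$ shows that sending $(a,b,c) \mapsto (a,-b,c)$ is an isometry; and since the crystalline Frobenius $F^*$ respects the grading by $H^0$, $H^2$, $H^4$ (including the Tate twists), it commutes with the diagonal map $\mathfrak{i}$, so $\mathfrak{i}$ is an automorphism of the F-isocrystal. Because $\mathfrak{i}$ acts as $\pm id$ on each graded piece, it also preserves the Hodge filtration attached to any lift of $X$.

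Next, suppose for contradiction that $\mathfrak{i} = \Phi^*$ for some $\Phi = \Phi_P \in Aut(D^b(X))$. Since $X$ has finite height, I would fix a projective lift $X_W$ of $X$ over $W$ as provided by Theorem \ref{Delignelifting}. As $\mathfrak{i}$ preserves the Hodge filtration on $H^*_{crys}(X/K)$ induced by $X_W$ (it sends each $F^i_{Hdg}$ to itself, being $\pm id$ on the graded pieces), Theorem \ref{Hodgefil} applies and lifts the kernel $P$ to a perfect complex $P_W \in D^b(X_W \times_W X_W)$. Its associated functor $\Phi_W$ is again an equivalence, since a fully faithful Fourier--Mukai functor between K3 surfaces is automatically an equivalence (\cite{HuyFM} Proposition 7.6). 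Fixing an embedding $\bar{K} \hookrightarrow \C$ then yields a derived autoequivalence $\Phi_\C : D^b(X_\C) \to D^b(X_\C)$.

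The crux, which I expect to be the main obstacle, is to track the induced cohomological action through the chain of comparison isomorphisms. The Berthelot--Ogus crystalline-to-de Rham comparison identifies the action of $\Phi^*$ on $\mathcal{H}^*_{crys}(X/K)$ with the de Rham action of $P_W$, which is locally constant over $W$, and the de Rham-to-Betti comparison identifies the latter with the action of $\Phi_\C$ on $\tilde{H}^*(X_\C,\Z)\otimes\C$. Because all these comparisons are compatible with the Chern character and hence with the Mukai vector of the kernel, the Betti-level action of $\Phi_\C$ is exactly $\mathfrak{i}$, namely the cone inversion on the complex Mukai lattice. But by Theorem \ref{DerivedHodge} together with the orientation result of \cite{Huy}, the cone inversion is not induced by any derived autoequivalence over $\C$. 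This contradiction shows that $\mathfrak{i}$ is not in the image, so the image is a proper subgroup and the index is at least $2$. The delicate points to nail down will be the precise compatibility of the comparison isomorphisms with the action of the Fourier--Mukai kernel, and the flatness/constancy argument guaranteeing that the de Rham action of $P_W$ specializes to the crystalline action of $P$ while base changing to the Betti realization.
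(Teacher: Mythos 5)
Your proposal is correct and follows essentially the same route as the paper: assume the cone inversion $\mathfrak{i}$ is realized by an autoequivalence, use Orlov representability to get a kernel, observe that $\mathfrak{i}$ preserves the Hodge filtration so that Theorem \ref{Hodgefil} lifts the kernel to characteristic zero, base change to $\C$ via an embedding of $\bar K$, and contradict the orientation theorem of \cite{Huy}; your preliminary checks (that $\mathfrak{i}$ is an isometry of the F-isocrystal commuting with Frobenius, and that a proper subgroup has index at least $2$) are correct and make explicit what the paper leaves implicit.

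Two small repairs are needed. First, you invoke Theorem \ref{Delignelifting} for ``a projective lift $X_W$ of $X$ over $W$,'' but Deligne's theorem only produces a lift over a \emph{finite extension} $T$ of $W(k)$, possibly ramified, whereas Theorem \ref{Hodgefil} as stated requires a lift over $W$ itself. The paper instead uses the Lieblich--Maulik Picard-preserving lift (Lemma \ref{lmlift}), which exists over $W$ precisely because $X$ has finite height --- this is where the finite-height hypothesis of the theorem enters, and your version as written does not use it. Second, to see that the lifted kernel $P_W$ still induces an equivalence you cite \cite{HuyFM} Proposition 7.6, but that statement concerns K3 surfaces over a field and does not apply verbatim to the relative situation over $W$; the paper's argument is to check via Nakayama's lemma that the adjunction maps $\Delta_*\O_{X_W} \ra P_W \circ P_W^{\vee}$ and $P_W \circ P_W^{\vee} \ra \Delta_*\O_{X_W}$ are quasi-isomorphisms, since they are so on the special fiber. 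With these substitutions your argument coincides with the paper's proof.
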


We prove the above proposition by showing that the cone inversion map on the cohomology does not come from any derived auto-equivalence. The proof is done by contradiction, we assume that such an auto-equivalence exists, then lift the kernel of the derived auto-equivalence to char $0$, and then we use the results of \cite{Huy}, to get a contradiction that this does not happen.

Recall that we have the following diagram of descend to cohomology of a Fourier-Mukai transform $\Phi_P$, for $P \in D^b(X \times Y)$:
\begin{center}
$\begin{CD}
 D^b(X)@>\text{$\mathcal{E} \mapsto \mathbb{R}p_{Y*}(p_X^* \mathcal{E}) \otimes^{\mathbb{L}} P) $}>>D^b(X)\\
@VVch( \ )V @VVch( \ )V\\
CH^*(X) @>>>CH^*(X)\\
@VV{\circ \sqrt{td_{X}}}V @VV\circ \sqrt{td_{Y}}V\\
 H^*(X)@>\text{$\alpha \mapsto p_{Y*}\big(( p_X^*\alpha)\cdot ch(P) \sqrt{td_{X\times Y}} \ \big) $}>> H^*(X),
\end{CD}$\\
\end{center} 
where $ch(-)$ is the Chern character and $td_{-}$ is the Todd genus. 
Before proving the above theorem we state the following lemma which will be required for the proof.

\begin{lemma}[\cite{LM}, Corollary 4.2] \label{lmlift}
Any K3 surface of finite height over a perfect field $k$ is the closed fiber of a smooth projective relative K3 surfaces $X_W \ra \Spec(W(k))$ such that the restriction map $Pic(X_W) \ra Pic(X)$ is an isomorphism. Moreover, it preserves the positive cone.
\end{lemma}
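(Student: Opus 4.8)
The plan is to build a formal Picard-preserving lift of $X$ over $W := W(k)$, algebraize it, and then read off both the isomorphism of Picard lattices and the preservation of the positive cone.

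First I would set up the deformation theory. Since $X$ is a K3 surface, $H^0(X, T_X) = H^2(X, T_X) = 0$ and $h^1(X, T_X) = 20$, so the formal deformation functor $Def_X$ is prorepresentable and formally smooth over $W$ of relative dimension $20$; write $Def_X = Spf(W[[t_1, \ldots, t_{20}]])$. For a line bundle $L$ on $X$ the locus $Def_{(X,L)} \subseteq Def_X$ over which $L$ extends is a closed formal subscheme whose tangent space is the kernel of the contraction map $H^1(X, T_X) \ra H^2(X, \O_X)$ determined by $c_1(L) \in H^1(X, \Omega^1_X)$, the higher obstructions again lying in $H^2(X, \O_X) \cong k$. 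As this group is one-dimensional, $Def_{(X,L)}$ is a relative Cartier divisor in $Def_X$. Fixing a basis $L_1, \ldots, L_\rho$ of $Pic(X) \cong NS(X)$, a Picard-preserving formal lift is exactly a $W$-point of $\bigcap_{i=1}^{\rho} Def_{(X,L_i)}$ reducing to the base point modulo $p$; producing such a point is the whole problem.

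To produce it I would pass to the crystalline period description. Via the comparison $H^2_{crys}(X/W) \otimes_W R \cong H^2_{DR}(X_R/R)$ and Grothendieck--Messing theory (in the crystalline form worked out by Ogus), lifts of $X$ over an augmented Artinian $W$-algebra correspond to isotropic lifts of the Hodge line $F^2_{Hdg} \subseteq H^2_{crys}(X/W) \otimes R$, and $L_i$ extends along such a lift precisely when the crystalline first Chern class $\tilde c_1(L_i)$ is orthogonal to the lifted line. A Picard-preserving lift therefore amounts to an isotropic line defined over $W$, reducing to $F^2_{Hdg} \bmod p$, and lying in the transcendental subcrystal $T := NS(X)^{\perp} \subseteq H^2_{crys}(X/W)$. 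The heart of the argument, and the only place the hypothesis enters, is the construction of this line over all of $W$: for a K3 of finite height $h$ the Newton polygon of $H^2_{crys}(X/W)$ has slopes $1 - 1/h$, $1$, $1 + 1/h$ with multiplicities $h$, $22 - 2h$, $h$, and the nonzero slope-$(<1)$ summand of $T \otimes K$, paired against its dual slope-$(>1)$ summand under the Frobenius pairing, supplies a Frobenius-compatible isotropic line lifting $F^2_{Hdg}$; this is the input I would take from Nygaard--Ogus. For a supersingular $X$ this slope-$(<1)$ part vanishes and no such lift need exist, which is exactly why finite height cannot be dropped. The main obstacle is making this construction coherent over $W$ rather than merely over $K$ or over each Artinian truncation.

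Finally I would algebraize. One of the $L_i$ may be taken ample, so by Grothendieck's existence theorem the formal lift is the completion of a projective scheme $X_W / W$ and the ample bundle algebraizes; hence $X_W$ is a projective relative K3 surface. The specialization map $Pic(X_W) \ra Pic(X)$ is injective for K3 surfaces and is surjective by construction, since $L_1, \ldots, L_\rho$ lie in its image, so it is an isomorphism of lattices. Being an isometry that carries the extended ample class to an ample class on $X$, it sends $\mathcal{C}_{X_W}$ onto $\mathcal{C}_X$, which gives the asserted preservation of the positive cone. If $k$ is only perfect and not algebraically closed, the slope decomposition is made over $W(\bar k)$ and the resulting line is Galois-stable, so it descends to $W$.
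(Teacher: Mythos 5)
The paper offers no proof of this lemma at all: it is quoted verbatim from Lieblich--Maulik \cite{LM}, Corollary 4.2, so the comparison must be with their argument. Your outer skeleton agrees with theirs --- prorepresentability of $Def_X$ by $\mathrm{Spf}\,W[[t_1,\dots,t_{20}]]$, each $Def_{(X,L_i)}$ cut out by one equation with tangent space the kernel of contraction against $c_1(L_i)$, algebraization via a lifted ample class and Grothendieck existence, and the cone statement read off from the specialization isometry carrying an ample class to an ample class. The genuine gap is exactly the step you yourself flag as ``the main obstacle'': producing the $W$-point of $\bigcap_i Def_{(X,L_i)}$. Your proposed mechanism --- a ``Frobenius-compatible isotropic line'' inside the slope-$(1-1/h)$ part of the transcendental crystal --- cannot work as stated once $h \ge 2$: that part is isoclinic of non-integral slope $1-1/h$, and a $\varphi$-stable rank-one $W$-summand would be a rank-one F-crystal, whose slope is an integer, a contradiction. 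This is precisely why the Nygaard--Ogus quasi-canonical liftings you invoke are in general constructed only over ramified finite extensions of $W(k)$ (with ramification tied to the height), whereas the lemma asserts a lift over $W(k)$ itself; black-boxing Nygaard--Ogus therefore does not close the gap, and only the ordinary case $h=1$ (where the canonical lift lives over $W$) survives your argument as written.

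The actual Lieblich--Maulik proof imposes no Frobenius condition on the lifted Hodge line at all. The finite-height hypothesis enters through a linear-algebra statement: the crystalline Chern class map $c_1\colon NS(X)\otimes W \to H^2_{crys}(X/W)$ is injective with torsion-free cokernel, equivalently $NS(X)\otimes k \to H^2_{DR}(X/k)$ is injective (this is what fails in the supersingular case, where $\rho = 22 > 20$, not merely the vanishing of the small-slope part). Consequently the $\rho$ power series $f_1,\dots,f_\rho$ cutting out $\bigcap_i Def_{(X,L_i)}$ inside $\mathrm{Spf}\,W[[t_1,\dots,t_{20}]]$ have linearly independent differentials modulo $(p,\mathfrak{m}^2)$, so the intersection is formally smooth over $W$ of relative dimension $20-\rho$, and a $W$-point is obtained by lifting successively through the truncations $W_n$ --- no slope-theoretic input, no coherence-over-$W$ difficulty, and no Galois descent, since the argument works over any perfect $k$ directly. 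With that replacement for your middle paragraph, your remaining steps (algebraization, injectivity and surjectivity of $Pic(X_W) \to Pic(X)$, and preservation of the positive cone) go through as you wrote them.
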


\begin{proof}[Proof of Theorem \ref{orientation preserving}]
Assume that the cone inversion map is induced by a derived auto-equivalence. Then using Orlov's representability Theorem (\cite{Orlov1}, \cite{Orlov2}), we know that this derived auto-equivalence is  a Fourier-Mukai transform and we denote the kernel of the transform by $\mathcal{E}$. Since $\mathcal{E}$ induces the cone inversion map and this map preserves the Hodge filtration on the crystalline cohomology, using Theorem \ref{Hodgefil}, we know that we can lift the perfect complex $\mathcal{E}$ to a perfect complex $\mathcal{E}_W$ in $D^b(X_W \times X_W)$, where $X_W$ is the lift of $X$ as in Lemma \ref{lmlift}. Note that the lifted complex also induces a derived equivalence. Indeed, using Nakayama's lemma we see that the adjunction maps $\Delta_*\O_{X_W} \ra \mathcal{E}_W \circ \mathcal{E}_W^{\vee}$ and  $\mathcal{E}_W \circ \mathcal{E}_W^{\vee} \ra \Delta_*\O_{Y_W}$  are quasi-isomorphisms. Moreover, since we have $H^*_{crys}(X/W) \cong H^*_{DR}(X_W/W)$, we know that the lifted complex induces again the cone inversion map on the cohomology. It also follows that for any field extension $K'/K$, the generic fiber complex $\mathcal{E}_{K'} \in D^b(X_{K'} \times_{K'} X_{K'})$ induces a Fourier Mukai equivalence $\Phi: D^b(X_{K'}) \ra D^b(X_{K'})$. Choosing an embedding $K \hookrightarrow \C$ (see our conventions [\ref{convention}]) yields a Fourier-Mukai  equivalence $ D^b(X_{K} \otimes \C) \ra D^b(X_{K} \otimes \C)$ which induces the cone inversion map on $\tilde{H}^*(X, \Z)$. This is a contradiction as in characteristic zero this does not happen, see \cite{Huy} for a proof. 
\end{proof}

We now make an interesting observation about the kernel of the map:

\begin{corollary}\label{Kernellifts}
Let $X$ be a K3 surface over $k$, an algebraically closed field of positive characteristic. Then the kernel of the natural map 
\begin{equation*}
0 \ra Ker \ra Aut(D^b(X)) \ra Aut(H^*_{crys}(X/K))
\end{equation*}
lifts. More precisely, assume that $X_V$ be a lift of $X$ over $V$, a mixed characteristic discrete valuation ring with residue field $k$, then every derived autoequivalence in the kernel of the map above lifts as an autoequivalence of the derived category of $X_V$. 
\end{corollary}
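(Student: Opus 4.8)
The plan is to apply the Lieblich--Olsson lifting criterion of Theorem~\ref{Hodgefil} to each element of the kernel, exploiting that the condition defining the kernel makes the hypothesis of that theorem automatic. First I would take an arbitrary autoequivalence $\Phi$ in the kernel; by Orlov's Representability Theorem~\ref{ORT} we may write $\Phi \cong \Phi_P$ for a kernel $P \in D^b(X \times X)$, unique up to isomorphism, and by the very definition of the kernel the induced isometry $\Phi_P^* \colon H^*_{crys}(X/K) \ra H^*_{crys}(X/K)$ of the Mukai F-isocrystal is the identity map.

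Next, the fixed lift $X_V$ determines, via the comparison between crystalline and de Rham cohomology recalled in Section~\ref{PrelimsK3}, a Hodge filtration $F^2_{Hdg}(X) \subset F^1_{Hdg}(X) \subset H^*_{crys}(X/K)$. Since $\Phi_P^*$ is the identity, it trivially carries $F^1_{Hdg}(X)$ to $F^1_{Hdg}(X)$ and $F^2_{Hdg}(X)$ to $F^2_{Hdg}(X)$, so the hypotheses of Theorem~\ref{Hodgefil} hold with $Y = X$ and both lifts taken to be $X_V$. Applying that theorem, I would conclude that $P$ lifts to a perfect complex $P_V \in D^b(X_V \times_V X_V)$.

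It then remains to check that $P_V$ induces an equivalence and not merely a functor. The reduction $\Phi_P$ is fully faithful and, by Nakayama's lemma, the adjunction unit and counit of $\Phi_{P_V}$ stay quasi-isomorphisms after lifting; since a fully faithful Fourier--Mukai functor between K3 surfaces is automatically an equivalence (\cite{HuyFM} Proposition~7.6), $\Phi_{P_V}$ is an autoequivalence of $D^b(X_V)$. Equivalently, one lifts the kernel of the inverse equivalence and uses the rigidity $\mathrm{Ext}^1_{X \times X}(P,P) = 0$ of Lemma~\ref{4.34} to see that the resulting lift of the identity is unique and equal to the identity, forcing $\Phi_{P_V}$ to be an equivalence.

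Since the cohomological hypothesis is vacuous here, I expect the only genuine subtlety to lie in the base ring: Theorem~\ref{Hodgefil} is phrased over the Witt ring $W$, whereas the corollary permits an arbitrary mixed-characteristic discrete valuation ring $V$ finite over $W$. The point to verify is therefore that the deformation-theoretic and Grothendieck-existence arguments underpinning Theorem~\ref{Hodgefil} carry over verbatim over $V$; this should be routine, as they rely only on completeness of the base and ampleness of the lifted polarization, both of which are available over $V$.
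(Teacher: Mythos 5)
Your proposal is correct and follows essentially the same route as the paper, whose proof is exactly the observation that a kernel element induces the identity on $H^*_{crys}(X/K)$, hence preserves every Hodge filtration, so Theorem~\ref{Hodgefil} applies. Your additional care in verifying that the lifted kernel induces an equivalence (via Nakayama and Lemma~\ref{4.34}) and in flagging the base change from $W$ to $V$ goes slightly beyond the paper's one-line argument but is consistent with remarks the paper itself makes after Theorem~\ref{Hodgefil}.
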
 

\begin{proof}
This is clear as any autoequivalence in the kernel induces the identity automorphism on the cohomology which is bound to respect every Hodge filtration on the F-isocrystal and then we use Theorem \ref{Hodgefil}.
\end{proof}

This allows us to give at least an upper bound on the kernel as follows:
Let $X$ be a K3 surface over an algebraically closed field of characteristic $p >2$. Choose a lift of $X$, denoted as $X_R$, such that the Picard rank of the geometric generic fiber is $1$. There always exists such a lift as shown by Esnault-Oguiso.

\begin{theorem}[Esnault-Oguiso \cite{EO}, Theorem 4.1]
Let $X$ be a K3 surface defined over an algebraically closed field $k$ of characteristic $p >0$, where $p >2$ if $X$ is supersingular. Then there is a discrete valuation ring $R$, finite over the ring of Witt vectors $W(k)$, together with a projective model $X_R \ra \Spec(R)$, such that the Picard rank of $X_{\bar{K}}$  is $1$, where $K$ is the fraction field of $W(k)$ and $\bar{K} \supset K$ is an algebraic closure.  
\end{theorem}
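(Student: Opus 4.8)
The plan is to realize $X$ as the special fibre of a polarized lift whose de Rham/crystalline period is as generic as possible, so that no line bundle other than the lifted polarization survives to the geometric generic fibre. First I would apply Deligne's lifting theorem (Theorem \ref{Delignelifting}) to an ample $L_0$ on $X=X_0$, and pass to the deformation functor of the polarized pair $(X_0,L_0)$. Under the stated hypotheses (in particular $p>2$ in the supersingular case) this functor is prorepresentable by a formally smooth $W$-scheme $\mathrm{Def}(X_0,L_0)\cong\mathrm{Spf}\,W[[t_1,\dots,t_{19}]]$ of relative dimension $19$, since $h^{1,1}=20$ and fixing the polarization cuts down by one. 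Using the lifted ample class together with the Grothendieck existence theorem (as in the weak lifting argument above), the universal formal family algebraizes, so that any continuous $W$-algebra homomorphism $W[[t_i]]\to R$ into a discrete valuation ring $R$ finite over $W$ produces an honest projective polarized K3 surface $X_R\to\Spec R$ with special fibre $(X_0,L_0)$.

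Next I would control $\mathrm{Pic}(X_{\bar K})$ through the period, after choosing an embedding $\bar K\hookrightarrow\mathbb C$ as in our Conventions \ref{convention}. Over $\mathbb C$ the local period map of the complexified family is an open immersion into the period domain, a smooth quadric, and by the Lefschetz $(1,1)$-theorem the Néron–Severi group of the fibre over a period $\omega$ is exactly the set of integral classes orthogonal to $\omega$. By injectivity of the specialization map on Néron–Severi groups we have $\mathrm{NS}(X_{\bar K})\hookrightarrow\mathrm{NS}(X_0)$, so it suffices to prevent the \emph{countably many} classes $\lambda\in\mathrm{NS}(X_0)$ not proportional to $c_1(L_0)$ from remaining algebraic on the generic fibre. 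For each such $\lambda$ the locus where $\lambda$ stays of type $(1,1)$ is a proper closed Noether–Lefschetz divisor $Z_\lambda$ in $\mathrm{Def}(X_0,L_0)$, and the classical theory guarantees that a very general member of a $19$-dimensional versal family has Picard rank exactly $1$, i.e. $\bigcup_\lambda Z_\lambda$ is a proper (countable) union.

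The heart of the argument is then to produce a single DVR-arc $\Spec R\to\mathrm{Def}(X_0,L_0)$ through the closed point $[X_0,L_0]$ whose generic point avoids every $Z_\lambda$. I would parametrize the arcs through the special point by their images $t_i\mapsto\pi_i\in\mathfrak{m}_R$; for each $\lambda$ the requirement that the arc be \emph{contained} in $Z_\lambda$ is a single nontrivial ($p$-adic analytic) condition on the $\pi_i$, nontrivial precisely because the period map is a local isomorphism and $Z_\lambda$ is a genuine divisor. Since $\bar K$ has cardinality at most that of $\mathbb C$ and is in particular uncountable (Conventions \ref{convention}), the set of admissible jets defined over finite extensions of $W$ is uncountable, whereas there are only countably many conditions $Z_\lambda$ to avoid; hence a generic choice of arc escapes all of them at once. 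For such an arc the only lattice classes orthogonal to the period of $X_{\bar K}$ are the multiples of $c_1(L_0)$, so $\mathrm{Pic}(X_{\bar K})=\mathbb Z\cdot L$ and the Picard rank is $1$.

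The step I expect to be the main obstacle is this last reconciliation: forcing a one-dimensional base $\Spec R$ in mixed characteristic out of all the Noether–Lefschetz loci of a $19$-dimensional deformation space, and making the cardinality count rigorous over a DVR finite over $W$ rather than over $\mathbb C$ — one must ensure the non-containment conditions are genuinely proper and that the generic point of the chosen arc, not merely a very general closed point, realizes the minimal Picard rank. A secondary subtlety is the supersingular case, where the relevant deformation theory is governed by supersingular K3 crystals and the very existence of a characteristic-zero lift relies on $p>2$; there one must check that both the algebraization and the period-avoidance argument survive, which is exactly where the hypothesis $p>2$ enters.
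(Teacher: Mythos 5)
The paper itself does not prove this statement: it is imported verbatim from Esnault--Oguiso \cite{EO}, Theorem 4.1, so your attempt has to be measured against their argument. Your skeleton in fact reproduces theirs (polarized deformation space over $W$, countably many line-bundle jumping loci, a DVR arc through the closed point avoiding all of them), but two of your steps have genuine gaps. The structural one: you define the loci $Z_\lambda$ and certify that they are proper divisors via an embedding $\bar K\hookrightarrow \C$, the complex period map and Lefschetz $(1,1)$. This is circular --- the embedding exists only after the arc, hence the family, has been chosen, and it says nothing about the formal scheme $\mathrm{Def}(X_0,L_0)$ over $W$. The correct mixed-characteristic substitute is deformation-theoretic: the locus where a class $\lambda\in \mathrm{Pic}(X_0)$ continues to deform is cut out by a single equation $f_\lambda$, because the obstruction lives in the one-dimensional space $H^2(X_0,\O_{X_0})$ (Deligne \cite{Deligne}), so $Z_\lambda$ is a formal Cartier divisor; what actually requires proof is that $f_\lambda\neq 0$, i.e.\ that $Z_\lambda$ does not exhaust the deformation space. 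For finite height one can reduce this to characteristic-zero Noether--Lefschetz statements, but when $X_0$ is supersingular all $22$ classes are algebraic on the special fiber and there is no transcendental period to exploit; it is exactly here that \cite{EO} invoke Ogus's theory of supersingular K3 crystals \cite{Ogus2}, and this is where the hypothesis $p>2$ does real work --- not merely, as you suggest, in securing the existence of some lift. Relatedly, $\mathrm{Def}(X_0,L_0)$ is in general only known to be a $W$-flat formal Cartier divisor inside $\mathrm{Spf}\,W[[t_1,\dots,t_{20}]]$ (Deligne), not formally smooth and isomorphic to $\mathrm{Spf}\,W[[t_1,\dots,t_{19}]]$ as you assert; flatness suffices for the argument, but your smoothness claim is unjustified.

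The second gap is the avoidance step: ``uncountably many arcs, countably many conditions'' proves nothing by itself --- you need each condition to exclude only a small set of arcs. Once $f_\lambda\neq 0$ is known, containment of the arc $t_i\mapsto \pi_i$ in $Z_\lambda$ reads $f_\lambda(\pi_1,\dots,\pi_N)=0$, and the zero set of a nonzero restricted power series on $(\mathfrak m_R)^N$ is closed with empty interior by the $p$-adic identity theorem; since $W(k)$ is a complete metric space, a Baire-category argument (or an inductive, coefficient-by-coefficient choice of the $\pi_i$, which is what \cite{EO} in effect carry out) produces an arc avoiding all countably many divisors at once. Note also that the relevant uncountability is that of $W(k)$ --- the residue field $k=\bar{\mathbb F}_p$ may well be countable, so your appeal to $|\bar K|\leq |\C|$ is beside the point. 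Your closing worry about the generic point versus a very general closed point is resolved precisely by this formulation: non-containment over the DVR means $f_\lambda(\pi)\neq 0$ in $R$, and combined with the injectivity of the specialization map $\mathrm{Pic}(X_{\bar K})\hookrightarrow \mathrm{Pic}(X_0)$ and spreading out any class on $X_{\bar K}$ over a finite extension of $R$, it gives $\mathrm{Pic}(X_{\bar K})=\Z\cdot L$ as required.
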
 

Let $\Phi_P: D^b(X) \ra D^b(X)$ be a Fourier-Mukai autoequivalence induced by $P \in D^b(X \times X)$ that belong to the kernel of the natural map 
$$
Aut(D^b(X)) \ra Aut(H^*_{crys}(X/K)).
$$ 
We will denote the kernel of this map as $Ker_X$. Now using the following lemma we see that the set of infinitesimal deformations of the kernel $P$ is a singleton set, which in turn implies that the lift of $P$ to $X_R \times X_R$  (this was just the corollary \ref{Kernellifts}) is unique. 

\begin{lemma} \label{4.34}
Let $X$ and $Y$ be K3 surfaces over an algebraically closed field $k$ and let $P \in D(X \times Y)$ be a complex defining the Fourier-Mukai equivalence $\Phi_P: D(X) \ra D(Y)$. Then $\rm{Ext}^1_{X \times Y}(P, P) = 0$.  
\end{lemma}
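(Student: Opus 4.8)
The plan is to reduce the vanishing of $\mathrm{Ext}^1_{X \times Y}(P,P)$ to the vanishing of the first Hochschild cohomology of a K3 surface, exploiting the hypothesis that $\Phi_P$ is an \emph{equivalence} rather than merely a Fourier--Mukai transform. Recall that Hochschild cohomology may be taken to be $HH^i(X) = \mathrm{Ext}^i_{X \times X}(\mathcal{O}_{\Delta}, \mathcal{O}_{\Delta})$, where $\Delta \subset X \times X$ is the diagonal and $\mathcal{O}_{\Delta}$ is the kernel of the identity functor $\mathrm{id}_{D(X)}$. The strategy is to transport $\mathcal{O}_{\Delta}$ to $P$ along an auxiliary equivalence and then to invoke that equivalences preserve all graded $\mathrm{Ext}$-groups.

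First I would consider the exterior product functor $\Theta := \mathrm{id}_{D(X)} \boxtimes \Phi_P : D(X \times X) \ra D(X \times Y)$, that is, the Fourier--Mukai functor which is the identity on the first factor and $\Phi_P$ on the second. Since $\Phi_P$ is an equivalence and the identity trivially is, $\Theta$ is again an equivalence, with inverse $\mathrm{id}_{D(X)} \boxtimes \Phi_P^{-1}$. The key computation is that $\Theta(\mathcal{O}_{\Delta}) \cong P$: applying $\Phi_P$ in the second variable to the diagonal sheaf and using $\Phi_P(k(x)) = P|_{\{x\}\times Y}$ for a closed point $x$, one identifies $\Theta(\mathcal{O}_{\Delta})$ with $P$ viewed in $D(X \times Y)$. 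As an equivalence induces isomorphisms on all graded $\mathrm{Ext}$-groups, this yields
\begin{equation*}
\mathrm{Ext}^i_{X \times Y}(P, P) \cong \mathrm{Ext}^i_{X \times X}(\mathcal{O}_{\Delta}, \mathcal{O}_{\Delta}) = HH^i(X)
\end{equation*}
for all $i$, so the problem is reduced to showing $HH^1(X) = 0$.

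To compute $HH^1(X)$ I would avoid any appeal to the Hochschild--Kostant--Rosenberg decomposition (which is delicate in positive characteristic) and argue instead with the local-to-global spectral sequence. Since $\Delta \hookrightarrow X \times X$ is a regular (local complete intersection) embedding with normal bundle $N_{\Delta} \cong T_X$, the local Ext-sheaves are $\mathcal{E}xt^q_{X\times X}(\mathcal{O}_{\Delta}, \mathcal{O}_{\Delta}) \cong \wedge^q N_{\Delta} \cong \wedge^q T_X$, computed from the Koszul resolution of the diagonal and valid in every characteristic. The spectral sequence $E_2^{p,q} = H^p(X, \wedge^q T_X) \Rightarrow \mathrm{Ext}^{p+q}_{X \times X}(\mathcal{O}_{\Delta}, \mathcal{O}_{\Delta})$ then has only the two terms $H^1(X, \mathcal{O}_X)$ and $H^0(X, T_X)$ contributing in total degree $1$. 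For a K3 surface $H^1(X, \mathcal{O}_X) = 0$ by definition, and $T_X \cong \Omega^1_X$ (as $\omega_X \cong \mathcal{O}_X$ on a surface) forces $H^0(X, T_X) = H^0(X, \Omega^1_X) = 0$. Hence $\mathrm{Ext}^1_{X\times X}(\mathcal{O}_{\Delta}, \mathcal{O}_{\Delta}) = 0$, and the reduction above gives $\mathrm{Ext}^1_{X \times Y}(P, P) = 0$.

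The only delicate point is the reduction step: one must justify rigorously, within the Fourier--Mukai formalism, that $\Theta = \mathrm{id}_{D(X)}\boxtimes \Phi_P$ is an equivalence and that it sends $\mathcal{O}_{\Delta}$ to $P$. The equivalence follows from the standard fact that exterior products of Fourier--Mukai equivalences are equivalences, with kernel the exterior product of the kernels, and the identification $\Theta(\mathcal{O}_{\Delta}) \cong P$ is a direct base-change and projection-formula computation; I expect the bookkeeping with the several projections from $X \times X \times X \times Y$ to be the main source of friction. An alternative packaging of the same idea uses the canonical isomorphism $\mathrm{Hom}(\Phi_P, \Phi_P[i]) \cong \mathrm{Ext}^i_{X \times Y}(P,P)$ together with $\mathrm{Hom}(\mathrm{id}_{D(X)}, \mathrm{id}_{D(X)}[i]) = HH^i(X)$ and the invariance of Hochschild cohomology under the equivalence $\Phi_P$; this avoids exterior products at the cost of citing the comparison between natural transformations of Fourier--Mukai functors and morphisms of their kernels.
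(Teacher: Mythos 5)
The paper itself does not prove this lemma: it simply cites \cite{LO2}, Lemma 3.7(ii). Your argument is, in substance, the proof given there. You transport $P$ to the diagonal kernel (Lieblich--Olsson convolve $P$ with the kernel of the inverse equivalence to get $\mathcal{O}_\Delta$ rather than using $\mathrm{id}\boxtimes\Phi_P$, but this difference is cosmetic), identify $\mathrm{Ext}^i_{X\times Y}(P,P)\cong \mathrm{Ext}^i_{X\times X}(\mathcal{O}_\Delta,\mathcal{O}_\Delta)$, and then compute the degree-one part. Your instinct to avoid HKR in positive characteristic is correct and also unnecessary to worry about here: the identification $\mathcal{E}xt^q_{X\times X}(\mathcal{O}_\Delta,\mathcal{O}_\Delta)\cong \wedge^q T_X$ via the Koszul resolution of the lci embedding $\Delta\hookrightarrow X\times X$ is characteristic-free, and in total degree $1$ the local-to-global spectral sequence only needs the exact sequence $0 \to H^1(X,\mathcal{O}_X) \to \mathrm{Ext}^1_{X\times X}(\mathcal{O}_\Delta,\mathcal{O}_\Delta) \to H^0(X,T_X)$. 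The reduction step you flag as delicate is indeed only bookkeeping: that $\mathrm{id}\boxtimes\Phi_P$ has kernel $\mathcal{O}_\Delta\boxtimes P$ and is an equivalence is standard (see \cite{HuyFM}), and $\Theta(\mathcal{O}_\Delta)\cong P$ follows from the base-change and projection-formula computation you describe.

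The one genuine gap is the final vanishing. You assert that $T_X\cong\Omega^1_X$ \emph{forces} $H^0(X,T_X)=H^0(X,\Omega^1_X)=0$, but nothing in the definition of a K3 surface gives $H^0(X,\Omega^1_X)=0$ in positive characteristic: the Hodge symmetry that yields $h^{1,0}=h^{0,1}=0$ over $\C$ is unavailable, and the vanishing of global vector fields (equivalently, by $\omega_X\cong\mathcal{O}_X$, of global one-forms) on a K3 surface in characteristic $p$ is a nontrivial theorem of Rudakov--Shafarevich \cite{RS76}, whose proof includes the delicate small-characteristic cases. Since the paper invokes this lemma precisely in positive characteristic (uniqueness of lifts of Fourier--Mukai kernels), this citation is not optional; your argument silently assumes the hardest input. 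With the reference to \cite{RS76} (or an alternative argument via torsion-freeness of $H^2_{crys}$ and $b_1=0$) inserted at that point, your proof is complete and agrees with the cited one.
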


\begin{proof} 
See \cite{LO2}, Lemma 3.7 (ii).
\end{proof}

Next, note that the fiber of the lift of $P$ over the geometric generic point of $R$, denoted as $P_{\bar{K}}$, also belongs to the kernel of the natural map (again base changed to $\C$ using the embedding $ \bar{K} \subset \C$)
$$
Aut(D^b(X_{\C})) \ra O_{Hdg}(\tilde{H}^*(X_{\C}, \Z)),
$$
denoted as $Ker_{X_{\C}}$.  Indeed, this follows from the base change on cohomology and Berthelot-Ogus's isomorphism \cite{BO}. Let us assume that $\Phi_{P_{\C}}$ does not induces the identity on the singular cohomology of $X_{\C}$ and hence, using the following natural commutative diagram
$$
\xymatrix{ H^*(X_{\C}, \C) \ar[r] \ar[d]^{\cong} &H^*(X_{\C}, \C) \ar[d]^{\cong} \\
H^*_{DR}(X_{\C}) \ar[r] &H^*_{DR}(X_{\C}), }
$$
$\Phi_{P_{\C}}$ also does not induces the identity on the de Rham cohomology of $X_{\C}$. As the autoequivalence $\Phi_{P_{\C}}$ is just the base change of $\Phi_{P_{\bar{K}}}$ we see that the map induced by $\Phi_{P_{\bar{K}}}$ on the de Rham cohomology of $X_{\bar{K}}$ is not the identity. Now again $\Phi_{P_{\bar{K}}}$ comes via base change from $\Phi_{P_K}$ so it is not the identity on de Rham cohomology of $X_K$, now using the Berthelot-Ogus's isomorphism it does not induce the identity on the crystalline cohomology of $X$ but this is not possible as it is a lift of an autoequivalence which induces the identity on the crystalline cohomology.

This gives us the following injective map 
\begin{equation*}
\begin{split}
Ker_X &\hookrightarrow Ker_{X_{\C}} \\
\Phi_P &\mapsto \Phi_{P_{\C}}.
\end{split}
\end{equation*}

Now, using the Picard rank 1 lift, we see that $Ker_X$ is a subgroup of the kernel, $Ker_{X_{\C}}$. And this kernel has been described in \cite{BB} Theorem 1.4. Thus, we have shown that
 
\begin{proposition} \label{boundonKernel}
Let $X$ be a K3 surface over $k$, an algebraically closed field of characteristic $p > 3$, and $X_R \ra \Spec(R)$ be a Picard rank one lift of X with $X_{\C}$ the base change to $\C$ of the geometric generic fiber of $X_R$. Here, $R$ is mixed characteristic discrete valuation ring with residue field $k$. Then $Ker_{X} \subset Ker_{X_{\C}}$. 
\end{proposition}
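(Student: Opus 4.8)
The plan is to construct an explicit injective map $Ker_X \hookrightarrow Ker_{X_\C}$ and then to read off the bound from the description of the target group in \cite{BB}, which is available because $X_\C$ has been arranged to have Picard rank one. The whole argument is an application of the lifting machinery already assembled: an autoequivalence lying in $Ker_X$ acts as the identity on the Mukai F-isocrystal $H^*_{crys}(X/K)$, so in particular it preserves every step of the Hodge filtration attached to the chosen lift $X_R$, and therefore it lifts. The key inputs will be Corollary \ref{Kernellifts} for the existence of the lift, Lemma \ref{4.34} for its uniqueness, and the chain of comparison isomorphisms that transports triviality of the cohomological action from crystalline cohomology in characteristic $p$ to singular cohomology over $\C$.

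First I would take $\Phi_P \in Ker_X$ with kernel $P \in D^b(X \times X)$. Since $\Phi_P$ induces the identity on $H^*_{crys}(X/K)$, it trivially respects the Hodge filtration coming from $X_R$, so by Corollary \ref{Kernellifts} (i.e. Theorem \ref{Hodgefil} applied in this situation) the complex $P$ lifts to some $P_R \in D^b(X_R \times_R X_R)$. Lemma \ref{4.34} gives $\text{Ext}^1_{X \times X}(P,P) = 0$, so the infinitesimal deformations of $P$ form a singleton and the lift $P_R$ is unique. A Nakayama argument shows the lifted kernel again induces a fully faithful functor, which for K3 surfaces is automatically an equivalence; restricting to the geometric generic fibre and base changing along a fixed embedding $\bar K \hookrightarrow \C$ then yields a Fourier--Mukai equivalence $\Phi_{P_\C} : D^b(X_\C) \ra D^b(X_\C)$.

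Next I would check that $\Phi_{P_\C}$ actually lands in $Ker_{X_\C}$, i.e. that it acts as the identity on $\tilde{H}^*(X_\C, \Z)$. This is the crux and is argued by contradiction: if the induced map on singular cohomology were nontrivial, then through the comparison with de Rham cohomology over $\C$, base change down to $\bar K$ and then to $K$, and finally the Berthelot--Ogus isomorphism \cite{BO}, one would obtain a nontrivial action on $H^*_{crys}(X/K)$, contradicting $\Phi_P \in Ker_X$. Injectivity of the assignment $\Phi_P \mapsto \Phi_{P_\C}$ follows from uniqueness of the lift together with injectivity of base change on isomorphism classes (the same descent argument as in Remark \ref{finiteChar0}): $P_\C$ determines $P_{\bar K}$, hence the unique $P_R$, hence $P$. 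With the injection established, the Picard rank one hypothesis on $X_\C$ lets me invoke \cite{BB} Theorem 1.4 for the structure of $Ker_{X_\C}$, yielding the stated inclusion $Ker_X \subset Ker_{X_\C}$.

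The hard part, and the step I would scrutinise most carefully, is the compatibility used freely throughout: that the cohomological realisation of a Fourier--Mukai kernel commutes with (i) the Berthelot--Ogus comparison between crystalline and de Rham cohomology of the lift, (ii) base change of the ground field, and (iii) the classical comparison between algebraic de Rham and singular cohomology over $\C$. Each is a functoriality statement for the descent of $\Phi_P$ to cohomology displayed earlier, but chaining them so that ``identity on $H^*_{crys}(X/K)$'' genuinely forces ``identity on $\tilde{H}^*(X_\C,\Z)$'' is where the real bookkeeping lies; everything else becomes formal once the unique lift is produced.
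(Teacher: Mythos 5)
Your proposal is correct and follows essentially the same route as the paper: lift the kernel via Corollary \ref{Kernellifts} (identity on cohomology respects any Hodge filtration, so Theorem \ref{Hodgefil} applies), use Lemma \ref{4.34} for uniqueness of the lift, run the same contradiction argument through the singular--de Rham comparison, base change, and the Berthelot--Ogus isomorphism to show the lifted equivalence lands in $Ker_{X_{\C}}$, and conclude with the Picard rank one hypothesis and \cite{BB}. Your slightly more explicit treatment of injectivity (via uniqueness of the lift and the descent argument of Remark \ref{finiteChar0}) is a minor elaboration of what the paper asserts, not a different method.
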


%%%%%%%%%%%%%%%%%%%%%%%%%%%%%%%%%%%%%%%%%%%%%%%%%%%%%%%%%%%%%%%%%%%%%%%%%%%%%%%%%%%%%%%%%%%%%%
\section{Counting Fourier-Mukai Partners in Positive Characteristic} \label{CountingFMP}

In this last section, we count the number of Fourier-Mukai partners of an ordinary K3 surface, in terms of the Fourier-Mukai partners of the geometric generic fiber of its canonical lift. Moreover, we prove that any automorphism of ordinary K3 surfaces lifts to its canonical lift. We start with comparing the Fourier-Mukai partners of a K3 surface over a field of positive characteristic with that of the geometric generic fiber of its lift to characteristic zero. Then we restrict to ordinary K3 surfaces and give a few consequences to lifting automorphisms of ordinary K3 surfaces. Moreover, we give a sufficient condition on derived autoequivalences of an ordinary K3 surface so that they lift to the canonical lift.  Lastly, we show that the class number counting formula (compare from Theorem \ref{classnocount}) also holds for K3 surfaces over a characteristic $p$ field.

Let $X$ (resp. $Y$) be a regular proper scheme with $D^b(X)$ (resp. $D^b(Y)$) its bounded derived category. Recall that we say that $Y$ is a \textbf{Fourier-Mukai partner} of $X$ if there exists a perfect complex $\mathcal{P} \in D^b(X \times Y)$ such that the following map is an equivalence of derived categories:
\begin{equation}
\begin{split}
\Phi_P:  D^b(X) & \xrightarrow{\cong} D^b(Y) \\
		\mathcal{Q} & \mapsto \mathbb{R}p_{Y*} ((p_X^* \mathcal{Q}) \otimes^{\mathbb{L}} \mathcal{P}), 
\end{split}
\end{equation}
where $p_X$ (resp. $p_Y$) is the projection from $X \times Y$ to $X$ (resp. $Y$).

We want to count the number of Fourier-Mukai partners of a K3 surface in positive characteristic. We will do this by lifting the K3 surface to characteristic 0 and then counting the Fourier-Mukai partners of the geometric generic fibers. For this we will show that the specialization map for Fourier-Mukai partners defined below is injective and surjective:
\begin{equation} \label{spzmap}
\begin{split}  \{ \text{FM partners of $X_{\bar{K}}$} \}  \ra &\{ \text{FM partners of X} \} \\
M_{X_{\bar{K}}}(v) \mapsto &M_X(v).
\end{split}
\end{equation}
Here, $X$ is a K3 surface of finite height over $k$ an algebraically closed field of characteristic $p > 3$,  $X_{\bar{K}}$ is the geometric generic fiber of $X_W$, which is a Picard preserving lift of $X$, and $M_X(v)$ (resp.\ $M_{X_{\bar{K}}}(v)$, $M_{X_W}(v)$) is the (fine) moduli space of stable sheaves with Mukai vector $v$ on $X$ (resp. $X_{\bar{K}}$, $X_W$). Note that from now on we will fix one such lift of $X$. Such a lift always exists by Lemma \ref{lmlift} for K3 surfaces of finite height. On the other hand, Theorem \ref{LOmainthm} shows that supersingular K3 surfaces have no nontrivial Fourier-Mukai partners, so from now we restrict to the case of K3 surfaces of finite height. 

To show that the map (\ref{spzmap}) is well defined, we need the following lemma: 

\begin{lemma}[(Potentially) Good reduction] (\cite{LO2} Theorem 5.3)
Let $V$ be a discrete valuation ring with a fraction field $K$, a field of characteristic $0$, and residue field $k$ of characteristic $p$ such that there is a K3 surface $X_K$ over $K$ with good reduction, then all the Fourier-Mukai partners of $X_{\bar{K}}$ have good reduction possibly after a finite extension of $K$. 
\end{lemma}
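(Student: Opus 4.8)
The plan is to exhibit each Fourier-Mukai partner of $X_{\bar K}$ as the geometric generic fiber of a relative moduli space of sheaves built on the good model of $X_K$, and then to read good reduction off the special fiber of that relative moduli space. First I would apply the Mukai-Orlov description, Theorem \ref{DerivedHodge}: any Fourier-Mukai partner $Y$ of $X_{\bar K}$ is isomorphic to a fine moduli space $M_{X_{\bar K}}(v)$ for a primitive Mukai vector $v \in A^*(X_{\bar K})_{num, \Q}$ with $v^2 = 0$ and positive degree-zero part, which moreover admits a companion class $v'$ with $\langle v, v' \rangle = 1$. All of this data---$v$, $v'$, and the isomorphism $Y \cong M_{X_{\bar K}}(v)$---is cut out by finitely many equations, hence is defined over a finite extension $K'/K$. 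Replacing $V$ by (a localization of) its integral closure in $K'$, which is again a discrete valuation ring and over which $X_{K'}$ still has good reduction, I may assume that $v$, $v'$ and a $K'$-model $Y_{K'}$ of $Y$ are all defined over $K'$; this is the source of the phrase \emph{after a finite extension}.

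Next I would spread the numerical data out over the smooth proper model $X_{V'}$ furnished by good reduction. Since $X_{V'}$ is regular and its special fiber is an irreducible divisor, the restriction map $\mathrm{Pic}(X_{V'}) \to \mathrm{Pic}(X_{K'})$ is surjective; after enlarging $K'$ once more to trivialize the Galois action on $\mathrm{NS}(X_{\bar K})$, the line-bundle component of $v$ is represented by a genuine line bundle on $X_{K'}$ and therefore extends across $X_{V'}$. This yields a relative Mukai vector $v_{V'}$ over $X_{V'}/V'$ restricting to $v$ on the generic fiber, together with a relative companion class $v'_{V'}$ satisfying $\langle v_{V'}, v'_{V'} \rangle = 1$; I would also extend a polarization of $X_{K'}$ to a relative polarization $h$ of $X_{V'}$. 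Then I would invoke the relative fine moduli space construction, Theorem \ref{finerelativemoduli}: under the condition $\langle v_{V'}, v'_{V'} \rangle = 1$ every semistable sheaf is locally free and stable, the associated $\mathbb{G}_m$-gerbe is trivial, and the relative moduli space $M_{X_{V'}}(v_{V'}) \to \Spec(V')$ is a smooth projective relative K3 surface. Its geometric generic fiber is $M_{X_{\bar K}}(v) \cong Y$ and its special fiber is the K3 surface $M_{X_k}(v_k)$ over $k$, so $M_{X_{V'}}(v_{V'})$ is a smooth proper model of $Y$ over $V'$---exactly a good reduction of $Y$ after the finite extension $K'/K$.

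The main obstacle is the middle step: ensuring that the numerical data genuinely spreads out to an integral model whose relative moduli space has K3 fibers uniformly, rather than only generically. Concretely one must (i) extend $v$ and $v'$ across the special fiber, which forces the finite extensions both to realize the relevant Picard classes over $V'$ and to make $\langle v_{V'}, v'_{V'} \rangle = 1$ hold fiberwise, and (ii) choose the relative polarization $h$ so as to avoid walls, i.e.\ so that no strictly semistable sheaf appears in any fiber and the $\mathbb{G}_m$-gerbe stays trivial over all of $\Spec(V')$. Once such a wall-free polarization carrying the companion class is fixed, the hypotheses of Theorem \ref{finerelativemoduli} hold uniformly along the family and the special fiber is automatically a smooth K3; establishing this uniform stability, together with the correct specialization behavior of the Mukai vectors, is the technical heart of the argument.
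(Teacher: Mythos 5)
Your proposal is correct and takes essentially the same route as the paper, which does not prove this lemma itself but imports it from \cite{LO2} Theorem 5.3: the argument there, and the paper's own use of it in the paragraph immediately following the lemma, is precisely your spreading-out of $Y \cong M_{X_{\bar{K}}}(v)$ (after a finite extension of $K$) to a relative moduli space $M_{X_V}(v)/V$ whose smoothness and projectivity over $V$ exhibit the good reduction. The only adjustment worth noting is that the wall-avoidance you single out as the technical heart is not an extra constraint: the condition $\langle v, v' \rangle = 1$ already forces every semistable sheaf with vector $v$ to be locally free and stable in each fiber, for any choice of relative polarization (this is built into Theorem \ref{finerelativemoduli}(2) via \cite{HL} Remark 6.1.9), so once $v$ and $v'$ are extended over the model --- and the pairing $\langle v, v' \rangle = 1$ persists fiberwise automatically, since intersection numbers are constant in flat proper families --- the hypotheses hold uniformly without any genericity choice.
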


Thus for any Fourier-Mukai partner of $X_{\bar{K}}$ which is of the form $M_{X_{\bar{K}}}(v)$ is a geometric generic fiber of $M_{X_V}(v) / V$, where $V$ is a finite (algebraic) extension of $W(k)$. Note that the residue field of $V$ is still $k$ as $k$ is algebraically closed. Now using functoriality of the moduli functor we note that the special fiber of $M_{X_V}(v)$ is $M_X(v)$. This is a Fourier-Mukai partner of $X$ (see, for example, \ref{LOmainthm}).  Thus, the map (\ref{spzmap}) is well-defined.

\begin{proposition} [Lieblich-Olsson \cite{LO}] \label{Prop5.2} \label{surjectivity}
The specialization map (\ref{spzmap}) above is surjective.
\end{proposition}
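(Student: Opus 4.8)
The plan is to prove surjectivity by carrying out the moduli-space construction relatively over the base discrete valuation ring, so that every Fourier-Mukai partner of $X$ is realized as the special fiber of a relative moduli space whose geometric generic fiber is a Fourier-Mukai partner of $X_{\bar{K}}$. First I would take an arbitrary Fourier-Mukai partner $Y$ of $X$. By Theorem \ref{LOmainthm} (Lieblich-Olsson), $Y$ is a K3 surface isomorphic to a \emph{fine} moduli space $M_X(v)$ of stable sheaves for some primitive Mukai vector $v \in A^*(X)_{num, \Q}$ with $v^2 = 0$ and positive degree-zero part; the fineness (i.e.\ the existence of a universal sheaf on $X \times M_X(v)$) is exactly the triviality of the associated $\mathbb{G}_m$-gerbe, which supplies a class $v'$ with $\langle v, v' \rangle = 1$.

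Next I would lift the Mukai vector to the relative setting. Because $X_W$ is the Picard-preserving lift of Lemma \ref{lmlift}, the restriction $Pic(X_W) \ra Pic(X)$ is an isomorphism, so the algebraic Mukai lattice $H^0 \oplus NS(X) \oplus H^4$, together with its Mukai pairing, is carried isometrically onto the corresponding lattice of $X_W$ (and, after base change, of the geometric generic fiber $X_{\bar{K}}$). Hence $v$ lifts to a class $\tilde{v}$ over $X_W$, and since the identification is an isometry, $\tilde{v}$ stays primitive with $\tilde{v}^2 = 0$ and positive degree-zero part, and the lift $\tilde{v}'$ of $v'$ satisfies $\langle \tilde{v}, \tilde{v}' \rangle = 1$. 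Thus the numerical hypotheses of Theorem \ref{finerelativemoduli} are satisfied for $\tilde{v}$.

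Then I would invoke Theorem \ref{finerelativemoduli} to produce the fine relative moduli space $M_{X_W}(\tilde{v}) \ra \Spec(W)$, which is a smooth projective relative K3 surface with trivial associated $\mathbb{G}_m$-gerbe. Appealing to the functoriality and base-change of the moduli functor (as used already in the paragraph preceding the statement), its special fiber over $\Spec(k)$ is $M_X(v) \cong Y$, while its geometric generic fiber is $M_{X_{\bar{K}}}(\tilde{v})$, a two-dimensional fine moduli space of stable sheaves on $X_{\bar{K}}$ and hence a Fourier-Mukai partner of $X_{\bar{K}}$ by Theorem \ref{DerivedHodge}(4). Consequently $M_{X_{\bar{K}}}(\tilde{v}) \mapsto M_X(v) \cong Y$ under the specialization map (\ref{spzmap}), so $Y$ lies in its image, which is what surjectivity asserts.

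The step I expect to be the main obstacle is establishing the precise compatibility of the relative construction with restriction to the two fibers, namely that $M_{X_W}(\tilde{v})$ genuinely restricts to $M_X(v)$ over the closed point and to $M_{X_{\bar{K}}}(\tilde{v})$ over the geometric generic point, together with the preservation of fineness (triviality of the $\mathbb{G}_m$-gerbe, equivalently the existence of a universal sheaf) across the whole family. Verifying that the condition $\langle \tilde{v}, \tilde{v}' \rangle = 1$ continues to force every semistable sheaf with Mukai vector $\tilde{v}$ to be locally free and stable in the relative family, and that no jumping occurs between the fibers, is where the argument must be run with care; once this relative flatness-and-stability bookkeeping is in place, the identification of the two fibers and the conclusion are formal.
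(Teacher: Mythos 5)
Your proposal is correct and follows essentially the same route as the paper: realize any Fourier--Mukai partner of $X$ as a fine moduli space $M_X(v)$ via Lieblich--Olsson, lift the Mukai vector through the Picard-preserving lift $X_W$ of Lemma \ref{lmlift}, and take the geometric generic fiber of the relative moduli space $M_{X_W}(v)$ furnished by Theorem \ref{finerelativemoduli}. The only substantive difference is cosmetic: the paper additionally normalizes $v = (r, l, s)$ with $r$ prime to $p$ via \cite{LO} Lemma 8.1 before lifting, whereas you instead track the numerical condition $\langle v, v' \rangle = 1$ through the lift; both suffice for the fineness and base-change bookkeeping you flag at the end.
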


\begin{proof}
From \cite{LO} Theorem 3.16, note that all Fourier-Mukai partners of $X$ are of the form $M_X(v)$. Moreover, one can always assume $v$ to be of the form $(r, l, s)$ where $l$ is the Chern class of a line bundle and $r$ is prime to $p$ (see \cite{LO}, Lemma 8.1). (Note that we take the Mukai vector here in the respective Chow groups rather than cohomology groups).   Then since we have chosen our lift $X_W$ of $X$ to be Picard preserving, we can also lift the Mukai vector to $(r_W, l_W, s_W)$, again denoted by $v$,  and this gives a FM partner of $X_W$, namely $M_{X_W}(v)$, and taking the geometric generic fiber of it gives a Fourier-Mukai partner of $X_{\bar{K}}$. 
\end{proof}

\begin{remark}
Note that the $Pic(X_{\bar{K}}) \cong Pic(X)$, i.e., the specialization map is an isomorphism. This is essentially due to the fact that $k$ is algebraically closed and every line bundle on $X$ lifts uniquely to $X_W$ as $Ext^1(L, L) = H^1(X, \O_X) = 0$ for $L \in Pic(X)$, under which the set of infinitesimal deformations of the line bundle $L$ is a torsor. 
\end{remark}

\begin{remark}
Note that the argument above already implies that the number of Fourier-Mukai partners of a K3 surface over an algebraically closed field of characteristic $p > 3$ is finite. This argument was given by Lieblich-Olsson in \cite{LO}.
\end{remark}

\textbf{Injectivity:}We need to show that if $M_X(v) \cong X$, then $M_{X_W}(v) \cong X_W$. For this statement we will restrict to the case of ordinary K3 surfaces. We recall some results about ordinary K3 surfaces and their canonical lifts as proved by Nygaard in \cite{Nygaardtate} and \cite{Nygaardtorelli}, and by Deligne-Illusie in \cite{Deligne-Illusie}. 

\begin{definition}[Ordinary K3 surface]
A K3 surface $X$ over a perfect field $k$ of positive characteristic is called \textbf{ordinary} if the height of $X$ is 1. 
\end{definition}

\begin{proposition} The following are equivalent:
\begin{enumerate}
\item $X$ is an ordinary K3 surface,
\item The height of formal Brauer group is 1,
\item The Frobenius $F: H^2(X, \O_X) \ra H^2(X, \O_X)$ is bijective. 
\end{enumerate}
\end{proposition}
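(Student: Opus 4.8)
The plan is to prove the equivalences as a chain, treating $(1)\Leftrightarrow(2)$ as essentially formal and locating the real content in $(2)\Leftrightarrow(3)$. For $(1)\Leftrightarrow(2)$ I would simply invoke the coincidence of the two definitions of height already recalled at the start of Section \ref{PrelimsK3}: the height computed from the slope-$<1$ part of the $F$-crystal $H^2_{crys}(X/W)$ agrees with the height of the formal Brauer group $\widehat{\mathrm{Br}}(X)$ (see \cite{Liedtke}, Prop.~6.17). Since $X$ being ordinary means by definition that the first quantity equals $1$, this is exactly the statement that $\widehat{\mathrm{Br}}(X)$ has height $1$.

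For the substantive equivalence $(2)\Leftrightarrow(3)$ I would use Artin--Mazur theory together with the de Rham--Witt description of the slope filtration. Recall that $\widehat{\mathrm{Br}}(X)$ is a one-dimensional formal group whose Lie algebra is canonically $H^2(X,\O_X)$ and whose Dieudonn\'e module is $H^2(X,W\O_X)$. Via the slope spectral sequence $E_1^{i,j}=H^j(X,W\Omega^i_X)\Rightarrow H^{i+j}_{crys}(X/W)$, the $K$-vector space $H^2(X,W\O_X)\otimes_W K$ is identified with the slope-$[0,1)$ part of the $F$-isocrystal $H^2_{crys}(X/K)$; hence its $W$-rank equals the height $h$, and the Frobenius $F$ on $H^2(X,W\O_X)$ has a single slope $1-1/h$. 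In particular $h=1$ if and only if this module is free of rank one and $F$ is a unit-root (slope $0$) operator, i.e.\ a $Frob_W$-semilinear automorphism, while for $h\geq 2$ (and in the supersingular case) the slope is strictly positive.

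The last step is to compare this with the geometric Frobenius $F^*$ on the one-dimensional space $H^2(X,\O_X)$. Here I would use that $H^2(X,\O_X)$ is recovered as the Lie algebra of the formal group, a one-dimensional quotient $H^2(X,W\O_X)/V\,H^2(X,W\O_X)$ of the Dieudonn\'e module, onto which $F$ descends (note $p\,H^2(X,W\O_X)=VF\,H^2(X,W\O_X)\subseteq V\,H^2(X,W\O_X)$, so this quotient factors through reduction mod $p$). If $h=1$ the operator $F$ is a unit, so its image on $H^2(X,\O_X)$ is nonzero, hence bijective. Conversely, if $h\geq 2$ or $X$ is supersingular, the strictly positive slope forces $F$ to be topologically nilpotent, so $F$ is nilpotent modulo $p$ and induces the zero map on the one-dimensional quotient $H^2(X,\O_X)$. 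Thus $F^*$ on $H^2(X,\O_X)$ is bijective precisely when $h=1$, which closes the chain $(1)\Leftrightarrow(2)\Leftrightarrow(3)$.

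I expect the main obstacle to be the precise bookkeeping of Frobenius semilinearity and slope conventions in the previous paragraph: one must check that the geometric Frobenius pullback $F^*$ on $H^2(X,\O_X)$ is genuinely the reduction of the Dieudonn\'e-module Frobenius, and that ``slope $0$'' (unit root) corresponds to $F^*$ being an isomorphism rather than to nilpotence. Rather than computing formal group laws by hand, I would lean on the established dictionary between ordinarity, the unit-root crystal, and bijectivity of Frobenius on $H^2(X,\O_X)$, as developed by Nygaard \cite{Nygaardtate} and Deligne--Illusie \cite{Deligne-Illusie}.
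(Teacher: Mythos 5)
The paper offers no argument of its own here: immediately after the statement it refers to Nygaard, \cite{Nygaardtate} Lemma 1.3, for the proof. Your sketch is correct and is essentially the standard argument underlying that citation: $(1)\Leftrightarrow(2)$ is the agreement of the two definitions of height recalled at the start of Section \ref{PrelimsK3} (\cite{Liedtke} Prop.~6.17), and $(2)\Leftrightarrow(3)$ goes through the Artin--Mazur identification of $H^2(X,W\O_X)$ with the Dieudonn\'e module of $\widehat{Br}(X)$, the slope spectral sequence identification of $H^2(X,W\O_X)\otimes_W K$ with the slope $[0,1)$ part of $H^2_{crys}(X/K)$, and the observation that a $Frob_k$-semilinear endomorphism of the one-dimensional space $H^2(X,\O_X)\cong H^2(X,W\O_X)/V\,H^2(X,W\O_X)$ is either bijective or zero. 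Two points deserve tightening. First, in the supersingular case $H^2(X,W\O_X)$ is not finitely generated over $W$ (it is $k[[V]]$ with $F=0$), so the phrase ``strictly positive slope forces $F$ to be topologically nilpotent'' does not literally apply there; the clean statement is that $\widehat{Br}(X)\cong\widehat{\mathbb{G}}_a$ and $F$ vanishes identically on its Dieudonn\'e module, hence on the quotient. Second, the identification $H^2(X,\O_X)\cong H^2(X,W\O_X)/V\,H^2(X,W\O_X)$ uses the long exact sequence of $0\ra W\O_X\xrightarrow{V} W\O_X\ra \O_X\ra 0$ together with the vanishing of the relevant next term (standard for K3 surfaces), and the compatibility you rightly flag --- that $F$ reduces mod $V$ to the $p$-power Frobenius on $\O_X$, so the Dieudonn\'e $F$ on the quotient really is the geometric $F^*$ on $H^2(X,\O_X)$ --- is part of the de Rham--Witt formalism and is verified in \cite{Nygaardtate}. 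With these two patches your chain $(1)\Leftrightarrow(2)\Leftrightarrow(3)$ is complete, and it matches the proof the paper delegates to Nygaard rather than replacing it with a genuinely different route.
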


We refer to \cite{Nygaardtate} Lemma 1.3 for a proof of this proposition. 

Let $A$ be an Artin local ring with residue field $k$ and let $X_A/ A$ be a lifting of the ordinary K3 surface $X/ k$. In \cite{AM}  Artin-Mazur showed that the enlarged Brauer group $\Psi_{X_A}$ defines a $p$-divisible group on $\Spec(A$) lifting $\Psi_X / k$. 

\begin{theorem}[Nygaard \cite{Nygaardtate}, Theorem 1.3]
Let $X/k$ be an ordinary K3 surface. The map
\begin{equation*}
\{ \text{Iso. classes of liftings $X_A/A$} \} \ra \{ \text{Iso. classes of liftings G/A} \}
\end{equation*}
defined by 
\begin{equation*}
X_A/A \mapsto \Psi_{X_A}/A
\end{equation*}
is a functorial isomorphism. 
\end{theorem}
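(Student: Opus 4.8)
The plan is to prove Nygaard's theorem in the style of Serre--Tate theory for ordinary abelian varieties: to show that the natural transformation $X_A \mapsto \Psi_{X_A}$ is an isomorphism of deformation functors by checking that both functors are prorepresentable and formally smooth and that the transformation induces an isomorphism on tangent spaces. Recall the standard lemma that a natural transformation between prorepresentable, formally smooth functors which is an isomorphism on tangent spaces is automatically an isomorphism of functors; this is what will upgrade the infinitesimal statement to the claimed functorial isomorphism on all Artin local $A$. First I would set up the geometric side: the deformation functor $Def_X$ of the K3 surface $X$ has tangent space $H^1(X, T_X)$ and obstructions in $H^2(X, T_X)$. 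Since $X$ is a K3 surface the symplectic form gives $T_X \cong \Omega^1_X$, and Serre duality yields $H^2(X, T_X) \cong H^0(X, \Omega^1_X)^\vee = 0$; hence $Def_X$ is unobstructed, i.e. formally smooth, with $20$-dimensional tangent space $H^1(X, \Omega^1_X)$. On the arithmetic side, deformations of a $p$-divisible group are unobstructed by the theory of Grothendieck--Messing and Illusie, so the functor $Def_{\Psi_X}$ is also formally smooth; thus both functors are prorepresented by formal power series rings over $W(k)$ and it suffices to compare their tangent spaces.

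Second, I would compute and match the tangent spaces. For an ordinary K3 surface the bijectivity of Frobenius on $H^2(X, \O_X)$ (the Proposition above) forces $\Psi_X$ to be an \emph{ordinary} $p$-divisible group: its connected part is the formal Brauer group $\hat{Br}_X \cong \hat{\mathbb{G}}_m$, of height $1$ (because the height of $X$ is $1$) and one-dimensional, while its \'etale quotient $\Psi_X^{et}$ has height $20$. This matches the slope decomposition of $H^2_{crys}(X/W)$, which has slopes $0,1,2$ with multiplicities $1,20,1$: the slope-$\le 1$ part, of rank $1+20=21$, is exactly the (contravariant) Dieudonn\'e module of $\Psi_X$. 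By Serre--Tate theory the lifts of such an ordinary $p$-divisible group over $A$ form a torsor under $\mathrm{Hom}_{\Z_p}(T_p\Psi_X^{et}, \hat{\mathbb{G}}_m(\m_A))$, so the tangent space of $Def_{\Psi_X}$ is $\mathrm{Hom}_{\Z_p}(T_p\Psi_X^{et}, k) \cong k^{20}$, of the same dimension $20$ as $H^1(X, \Omega^1_X)$.

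Third, I would produce the comparison realizing the transformation on these tangent spaces, rather than merely matching dimensions. Both deformation problems are controlled by lifting a Hodge filtration on crystalline cohomology: on the K3 side the Kodaira--Spencer map identifies first-order deformations with liftings of $F^1 H^2_{dR}(X/k)$, while on the $p$-divisible side Grothendieck--Messing identifies deformations of $\Psi_X$ with liftings of the Hodge filtration of its Dieudonn\'e crystal. The essential input is the Artin--Mazur comparison identifying the Dieudonn\'e crystal of $\Psi_X$ with the slope-$\le 1$ part of $H^2_{crys}(X/W)$ compatibly with these two Hodge filtrations. Granting this, lifting $X$ over $A$ is precisely the data of lifting $\Psi_X$, the two period/Kodaira--Spencer descriptions coincide, and $\eta$ is an isomorphism on tangent spaces, hence an isomorphism of functors; the split $p$-divisible group $\hat{\mathbb{G}}_m \times (\Q_p/\Z_p)^{20}$ then corresponds to the canonical (Serre--Tate) lift of $X$.

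The main obstacle is exactly this last compatibility: establishing that the Grothendieck--Messing deformation theory of $\Psi_X$ agrees, with full respect for the Hodge filtration and the crystal structure, with the Hodge-theoretic deformation theory of the surface through $F^1 H^2_{dR}$. This is where ordinariness is indispensable, since it is what guarantees that $\Psi_X$ is a genuine $p$-divisible group with a clean slope decomposition, so that its Dieudonn\'e module accounts for the entire slope-$\le 1$ part of $H^2_{crys}(X/W)$ and its deformation theory becomes literally the deformation theory of the Hodge filtration that also governs the K3 surface. Once the crystalline comparison is in place, the remaining verifications (prorepresentability, the tangent-space identification, and naturality in $A$) are formal.
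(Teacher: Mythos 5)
The paper does not prove this statement at all: it is quoted verbatim from Nygaard \cite{Nygaardtate}, and the only in-paper material surrounding it is the consequence drawn afterwards (rigidity of the height-one formal part and of the \'etale part, hence the canonical lift as the split extension). So the only meaningful comparison is with Nygaard's original argument, and your outline reconstructs that argument faithfully: both deformation functors are prorepresentable and formally smooth (on the K3 side $T_X \cong \Omega^1_X$ and $H^2(X,T_X) \cong H^0(X,\Omega^1_X)^\vee = 0$; on the $p$-divisible group side Grothendieck--Messing/Illusie), ordinarity forces $\Psi_X$ to be an extension of an \'etale group of height $20$ by $\hat{Br}_X \cong \hat{\mathbb{G}}_m$, with Dieudonn\'e module the slope-$\leq 1$ part of $H^2_{crys}(X/W)$ of rank $21$, the tangent spaces are both $20$-dimensional, and the isomorphism is won by matching the two period descriptions through the filtered crystalline comparison. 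This is exactly the Serre--Tate-style architecture of Nygaard's Section 1, and your explicit identification of the split lift $\hat{\mathbb{G}}_m \times (\Q_p/\Z_p)^{20}$ with the canonical lift agrees with how the paper uses the theorem.

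Two caveats. First, the step you ``grant'' --- the identification of the Dieudonn\'e crystal of $\Psi_X$ with the slope-$\leq 1$ part of $H^2_{crys}(X/W)$ \emph{compatibly with the Hodge filtrations}, together with the fact that the tangent map induced by $X_A \mapsto \Psi_{X_A}$ is the Grothendieck--Messing one --- is not a routine citation but the actual mathematical content of Nygaard's proof; as written, your text is a correct outline with the crux deferred, and it should cite Nygaard's comparison results rather than Artin--Mazur alone. Second, a small but instructive imprecision: for K3 surfaces the crystalline period datum is the \emph{isotropic} line $F^2 \subset H^2_{dR}$ (Ogus), not $F^1$. Unconstrained liftings of a line in the rank-$22$ module give a $21$-dimensional space, and it is the isotropy constraint that cuts this to $20$; on the $\Psi_X$ side, liftings of the rank-one Hodge filtration inside the rank-$21$ module $D(\Psi_X)$ are automatically $20$-dimensional with no constraint. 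That these two counts agree under the comparison is precisely why the tangent map is an isomorphism, and your $F^1$-formulation blurs this point (though since $F^1 = (F^2)^{\perp}$ the data are equivalent). You should also record that prorepresentability of $Def_X$ uses $H^0(X,T_X)=0$, which in positive characteristic is Rudakov--Shafarevich, not Serre duality.
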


Recall that the enlarged Brauer group of a K3 surface fits in the following exact sequence (\cite{AM} Proposition IV.1.8):
\begin{equation}
0 \ra \Psi_X^0 (= \hat{Br_X}) \ra \Psi_X \ra \Psi^{\acute{e}t} \ra 0.
\end{equation}
 
As the height one formal groups are rigid, there is a unique lifting $G^0_A$ of $\Psi_X^0$ to $A$. Similarly, the \'etale groups are rigid as well, so there is a unique lift $G^{\acute{e}t}_A$ of $\Psi_X^{\acute{e}t}$ to $A$. This implies that if $G$ is any lifting of $\Psi_X$ to $A$, then we have an extension
$$
0 \ra G^0_A \ra G \ra G^{\acute{e}t}_A \ra 0
$$
lifting the extension
$$
0 \ra \Psi_X^0 \ra \Psi_X \ra \Psi_X^{\acute{e}t} \ra 0.
$$
Therefore, the trivial extension $G = G^0_A \times G_A^{\acute{e}t}$ defines a unique lift $X_{can,A}/A$  of $X/k$ such that $\Psi_{X_{can,A}} = G^0_A \times G^{\acute{e}t}_A$. Take $A = W_n$ and $X_n = X_{can, W_n}$, then we get a proper flat formal scheme $\{X_n\}/ Spf W$.

\begin{theorem}[Definition of \textbf{Canonical Lift}]
The formal scheme $\{X_n\} / Spf W$ is algebraizable and defines a K3 surface $X_{can}/ \Spec(W)$. 
\end{theorem}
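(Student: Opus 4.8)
The plan is to invoke Grothendieck's algebraization (formal existence) theorem, for which the whole problem reduces to producing a line bundle on the formal scheme $\{X_n\}$ that is ample on the closed fibre. Recall that if $\{X_n\}/Spf\, W$ is a proper formal scheme over the complete Noetherian local ring $W$ and $\{L_n\}$ is a compatible system of line bundles on the $X_n$ whose reduction $L_0$ on $X$ is ample, then $(\{X_n\}, \{L_n\})$ is the formal completion of a unique projective $W$-scheme $X_{can}$ carrying an ample line bundle $L$. Since each $X_n$ is a flat lift of the K3 surface $X$, the scheme $X_{can}$ is smooth and proper over $W$ with K3 special fibre; as the conditions $\omega_X \cong \O_X$ and $H^1(X, \O_X) = 0$ deform, its geometric generic fibre is again a smooth proper surface with trivial canonical bundle and vanishing $h^1(\O)$, hence a K3 surface. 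Thus $X_{can}$ is a relative K3 surface over $\Spec(W)$ as claimed, and everything comes down to constructing the system $\{L_n\}$.

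First I would fix an ample line bundle $L = L_0$ on $X$ and lift it up the tower $X_n \hookrightarrow X_{n+1}$ one step at a time. Given $L_n$ on $X_n$ reducing to $L$, the obstruction to extending $L_n$ to $X_{n+1}$ lies in $H^2(X, \O_X)$ and is computed as the cup product of the crystalline (de Rham) first Chern class $c_1(L)$ with the Kodaira--Spencer class of the square-zero thickening $X_n \hookrightarrow X_{n+1}$. Because $H^1(X, \O_X) = 0$, the set of extensions of a line bundle across a square-zero ideal $I$ is a torsor under $H^1(X, \O_X) \otimes I = 0$, so a lift, once it exists at each finite level, is unique, and the $L_n$ automatically form a compatible system. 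Moreover ampleness is inherited for free: over the local base $W_n$ a line bundle on a proper scheme is ample precisely when its restriction to the closed fibre is ample, so each $L_n$ is ample as soon as $L_0$ is. Hence the entire construction reduces to the vanishing of the above obstruction class for the canonical tower.

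The crux, and the step I expect to be the main obstacle, is therefore to show that this obstruction vanishes for $X_{can}$, that is, that line bundles lift along the canonical deformation. This is exactly where the defining property of the canonical lift is used: $X_{can}$ is attached, via Nygaard's correspondence stated above, to the trivial extension $G^0_A \times G^{\acute{e}t}_A$ of the enlarged Brauer $p$-divisible group. Under the crystalline comparison this splitting translates into the statement that the Hodge filtration of $X_{can}$ is compatible with the slope decomposition of $H^2_{crys}(X/W)$ induced by Frobenius; equivalently, $X_{can}$ is the unique lift whose Hodge filtration $F^1_{Hdg} \subset H^2_{DR}(X_{can}/W)$ absorbs the image of the full N\'eron--Severi group under the crystalline first Chern class. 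For such a lift the class $c_1(L)$ remains inside $F^1_{Hdg}$ to all infinitesimal orders, which by the infinitesimal $(1,1)$-criterion for deforming line bundles is precisely the vanishing of the Kodaira--Spencer obstruction at every step. I would establish this compatibility from the Frobenius-equivariant description of the canonical lift in Nygaard \cite{Nygaardtate} and Deligne--Illusie \cite{Deligne-Illusie}; once it is in place, the ample line bundle $L$ lifts along the tower, the system $\{L_n\}$ exists, and Grothendieck's theorem algebraizes $\{X_n\}$ to the desired K3 surface $X_{can}/\Spec(W)$.
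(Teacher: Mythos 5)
The paper does not actually prove this statement: it is quoted from Nygaard and the text immediately below it simply says ``This theorem was proved by Nygaard in \cite{Nygaardtate}, Proposition 1.6.'' Your skeleton is precisely the standard (Nygaard) argument: reduce algebraization to Grothendieck's existence theorem by lifting an ample line bundle up the tower, use $H^1(X,\O_X)=0$ to get uniqueness and compatibility of the lifts $L_n$, use the fact that ampleness over an Artin local base is detected on the closed fibre, and check that the algebraized scheme is a relative K3. You also correctly isolate the crux, namely that line bundles lift along the canonical deformation --- this is exactly Proposition 1.8 of \cite{Nygaardtate}, which the paper states right after this theorem. So in structure your proposal reconstructs the intended proof rather than deviating from it.

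One caveat on the mechanism you propose for the crux. You argue that $c_1(L)$ stays in $F^1_{Hdg}$ to all orders and then invoke an ``infinitesimal $(1,1)$-criterion'' to kill the Kodaira--Spencer obstruction at each step. This is delicate: the paper itself remarks, right after Theorem \ref{Hodgefil}, that the Hodge-filtration lifting criterion is \emph{not} true infinitesimally (the $\mathcal{L}^{\otimes p}$ example, where $ch$ lands in the correct Hodge level over an Artinian base but the bundle need not lift), essentially because the obstruction group $H^2(X,\O_X)\otimes I$ is $p$-torsion and mod-$p^n$ Chern classes lose integral information. So a pointwise check that the crystalline class lies in $F^1$ at each finite level does not by itself give the vanishing you need. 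Nygaard's actual route is through the $p$-divisible group: a line bundle lifts to $X_A$ exactly when the corresponding compatibility with the extension structure of $\Psi_{X_A}$ holds, and for the trivial extension $G^0_A\times G^{\acute{e}t}_A$ defining the canonical tower this is automatic (equivalently, via the Deligne--Illusie unit-root splitting, which is the Frobenius-equivariant statement you allude to). Since you defer to \cite{Nygaardtate} and \cite{Deligne-Illusie} for exactly this point, the proposal is sound as an outline, but as written the $(1,1)$-criterion step should be replaced by, or explicitly reduced to, the splitting argument rather than an infinitesimal Hodge-level check.
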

This theorem was proved by Nygaard in \cite{Nygaardtate}, Proposition 1.6. 

One of the nice properties of the canonical lift is that it is a Picard lattice preserving lift.

\begin{proposition}[Nygaard, \cite{Nygaardtate}, Proposition 1.8]
The canonical lift $X_{can}$ has the property that any line bundle on $X$ lifts uniquely to $X_{can}$.
\end{proposition}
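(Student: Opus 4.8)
The plan is to split the statement into a uniqueness part, which is formal and holds for any lift, and an existence part, which is the genuine content and uses the defining feature of the canonical lift. For uniqueness, recall that for a K3 surface $H^1(X, \O_X) = 0$. Given an infinitesimal thickening $\Spec(W_n) \hookrightarrow \Spec(W_{n+1})$ and a line bundle $L_n$ on $X_{can, W_n}$, the set of lifts of $L_n$ to $X_{can, W_{n+1}}$ is, whenever non-empty, a torsor under $H^1(X, \O_X) = 0$; hence a lift is unique if it exists. Passing to the inverse limit and algebraizing, any line bundle admits at most one lift to $X_{can}$. This is exactly the torsor argument already used in the remark after Proposition \ref{surjectivity}.

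For existence I would work step by step along $W_n \to W_{n+1}$ and control the obstruction through the crystalline first Chern class. The obstruction to lifting $L_n$ lives in $H^2(X, \O_X) \cong k$, and the de Rham/crystalline criterion (the line-bundle analogue of Theorem \ref{Hodgefil}) says that $L$ lifts to $X_{can, W_n}$ precisely when its crystalline class $c_1(L)$ lies in the first step $F^1$ of the Hodge filtration induced on $H^2_{crys}(X/W) \cong H^2_{DR}(X_{can}/W)$ by the canonical lift. Thus the entire problem reduces to verifying $c_1(L) \in F^1_{can}$.

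Two inputs then close the argument. First, since $X$ is ordinary, $H^2_{crys}(X/W)$ splits integrally as an F-crystal into its slope $0$, slope $1$ and slope $2$ parts $U_0 \oplus U_1 \oplus U_2$ of ranks $1$, $20$, $1$, and the crystalline Frobenius $\varphi = F^*$ acts on the cycle class of a line bundle by $\varphi(c_1(L)) = p\, c_1(L)$, so $c_1(L) \in U_1$. Second — and this is the defining property of the canonical lift — its Hodge filtration is the Frobenius-compatible one, namely
$$
F^1_{can} = \{\, x \in H^2_{crys}(X/W) : \varphi(x) \in p\, H^2_{crys}(X/W) \,\} = U_1 \oplus U_2,
$$
the slope $\geq 1$ part (indeed $\varphi(U_0) = U_0 \not\subset p H^2_{crys}$ while $\varphi(U_1) = p U_1$ and $\varphi(U_2) = p^2 U_2$). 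This is what the construction of $X_{can}$ via the trivial extension $G = G^0_A \times G^{\acute{e}t}_A$ of the enlarged Brauer group encodes: the splitting of the $p$-divisible group is equivalent to the splitting of the slope filtration and forces the Hodge filtration to coincide with it. Combining the two inputs gives $c_1(L) \in U_1 \subset F^1_{can}$, so the obstruction vanishes at every stage; the resulting compatible system of line bundles on the formal scheme $\{X_n\}$ is then algebraized by Grothendieck existence (the same algebraization used to define $X_{can}$) to a line bundle on $X_{can}$.

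The main obstacle is the second input: identifying $F^1_{can}$ with the slope filtration $U_1 \oplus U_2$. The slope-$1$ statement for $c_1(L)$ and the reduction to "$c_1(L) \in F^1$" are standard, but the compatibility of the canonical lift's Hodge filtration with the slope decomposition is exactly where the Serre--Tate/Nygaard--Ogus description of $X_{can}$ through the split lift of $\Psi_X$ must be translated into a statement about de Rham cohomology and its Hodge filtration. Once that dictionary between the split $p$-divisible group and the Frobenius-stable Hodge filtration is established, the vanishing of the obstruction, and hence the liftability of every line bundle, is immediate.
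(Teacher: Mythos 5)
The paper itself contains no proof of this proposition: it is imported verbatim from Nygaard (\cite{Nygaardtate}, Proposition 1.8), so your attempt can only be measured against Nygaard's original argument, of which it is essentially a crystalline translation. The skeleton is sound: uniqueness from the torsor argument under $H^1(X,\O_X)=0$ together with algebraization of the compatible system by Grothendieck existence; existence reduced, via the standard criterion that a line bundle deforms to a lift exactly when $c_1^{crys}(L)$ lands in $F^1$ of the Hodge filtration induced by that lift (this is Deligne/Berthelot--Ogus, \cite{Deligne}, \cite{BO}, rather than literally Theorem \ref{Hodgefil}, which concerns kernels on a product of two K3s), to the position of $c_1(L)$; the Katz slope decomposition $U_0\oplus U_1\oplus U_2$ of the ordinary F-crystal; and $\varphi(c_1(L))=p\,c_1(L)$, which indeed kills the slope-$0$ and slope-$2$ components because $\bigcap_n p^nU_i=0$, giving $c_1(L)\in U_1$.

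Two remarks. First, a small but genuine slip: the set $\{x\in H^2_{crys}(X/W):\varphi(x)\in p\,H^2_{crys}(X/W)\}$ is not $U_1\oplus U_2$ but $pU_0\oplus U_1\oplus U_2$; your parenthetical only rules out $U_0$ being contained in it, not the presence of $pU_0$. The correct formulation is that $F^1_{can}$, being a direct summand (the quotient by $F^1$ is free), equals the slope-$\geq 1$ part $U_1\oplus U_2$, i.e.\ the saturation of your set; since all you use is the inclusion $U_1\subset F^1_{can}$, the argument survives unchanged. Second, as you candidly flag, the identity $F^1_{can}=U_1\oplus U_2$ is exactly where the content of the proposition lives: it is the Nygaard/Deligne--Illusie characterization of the canonical lift, the de Rham shadow of the splitting $\Psi_{X_{can}}=G^0\times G^{\acute{e}t}$ (whose $p$-adic \'etale avatar is Taelman's criterion, Theorem \ref{lenny}). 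Your write-up therefore reduces the proposition to that structural fact rather than proving it; given that the paper itself cites the proposition from Nygaard, this is a reasonable division of labor, but a self-contained proof would have to carry out the Serre--Tate dictionary you defer --- which is, in substance, what Nygaard's own proof does by playing the deformation theory of line bundles against the split lift of the enlarged Brauer group.
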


Next, we state a criteria for a lifting of an ordinary K3 surface to come from the canonical lift. This is the criteria that we will be using to determine that our lift is canonical. 

\begin{theorem}[Taelman \cite{Taelman} Theorem C] \label{lenny}
Let $\O_K$ be a discrete valuation ring with perfect residue field $k$ of characteristic $p$ and fraction field $K$ of characteristic $0$. Let $X_{\O_K}$ be a projective $K3$ surface over $\O_K$ with $X_{\bar{K}}$ the geometric generic fiber and assume that $X := X_{\O_K} \otimes k$, the special fiber, is an ordinary K3 surface. Then the following are equivalent:
\begin{enumerate}
\item $X_{\O_K}$ is the base change from $W(k)$ to $\O_K$ of the canonical lift of $X$,
\item $H^2_{et}(X_{\bar{K}}, \Z_p) \cong H^0 \oplus H^1(-1) \oplus H^2(-2)$ with $H^i$ unramified $\Z_p[Gal_K]$-modules, free of rank $1, \ 20, \ 1$ over $\Z_p$ respectively. 
\end{enumerate} 
Here, the $(-1)$ and $(-2)$ denote Tate twists. 
\end{theorem}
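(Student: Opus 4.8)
The plan is to route the equivalence through the three-way dictionary relating liftings of the K3 surface, liftings of its enlarged Brauer group, and the Galois action on $p$-adic \'etale cohomology. Nygaard's theorem (stated above) identifies isomorphism classes of liftings $X_A/A$ of the ordinary K3 surface $X$ with isomorphism classes of liftings of the enlarged formal Brauer group $\Psi_X$, and the canonical lift is, by construction, the one corresponding to the split $p$-divisible group $G^0_A \times G^{\acute{e}t}_A$. On the other side, $p$-adic Hodge theory (the crystalline comparison for the lift $X_{\O_K}$, together with the fact that ordinary reduction forces the slope/Hodge--Tate filtration on $H^2$ to be split) lets me translate the splitting type of $\Psi_X$ into the decomposition type of the $\Z_p[Gal_K]$-module $H^2_{et}(X_{\bar K}, \Z_p)$. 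First I would fix the slope and weight bookkeeping: for an ordinary K3 the Newton polygon of $H^2_{crys}(X/W)$ has slopes $0,1,2$ with multiplicities $1,20,1$, and these must match the candidate decomposition $H^0 \oplus H^1(-1) \oplus H^2(-2)$, where the Tate twists record exactly these slopes and the corresponding Hodge--Tate weights.

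For the implication $(1)\Rightarrow(2)$ I would assume $X_{\O_K}$ is the base change of the canonical lift and deduce the splitting of the Galois representation. Over any lift with ordinary reduction the module $H^2_{et}(X_{\bar K}, \Z_p)$ carries an ordinary (connected--\'etale) filtration whose graded pieces are unramified $\Z_p[Gal_K]$-modules twisted by powers of the cyclotomic character, of ranks $1,20,1$; this is the \'etale incarnation of the split slope filtration of the ordinary $F$-crystal $H^2_{crys}(X/W)$. The content of $(1)$ is that over the canonical lift this filtration splits as a direct sum: by Nygaard's theorem the canonical lift corresponds to the split $p$-divisible group $\Psi_X \cong G^0 \times G^{\acute{e}t}$, and I would transport this splitting across the crystalline comparison isomorphism to conclude that $D_{crys}(H^2_{et})$, and hence $H^2_{et}$ itself, decomposes compatibly with the filtration. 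Reading off weights and Frobenius slopes then identifies the three summands as $H^0$ (unramified, Frobenius a unit), $H^1(-1)$ (the self-dual rank-$20$ middle, unramified after one cyclotomic twist), and $H^2(-2)$ (unramified after two twists, dual to $H^0$ under the polarization pairing), which is exactly condition $(2)$.

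For the converse $(2)\Rightarrow(1)$ I would reverse this dictionary. Assuming $H^2_{et}(X_{\bar K},\Z_p) \cong H^0 \oplus H^1(-1) \oplus H^2(-2)$ with the $H^i$ unramified, the unramifiedness of each graded piece forces the connected--\'etale extension of the lifted $p$-divisible group $\Psi_{X_{\O_K}}$ to be the trivial one, so that $\Psi_{X_{\O_K}} \cong G^0 \times G^{\acute{e}t}$; by Nygaard's functorial isomorphism the trivial extension corresponds to the canonical lift, giving $(1)$. Here I would be careful about the field of definition: the unramified structure shows that the representation, and hence the lift, descends from $\O_K$ to $W(k)$, so what one obtains is exactly the base change to $\O_K$ of the canonical lift over $W(k)$, as stated.

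The hard part will be making this central translation precise: identifying the extension class that controls whether the connected--\'etale sequence of $\Psi_{X_{\O_K}}$ splits with the obstruction to $H^2_{et}(X_{\bar K},\Z_p)$ decomposing into unramified-times-cyclotomic pieces. This is the analogue for ordinary K3 surfaces of the Serre--Tate canonical-coordinate description of ordinary abelian varieties, and pinning it down requires the integral crystalline comparison theorem together with a careful study of the ordinary $F$-crystal $H^2_{crys}(X/W)$. One must show that the Serre--Tate, or Kodaira--Spencer, deformation coordinate of the lift coincides, under the comparison isomorphism, with the class in $\mathrm{Ext}^1_{Gal_K}$ measuring the failure of the graded pieces to split off the $\Z_p[Gal_K]$-module $H^2_{et}(X_{\bar K},\Z_p)$. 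Controlling the self-dual rank-$20$ middle piece, and not merely the rank-one connected and \'etale parts, is the main subtlety; this is precisely where one uses that $X$ is a K3 surface --- so that $H^2$ is polarized and the transcendental part is orthogonal to the unit-root line --- rather than an arbitrary ordinary smooth proper variety.
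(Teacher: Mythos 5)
The paper does not prove Theorem \ref{lenny} at all: it is imported verbatim from Taelman \cite{Taelman} (Theorem C) and used as a black box, so there is no internal proof to compare you against. Judged on its own terms, your outline does follow what is essentially Taelman's strategy --- Nygaard's classification of liftings of an ordinary K3 by liftings of the enlarged Brauer group $\Psi_X$, with the canonical lift singled out as the split extension $G^0 \times G^{\acute{e}t}$, translated into Galois-theoretic terms by $p$-adic Hodge theory. But as written it is a road map, not a proof, and the gap sits exactly where you yourself flag it: the ``central translation'' is announced, never executed.

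Three concrete points would have to be supplied. First, the asserted equivalence between splitting of the connected--\'etale sequence of $\Psi_{X_{\O_K}}$ over $\O_K$ and splitting of the corresponding extension of $\Z_p[Gal_K]$-modules requires injectivity of the natural map from extensions of $p$-divisible groups over $\O_K$ to $\mathrm{Ext}^1_{Gal_K}$ of their Tate modules (a Kummer-theoretic/Tate full-faithfulness statement over a possibly ramified $\O_K$); without it, $(2)\Rightarrow(1)$ does not follow from vanishing of the Galois extension class. Second, and more seriously, the enlarged Brauer group does not see all of $H^2$: for an ordinary K3 its Dieudonn\'e module has height $21$ (connected part of height $1$, \'etale part of height $20$), accounting only for the slope-$1$ and slope-$2$ parts of $H^2_{crys}(X/W)$; the slope-$0$ unit-root line --- the summand $H^0$ in condition (2) --- is invisible to $\Psi$. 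So splitting of $\Psi$'s connected--\'etale sequence controls only the extension between the rank-$20$ middle and $H^2(-2)$, and splitting off $H^0$ needs a separate argument, presumably via the perfect Galois-equivariant cup-product pairing (the rank-$1$ top filtration step is the orthogonal complement of the rank-$21$ step), verified integrally over $\Z_p$ and not merely after inverting $p$; your appeal to ``duality under the polarization pairing'' gestures at this but does not carry it out. Third, in $(2)\Rightarrow(1)$ the sentence ``the representation, and hence the lift, descends from $\O_K$ to $W(k)$'' is not an argument: what one actually does is apply Nygaard's theorem over the Artinian quotients $\O_K/\mathfrak{m}^n$ (which are Artin local $W(k)$-algebras with residue field $k$), identify each $X_{\O_K/\mathfrak{m}^n}$ with the base change of the canonical lift there, and then algebraize. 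Since these are precisely the steps constituting Taelman's proof, the proposal is a correct identification of the route but does not itself close the statement.
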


We now prove that the automorphisms of an ordinary K3 surface lift always to characteristic zero. 

\begin{theorem} \label{liftingauto}
Every isomorphism $\vp: X \ra Y$ of ordinary K3 surfaces over an algebraically closed field of characteristic $p$ lifts to an isomorphism of the canonical lift of the ordinary K3's $\vp_W: X_{can} \ra Y_{can}$. In particular, every automorphism of $X$ lifts to an automorphism of $X_{can}$.    
\end{theorem}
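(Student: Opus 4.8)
The plan is to lift $\vp$ by lifting the kernel $\O_{\Gamma(\vp)}$ of the induced Fourier--Mukai equivalence and then recognising the lifted kernel as the graph of an isomorphism. First I would record that, because a K3 surface has $H^1(X,\O_X)=0$, the comparison of deformation functors (Theorem \ref{deformationcomparison}), applied to the closed embedding $\Gamma(\vp)\hookrightarrow X\times Y$ in place of the graph of an automorphism, shows that lifting $\vp$ as an isomorphism is equivalent to lifting $\O_{\Gamma(\vp)}$ as a perfect complex on $X_{can}\times Y_{can}$. Indeed, as in the discussion following Theorem \ref{deformationcomparison}, any such lift has a single nonzero cohomology sheaf, which is the structure sheaf of a flat deformation of $\Gamma(\vp)$; flatness then forces the first projection to $X_{can}$ to be an isomorphism (Lemma \ref{24.8}), so the lift is the graph of an isomorphism $\vp_W$ reducing to $\vp$.

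With this reduction in hand, the task becomes verifying the hypothesis of the Lieblich--Olsson lifting criterion (Theorem \ref{Hodgefil}) for the canonical lifts $X_{can}$ and $Y_{can}$: I must check that the isometry of Mukai F-isocrystals $H^*_{crys}(X/K)\ra H^*_{crys}(Y/K)$ induced by $\O_{\Gamma(\vp)}$ carries $F^1_{Hdg}(X_{can})$ to $F^1_{Hdg}(Y_{can})$ and $F^2_{Hdg}(X_{can})$ to $F^2_{Hdg}(Y_{can})$, where the Hodge filtrations are those induced by the canonical lifts through the Berthelot--Ogus comparison \cite{BO}. Since $\vp$ is fully faithful (being an isomorphism, its graph induces an equivalence), Theorem \ref{Hodgefil} then applies.

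The heart of the argument, and the step I expect to be the main obstacle, is to show that these Hodge filtrations are intrinsic to the F-crystal structure. For an ordinary K3 surface the F-crystal $H^2_{crys}(X/W)$ has slopes $0,1,2$ with multiplicities $1,20,1$, and by the theory of Deligne--Illusie \cite{Deligne-Illusie} and Nygaard \cite{Nygaardtate} the canonical lift is precisely the one whose Hodge filtration coincides, after the crystalline--de Rham comparison, with the slope (Newton) filtration: $F^1_{Hdg}$ is the sum of the slope-$\geq 1$ parts and $F^2_{Hdg}$ is the slope-$2$ part, while the two extra summands $H^0_{crys}(X/K)(-1)$ and $H^4_{crys}(X/K)(1)$ of the Mukai F-isocrystal are of pure slope $1$ and contribute compatibly. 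Because $\vp$ is an isomorphism of surfaces, the induced map commutes with Frobenius and is therefore a morphism of F-isocrystals, so by Dieudonn\'e--Manin it respects the slope decomposition and hence automatically preserves these canonical Hodge filtrations. This is exactly the input that ordinariness provides and that is unavailable for K3 surfaces of higher height, where the Hodge filtration of a lift is not determined by the F-crystal.

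Granting this, Theorem \ref{Hodgefil} produces a perfect complex $P_W\in D^b(X_{can}\times_W Y_{can})$ lifting $\O_{\Gamma(\vp)}$; Nakayama's lemma shows that $P_W$ induces an equivalence, and the reduction step of the first paragraph then identifies $P_W$ with $\O_{\Gamma(\vp_W)}$ for a unique isomorphism $\vp_W\colon X_{can}\ra Y_{can}$ reducing to $\vp$. Taking $Y=X$ yields the lifting of automorphisms. As a consistency check one can confirm via Taelman's criterion (Theorem \ref{lenny}) that the targets are indeed the canonical lifts, which holds by construction since the $p$-adic \'etale cohomology of their geometric generic fibres is a sum of unramified $\Z_p[Gal_K]$-modules with the prescribed Tate twists.
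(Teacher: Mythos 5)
Your proof is correct, but it takes a genuinely different route from the paper's. The paper does not verify the Hodge-filtration hypothesis of Theorem \ref{Hodgefil} at all: it instead invokes the infinitesimal lifting machinery of Theorem \ref{liftingkernels} (after checking its hypotheses, which hold because an isomorphism fixes $(1,0,0)$ and preserves ample cones), which produces a lift of the kernel over the canonical lift $X_{can}$ together with an a priori unidentified lift $Y'$ of $Y$; the substance of the paper's argument is then the a posteriori identification $Y' \cong Y_{can}$, carried out via Taelman's \'etale criterion (Theorem \ref{lenny}) by transporting the Galois-module decomposition of $H^2_{et}(X_{can,\bar K},\Z_p)$ through the isomorphism of generic fibers. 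You instead fix both canonical lifts from the outset and check the hypothesis of Theorem \ref{Hodgefil} directly, using the crystalline characterization of the canonical lift of an ordinary K3 --- that its Hodge filtration agrees under the Berthelot--Ogus comparison with the slope filtration ($F^1 = $ slope $\geq 1$, $F^2 = $ slope $2$), which goes back to Deligne--Illusie and Nygaard and is precisely the crystalline counterpart of the \'etale criterion the paper uses --- together with the observation that $\Phi_{\O_{\Gamma(\vp)}}$ acts degreewise as $\vp_*$ and commutes with Frobenius, hence preserves isoclinic pieces by Dieudonn\'e--Manin. Your route buys a purely crystalline argument that makes the role of ordinariness transparent (the Hodge filtration of the distinguished lift is intrinsic to the F-isocrystal, which fails in higher finite height) and it never needs the deformation-functor isomorphism $\delta$; the paper's route avoids having to source the slope-filtration characterization of the canonical lift, needing only Taelman's published criterion, at the cost of the extra identification step. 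Two points you should make explicit to close your argument: cite where the Hodge-equals-slope characterization is proved (\cite{Deligne-Illusie}, \cite{Nygaardtate}), since it is the hinge of your proof and you present it mostly as known; and note that your use of Theorem \ref{deformationcomparison} and Lemma \ref{24.8} for the graph embedding $\Gamma(\vp) \hookrightarrow X \times Y$ with $X \neq Y$ is the same harmless extension the paper itself performs, so the final identification of $P_W$ with $\O_{\Gamma(\vp_W)}$ is sound.
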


\begin{remark}
Note that the above statement is stronger than the tautological statement: If $X$ and $X'$ are two isomorphic ordinary $K3$ surfaces over a perfect field $k$, then their canonical lifts are isomorphic. 
\end{remark}

\begin{remark}
This statement should be compared with the results of Esnault-Oguiso (\cite{EO} Theorems 5.1, 6.4 and 7.5), who constructed automorphisms which do not lift to characteristic 0. 
\end{remark}

\begin{proof}[Proof of Theorem \ref{liftingauto}] 
Let $\vp: X \ra Y$ be an isomorphism of ordinary K3 surfaces. Consider the graph of this isomorphism as a coherent sheaf (or even as a perfect complex) on the product $X \times Y$, then from Theorem \ref{deformationcomparison} the deformation of isomorphism as a morphism and as a sheaf are equivalent so we use Theorem \ref{liftingkernels} to construct a lifting of the isomorphism for the canonical lift $X_{can}$ of $X$. As isomorphisms preserve the ample cone, the induced Fourier-Mukai transform satisfies the assumptions of Theorem \ref{liftingkernels}. Note that the Lieblich-Olsson lifting of perfect complexes allows us to be only able to choose the lifting of $X$ and then it constructs a unique lifting $Y'$ of $Y$ to which the perfect complex lifts. So, now the only remaining statement to show is that $Y'$ is the canonical lift of $Y$. This follows from the criteria of canonical lift Theorem \ref{lenny} and the observation that the isomorphism between $\vp_{\bar{K}}: X_{can, \bar{K}} \ra Y'_{\bar{K}}$ induces an isomorphism of Galois module on the second $p$-adic \'etale cohomology. This isomorphism of Galois modules provides us with the required decomposition of $H^2_{et}(Y'_{\bar{K}}, \Z_p)$, which shows that $Y'$ is the canonical lift of $Y$. 
\end{proof}

\begin{remark}
This gives a fixed point of the $\delta$ functor constructed by \cite{LO} (see Theorem \ref{Infinitesimallifting}).
\end{remark}

\begin{corollary}
Every isomorphism of ordinary K3 surfaces over an algebraically closed field of characteristic $p$ preserves the Hodge filtration induced by the canonical lift. In particular, the automorphisms as well.
\end{corollary}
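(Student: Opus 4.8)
The plan is to derive this as a formal consequence of Theorem \ref{liftingauto}, using only the functoriality of the Hodge filtration on de Rham cohomology together with that of the Berthelot--Ogus comparison isomorphism. Let $\vp: X \ra Y$ be an isomorphism of ordinary K3 surfaces over $k$, and write $W = W(k)$. By Theorem \ref{liftingauto}, $\vp$ lifts to an isomorphism $\vp_W: X_{can} \ra Y_{can}$ of the canonical lifts, and by construction $\vp_W$ reduces to $\vp$ on the special fibers.

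First I would record that the pullback on algebraic de Rham cohomology of a morphism of smooth proper $W$-schemes is strictly compatible with the Hodge filtration coming from the Hodge-to-de Rham spectral sequence. Applying this to $\vp_W$ yields a filtered isomorphism
$$
\vp_W^* : H^*_{DR}(Y_{can}/W) \ra H^*_{DR}(X_{can}/W)
$$
sending $F^1_{Hdg}$ to $F^1_{Hdg}$ and $F^2_{Hdg}$ to $F^2_{Hdg}$. Next I would recall that the Hodge filtration on $H^*_{crys}(X/W)$ (and on $H^*_{crys}(Y/W)$) is by definition the one transported from the de Rham cohomology of the chosen lift through the comparison isomorphism $H^i_{crys}(-/W) \cong H^i_{DR}((-)_{can}/W)$. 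The remaining step is to identify the crystalline pullback $\vp^*: H^*_{crys}(Y/W) \ra H^*_{crys}(X/W)$ with $\vp_W^*$: since $\vp_W$ reduces to $\vp$ on special fibers, functoriality of the comparison isomorphism gives a commutative square relating these two maps under the two comparison isomorphisms. Hence $\vp^*$ is identified with the filtered map $\vp_W^*$ and therefore preserves the Hodge filtration; the case of automorphisms is the special case $Y = X$.

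The only non-formal ingredient is Theorem \ref{liftingauto}, which is already established; everything else is functoriality. Accordingly, the one point to verify carefully is the compatibility of the Berthelot--Ogus comparison isomorphism with a morphism of lifts that reduces to the prescribed map on special fibers. This is exactly the compatibility already invoked in the proof of Theorem \ref{orientation preserving}, so I would cite \cite{BO} for it; once this commuting square is in hand, preservation of $F^{\bullet}_{Hdg}$ transfers immediately from the de Rham side, and no further computation is needed.
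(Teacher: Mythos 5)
Your proposal is correct and follows essentially the paper's own route: the paper deduces the corollary from Theorem \ref{liftingauto} together with the $p$-adic lifting criterion of Esnault--Oguiso (\cite{EO} Remark 6.5), and the direction of that criterion you need (a lifted isomorphism preserves the Hodge filtration induced by the lift) is precisely the functoriality-of-the-de-Rham-filtration plus Berthelot--Ogus compatibility argument you spell out. The only cosmetic point is that mere compatibility, rather than \emph{strict} compatibility, of $\vp_W^*$ with $F^{\bullet}_{Hdg}$ is all that is needed (and for an isomorphism the two coincide anyway, since the inverse is also compatible).
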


\begin{proof}
This follows from \ref{liftingauto} and  \cite{EO} Remark 6.5. 
\end{proof}

\begin{theorem} \label{modulicanonicalliftcommute}
Let $X$ be an ordinary K3 surface, then the canonical lift of the moduli space of stable sheaves with a fixed Mukai vector is the moduli space of stable sheaves with the same Mukai vector on the canonical lift:
\begin{equation} 
(M_X(v))_{can} \cong M_{X_{can}}(v). 
\end{equation}
\end{theorem}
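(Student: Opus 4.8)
The strategy is to recognize $M_{X_{can}}(v)$ as a lift of $M_X(v)$ over $W=W(k)$ and then to verify Taelman's characterization of the canonical lift (Theorem \ref{lenny}).

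First I would record that $(M_X(v))_{can}$ even makes sense. Since $M_X(v)$ is a Fourier--Mukai partner of the ordinary surface $X$ and height is a derived invariant (Lemma \ref{derivedinvheight}), the surface $M_X(v)$ again has height one, hence is ordinary and possesses a canonical lift. On the other hand, by the fine relative moduli construction (Theorem \ref{finerelativemoduli}) the relative moduli space $M_{X_{can}}(v)\to \Spec(W)$ is a relative K3 surface; by compatibility of moduli with base change its special fiber is $M_X(v)$ and its geometric generic fiber is $M_{X_{can}}(v)_{\bar K}\cong M_{X_{can,\bar K}}(v)$. Thus $M_{X_{can}}(v)$ is a $W$-lift of $M_X(v)$, and it remains only to show that this lift is the canonical one, for which (taking $\mathcal{O}_K=W$ in Theorem \ref{lenny}) it suffices to exhibit the decomposition
$$
H^2_{\acute{e}t}(M_{X_{can}}(v)_{\bar K},\Z_p)\;\cong\;H^0\oplus H^1(-1)\oplus H^2(-2)
$$
with the $H^i$ unramified of ranks $1,20,1$.

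To produce this decomposition I would transport the corresponding structure from $X_{can}$ across the Fourier--Mukai equivalence. The universal sheaf on $X_{can}\times_W M_{X_{can}}(v)$ (which exists by the triviality of the $\mathbb{G}_m$-gerbe in Theorem \ref{finerelativemoduli}) is a kernel defined over $W$, so its restriction to the generic fiber induces a $\mathrm{Gal}(\bar K/K)$-equivariant refinement of the \'etale Mukai--Orlov isomorphism of Proposition \ref{etaleversion},
$$
H^2_{\acute{e}t}(M_{X_{can}}(v)_{\bar K},\Z_p)\;\cong\;v^{\perp}/\Z_p v,
$$
where $v^{\perp}$ is taken inside the Mukai lattice $\tilde H^*_{\acute{e}t}(X_{can,\bar K},\Z_p)$. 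Here I would work with the Tate-twisted Mukai lattice $\bigoplus_i H^{2i}(-,\Z_p(i))$, in which the algebraic class $v=(r,l,s)$ becomes a Galois-invariant (Tate) class, so that $\Z_p v$ is a Galois-stable line and $v^{\perp}/\Z_p v$ is a genuine Galois module. Now I would invoke that $X_{can}$ is the canonical lift: Taelman's criterion gives $H^2_{\acute{e}t}(X_{can,\bar K},\Z_p)\cong U_0\oplus U_1(-1)\oplus U_2(-2)$ with $U_i$ unramified of ranks $1,20,1$ (and, since $k$ is algebraically closed, ``unramified'' just means trivial inertia, i.e.\ trivial Galois action). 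Adjoining $H^0$ and $H^4$, the Tate-twisted Mukai lattice of $X_{can,\bar K}$ splits, as a Galois module, into a trivial part of rank $22$ and two lines isomorphic to $\Z_p(\pm1)$. The class $v$ lies in the trivial part, and the Mukai pairing annihilates it against the two nontrivial lines (a trivial module admits no equivariant pairing into $\Z_p$ against $\Z_p(\pm1)$); hence $v^{\perp}/\Z_p v$ inherits exactly one copy of $\Z_p(1)$, a trivial part of rank $20$, and one copy of $\Z_p(-1)$. Untwisting, this is precisely $H^0\oplus H^1(-1)\oplus H^2(-2)$ with unramified pieces of ranks $1,20,1$, which is condition (2) of Theorem \ref{lenny}. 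Therefore $M_{X_{can}}(v)$ is the canonical lift of $M_X(v)$, i.e.\ $(M_X(v))_{can}\cong M_{X_{can}}(v)$.

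The step I expect to be the main obstacle is the Galois-equivariance of the Mukai--Orlov identification together with the Tate-twist bookkeeping: one must ensure that the cohomological Fourier--Mukai transform is realized by a $W$-rational correspondence (the universal family, or in the absence of a universal sheaf a $K$-rational twisted kernel) so that the induced isomorphism of \'etale Mukai lattices respects the Galois action, and one must place $v$ correctly in the Tate part of the lattice. Once this equivariance is secured, the remaining identification of ranks and twists is routine.
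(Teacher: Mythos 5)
Your proof is correct and takes essentially the same route as the paper: both verify Taelman's criterion (Theorem \ref{lenny}) for the $W$-lift $M_{X_{can}}(v)$ of $M_X(v)$ by transporting the canonical-lift Galois module decomposition of $H^2_{\acute{e}t}(X_{can,\bar{K}}, \Z_p)$ through the \'etale Mukai--Orlov identification $H^2_{\acute{e}t}(M_{X_{can}}(v)_{\bar{K}}, \Z_p) \cong v^{\perp}/\Z_p v$ of Proposition \ref{etaleversion}. Your extra care with the Tate-twist bookkeeping, the placement of $v$ in the Galois-invariant part, and the equivariance of the correspondence via the $W$-rational universal kernel only makes explicit what the paper's proof leaves implicit.
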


\begin{proof}
We use the criteria for canonical lift Theorem \ref{lenny} to show that $M_{X_{can}}(v)$  is indeed the canonical lift of $M_X(v)$. To use the criteria, we note that 
\begin{equation*}
\begin{split}
H^2_{\acute{e}t}(M_{X_{can}}(v)_{\bar{K}}, \Z_p) &= v^{\perp}/v\Z_p \\
									&\subset H^0_{\acute{e}t}(X_{can, \bar{K}}, \Z_p) \oplus H^2_{\acute{e}t}(X_{can, \bar{K}}, \Z_p) \oplus H^4_{\acute{e}t}(X_{can, \bar{K}}, \Z_p),
\end{split}
\end{equation*}
where the orthogonal complement is taken with respect to the extended pairing on the \'etale Mukai lattice. As $X_{can}$ is the canonical lift of X, we have the following decomposition of
$$
H^2_{\acute{e}t}(X_{can, \bar{K}}, \Z_p) = M^0_X \oplus M^1_X(-1) \oplus M_X^2(-2)
$$
as Galois modules.  We define the decomposition of $H^2_{\acute{e}t}(M_{X_{can}}(v)_{\bar{K}}, \Z_p) = M^0 \oplus M^1(-1) \oplus M^2(-2)$ as Galois modules, where 
\begin{equation}
\begin{split}
M^0 &=  M^0_X \\
M^2 &= M^2_X \\
M^1 &= H^0_{\acute{e}t}(X_{can, \bar{K}}, \Z_p) \oplus H^4_{\acute{e}t}(X_{can, \bar{K}}, \Z_p) \oplus  (v^{\perp}/v\Z_p \cap M^1_X) .
\end{split}
\end{equation}
The last relation above holds using Proposition \ref{etaleversion} and the fact that $H^0_{\acute{e}t}(X_{can, \bar{K}}, \Z_p)$ and $H^4_{\acute{e}t}(X_{can, \bar{K}}, \Z_p)$ are orthogonal to $M_X^1$. 
\end{proof}

\begin{theorem} \label{ordinarycount}
If $X$ is an ordinary $K3$ surface over an algebraically closed field of char $p$, then the number of FM partners of $X$ are the same as the number of Fourier-Mukai partners of the geometric generic fiber of the canonical lift of $X$ over $W$.
\end{theorem}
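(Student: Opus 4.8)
The plan is to prove the theorem by showing that the specialization map (\ref{spzmap}), $M_{X_{\bar{K}}}(v) \mapsto M_X(v)$, is a \emph{bijection} between the Fourier-Mukai partners of the geometric generic fiber $X_{\bar{K}}$ of the canonical lift and those of $X$; this equates the two cardinalities at once. Here I would fix the lift $X_W$ of the ordinary surface $X$ to be its canonical lift $X_{can}$, which is legitimate because the canonical lift is Picard preserving (Nygaard, Proposition 1.8), so it qualifies as the Picard-preserving lift used to set up (\ref{spzmap}). Surjectivity of this map is already Proposition \ref{surjectivity}, so the whole content reduces to injectivity, and I expect this to be the main obstacle.

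Rather than arguing injectivity abstractly, I would exhibit an explicit inverse. Given a Fourier-Mukai partner $Y = M_X(v)$ of $X$, first note that $Y$ is again an ordinary K3 surface: it is derived equivalent to $X$ and height is a derived invariant by Lemma \ref{derivedinvheight}, so $Y$ has height $1$. Thus $Y$ has a canonical lift $Y_{can}$. Now Theorem \ref{modulicanonicalliftcommute} identifies $(M_X(v))_{can} \cong M_{X_{can}}(v)$, whence passing to the geometric generic fiber gives $Y_{can, \bar{K}} \cong M_{X_{\bar{K}}}(v)$, a Fourier-Mukai partner of $X_{\bar{K}}$. This defines a candidate inverse $FM(X) \ra FM(X_{\bar{K}})$, $Y \mapsto Y_{can, \bar{K}}$. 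It is well defined on isomorphism classes precisely because the canonical lift is functorial for isomorphisms: by Theorem \ref{liftingauto} any isomorphism $Y \cong Y'$ of ordinary K3 surfaces lifts to $Y_{can} \cong Y'_{can}$, hence $Y_{can, \bar{K}} \cong Y'_{can, \bar{K}}$.

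Finally I would check that the two maps are mutually inverse. Composing specialization with the canonical-lift map sends $M_{X_{\bar{K}}}(v) \mapsto M_X(v) \mapsto (M_X(v))_{can, \bar{K}} \cong M_{X_{\bar{K}}}(v)$ by Theorem \ref{modulicanonicalliftcommute}, the identity; composing in the other order returns $M_X(v)$ as the special fiber of $M_{X_{\bar{K}}}(v)$, again the identity. Together with surjectivity from Proposition \ref{surjectivity} this shows (\ref{spzmap}) is a bijection, so $|FM(X)| = |FM(X_{\bar{K}})|$. The crux of the argument, and the step I expect to be delicate, is the interchange of forming the moduli space of stable sheaves with taking the canonical lift (Theorem \ref{modulicanonicalliftcommute}); the key supporting facts are that Fourier-Mukai partners of an ordinary surface stay ordinary (Lemma \ref{derivedinvheight}) and that isomorphisms of ordinary K3 surfaces lift canonically (Theorem \ref{liftingauto}), the latter being what guarantees that the inverse construction descends to isomorphism classes.
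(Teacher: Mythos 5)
Your proposal is correct and follows essentially the same route as the paper: surjectivity is quoted from Proposition \ref{surjectivity}, and injectivity is reduced, via Theorem \ref{modulicanonicalliftcommute}, to the identification $(M_X(v))_{can} \cong M_{X_{can}}(v)$ together with uniqueness of canonical lifts, exactly as in the paper's (terser) proof. Your only deviations are cosmetic: you invoke Theorem \ref{liftingauto} for well-definedness where the weaker tautological statement (isomorphic ordinary K3 surfaces have isomorphic canonical lifts, by functoriality of Nygaard's construction) would suffice, and you make explicit the use of Lemma \ref{derivedinvheight} to ensure Fourier-Mukai partners remain ordinary, a point the paper leaves implicit.
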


\begin{proof}
From the discussion in the Chapter \ref{CountingFMP}  Section \ref{surjectivity}, we see that all that is left to show is the injectivity of the specialization map on the set of Fourier-Mukai partners. That is, we need to show that if $M_X(v)$ is isomorphic to $X$, then the lifts of both of them are also isomorphic $X_{can} \cong M_{X_{can}}(v)$. This follows from the definition of canonical lifts and Theorem \ref{modulicanonicalliftcommute} that $M_{X_{can}}(v)$ is the canonical lift of $M_X(v)$. 
\end{proof}

\begin{corollary}
Let $X$ be an ordinary K3 surface over $k$, then the derived autoequivalences satisfying the assumptions of Theorem \ref{Infinitesimallifting} lift uniquely to a derived autoequivalence of $X_{can}$. 
\end{corollary}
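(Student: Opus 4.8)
The plan is to run the argument of Theorem \ref{liftingauto} with the structure sheaf of a graph replaced by the given Fourier--Mukai kernel, and then to use Taelman's criterion (Theorem \ref{lenny}) to identify the lift on the second factor with the canonical lift. Let $P\in D^b(X\times X)$ be the kernel of the autoequivalence $\Phi_P$, so that $P$ satisfies hypotheses (1) and (2) of Theorem \ref{Infinitesimallifting} with $Y=X$. Applying that theorem gives an isomorphism of deformation functors $\delta\colon Def_X\to Def_X$, and for the compatible system $X_n:=X_{can,W_n}$ coming from the canonical lift it produces deformations $\delta(X_n)$ together with kernels $P_n\in D^b(\delta(X_n)\times_{W_n}X_n)$ reducing to $P$. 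First I would algebraize this formal datum exactly as in the Weak Lifting argument: the polarized formal scheme $\varprojlim\delta(X_n)$ is algebraizable by Grothendieck existence, and the formal limit of the $P_n$ algebraizes by the Grothendieck existence theorem for perfect complexes (\cite{L} Proposition 3.6.1) to an object $P_W\in D^b(Y'\times_W X_{can})$, where $Y'$ denotes the resulting lift of $X$. A Nakayama argument, as in the remark following Theorem \ref{Hodgefil}, shows that $P_W$ induces an equivalence.

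The heart of the proof is to show that $Y'$ is the canonical lift $X_{can}$. For this I would pass to the geometric generic fibre: the equivalence $\Phi_{P_{\bar K}}\colon D^b(Y'_{\bar K})\xrightarrow{\sim}D^b(X_{can,\bar K})$ induces a $\mathrm{Gal}(\bar K/K)$-equivariant isometry of the $p$-adic \'etale Mukai lattices (cf. Proposition \ref{etaleversion}), the equivariance coming from the fact that $P_W$ is defined over $W$. Since $X_{can}$ is canonical, Theorem \ref{lenny} gives that $H^2_{\acute{e}t}(X_{can,\bar K},\Z_p)$ is a sum of unramified $\Z_p[\mathrm{Gal}_K]$-modules after Tate twisting; together with the tautological pieces $H^0_{\acute{e}t}(-1)$ and $H^4_{\acute{e}t}(1)$ this makes the whole Mukai lattice of $X_{can,\bar K}$ a direct sum of unramified modules twisted by $(0),(-1),(-2)$, of ranks $1,22,1$. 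Because this slope (equivalently, Tate-weight) decomposition is canonical and hence preserved by any Galois-equivariant isometry, the Mukai lattice of $Y'_{\bar K}$ decomposes in the same way. Removing from the slope-one part the two lines $H^0_{\acute{e}t}(Y'_{\bar K})(-1)$ and $H^4_{\acute{e}t}(Y'_{\bar K})(1)$, which are automatically unramified for any K3 surface, leaves $H^2_{\acute{e}t}(Y'_{\bar K},\Z_p)$ as a sum of unramified modules of ranks $1,20,1$ after Tate twisting, since a Galois-stable subquotient of an unramified module is unramified. This is precisely condition (2) of Theorem \ref{lenny}, so $Y'=X_{can}$, and $P_W$ therefore lives on $X_{can}\times_W X_{can}$ and defines a derived autoequivalence of $D^b(X_{can})$.

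Finally, uniqueness follows from rigidity of the kernel: by Lemma \ref{4.34} one has $\mathrm{Ext}^1_{X\times X}(P,P)=0$, so at each level $W_n$ the deformations of $P_n$ form a torsor under a vanishing group and are therefore unique; the formal lift, and hence its algebraization, is unique, giving a well-defined autoequivalence of $D^b(X_{can})$.

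I expect the main obstacle to be the identification $Y'=X_{can}$, and more precisely the fact that a cohomological Fourier--Mukai correspondence need not respect the grading $\tilde H=H^0\oplus H^2\oplus H^4$. This is why the argument is phrased in terms of the full Mukai lattice and the Galois-canonical slope filtration rather than the individual cohomology groups, and it is here that the hypothesis $\Phi(1,0,0)=(1,0,0)$, ensuring that the tautological lines sit correctly inside the slope filtration, is used.
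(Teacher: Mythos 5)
Your proof is correct, but at the crucial step it takes a genuinely different route from the paper. Both arguments begin the same way: lift the kernel along the canonical lift via Theorem \ref{Infinitesimallifting}, algebraize, and reduce the problem to showing that the resulting second lift $Y'$ (the paper's $X'$) is again $X_{can}$, which both settle by verifying condition (2) of Taelman's criterion (Theorem \ref{lenny}). The paper gets that Galois-module decomposition of $H^2_{\acute{e}t}(X'_{\bar K},\Z_p)$ geometrically: it invokes the characteristic-zero classification (Theorem \ref{DerivedHodge}) to identify $X'_{\bar K}$ with a fine moduli space $M_{X_{can,\bar K}}(v)$, then uses functoriality of moduli spaces and Theorem \ref{modulicanonicalliftcommute} ($(M_X(v))_{can}\cong M_{X_{can}}(v)$), whose proof supplies the decomposition explicitly. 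You instead argue purely on the level of Galois representations: the lifted kernel gives a $\mathrm{Gal}_K$-equivariant isometry of $p$-adic \'etale Mukai lattices, and since $k$ is algebraically closed the decomposition into unramified pieces twisted by $(0),(-1),(-2)$ is exactly the isotypic decomposition for the distinct characters $1,\chi^{-1},\chi^{-2}$ of the cyclotomic character, hence canonical and transported to $Y'_{\bar K}$; splitting off the tautological lines $H^0(-1)$ and $H^4(1)$ (which live in the weight-one part for any K3) recovers Taelman's condition for $H^2$ of ranks $1,20,1$. This is more self-contained — it bypasses moduli spaces of sheaves and Theorem \ref{modulicanonicalliftcommute} entirely — and it makes transparent why one need not worry that the cohomological correspondence fails to preserve the grading $H^0\oplus H^2\oplus H^4$. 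Your explicit uniqueness argument via $\mathrm{Ext}^1_{X\times X}(P,P)=0$ (Lemma \ref{4.34}) fills in a point the paper leaves implicit. One small correction: in your closing paragraph you suggest the hypothesis $\Phi(1,0,0)=(1,0,0)$ is what ensures the tautological lines sit correctly in the slope filtration, but your own canonicity argument makes that automatic for any equivariant isometry; the hypothesis is needed earlier, as an assumption of Theorem \ref{Infinitesimallifting} without which the kernel cannot be lifted at all.
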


\begin{proof}
The argument is going to be similar to the one used to show that every automorphism lifts, but now we will use the proof of Theorem \ref{modulicanonicalliftcommute}. 
Let $\mathcal{P} \in D^b(X \times X) $ induce a derived autoequivalence on $X$, then, using Theorem \ref{Infinitesimallifting}, there exists an $X' / W$ such that we can lift $\mathcal{P}$ to a kernel $\mathcal{P}_W \in D^b(X_{can} \times X')$. Now we need to show that $X'$ is just $X_{can}$. Note that $(\mathcal{P}_W)_{\bar{K}}$ gives a derived equivalence between $D^b(X_{can, \bar{K}}) \cong D^b(X'_{\bar{K}})$, this implies that $X'$ is isomorphic to some moduli space of stable sheaves with Mukai vector $v$, $M_{X_{can, \bar{K}}}(v)$. Now by functoriality of the moduli spaces, we have $M_{X_{can, \bar{K}}}(v) \cong M_{X_{can}}(v)_{\bar{K}}$ and by Theorem \ref{modulicanonicalliftcommute}, we have $M_{X_{can}}(v)_{\bar{K}} \cong M_X(v)_{can, \bar{K}}$. This implies that we get the required decomposition of the second p-adic integral \'etale cohomology of $X'_{\bar{K}}$, which using Theorem \ref{lenny} gives us the result. 
\end{proof}

\begin{corollary}
Every autoequivalence of an ordinary K3 surface that satisfies the assumptions of Theorem \ref{Infinitesimallifting} preserves the Hodge filtration induced by the canonical lift. 
\end{corollary}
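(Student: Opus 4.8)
The plan is to run the exact analogue of the argument used for the corollary on isomorphisms above, with Theorem \ref{liftingauto} replaced by the preceding corollary and the $p$-adic criterion of \cite{EO} replaced by the Hodge-theoretic description of derived equivalences over $\C$. First I would invoke the preceding corollary to produce, for an autoequivalence $\Phi_P$ satisfying the hypotheses of Theorem \ref{Infinitesimallifting}, a kernel $P_W \in D^b(X_{can} \times_W X_{can})$ that reduces to $P$ on $X \times X$ and induces an autoequivalence $\Phi_{P_W}$ of $D^b(X_{can})$. By construction, the Hodge filtration on $H^*_{crys}(X/K)$ induced by the canonical lift is precisely the one transported from the Hodge filtration on $H^*_{DR}(X_{can}/W)$ through the Berthelot--Ogus comparison $H^*_{crys}(X/W) \cong H^*_{DR}(X_{can}/W)$ of \cite{BO}; so it suffices to show that the cohomological descent of $\Phi_{P_W}$ on the de Rham realisation of the canonical lift respects this filtration.

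Next I would pass to the geometric generic fibre. The complex $P_{\bar{K}}$ induces a Fourier--Mukai equivalence of $D^b(X_{can, \bar{K}})$, and after fixing an embedding $\bar{K} \hookrightarrow \C$ (see our conventions in Section \ref{convention}) it base changes to an equivalence of $D^b(X_{can, \C})$. By Theorem \ref{DerivedHodge} the induced map on the Mukai lattice $\tilde{H}^*(X_{can, \C}, \Z)$ is a Hodge isometry; in particular it preserves the filtration $F^{\bullet}$ of the Mukai Hodge structure, sending $F^2$ into $F^2$ and $F^1$ into $F^1$. Through the comparison between singular and algebraic de Rham cohomology over $\C$, flat base change for de Rham cohomology along $K \hookrightarrow \C$, and finally the Berthelot--Ogus isomorphism, this filtration-preservation transports back to the assertion that $\Phi_{P_W}$ sends $F^1_{Hdg}(X)$ and $F^2_{Hdg}(X)$ into themselves inside $H^*_{crys}(X/K)$. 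Since $P_W$ reduces to $P$ and the crystalline realisation is compatible with this reduction, $\Phi_P$ preserves the Hodge filtration induced by $X_{can}$, which is the claim.

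The step that requires genuine care, and hence the main obstacle, is the compatibility bookkeeping: one must verify that the cohomological descent of $\Phi_{P_W}$ is compatible with each comparison isomorphism (Betti--de Rham over $\C$, de Rham base change, and Berthelot--Ogus) and that the filtration established over $\C$ is literally the one defining $F^i_{Hdg}$ on the crystalline side. This is the same diagram chase through $p$-adic Hodge theory already carried out in the proofs of Theorem \ref{orientation preserving} and Proposition \ref{boundonKernel}, so no new ingredient is needed beyond transcribing those compatibilities; the only feature special to the ordinary case is that the canonical lift is the distinguished lift whose de Rham Hodge filtration is the one named in the statement.
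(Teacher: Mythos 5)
Your proposal is correct and follows essentially the same route as the paper, whose entire proof is the one-liner ``Follows from the corollary above and Theorem \ref{Hodgefil}'': lift the kernel to $X_{can}\times X_{can}$ via the preceding corollary, then conclude that the crystalline action preserves the filtration induced by the canonical lift. Your expansion through the embedding $\bar{K}\hookrightarrow\C$, Theorem \ref{DerivedHodge}, and the Betti--de Rham, base-change, and Berthelot--Ogus comparisons merely makes explicit the converse direction of Theorem \ref{Hodgefil} (a lifted kernel induces a filtered map on cohomology), which the paper leaves implicit in its citation and which is the same diagram chase it carries out in Theorem \ref{orientation preserving}.
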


\begin{proof}
Follows from the corollary above and Theorem \ref{Hodgefil}.
\end{proof}
%%%%%%%%%%%%%%%%%%%%%%%%%%%%%%%%%%%%%%%%%%%%%%%%%%%%%%%%%%%%%%%%%%%

\subsection{The Class Number Formula}

Lastly, we give the corresponding class number formula in characteristic $p$ to corollary \ref{classnocount}.

\begin{theorem} \label{Classno}
Let $X$ be a $K3$ surface of finite height over an algebraically field of positive characteristic (say $q >3$). If the N\'eron-Severi lattice of $X$ has rank 2 and determinant $-p$ ($p$ and $q$ can also be same), then the number of Fourier-Mukai partners of $X$ is $(h(p) +1)/2$. 
\end{theorem}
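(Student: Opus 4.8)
The plan is to reduce the count in characteristic $p$ to the characteristic-zero count of Theorem \ref{classnocount} by means of a Picard-preserving lift, and then to show that the specialization map on Fourier--Mukai partners is a bijection. First I would invoke Lemma \ref{lmlift} to choose a smooth projective lift $X_W \to \Spec(W(k))$ of $X$ for which the restriction $Pic(X_W) \to Pic(X)$ is an isomorphism; passing to the geometric generic fibre $X_{\bar K}$, the specialization map $NS(X_{\bar K}) \to NS(X)$ is then an isometry, so $X_{\bar K}$ is a K3 surface over an algebraically closed field of characteristic $0$ whose N\'eron--Severi lattice again has rank $2$ and determinant $-p$. By Remark \ref{finiteChar0} its Fourier--Mukai count agrees with the complex one, so Theorem \ref{classnocount} gives $|FM(X_{\bar K})| = (h(p)+1)/2$. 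It therefore suffices to prove that the specialization map (\ref{spzmap}) is a bijection.

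Surjectivity is already available: it is Proposition \ref{surjectivity}, and combined with the previous paragraph it yields $|FM(X)| \le (h(p)+1)/2$. The content of the theorem is thus the reverse inequality, equivalently the injectivity of (\ref{spzmap}). Re-centering the base point (replacing $X$ by a chosen partner and transporting the lift along the corresponding relative moduli space $M_{X_W}(v)$, whose special and geometric generic fibres are $M_X(v)$ and $M_{X_{\bar K}}(v)$ by functoriality), I would reduce injectivity to the following assertion: if $M_X(v) \cong X$ then $M_{X_{\bar K}}(v) \cong X_{\bar K}$. Here I use that each $M_{X_W}(v)$ is again a Picard-preserving lift, since its special and generic fibres are Fourier--Mukai partners of $X$ and $X_{\bar K}$ respectively and hence both have Picard rank $2$, forcing the specialization map on N\'eron--Severi lattices to be an isometry.

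To prove that assertion I would turn an isomorphism $\psi: M_X(v) \xrightarrow{\sim} X$ into a derived autoequivalence $\Phi = \psi_* \circ \Phi_{\mathcal P}$ of $D^b(X)$ and study its cohomological descent. Via the Berthelot--Ogus comparison this descent is compatible with the Hodge-theoretic one on the lift, so it is enough to check that, after composing with shifts, tensoring by a line bundle, and if necessary the cone inversion of Theorem \ref{orientation preserving}, $\Phi$ can be arranged to preserve the Hodge filtration of $X_W$ on the Mukai F-isocrystal; the rank-$2$, determinant $-p$ hypothesis makes the relevant isometry group small enough (the Hodge isometries of the transcendental lattice of $X_{\bar K}$ reduce to $\pm \mathrm{id}$) that this is possible. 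One then lifts the kernel by Theorem \ref{Hodgefil} to a kernel on $X_W \times_W X_W$, whose geometric generic fibre is a Fourier--Mukai equivalence $D^b(X_{\bar K}) \cong D^b(X_{\bar K})$ realizing the isometry; the characteristic-zero derived Torelli Theorem \ref{DerivedHodge} then yields $M_{X_{\bar K}}(v) \cong X_{\bar K}$, as desired.

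The main obstacle is precisely this last step: unlike the ordinary case (Theorem \ref{ordinarycount}), there is no canonical lift whose functoriality in isomorphisms (Theorem \ref{liftingauto}) forces the two lifts to coincide, so injectivity cannot be read off for free and must instead be extracted from the cohomological descent together with the lifting criterion \ref{Hodgefil}. The delicate point is guaranteeing that the autoequivalence built from $\psi$ preserves the Hodge filtration of the \emph{fixed} lift $X_W$ rather than of some other lift; controlling this is where the rank-$2$, determinant $-p$ constraint on $NS(X)$, and hence the rigidity of the transcendental Hodge isometries, has to be used decisively.
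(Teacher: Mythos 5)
Your set-up matches the paper's: both proofs lift $X$ by the Lieblich--Maulik Picard-preserving lift (Lemma \ref{lmlift}), note that $NS(X_{\bar K})$ again has rank $2$ and determinant $-p$ so that Theorem \ref{classnocount} (via Remark \ref{finiteChar0}) gives $|FM(X_{\bar K})| = (h(p)+1)/2$, and invoke Proposition \ref{surjectivity} for surjectivity of the specialization map (\ref{spzmap}). The divergence --- and the genuine gap --- is in your injectivity argument. You propose to build an autoequivalence $\Phi = \psi_* \circ \Phi_{\mathcal P}$ from an isomorphism $\psi: M_X(v) \xrightarrow{\sim} X$ and to lift its kernel by Theorem \ref{Hodgefil}, but that theorem requires $\Phi$ to preserve the Hodge filtration $F^1_{Hdg}, F^2_{Hdg}$ determined by the \emph{fixed} lift $X_W$, and you never establish this. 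Your stated reason --- that Hodge isometries of the transcendental lattice of $X_{\bar K}$ ``reduce to $\pm\mathrm{id}$'' --- is both unproved in the crystalline setting where you need it and beside the point: filtration preservation is a condition on the position of the period line $F^2 \subset H^2_{crys}(X/K)$ relative to the isometry's action on the transcendental isocrystal, and your available modifications (shift, which acts by $-\mathrm{id}$ on total cohomology; twist by a line bundle, which moves only the algebraic part; cone inversion, which by Theorem \ref{orientation preserving} is not even realized by an autoequivalence in any case) give no mechanism to correct an arbitrary failure. As you yourself flag, this ``delicate point'' is exactly where the proof would live, and it is missing. A secondary soft spot: even granted a lift of the composite kernel to $X_W \times_W X_W$, concluding $M_{X_{\bar K}}(v) \cong X_{\bar K}$ requires tracking that the lifted equivalence's Hodge realization carries $v$ to $(0,0,1)$ so that Theorem \ref{DerivedHodge} applies; this is patchable via Berthelot--Ogus, but it is not spelled out.

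The paper's own injectivity step avoids all lifting of kernels and is purely lattice-theoretic, and you could substitute it directly. If $Y_{\C}$ is a Fourier--Mukai partner of $X_{\C}$ whose reduction mod $q$ is isomorphic to $X$, then (since the relevant lifts are Picard-preserving and rank and discriminant are derived invariants) $NS(Y_{\C}) \cong NS(X_{\C})$. Now apply the counting formula of Theorem \ref{CountingformulaChar0}: the number of partners of $X_{\C}$ with Picard lattice isomorphic to $S = NS(X_{\C})$ is
$$
|Aut(S) \backslash Aut(S^*/S) / O_{Hdg}(T(X_{\C}))|,
$$
and since $S^*/S \cong \Z/p$, the group $Aut(S^*/S)$ is just $\{\pm \mathrm{id}\}$, while $-\mathrm{id}$ is always induced by a Hodge isometry of the transcendental lattice; so this double quotient is trivial and $Y_{\C} \cong X_{\C}$. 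Note the contrast with your appeal to transcendental isometries: what the paper uses is the smallness of $Aut(S^*/S)$ for prime discriminant, for which the containment $\{\pm\mathrm{id}\} \subset \mathrm{im}\, O_{Hdg}(T)$ is automatic --- no rigidity statement about $O_{Hdg}(T(X_{\bar K}))$ itself, and no Hodge-filtration bookkeeping on the crystalline side, is needed.
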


\begin{proof}
We lift $X$ to characteristic 0 using the Lieblich-Maulik Picard preserving lift and then base changing to the geometric generic fiber to get $X_{\bar{K}}$. Choose an embedding of $\bar{K}$ to $\C$ (complex numbers) and base change to $\C$, to get $X_{\C}$. Now, from Proposition \ref{Prop5.2}, we get that every Fourier-Mukai partner of $X$ lifts to a Fourier-Mukai partner of $X_{\C}$. So, we just need to show that if any Fourier-Mukai partner, say $Y_{\C}$, of $X_{\C}$ reduces $\mod q$ to an isomorphic $K3$ surface, say $Y$, to $X$, then it is isomorphic to $X_{\C}$. This follows from noting that if $Y_{\C}$ becomes isomorphic $ \mod q$, then the Picard lattices of $X_{\C}$ and $Y_{\C}$ are isomorphic. The number of Fourier Mukai partners of $X_{\C}$ with isomorphic Picard lattices is given by the order of the quotient of the orthogonal group of discriminant group of $NS(X_{\C})$ by the Hodge isometries of the transcendental lattice (cf. Theorem \ref{CountingformulaChar0}), but in this case the discriminant group of $NS (X_{\C}) = \Z/p$ so the orthogonal group is just ${\pm id}$ and there is always $\pm id$ in the hodge isometries, so we get the quotient to be a group of order $1$.  Thus the result.
\end{proof}

\begin{remark}
Note that the Picard lattice $Pic(X_K)$ and $Pic(X_{\bar{K}})$ are indeed isomorphic as after reduction we are over an algebraically closed field and the line bundles lift uniquely as $Pic_X^0$ is trivial for a K3 surface. 
\end{remark}

%%%%%%%%%%%%%%%%%%%%%%%%%%%%%%%%%%%%%%%%%%%%%%%%%%%%%%%%%%%%%%%%%%%%%%%%%%%%%%%%%%%%%%%%%%%%%%%%%%%%%%%%%

\section{Appendix: F-crystal on Crystalline Cohomology} \label{F-crystalchap}

In this appendix, we analyze the possibility of having a ``naive" F-crystal structure on the Mukai isocrystal of a K3 surface. We begin by recalling a few results about crystalline cohomology and the action of Frobenius on it, for details we refer to \cite{SP} Tag 07GI and Tag 07N0,  \cite{Berth},  \cite{BO}, \cite{Liedtke} Section 1.5. 

Let $X$ be a smooth and proper variety over a perfect field $k$ of positive characteristic $p$. Let $W(k)$ (resp.\ $W_m(k)$) be the associated ring of (resp. truncated) Witt vectors with the field of fraction $K$. Let us denote by $Frob_k: k \ra k $; $x \mapsto x^p$, the Frobenius morphism of $k$, which induces a ring homomorphism $Frob_W: W(k) \ra W(k)$, by functoriality, and there exists an additive map $V: W(k) \ra W(k)$ such that $p = V \circ Frob_W = Frob_W \circ V$. Thus, $Frob_W$ is injective.  For any $m > 0$, we have cohomology groups  $H^*_{crys}(X/W_m(k))$. These are finitely generated $W_m(k)$-modules. Taking the inverse limit of these groups gives us the crystalline cohomology:
$$
H^n_{crys}(X/W(k)) : = \varprojlim H^n_{crys}(X/W_m(k)).
$$
It has the following properties as a Weil cohomology theory: 
\begin{enumerate}
\item $H^n_{crys}(X/W(k))$ is a contravariant functor in $X$ and the groups are finitely generated as $W(k)$-modules. Moreover, $H^n_{crys}(X/W(k))$ is $0$ if $n < 0$ or $n > 2dim(X)$.
\item Poincar\'e Duality: The cup-product induces a perfect pairing:
\begin{equation} \label{Poincareduality}
\frac{H^n_{crys}(X/W(k))}{torsion} \times \frac{H^{2dim(X)-n}_{crys}(X/W(k))}{torsion} \ra H^{2dim(X)}_{crys}(X/W(k)) \cong W(k). 
\end{equation}
\item $H^n_{crys}(X/W(k))$ defines an integral structure on $H^n_{crys}(X/W(k)) \otimes_{W(k)} K$.
\item If there exists a proper lift of $X$ to $W(k)$, that is, a smooth and proper scheme $X_W \ra \Spec(W(k))$ such that its special fiber is isomorphic to $X$. Then we have, for each $n$,
$$
H^n_{DR}(X_W/W(k)) \cong H^n_{crys}(X/W(k)).
$$
\item Consider the commutative square given by absolute Frobenius:
\[
\xymatrix{
X \ar[d] \ar[r]^{F} & X \ar[d]\\
k \ar[r]^{Frob_k} & k.}
\] 
This, by the functoriality of the crystalline cohomology, gives us a $Frob_W$-linear endomorphism on $H^i(X/W)$ of $W(k)$-modules, denoted by $F^*$. Moreover, $F^*$ is injective modulo the torsion, i.e., 
$$
F^*: H^i(X/W) / torsion \ra H^i(X/W) / torsion
$$
is injective.
\end{enumerate}

\begin{theorem}[Crystalline Riemann-Roch]
Let $X$ and $Y$ be smooth varieties over $k$, a field of characteristic $p$, and $f: X \rightarrow Y$ be a proper map. Then the following diagram commutes:
\[
\xymatrix{
K_0(X) \ar[d]^{ch( \ ).td_{X}} \ar[r]^{f_*} &K_0(Y)\ar[d]^{ch( \ ).td_{Y}}\\
\oplus_i H^2i_{crys}(X/K) \ar[r]^{f_*} & \oplus_i H^2i_{crys}(Y/K),}
\]
i.e., $ch(f_*\alpha).td_{Y} = f_*(ch(\alpha).td_{X}) \in \oplus_i H^i_{crys}(Y/K)$ for all $\alpha \in K_0(X)$, where $K_0(X)$ is the Grothendieck group of coherent sheaves on $X$.
\end{theorem}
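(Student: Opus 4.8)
The plan is to deduce the crystalline Riemann-Roch formula from the classical Grothendieck-Riemann-Roch theorem at the level of rational Chow groups, transported to crystalline cohomology via the naturality of the crystalline cycle class map. The point is that both the Chern character and the Todd genus are universal polynomials in Chern classes, so once one knows that the passage from cycles to crystalline cohomology respects Chern classes, ring structure and proper pushforward, the crystalline statement is a formal consequence of the one in the Chow ring.

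First I would recall the Grothendieck-Riemann-Roch theorem in its Chow-theoretic form (SGA 6, or Fulton's \emph{Intersection Theory} for the proper case obtained from the projective case by Chow's lemma): for a proper morphism $f\colon X \to Y$ of smooth varieties over $k$ one has
\[
ch(f_*\alpha)\cdot td_Y = f_*\big(ch(\alpha)\cdot td_X\big)\in A^*(Y)_{num,\Q}
\]
for all $\alpha\in K_0(X)$, where now $ch$ and $td$ are valued in the rational numerical Chow ring. This is precisely the identity we want, but with $A^*(-)_{num,\Q}$ in place of crystalline cohomology. I would then invoke the crystalline cycle class map $cl_X\colon A^*(X)_{num,\Q}\to \bigoplus_i H^{2i}_{crys}(X/K)$, built from crystalline Chern classes (Berthelot--Illusie, Gros), and use three of its properties: (a) it is a homomorphism of graded rings, sending the intersection product to the cup product; (b) it carries the Chow-theoretic Chern classes of a perfect complex to its crystalline Chern classes, so that $cl_X(ch(\alpha)) = ch_{crys}(\alpha)$ and $cl_X(td_X)=td_{X,crys}$; and (c) it is compatible with proper pushforward, $cl_Y(f_*\beta)=f_*^{\mathrm{crys}}(cl_X(\beta))$, where $f_*^{\mathrm{crys}}$ is the Gysin map on crystalline cohomology furnished by the Poincar\'e duality pairing (\ref{Poincareduality}).

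The reduction is then immediate: I would apply $cl_Y$ to the Chow-level identity. By (a) and (b) the left-hand side becomes $ch_{crys}(f_*\alpha)\cdot td_{Y,crys}$, while by (c) followed by (a) and (b) the right-hand side becomes $f_*^{\mathrm{crys}}\big(ch_{crys}(\alpha)\cdot td_{X,crys}\big)$. Equating the two expressions in $\bigoplus_i H^{2i}_{crys}(Y/K)$ yields exactly the commutativity of the stated diagram. Here it is essential that we have already passed to $K$-coefficients, so that the denominators appearing in $ch$ and $td$ cause no trouble.

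The main obstacle is property (c), the compatibility of the crystalline cycle class map with proper pushforward. Properties (a) and (b) are formal consequences of the axiomatics of Chern classes in a Weil cohomology theory, but the pushforward compatibility requires a genuine geometric input: the construction of well-behaved Gysin maps in crystalline cohomology and the verification that they agree with the Chow-theoretic proper pushforward. The standard route, which I would follow, is to factor $f$ as a closed immersion followed by a smooth projective projection $X\hookrightarrow \mathbb{P}^n_Y\to Y$, establish (c) in each case separately (the projective bundle formula handles the projection, while deformation to the normal cone together with an explicit Gysin computation handles the immersion), and then conclude in general by functoriality of both pushforwards under composition. These compatibilities are exactly the content of Gros's crystalline Riemann-Roch, which I would cite for the technical heart of the argument.
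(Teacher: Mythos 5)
The paper itself offers no proof of this theorem: it is stated in the appendix as a recollection of a known result (the crystalline Riemann--Roch theorem of Gillet--Messing, with the Chern class theory of Berthelot--Illusie \cite{BI}; see also Gros), so there is no in-paper argument to compare against. Your overall strategy --- Grothendieck--Riemann--Roch at the level of rational Chow groups, transported via a multiplicative cycle class map compatible with Gysin pushforward --- is precisely how the literature proves it, and your identification of the pushforward compatibility (c) as the genuine geometric content is correct.

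There is, however, one step that fails as written: you state GRR in the \emph{numerical} Chow ring $A^*(-)_{num,\Q}$ and then ``apply $cl_Y$'' to that identity. The crystalline cycle class map is defined on $CH^*(X)_{\Q}$ and is not known to factor through numerical equivalence: such a factorization would mean that homologically and numerically trivial cycles coincide, which is Standard Conjecture $D$ and open in general (the paper's habit of placing Mukai vectors in $A^*(X)_{num,\Q}$ may have suggested this, but cycle classes must be taken on honest Chow groups). The repair is immediate --- GRR holds in $CH^*_{\Q}$ (SGA 6 for projective morphisms, extended to proper ones via Chow's lemma as you note), so run the entire argument there and never pass to numerical equivalence. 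Two smaller points of hygiene: your factorization $X \hookrightarrow \mathbb{P}^n_Y \to Y$ establishes (c) only for \emph{projective} $f$, so for general proper $f$ Chow's lemma must be invoked again at the cohomological stage, not just in the Chow-theoretic input; and the Gysin map you define by the duality pairing (\ref{Poincareduality}) requires $X$ and $Y$ smooth and \emph{proper} (for non-proper smooth varieties $H^*_{crys}$ is not even finitely generated, and rigid cohomology is the appropriate theory). Both points are harmless for the paper's actual use of the theorem --- projections between products of K3 surfaces --- but they matter if one insists on the statement's generality of arbitrary proper maps of smooth varieties.
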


\begin{remark}
The map $f_*$ does not preserve the cohomological grading but does preserve the homological grading, i.e., if the dimensions of $X$ and $Y$ are $n$ and $m$ respectively, then we have the following commutative square: 
\[
\xymatrix{
K_0(X) \ar[d]^{ch( \ ).td_{X}} \ar[r]^{f_*} &K_0(Y)\ar[d]^{ch( \ ).td_{Y}}\\
\oplus_i H^{2i}_{crys}(X/K) \ar[r]^{f_*} & \oplus_i H^{2i +(n-m)}_{crys}(Y/K),}
\] 
and here the grading is respected. If $X$ and $Y$ are K3 surfaces, then $ n = m =2$ and we do not have to worry about this remark, as then the usual cohomological grading is preserved.
\end{remark}

Next we state a few main results about the compatibility of the Frobenius action with the various relations : 

\begin{proposition}[K\"unneth Formula for the crystalline cohomology, \cite{Berth} Chapitre 5, Th\'eor\`eme 4.2.1 and \cite{IllusieCC} Section 3.3]
Let $X, Y$ be proper and smooth varieties over $k$. Then there is a canonical isomorphism in $D(W)$, the derived category of $W$ modules, given as follows:
\begin{equation*}
\mathbb{R}\Gamma(X/W) \otimes_W^{\mathbb{L}} \mathbb{R}\Gamma(Y/W) \cong \mathbb{R}\Gamma(X \times_k Y/W),
\end{equation*}
yielding exact sequences
\begin{eqnarray*}
0 \rightarrow \oplus_{p+q = n} (H^p(X/W) \otimes H^q(Y/W)) \rightarrow H^n(X \times Y/ W) \rightarrow \\
 \rightarrow \oplus_{p + q = n+1} Tor_1^W(H^p(X/W), H^q(Y/W)) \rightarrow 0.
\end{eqnarray*}
\end{proposition}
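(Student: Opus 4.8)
The plan is to prove the statement in two stages: first establish the isomorphism at the level of the derived category $D(W)$, and then extract the short exact sequence by pure homological algebra over $W = W(k)$, exploiting that $W$ is a discrete valuation ring (hence a principal ideal domain of global dimension one, as $k$ is perfect). The first stage is where the crystalline geometry genuinely enters; the second is formal.

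For the derived isomorphism $\mathbb{R}\Gamma(X/W) \otimes_W^{\mathbb{L}} \mathbb{R}\Gamma(Y/W) \cong \mathbb{R}\Gamma(X \times_k Y/W)$, I would work on the crystalline site. Locally I would choose closed immersions $X \hookrightarrow P$ and $Y \hookrightarrow Q$ into smooth $W$-schemes; then $X \times_k Y \hookrightarrow P \times_W Q$ is again such an immersion, and the divided-power envelope of the product is compatible with the tensor product of the individual PD-envelopes. Since for a product of smooth schemes the de Rham complex factors as $\Omega^\bullet_{P \times_W Q} \cong \Omega^\bullet_P \boxtimes \Omega^\bullet_Q$, the local complexes computing crystalline cohomology satisfy K\"unneth on the nose. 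I would then globalize by a \v{C}ech/cohomological descent argument over an affine cover, or, more efficiently, simply appeal to Berthelot's construction (\cite{Berth} Chapitre 5) which carries this out; the upshot is a quasi-isomorphism of complexes of $W$-modules computing the two sides, natural enough to be compatible with the Frobenius actions.

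Once the derived tensor isomorphism is in hand, the short exact sequence follows from the algebraic K\"unneth theorem. The decisive point is that $W$ has global dimension one, so $\mathrm{Tor}_i^W(-,-)$ vanishes for $i \geq 2$. Representing each $\mathbb{R}\Gamma$ by a bounded complex of free $W$-modules and computing the cohomology of $\mathbb{R}\Gamma(X/W) \otimes_W^{\mathbb{L}} \mathbb{R}\Gamma(Y/W)$ by the universal-coefficients spectral sequence, the spectral sequence degenerates to its two-column form, yielding exactly
\[
0 \to \bigoplus_{p+q=n} H^p(X/W) \otimes_W H^q(Y/W) \to H^n(X \times Y/W) \to \bigoplus_{p+q=n+1} \mathrm{Tor}_1^W\big(H^p(X/W), H^q(Y/W)\big) \to 0.
\]

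The main obstacle is the derived-isomorphism step: reconciling the crystalline-site machinery with the naive K\"unneth factorization of de Rham complexes, and in particular globalizing the local quasi-isomorphism while tracking functoriality, requires the full force of Berthelot's and Illusie's framework. The passage to the short exact sequence, by contrast, is entirely formal and rests only on $W$ being a PID. I would therefore cite \cite{Berth} and \cite{IllusieCC} for the derived isomorphism and present the homological-algebra extraction explicitly.
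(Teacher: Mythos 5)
The paper offers no proof of this proposition at all --- it is imported verbatim from \cite{Berth} Chapitre V, Th\'eor\`eme 4.2.1 and \cite{IllusieCC} Section 3.3 --- and your proposal correctly reconstructs the standard argument behind exactly those citations: the local K\"unneth factorization of PD-de Rham complexes for compatible embeddings into smooth schemes, globalization, and then the universal-coefficients extraction of the short exact sequence, which is purely formal because $W=W(k)$ is a discrete valuation ring (so $\mathrm{Tor}^W_i$ vanishes for $i\geq 2$, and $\mathbb{R}\Gamma$ is perfect since the cohomology is bounded and finitely generated over the regular noetherian ring $W$). One imprecision worth flagging: the crystalline site over $W$ itself is not defined in Berthelot's framework since $p$ is not nilpotent in $W$; as the paper's appendix recalls, $H^n_{crys}(X/W) := \varprojlim H^n_{crys}(X/W_m(k))$, so your ``closed immersions into smooth $W$-schemes'' should really be carried out over each $W_m$ (or over the formal completion), with the derived K\"unneth isomorphism established at each finite level and then passed to the limit using finite generation of the cohomology groups --- but since you defer this step to \cite{Berth} anyway, where it is carried out, this does not affect the correctness of your proposal.
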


\begin{remark}
Note that in the case of K3 surfaces the torsion is zero, so we have the following isomorphism:
\begin{equation*}
\oplus_{p+q = n} (H^p(X/W) \otimes H^q(Y/W)) \xrightarrow{\sim} H^n(X \times Y/ W).\\ 
\end{equation*}
\end{remark}

The action of Frobenius gives the following map:
\[
\xymatrix{
F^*H^n(X \times Y/ W) \ar[d]^{=} \ar@{^{(}->}[r] & H^n(X \times Y/W) \ar[d]^{=}\\
\oplus_{p+q = n} (F^*H^p(X/W) \otimes F^*H^q(Y/W)) \ar@{^{(}->}[r] & \oplus_{p+q = n} (H^p(X/W) \otimes H^q(Y/W)).}
\] 

\begin{proposition} \label{Frob-Kun}
The K\"unneth formula is compatible with the Frobenius action in the following way:\\
Let $\gamma \in H^n(X \times Y/W)$ be written (uniquely) as $\gamma = \sum \alpha_p \otimes \beta_q$, then
\begin{equation*}
F^*\gamma = F^*\alpha_p \otimes F^*\beta_q,
\end{equation*}
where $\alpha_p \in H^p(X/W)$ and $\beta_q \in H^q(Y/W)$.
\end{proposition}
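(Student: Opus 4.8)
The plan is to reduce the statement to two facts: first, that the semilinear Frobenius $F^*$ on crystalline cohomology is induced, by contravariant functoriality, from the absolute Frobenius \emph{morphism} $F\colon Z \to Z$; and second, that the Künneth isomorphism is natural in both variables. Granting these, the proposition becomes the identity $F_{X\times Y}^* = F_X^*\otimes F_Y^*$ under the Künneth decomposition, which is precisely the asserted formula once one evaluates on a general element $\gamma = \sum \alpha_p\otimes\beta_q$.

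First I would pin down the compatibility of the absolute Frobenius with products. Recall that the absolute Frobenius $F_Z$ of an $\mathbb{F}_p$-scheme $Z$ is the identity on the underlying space and the $p$-th power map on $\mathcal{O}_Z$, and that it is a natural transformation of the identity functor: every morphism $h\colon Z \to Z'$ satisfies $h\circ F_Z = F_{Z'}\circ h$. Applying naturality to the two projections $\pi_X,\pi_Y$ of $X\times_k Y$ gives $\pi_X\circ F_{X\times Y} = F_X\circ\pi_X$ and $\pi_Y\circ F_{X\times Y} = F_Y\circ\pi_Y$. Since the composites $F_X\circ\pi_X$ and $F_Y\circ\pi_Y$ agree over the base after the twist by $Frob_k$ (both equal $Frob_k$ composed with the structure morphism of $X\times_k Y$), the universal property of the fiber product identifies $F_{X\times Y}$ with the morphism determined by the pair $(F_X\circ\pi_X, F_Y\circ\pi_Y)$; that is, $F_{X\times Y} = F_X\times_k F_Y$. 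Concretely, on affines this is just the identity $(a\otimes b)^p = a^p\otimes b^p$ in characteristic $p$.

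Next I would invoke the naturality of the Künneth isomorphism. The canonical isomorphism $\mathbb{R}\Gamma(X/W)\otimes_W^{\mathbb{L}}\mathbb{R}\Gamma(Y/W)\xrightarrow{\sim}\mathbb{R}\Gamma(X\times_k Y/W)$ in $D(W)$ is functorial in $X$ and $Y$, so for any morphisms $f\colon X\to X$ and $g\colon Y\to Y$ the induced map $(f\times g)^*$ corresponds to $f^*\otimes g^*$ under Künneth. Taking $f=F_X$, $g=F_Y$ and using the previous step, $F_{X\times Y}^* = (F_X\times F_Y)^*$ matches $F_X^*\otimes F_Y^*$. One checks that the right-hand side is well defined and $Frob_W$-semilinear: for $w\in W$ and simple tensors, $(F_X^*\otimes F_Y^*)(w(\alpha\otimes\beta)) = Frob_W(w)\,(F_X^*\alpha\otimes F_Y^*\beta)$ independently of which factor carries the scalar, matching the semilinearity of $F_{X\times Y}^*$. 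Evaluating on $\gamma=\sum\alpha_p\otimes\beta_q$ then yields $F^*\gamma = \sum F^*\alpha_p\otimes F^*\beta_q$.

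The hard part will be justifying the naturality of the Künneth isomorphism for the Frobenius maps specifically, since these are not $W$-linear but only $Frob_W$-semilinear: one must verify that the derived-category isomorphism underlying Künneth is compatible with the endofunctor given by base change along $Frob_W$, so that the functorial maps $F_X^*$, $F_Y^*$ and $F_{X\times Y}^*$ fit into the same commuting square despite the semilinearity. This amounts to tracking the base Frobenius twist through the construction of the Künneth map at the level of the de Rham--Witt complex (or of the crystalline topos); once the absolute Frobenius is recognized as $F_X\times_k F_Y$, the compatibility is forced, but it is the only step requiring genuine care.
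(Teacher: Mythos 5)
Your argument is correct and is essentially the paper's own (largely implicit) approach: the paper offers no written proof of Proposition \ref{Frob-Kun} beyond the commuting diagram preceding it, and that diagram encodes exactly your two ingredients, namely $F_{X\times Y}=F_X\times_k F_Y$ and the functoriality of the K\"unneth isomorphism of Berthelot--Illusie. The semilinearity point you flag at the end is resolved in just the way you indicate --- factoring the absolute Frobenius through the ($k$-linear) relative Frobenius followed by base change along $Frob_k$, over which the K\"unneth map is compatible --- so no idea beyond what you wrote is needed.
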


Let $p_X$(resp. $p_Y$) denote the projection $X \times Y \ra X$ (resp. $X \times Y \ra Y$).  

\begin{proposition} 
The Frobenius has the following compatibility with the projection morphism:
\begin{equation*}
p_X^*(F^*(\alpha)) = F^*(p_X^*\alpha).
\end{equation*}
Similarly, for the other projection $p_Y$.  
\end{proposition}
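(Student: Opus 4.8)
The plan is to obtain the identity as a formal consequence of the contravariant functoriality of crystalline cohomology, reducing everything to the naturality of the absolute Frobenius. Recall from property (5) in the list above that the endomorphism $F^*$ of $H^n_{crys}(-/W)$ is not an extra piece of structure but is \emph{induced}, by functoriality, from the absolute Frobenius $F \colon Z \ra Z$ of a $k$-scheme $Z$ together with $Frob_W$ on the base $W$. Likewise $p_X^*$ is induced from the projection $p_X \colon X \times Y \ra X$, which is an honest morphism of $k$-schemes (paired with the identity of $W$). Both sides of the claimed equality therefore come from applying the single functor $H^n_{crys}(-/W)$ to morphisms of $k$-schemes over their Witt-vector bases, so it suffices to compare those morphisms of schemes.

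First I would record the only geometric input, namely that the absolute Frobenius is a natural transformation of the identity functor on $k$-schemes: for the projection $p_X$ the square
\begin{center}
$\begin{CD}
X \times Y @>F_{X \times Y}>> X \times Y \\
@Vp_XVV @VVp_XV \\
X @>F_X>> X
\end{CD}$
\end{center}
commutes, that is $p_X \circ F_{X \times Y} = F_X \circ p_X$. This holds because on underlying topological spaces every absolute Frobenius is the identity, while on structure sheaves it is the $p$-th power map, which commutes with any ring homomorphism $p_X^{\sharp}$ since $(p_X^{\sharp} a)^p = p_X^{\sharp}(a^p)$. Pairing the horizontal maps with $Frob_W$ and the vertical maps with $\mathrm{id}_W$, this becomes an equality of composite morphisms in the crystalline setting, $(F_X \circ p_X,\, Frob_W) = (p_X \circ F_{X \times Y},\, Frob_W)$.

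Next I would apply the contravariant functor $H^n_{crys}(-/W)$, which reverses all arrows and turns the commuting square into
\begin{center}
$\begin{CD}
H^n(X \times Y/W) @<F^*_{X \times Y}<< H^n(X \times Y/W) \\
@Ap_X^*AA @AAp_X^*A \\
H^n(X/W) @<F^*_X<< H^n(X/W)
\end{CD}$
\end{center}
Chasing an element $\alpha \in H^n(X/W)$ around the two paths from the lower right gives exactly $p_X^*(F^*(\alpha)) = F^*(p_X^* \alpha)$, which is the assertion. The argument for $p_Y$ is verbatim the same with $p_X$ replaced by $p_Y$ throughout.

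The statement requires no cohomological computation, so the only point demanding care --- and hence the (modest) main obstacle --- is the bookkeeping of which Frobenius is used where: one must consistently take the \emph{absolute} Frobenius on each of $X$, $Y$ and $X \times Y$, paired with $Frob_W$ on the Witt vectors, and use that the projection is a plain $k$-morphism paired with the identity on $W$. Once this is fixed, the naturality square above literally identifies the two composites and functoriality finishes the proof.
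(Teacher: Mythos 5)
Your proof is correct: the identity is indeed just the contravariant functoriality of crystalline cohomology applied to the naturality square $p_X \circ F_{X \times Y} = F_X \circ p_X$ of the absolute Frobenius, with the bookkeeping of base morphisms ($Frob_W$ versus $\mathrm{id}_W$) handled exactly as it should be. The paper states this proposition without any proof, treating it as standard, and your argument is precisely the standard one it implicitly relies on, so there is nothing to compare beyond noting that you have supplied the omitted details correctly.
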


Let the denote the cup-product as follows:
\begin{equation*}
H^i(X/W) \times H^j(X/W) \rightarrow H^{i+j}(X/W)
\end{equation*}
given by
\begin{equation*}
( \alpha , \beta) \mapsto \alpha \cup \beta.
\end{equation*}

\begin{proposition} \label{Frob-Cup}
The Frobenius action is compatible with the cup-product in the following way:
\begin{equation*}
F^*(\alpha \cup \beta) = F^*(\alpha) \cup F^*(\beta).
\end{equation*}
Moreover, the Poincar\'e duality induces a perfect pairing as in relation [\ref{Poincareduality}]
$$
<-,->: \frac{H^n}{torsion} \times \frac{H^{2dim(X)-n}}{torsion} \ra H^{2dim(X)} \cong W(k)
$$
which satisfies the following compatibility with Frobenius:
\begin{equation}
 < F^*(x), F^*(y)> = p^{dim(X)} Frob_W(<x,y>).
\end{equation}
\end{proposition}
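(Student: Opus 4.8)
The proposition splits into two assertions, and the plan is to establish the cup-product compatibility first and then feed it into the pairing compatibility. For the first assertion the guiding idea is that $\cup$ is a pullback-compatible operation: the graded $W(k)$-module $\bigoplus_n H^n_{crys}(X/W)$ is a graded-commutative ring under cup product, and for any morphism of smooth proper $k$-schemes the induced map on crystalline cohomology is a homomorphism of graded rings. The endomorphism $F^*$ is, by construction, exactly the map induced on cohomology by the commutative square attached to the absolute Frobenius $F: X \ra X$ (property (5) above). Hence $F^*$ preserves products, which is precisely the identity $F^*(\alpha \cup \beta) = F^*(\alpha) \cup F^*(\beta)$. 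The only point to keep in mind is that $F^*$ is $Frob_W$-semilinear rather than $W(k)$-linear, but multiplicativity is a statement about the morphism-induced map and holds irrespective of semilinearity.

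For the second assertion I would factor the duality pairing through the trace isomorphism. Writing $d = dim(X)$ and letting $\mathrm{tr}: H^{2d}_{crys}(X/W) \xrightarrow{\sim} W(k)$ be the trace of property (2), one has $\langle x, y \rangle = \mathrm{tr}(x \cup y)$ for $x$, $y$ in the torsion-free quotients of complementary degree. Feeding in the first assertion gives
\[
\langle F^*(x), F^*(y) \rangle = \mathrm{tr}\big(F^*(x) \cup F^*(y)\big) = \mathrm{tr}\big(F^*(x \cup y)\big),
\]
so the whole question collapses to understanding how $F^*$ interacts with $\mathrm{tr}$ on top cohomology. The input I would invoke is the normalization
\[
\mathrm{tr}(F^*(z)) = p^{d}\, Frob_W(\mathrm{tr}(z)) \qquad \text{for all } z \in H^{2d}_{crys}(X/W),
\]
equivalently, the statement that $H^{2d}_{crys}(X/W) \cong W(k)$ is the Tate-twisted crystal on which Frobenius acts with slope $d$. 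Granting this and substituting $z = x \cup y$ yields $\langle F^*(x), F^*(y) \rangle = p^{dim(X)} Frob_W(\langle x, y \rangle)$, as claimed.

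The main obstacle is precisely this last normalization. It encodes the fact that the absolute Frobenius on a $d$-dimensional variety is finite flat of degree $p^{d}$, and in the crystalline setting it is part of the compatibility of the trace map with Frobenius built into Berthelot's construction of crystalline Poincar\'e duality. One concrete way to pin it down is to reduce to the cycle class of a reduced closed point: such a class lies in $H^{2d}_{crys}(X/W)$ and is sent to $1$ by $\mathrm{tr}$, while $F$ has degree $p^{d}$, which forces the factor $p^{d}$ once the semilinear twist $Frob_W$ is accounted for. I would emphasize that the perfectness of the pairing itself requires no new argument here, as it is already recorded as property (2) in the list of Weil-cohomology axioms; only the Frobenius compatibility is genuinely at stake.
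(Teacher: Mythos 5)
Your proof is correct, but note that the paper does not actually prove this proposition: it is stated in the appendix as a recollection of standard facts, with the reader referred at the start of the appendix to the Stacks Project (Tags 07GI, 07N0), Berthelot, Berthelot--Ogus, and Liedtke (and the same Frobenius--duality compatibility is already quoted without proof in Example (2) after Definition \ref{F-crystal}). So you are supplying an argument where the paper defers to references, and the argument you supply is the standard one: multiplicativity of $F^*$ follows from functoriality of crystalline cohomology for the pair consisting of the absolute Frobenius and the PD-morphism $Frob_W$ of the base (your remark that semilinearity is irrelevant to multiplicativity is exactly the right point), and the pairing compatibility reduces via $\langle x,y\rangle = \mathrm{tr}(x\cup y)$ to the normalization $\mathrm{tr}(F^*z)=p^{d}\,Frob_W(\mathrm{tr}(z))$ on $H^{2d}$. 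Your reduction of that normalization to the cycle class of a point is legitimate, though to make it airtight you should cite that the crystalline cycle class is compatible with flat pullback (Berthelot--Illusie, Gros) together with the fact that $F^{-1}(\mathrm{pt})$ has length $p^{d}$; an alternative closing move avoiding cycle classes is the projection formula $F_*F^*=\deg(F)=p^{d}$ combined with the compatibility of the Gysin map with the trace. What your route buys beyond the paper's citation is a self-contained explanation of where the factor $p^{d}$ comes from, which is precisely the input the paper uses later (the slope-one twist in Lemma \ref{derivedinvheight} and the computation of $\varphi_X$ on $H^4$ in the appendix), and you correctly flag that perfectness of the pairing is quoted from property (2) rather than reproved.
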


Now we define an F-crystal (see Definition \ref{F-crystal}) structure on the Mukai F-isocrystal of crystalline cohomology for a K3 surface. \\

Let $X$ be a K3 surface over an algebraically closed field $k$ of characteristic $p >3$. Let $ch = ch_{cris}: K(X) \rightarrow H^{2*}(X/K)$ be the crystalline Chern character and $ch^i$ the $2i-th$ component of $ch$. Reducing to the case of a line bundle via the splitting principle, we see that the Frobenius $\varphi_X$ acts in the following manner on the Chern character of a line bundle $E$:
\begin{equation*}
\varphi_X(ch^i(E)) = p^i ch^i(E).
\end{equation*}
We normalize the Frobenius action on the F-isocrystal $H^*(X/K)$ using the Tate twist to get the {\it Mukai F-isocrystal} $\oplus_i H^i(X/K)(i-1)$. \\

We make the following observation, which shows that how the Frobenius action works on $H^4_{crys}(X/W)$. Note that for a perfect field $k$ of characteristic $p$, Serre (\cite{Serre}, Thm 8, pg 43) showed that the Witt ring W(k) has $p$ as its uniformizer.  Now for $H^4_{crys}(X/K)(1)$ the action of Frobenius is given by $\varphi_X/p$. But note that $ch^2(E) = 1/2(c_1^2(E)-2c_2(E)) $, for $E \in K(X)$, where $c_i(E)$ are the Chern classes of $E$, and as the intersection paring is even for a K3 surface, this is integral, i.e., $ch^2(E) \in H^4(X/W)$. This along with the fact that $rank_W(H^4(X/W)) = 1$  implies that $ch^2(E) = u{p}^n[1] $, where $u \in W^{\times}, p$ is the characteristic of $k$ and $[1]$ is the generator of $H^4(X/W)$ as a $W-$module. Hence, we have 
\begin{eqnarray*}
\varphi_X(ch^2(E))  = \varphi_X (up^n[1])  & = &  \sigma(u p^n) \varphi_X([1] ) \ \text{(via semi-linearity)} \\
								& = & \sigma(u) p^n 	 \varphi_X([1] ) \ \text{(as $\sigma$ is a ring map)}\\					     
								&  = & p^2 \cdot ch^2(E) = p^2 u p^n [1] .
\end{eqnarray*}
 This gives us that
\begin{equation*}
\varphi_X([1]) = u (\sigma(u))^{-1} p^2 [1],
\end{equation*}
where $u (\sigma(u))^{-1} \in W^{\times}$ as $\sigma$  is a ring map. Therefore, we have the Frobenius action on $H^4(X/W) \otimes K(1)$ given by $\varphi'_X([1]) = u (\sigma(u))^{-1} p [1]$. Thus, it indeed has a F-crystal inducing this F-isocrystal given by $(H^4(X/W), \varphi'_X)$. We remark that we are implicitly using the fact that $A \otimes_K K \cong A$, for any $K$-module $A$.

Note that the Mukai vector of a sheaf $P$ in $D^b(X)$ for a K3 surface X is by definition the class 
\begin{equation*}
v(P) = ch(P) \sqrt{td(X)} = (v_0(P), v_1(P), v_2(P)) \in \mathcal{H}^*_{crys}(X/W).
\end{equation*}
Indeed, we have $c_1(X) = 0$ and $2 = \chi(X, \mathcal{O}_X) = td_{2,X}$, which gives us that the Todd genus $td_{X} = (1,0,2) $ and thus $\sqrt{td_{X}} = (1,0,1)$. This then implies that
\begin{equation*}
v(P) = (rk(P), c_1(P), rk(P) + c_1^2(P)/2 - c_2(P)).
\end{equation*}
Note that the intersection pairing on $H^2_{crys}(X/W)$ is even, which gives us the above conclusion as $c_i(P) \in H^{2i}_{crys}(X/W)$ (see \cite{BI}).

\begin{lemma} \label{integral}
The Mukai vector of any object $P \in D^b(X \times Y)$ is a F-crystal cohomology class.
\end{lemma}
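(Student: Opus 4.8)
The plan is to prove that $v(P)=ch(P)\sqrt{td_{X\times Y}}$ lies in the integral lattice $H^*_{crys}(X\times Y/W)$, by checking integrality one K\"unneth bidegree at a time. Write $Z=X\times Y$; since $X$ and $Y$ are K3 surfaces, $Z$ is a smooth proper fourfold with $c_1(Z)=0$, its crystalline cohomology is torsion-free and concentrated in even degrees, and by the K\"unneth formula $H^*_{crys}(Z/W)=H^*_{crys}(X/W)\otimes_W H^*_{crys}(Y/W)$. Let $\eta_X\in H^4_{crys}(X/W)$ and $\eta_Y\in H^4_{crys}(Y/W)$ denote the positive generators; then $\sqrt{td_Z}=p_X^*\sqrt{td_X}\cdot p_Y^*\sqrt{td_Y}=(1+p_X^*\eta_X)(1+p_Y^*\eta_Y)$ is already integral, so it suffices to prove that each bidegree component $v(P)^{(i,j)}$ with $i,j\in\{0,2,4\}$ is integral.

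For the components of bidegree $(i,0)$ and $(0,j)$ I would restrict $P$ to a fibre. Derived restriction to $X\times\{y\}\cong X$ gives $Li_y^*P\in D^b(X)$, and one checks $i_y^*v(P)=v(Li_y^*P)$ in $H^*_{crys}(X/W)$; the left-hand side is exactly $\sum_i v(P)^{(i,0)}$. Since $Li_y^*P$ is a complex on the single K3 surface $X$, its Mukai vector is integral by the computation already carried out above (the half-integer $\tfrac12 c_1^2$ being absorbed by the evenness of the intersection form on $H^2_{crys}(X/W)$, cf. \cite{BI}); symmetrically for the $(0,j)$ components. The cross component $(2,2)$ is integral for a cheaper reason: writing $c_1(P)=p_X^*a+p_Y^*b$, the only contribution is $\tfrac12(c_1(P)^2)^{(2,2)}-c_2(P)^{(2,2)}=p_X^*a\cdot p_Y^*b-c_2(P)^{(2,2)}$, where the factor $2$ in the cross term $(c_1(P)^2)^{(2,2)}=2\,p_X^*a\cdot p_Y^*b$ cancels the denominator.

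The substantive part is bidegree $(4,2)$ and $(2,4)$, where the denominator $6$ of $ch_3$ is not removed by evenness alone; here I would invoke the crystalline Grothendieck--Riemann--Roch theorem for the projection $p_Y\colon Z\to Y$. Its relative Todd class is $p_X^*td_X=1+2\,p_X^*\eta_X$, so GRR reads $ch(Rp_{Y*}P)=p_{Y*}\big(ch(P)\,(1+2\,p_X^*\eta_X)\big)$. Taking the $H^2_{crys}(Y/W)$-part and using that $p_{Y*}$ drops cohomological degree by $4$ gives $c_1(Rp_{Y*}P)=p_{Y*}(ch_3(P))+2\,p_{Y*}(c_1(P)\,p_X^*\eta_X)$. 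The left side is integral (it is $c_1$ of a perfect complex) and the last term is integral, so $p_{Y*}(ch_3(P))\in H^2_{crys}(Y/W)$ is integral; since $ch_3(P)^{(4,2)}=p_X^*\eta_X\otimes p_{Y*}(ch_3(P))$ and the remaining contribution to $v(P)^{(4,2)}$, namely $c_1(P)^{(0,2)}\cdot p_X^*\eta_X$, is manifestly integral, the component $v(P)^{(4,2)}$ is integral. The symmetric argument with $p_X$ handles $(2,4)$. The crucial input is that the relative Todd class contributes exactly the factor $2$ needed to clear the denominator, while the pushforward Chern class stays integral.

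Finally, for the top component $(4,4)\in H^8_{crys}(Z/W)=W$ I would use Hirzebruch--Riemann--Roch (GRR for $Z\to\Spec(k)$): $\chi(Z,P)=\int_Z ch(P)\,td_Z=\int_Z v(P)\sqrt{td_Z}\in\Z$. Expanding against the integral class $\sqrt{td_Z}$, the degree-$8$ part of $v(P)\sqrt{td_Z}$ equals $v(P)^{(4,4)}$ plus products of the already-integral components $v(P)_0,v(P)_4$ with $\sqrt{td_Z}$; since all of these integrate to elements of $W$, the integral of $v(P)^{(4,4)}$ lies in $W$, forcing $v(P)^{(4,4)}$ to be integral. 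Assembling the bidegrees shows $v(P)\in H^*_{crys}(Z/W)$, and compatibility with the Frobenius structure is automatic from the computation preceding the lemma, where $\varphi$ acts on $ch^i$ by $p^i$, matching the Tate twist defining the Mukai F-isocrystal. The main obstacle is precisely the middle degrees $6$ and $8$, where neither evenness nor the single-surface case suffices; the resolution is the GRR computation along the projections, whose relative Todd class supplies the missing factor of $2$.
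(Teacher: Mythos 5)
Your proof is correct, but it takes a genuinely different --- and much heavier --- route than the paper's. The paper's own proof is essentially one observation: under its standing hypothesis $\mathrm{char}(k)=p>3$, the integers $2$ and $3$ are units in $W(k)$, so the denominators $2$, $6$, $24$ occurring in $ch^2$, $ch^3$, $ch^4$ are invertible, and $ch(P)\in H^*_{crys}(X\times Y/W)$ follows at once from the integrality of the crystalline Chern classes $c_i(P)\in H^{2i}_{crys}(X\times Y/W)$ of \cite{BI}; multiplying by the visibly integral class $\sqrt{td_{X\times Y}}=p_1^*(1,0,1)\cdot p_2^*(1,0,1)$ finishes it. What you do instead is transplant Mukai's genuinely integral argument (the one needed over $\Z$ in \cite{Mu}) into crystalline cohomology: K\"unneth bidegrees, derived fibre restriction for the $(i,0)$ and $(0,j)$ parts (your identities $i_y^*v(P)=v(Li_y^*P)$ and $i_y^*\sqrt{td_{X\times Y}}=\sqrt{td_X}$ are correct, the latter because $p_Y\circ i_y$ is constant and kills $H^4(Y)$), the factor $2$ in the $(2,2)$ cross term, crystalline Grothendieck--Riemann--Roch along the projections to clear the $6$ in degree six, and Hirzebruch--Riemann--Roch to clear the $24$ in degree eight; the key identity $c_1(\mathbb{R}p_{Y*}P)=p_{Y*}(ch^3(P))+2\,p_{Y*}(c_1(P)\,p_X^*\eta_X)$ and the degree-eight bookkeeping (only $v_8+v_4(\sqrt{td_Z})_4+v_0(\sqrt{td_Z})_8$ contribute, since $\sqrt{td_Z}$ is concentrated in degrees $0,4,8$) both check out. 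The trade-off: the paper's argument buys brevity from the hypothesis $p>3$, while yours proves the $2$- and $3$-integrality the paper never needs, is uniform in $p$ (modulo the usual caveats about crystalline torsion and Chern classes at small primes), and makes visible where each denominator is actually cancelled. Two small points of hygiene: the evenness step should be phrased via $c_1=c_1(\det)$ lying in the image of $NS(X)$, where self-intersection numbers are even integers --- over $W(k)$ with $p$ odd, ``evenness'' of the pairing on all of $H^2_{crys}$ is vacuous since $2\in W(k)^{\times}$; and your GRR input is exactly the paper's Crystalline Riemann--Roch theorem stated earlier in this appendix, applied to the projection after dividing by the invertible class $td_Y$ via the projection formula, so you import nothing beyond the paper's toolkit.
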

\begin{proof}(cf. \cite{Mu})
Note that from the definition of the F-crystal structure we just need to show that $ch(P) \in H^*_{crys}(X \times Y /W)$ as the square root of the Todd genus for a K3 surface is computed as follows:
\begin{equation*}
\sqrt{td_{X \times Y}} = p_1^*\sqrt{td_{X}}p_2^*\sqrt{td_{Y}} = p_1^*(1,0,1).p_2^*(1,0,1). 
\end{equation*}
We write the exponential chern character as follows:
\begin{equation*}
ch(P) = (rk(P), c_1(P), 1/2(c_1^2(P)-2c_2(P)), ch_3(P), ch_4(P))
\end{equation*}
where
\begin{equation*}
ch^3(P)= 1/6(c_1^3(P) - 3c_1c_2 + 3c_3(P))
\end{equation*}
and
\begin{equation*}
ch^4(P) = 1/24(c_1^4 - 4c_1^2c_2 + 4c_1c_3 + 2c_2^2 - 4 c_4).
\end{equation*}
Note that if $char(k) \neq 2, 3$, then $2, 3 $ are invertible in $W(k)$, so $ch(P) \in H^*_{crys}(X \times Y /W)$ as again we know $c_i(P) \in H^{2i}_{crys}(X \times Y /W)$ . 
\end{proof}

\begin{remark}
Thus, it makes sense to talk about the descent of a Fourier-Mukai transform to the F-crystal level but note that the new Frobenius structure on $H^4(X/W)(1)$ fails to be compatible with the intersection pairing as defined in Theorem \ref{Frob-Cup}. This causes the failure of existence of an F-crystal structure on the Mukai-isocrystal and also the failure to have a cohomological criteria of derived equivalences of K3 surfaces with crystalline cohomology.  
\end{remark}

%%%%%%%%%%%%%%%%%%%%%%%%%%%%%%%%%%%%%%%%%%%%%%%%%%%%%%%%%%

\end{document}